\newtheorem{thm}{Theorem}[section]
\newtheorem{prop}[thm]{Proposition}
\newtheorem{lem}[thm]{Lemma}
\newtheorem{rem}[thm]{Remark}
\newtheorem{cor}[thm]{Corollary}
\newtheorem{ex}[thm]{Example}
\newtheorem{defn}[thm]{Definition}
\newcommand{\diag}{\mathrm{diag}}
\begin{document}
\title{Quantitative $K$-Theory for Banach Algebras}
\author{Yeong Chyuan Chung}
\date{\today}
%\thanks{}
\subjclass[2010]{Primary: 19K99, 46L80; Secondary: 46H35, 47L10} 
\keywords{Quantitative operator K-theory, filtered Banach algebra, $L_p$ operator algebra, subquotient of $L_p$ space}
\maketitle

\begin{abstract}
Quantitative (or controlled) $K$-theory for $C^*$-algebras was used by Guoliang Yu in his work on the Novikov conjecture, and later developed more formally by Yu together with Herv\'{e} Oyono-Oyono. In this paper, we extend their work by developing a framework of quantitative $K$-theory for the class of algebras of bounded linear operators on subquotients (i.e., subspaces of quotients) of $L_p$ spaces. We also prove the existence of a controlled Mayer-Vietoris sequence in this framework. 
%and apply it to provide an algorithm for computing the $K$-theory of reduced $L_p$ crossed products arising from actions of countable discrete groups on compact Hausdorff spaces when the action has finite dynamic asymptotic dimension.
\end{abstract}

\tableofcontents

\section{Introduction}

%Dynamic asymptotic dimension is a property of topological dynamical systems introduced by Guentner, Willett, and Yu in \cite{GWY1} for discrete groups acting by homeomorphisms on locally compact Hausdorff spaces. One thinks of finite dynamic asymptotic dimension as a condition that allows us to break up the action into at most a certain number of parts whenever we are given a finite subset of the group, and such that on each part the action is fairly simple if we restrict our attention to the given finite subset. The main motivation of the authors of \cite{GWY1} was the implications for $K$-theory of associated $C^*$-algebras, such as crossed products, and thus for manifold topology. They also investigated some connections with controlled topology, coarse geometry, and structure of $C^*$-algebras.

Quantitative (or controlled) operator $K$-theory has its roots in \cite{Yu98}, where the idea was used by Yu in his work on the Novikov conjecture, which is a conjecture in topology on the homotopy invariance of certain higher signatures. In fact, by applying a certain controlled Mayer-Vietoris sequence to the Roe $C^*$-algebras associated to proper metric spaces with finite asymptotic dimension, Yu was able to prove the coarse Baum-Connes conjecture for these metric spaces, from which it follows that the Novikov conjecture holds for finitely generated groups with finite asymptotic dimension and whose classifying space has the homotopy type of a finite CW-complex. 

The underlying philosophy of quantitative $K$-theory is that the $K_0$ and $K_1$ groups of a complex Banach algebra can essentially be recovered by using ``quasi-idempotent'' and ``quasi-invertible'' elements respectively. A striking consequence of the flexibility gained by considering such elements instead of actual idempotent or invertible elements is that in place of closed (two-sided) ideals, which are often needed to apply the standard machinery in $K$-theory, we can sometimes use closed subalgebras that are almost ideals in an appropriate sense. This feature could already be seen in the controlled cutting and pasting technique used in \cite{Yu98}. %In particular, one may try to apply similar techniques to simple algebras (i.e., algebras with no non-trivial closed ideals) such as the irrational rotation algebras.

In \cite{OY15}, Oyono-Oyono and Yu developed the theory for general filtered $C^*$-algebras, i.e., $C^*$-algebras equipped with a filtration, and they formulated a quantitative version of the Baum-Connes conjecture, proving it for a large class of groups. They also suggested that the theory can be extended to more general filtered Banach algebras. For most algebras of interest in noncommutative geometry, we can obtain a natural filtration from a length function defined on the algebra. This gives the algebra the necessary geometric structure in order to define quantitative $K$-theory, and we may regard these algebras as ``geometric'' algebras in the spirit of geometric group theory. It does seem, however, that one needs the algebras to be equipped with an appropriate matrix norm structure, such as a $p$-operator space structure, as one needs to incorporate norm control in the framework. This norm control is automatic in the $C^*$-algebra setting but not for arbitrary Banach algebras. Motivated by the successful application of quantitative $K$-theory in investigations of (variants of) the Baum-Connes conjecture \cite{Yu98,GWY2}, our goal in this paper is to develop a framework of quantitative $K$-theory that can be applied to filtered $L_p$ operator algebras, i.e., closed subalgebras of $B(L_p(X,\mu))$ for some measure space $(X,\mu)$, where $p\in[1,\infty)$. This will then give us a tool to investigate an $L_p$ version of the Baum-Connes conjecture, which in turn gives us information about the $K$-theory of certain classes of $L_p$ operator algebras.

%Crossed product $C^*$-algebras have received much attention from the point of view of structure and classification of $C^*$-algebras, as well as noncommutative geometry, but more general Banach algebra crossed products are much less well-studied. However, recent work suggests growing interest in these algebras. Based on work by Dirksen, de Jeu, and Wortel in \cite{DDW}, where general Banach algebra dynamical systems were studied, Phillips \cite{Phil13} initiated the investigation of full and reduced crossed products of algebras of operators on $L_p$ spaces by isometric actions of locally compact groups for $p\in[1,\infty)$ with the motivation of computing the $K$-theory of the analogs of Cuntz algebras on $L_p$ spaces, which he had introduced in \cite{Phil12}. We also mention a study of the $L_1$ crossed product associated to a topological dynamical system by de Jeu, Svensson, and Tomiyama in \cite{DST}.

In trying to extend techniques and results for $C^*$-algebras to more general Banach algebras, the algebras of bounded linear operators on $L_p$ spaces seem to be a natural class to begin with.	
%For instance, one might hope that the notion of positivity in $C^*$-algebras can be replaced by appropriate uses of duality of $L_p$ spaces. 
Moreover, $L_p$ operator algebras have a natural $p$-operator space structure. From the point of view of noncommutative geometry, in particular the Baum-Connes conjecture \cite{BCH}, there are also reasons to study $L_p$ operator algebras. Indeed, in Lafforgue's work on the Baum-Connes conjecture \cite{Laf02,Skan}, he considered certain generalized Schwartz spaces whose elements act on (weighted) $L_p$ spaces. There is also the Bost conjecture, which is the Banach algebra analog of the Baum-Connes conjecture obtained by replacing the group $C^*$-algebra by the $L_1$ group convolution algebra. We refer the reader to \cite{LH} for a survey on the Baum-Connes conjecture and similar isomorphism conjectures. Also, in yet unpublished work \cite{KY}, Kasparov and Yu have been investigating an $L_p$ version of the Baum-Connes conjecture. 
%A key observation is that the usual Baum-Connes assembly map factors through the analogous $L_p$ assembly map because of the existence of a natural homomorphism from the reduced $L_p$ group algebra to the reduced group $C^*$-algebra. In particular, the lack of surjectivity of the induced homomorphism in $K$-theory will imply the failure of the Baum-Connes conjecture. 
One possible application of our framework of quantitative $K$-theory will be to the $L_p$ Baum-Connes conjecture with coefficients in $C(X)$ when we have a group acting with finite dynamic asymptotic dimension on $X$ as defined in \cite{GWY1}. This has been worked out in the $C^*$-algebraic setting in \cite{GWY2}, and we discuss the $L_p$ setting in \cite{Chung2}.

While we are mainly interested in $L_p$ operator algebras, the class of algebras of bounded linear operators on subquotients (i.e., subspaces of quotients) of $L_p$ spaces, which we will refer to as $SQ_p$ algebras, is a more natural class to work with, as suggested by the theory of $p$-operator spaces. Indeed, an abstract $p$-operator space (as defined in \cite{Daws10}) can be $p$-completely isometrically embedded in $B(E,F)$, where $E$ and $F$ are subquotients of $L_p$ spaces \cite{LM96}. Moreover, the class of $L_p$ operator algebras is not closed under taking quotients by closed ideals when $p\neq 2$ \cite{GardThiel16} while the class of $SQ_p$ algebras is \cite{LM96i}, which is relevant to us when we are considering short exact sequences of algebras. 

For general Banach algebras, one can still consider quantitative $K$-theory in a similar way as we do, provided one has an appropriate matrix norm structure on the Banach algebra. However, since notation becomes more cumbersome in this generality, and the application we have in mind involves only $L_p$ operator algebras, we develop our framework in the setting of $SQ_p$ algebras.

%analog of the reduced crossed product arising from actions with finite dynamic asymptotic dimension. The main tool for this application is again a controlled Mayer-Vietoris sequence, which we prove in Section \ref{sec:MV}, based on the analog for $C^*$-algebras in \cite{OY}.

The outline of this paper is as follows: In section 2, we introduce the basic ingredients for quantitative $K$-theory. In section 3, we define the quantitative $K$-theory groups and describe some basic properties analogous to those in ordinary $K$-theory. We also explain how quantitative $K$-theory is related to ordinary $K$-theory. In section 4, we establish a controlled long exact sequence analogous to the long exact sequence in $K$-theory. In section 5, we prove the existence of a controlled Mayer-Vietoris sequence under appropriate decomposability and approximability assumptions. Finally, in section 6, we briefly explain how our framework essentially reduces to that in \cite{OY15} when dealing with $C^*$-algebras.

%\subsection*{\small Acknowledgements} 
\emph{Acknowledgments.} We would like to thank Guoliang Yu for his patient guidance and support throughout this project, and also Herv\'{e} Oyono-Oyono for useful discussions. We are also grateful to an anonymous referee for carefully reading our manuscript and providing many valuable comments.

%---------------------------------------------

\section[Filtered Banach Algebras]{Filtered Banach Algebras, Quasi-Idempotents, and Quasi-Invertibles}

In this section, we introduce quasi-idempotents and quasi-invertibles in filtered Banach algebras, and we consider homotopy relations on these elements. These are the basic ingredients for our framework of quantitative $K$-theory. Throughout this paper, we will only consider complex Banach algebras.

\subsection{Filtered Banach algebras and $SQ_p$ algebras}

\begin{defn}
A filtered Banach algebra is a Banach algebra $A$ with a family $(A_r)_{r>0}$ of closed linear subspaces indexed by positive real numbers $r\in(0,\infty)$ such that
\begin{itemize}
\item $A_r\subseteq A_{r'}$ if $r\leq r'$;
\item $A_r A_{r'}\subseteq A_{r+r'}$ for all $r,r'>0$;
\item the subalgebra $\bigcup_{r> 0}A_r$ is dense in $A$.
\end{itemize}
If $A$ is unital with unit $1_A$, we require $1_A\in A_r$ for all $r> 0$. In this case, we set $A_0=\mathbb{C}1_A$. Elements of $A_r$ are said to have propagation $r$. The family $(A_r)_{r> 0}$ is called a filtration of $A$.
\end{defn}

When $A$ is a $C^*$-algebra, we also want $A_r^*=A_r$ for all $r>0$.

\begin{rem} \leavevmode
\begin{enumerate}
\item When $A=\mathbb{C}$, we will usually set $A_r=\mathbb{C}$ for all $r> 0$.
\item If $A$ is a filtered Banach algebra with filtration $(A_r)_{r>0}$, and $J$ is a closed ideal in $A$, then $A/J$ is a Banach algebra under the quotient norm, and has a filtration $((A_r+J)/J)_{r> 0}$.
%\item If $A$ and $B$ are Banach algebras with filtrations $(A_r)_{r> 0}$ and $(B_r)_{r> 0}$ respectively, then $A\oplus_p B$ is a filtered Banach algebra with filtration $(A_r\oplus B_r)_{r> 0}$ for $p\in[1,\infty]$. Here $A\oplus_p B$ denotes the algebraic direct sum with the norm given by $||(a,b)||_p=(||a||^p+||b||^p)^{1/p}$ when $p\in[1,\infty)$, and $||(a,b)||_\infty=\max(||a||,||b||)$ when $p=\infty$. 
\end{enumerate}
\end{rem}

\begin{ex}
Let $G$ be a countable discrete group equipped with a proper length function $l$, i.e., a function $l:G\rightarrow\mathbb{N}$ satisfying
\begin{itemize}
\item $l(g)=0$ if and only if $g=e$;
\item $l(gh)\leq l(g)+l(h)$ for all $g,h\in G$;
\item $l(g^{-1})=l(g)$ for all $g\in G$;
\item $\{g\in G:l(g)\leq r\}$ is finite for all $r\geq 0$.
\end{itemize}
Then 
\begin{enumerate}
\item The reduced group $C^*$-algebra, $C_\lambda^*(G)$, is a filtered $C^*$-algebra with a filtration given by \[(C_\lambda^*(G))_r=\{\sum a_g g:a_g\in\mathbb{C},l(g)\leq r\}.\]
\item Suppose that $G$ acts on a $C^*$-algebra $A$ by automorphisms. Then the reduced crossed product, $A\rtimes_\lambda G$, is a filtered $C^*$-algebra with a filtration given by \[(A\rtimes_\lambda G)_r=\{\sum a_g g:a_g\in A,l(g)\leq r\}.\]
\end{enumerate}
\end{ex}

Other examples of filtered $C^*$-algebras include finitely generated $C^*$-algebras and Roe algebras. One may also consider the $L_p$ analogs of the group $C^*$-algebra and crossed product, and these are examples of filtered Banach algebras. In fact, in each of these examples, one can define a length function $l:A\rightarrow[0,\infty]$ on the algebra $A$, satisfying the following conditions:
\begin{itemize}
\item $l(0)=0$ (or $l(1_A)=0$ if $A$ is unital);
\item $l(a+b)\leq\max(l(a),l(b))$ and $l(ab)\leq l(a)+l(b)$ for all $a,b\in A$;
\item $l(ca)\leq l(a)$ for any $a\in A$ and $c\in\mathbb{C}$;
\item the set $\{a\in A:l(a)<\infty\}$ is dense in $A$, and $\{a\in A:l(a)\leq r\}$ is a closed subset of $A$ for each $r\geq 0$.
\end{itemize}
This length function then gives rise to a natural filtration with \[A_r=\{a\in A:l(a)\leq r\}\] for each $r\geq 0$. Thus filtered Banach algebras may also be regarded as ``geometric'' Banach algebras in the spirit of geometric group theory.

If $A$ is a non-unital Banach algebra, let $A^+=\{(a,z):a\in A,z\in\mathbb{C}\}$ with multiplication given by $(a,z)(b,w)=(ab+z b+w a,zw)$. We call $A^+$ the unitization of $A$. We will use the notation \[\tilde{A}=\begin{cases} A & \text{if $A$ is unital,} \\ A^+ & \text{if $A$ is nonunital.} \end{cases}\]

Note that if $A$ is a unital Banach algebra, then we can always give it an equivalent Banach algebra norm such that $||1_A||=1$, namely the operator norm from the left regular representation of $A$ on itself. Thus we will always assume that $||1_A||=1$ when dealing with unital Banach algebras.
%We also point out that it was shown in \cite{AM07} that the best constant in (iii) above is 3, i.e., $||(a,\lambda)||_{\ell_1}\leq 3||(a,\lambda)||_R$.

%\begin{rem}
%In situations where we know more about the Banach algebra $A$, we may want to consider some other norms on $A^+$. For instance, if $A$ has a contractive approximate identity, then $A$ embeds isometrically into $B(A)$ via the left regular representation, and we may identify $A^+$ isometrically with $A+\mathbb{C}I\subset B(A)$. In other words, we may consider the norm $||(a,z)||_{A^+}=\sup\{||ab+zb||_A:b\in A,||b||_A\leq 1\}$. With this norm, $(A_r+\mathbb{C})_{r>0}$ is still a filtration of $A^+$. This is also the $C^*$-norm on $A^+$ when $A$ is a non-unital $C^*$-algebra.
%\end{rem}

For our framework of quantitative $K$-theory, since we will consider matrices of all sizes simultaneously and we want to have norm control, we need our Banach algebras to have some matrix norm structure. 

\begin{defn}\cite{Daws10}
For $p\in[1,\infty)$, an abstract $p$-operator space is a Banach space $X$ together with a family of norms $||\cdot||_n$ on $M_n(X)$ satisfying:
\begin{enumerate}
\item[$\mathcal{D}_\infty$:] For $u\in M_n(X)$ and $v\in M_m(X)$, we have \[\biggl\Vert\begin{pmatrix} u & 0 \\ 0 & v \end{pmatrix}\biggr\Vert_{n+m}=\max(||u||_n,||v||_m);\]
\item[$\mathcal{M}_p$:] For $u\in M_m(X)$, $\alpha\in M_{n,m}(\mathbb{C})$, and $\beta\in M_{m,n}(\mathbb{C})$, we have \[||\alpha u\beta||_n\leq ||\alpha||_{B(\ell_p^m,\ell_p^n)} ||u||_m ||\beta||_{B(\ell_p^n,\ell_p^m)}.\]
\end{enumerate}
\end{defn}

In the general theory of $p$-operator spaces, one typically considers only $p\in(1,\infty)$ but the definition still makes sense when $p=1$, and the properties of $p$-operator spaces that we will use still hold in this case. We also mention Le Merdy's result \cite[Theorem 4.1]{LM96} that for $p\in(1,\infty)$, an abstract $p$-operator space $X$ can be $p$-completely isometrically embedded in $B(E,F)$ for some $E,F\in SQ_p$, where $SQ_p$ denotes the collection of subspaces of quotients of $L_p$ spaces. The class $SQ_2$ is precisely the class of Hilbert spaces while the class $SQ_1$ contains all Banach spaces since every Banach space is a quotient of some $L_1$ space.

Note that a $p$-operator space structure does not necessarily respect multiplicative structure, so given a filtered Banach algebra $A$ with $p$-operator space structure $\{||\cdot||_n\}_{n\in\mathbb{N}}$, we also require $(M_n(A),||\cdot||_n)$ to be a Banach algebra for each $n$. Here we note the following result of Le Merdy:

\begin{thm}\cite[Theorem 3.3]{LM96i}
If $A$ is a unital Banach algebra with a $p$-operator space structure $\{||\cdot||_n\}_{n\in\mathbb{N}}$ and $p\in(1,\infty)$, then the following are equivalent:
\begin{enumerate}
\item Each $(M_n(A),||\cdot||_n)$ is a Banach algebra.
\item $A$ is $p$-completely isometrically isomorphic to a subalgebra of $B(E)$ for some $E\in SQ_p$.
\end{enumerate}
\end{thm}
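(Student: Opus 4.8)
The plan is to treat the two implications separately: $(2)\Rightarrow(1)$ is routine, whereas $(1)\Rightarrow(2)$ carries all the content and is proved by a Blecher--Ruan--Sinclair type strategy adapted to the $L_p$ setting.

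For $(2)\Rightarrow(1)$, suppose $A$ sits $p$-completely isometrically as a subalgebra of $B(E)$ with $E\in SQ_p$. The canonical $p$-operator space structure on $B(E)$ identifies $(M_n(B(E)),\|\cdot\|_n)$ isometrically with $B(\ell_p^n(E))$, where $\ell_p^n(E)=E\oplus_p\cdots\oplus_p E$ ($n$ summands); and $\ell_p^n(E)\in SQ_p$, since a finite $\ell_p$-sum of $L_p$ spaces is again an $L_p$ space and passing to subspaces and quotients commutes with finite $\ell_p$-sums. Under this identification, matrix multiplication in $M_n(A)\subseteq M_n(B(E))$ is composition of operators in $B(\ell_p^n(E))$, so $\|\xi\eta\|_n=\|\xi\circ\eta\|\le\|\xi\|\,\|\eta\|=\|\xi\|_n\,\|\eta\|_n$; together with completeness of $M_n(A)$, inherited from $A$, this gives $(1)$.

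For $(1)\Rightarrow(2)$, first I would observe that $(1)$ says precisely that the bilinear multiplication map $m\colon A\times A\to A$ is $p$-completely contractive: its $n$-th amplification $M_n(A)\times M_n(A)\to M_n(A)$ is ordinary matrix multiplication, whose norm is $\le 1$ exactly when $\|\cdot\|_n$ is submultiplicative, and the $p$-completely bounded norm of $m$ is the supremum of these amplification norms. The key step is then to invoke the $L_p$-analog of the Christensen--Effros--Sinclair factorization theorem for $p$-completely bounded bilinear maps, which is the technical heart of \cite{LM96i} and is where Le Merdy's representation theorem \cite[Theorem 4.1]{LM96} and the restriction $p\in(1,\infty)$ enter: the $p$-complete contraction $m$ factors as $ab=w\bigl(u(a)\,v(b)\bigr)$ for suitable $F_0,F_1,F_2\in SQ_p$, $p$-complete contractions $u\colon A\to B(F_1,F_2)$ and $v\colon A\to B(F_0,F_1)$, and a $p$-complete contraction $w\colon B(F_0,F_2)\to A$.

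Finally I would use unitality to promote this factorization to a genuine representation. Setting $b=1_A$ and then $a=1_A$ gives $a=w\bigl(u(a)\,v(1_A)\bigr)=w\bigl(u(1_A)\,v(a)\bigr)$; since $w$ is a $p$-complete contraction, $\|u(1_A)\|\le1$ and $\|v(1_A)\|\le1$, and $u,v$ are $p$-complete contractions, one reads off $\|v(a)\|_n=\|a\|_n$ for all $n$, so that $v$ is a $p$-complete isometric embedding of $A$ into $B(F_0,F_1)$ (and similarly for $u$). A standard gluing argument, exactly as in the proof of the Blecher--Ruan--Sinclair theorem, then uses the associativity of $m$ together with these relations to assemble from this one-sided data a \emph{unital} $p$-complete isometric homomorphism $A\to B(E)$ with $E=F_0\oplus_p F_1\oplus_p F_2\in SQ_p$; here it is essential that $A$ is unital and that $SQ_p$ is stable under $\ell_p$-direct sums, so that the assembled target space remains in $SQ_p$. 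I expect the main obstacle to be the bilinear factorization theorem in the $L_p$ setting: in the Hilbert space case it rests on Wittstock/Arveson-type extension and decomposition results, and finding $L_p$-substitutes (available only for $p\in(1,\infty)$, where $SQ_p$ is well behaved) is delicate; by comparison the reduction at the start of $(1)\Rightarrow(2)$ is a definition chase, and the concluding gluing is routine modulo the bookkeeping needed to stay inside $SQ_p$.
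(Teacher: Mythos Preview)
The paper does not prove this theorem: it is quoted verbatim as \cite[Theorem 3.3]{LM96i} and used as a black box, with only the subsequent remark commenting on the $p=2$ antecedent \cite{BRS90,Ble95} and the $p=1$ boundary case. So there is no ``paper's own proof'' to compare against; your proposal is really a sketch of Le Merdy's argument itself.

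As such a sketch, your outline is on the right track and identifies the correct architecture: $(2)\Rightarrow(1)$ via $M_n(B(E))\cong B(\ell_p^n(E))$ is exactly right, and for $(1)\Rightarrow(2)$ the strategy of reading submultiplicativity of all $\|\cdot\|_n$ as $p$-complete contractivity of the multiplication, then invoking an $L_p$ version of Christensen--Effros--Sinclair factorization, is precisely what underlies Le Merdy's proof. One small correction: your justification ``since $w$ is a $p$-complete contraction, $\|u(1_A)\|\le 1$'' is misattributed---the bound $\|u(1_A)\|\le 1$ comes simply from $u$ being a $p$-complete contraction and $\|1_A\|=1$; it is then $w$'s contractivity that gives $\|a\|_n\le\|u(1_A)v(a)\|_n\le\|v(a)\|_n$, forcing $v$ to be $p$-completely isometric. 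The part you label ``standard gluing'' is where most of the remaining work lies: $v$ lands in $B(F_0,F_1)$ rather than $B(E)$ and is not a priori multiplicative, and turning the factorization into a genuine unital homomorphism on a single $SQ_p$ space requires a careful inductive construction (as in \cite{BRS90,Ble95}), not merely forming $F_0\oplus_p F_1\oplus_p F_2$. You flag this honestly, but it is not quite ``routine bookkeeping.''
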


\begin{rem}
This theorem generalizes the result by Blecher, Ruan, and Sinclair in the $p=2$ case \cite[Theorem 3.1]{BRS90} (also see \cite[Theorem 2.1]{Ble95}). 
%If $A$ is a Banach algebra with an operator space structure $\{||\cdot||_n\}_{n\in\mathbb{N}}$ (i.e., a 2-operator space structure), and $A$ has a contractive approximate identity, then the following are equivalent: 
%\begin{enumerate}
%\item Each $(M_n(A),||\cdot||_n)$ is a Banach algebra. 
%\item $A$ is an operator algebra (on some Hilbert space).
%\end{enumerate}
The $p=1$ case is omitted in Le Merdy's theorem but we note that at least (ii) $\Rightarrow$ (i) is valid, i.e., algebras of bounded linear operators on $SQ_1$ spaces have a canonical 1-operator space structure such that the matrix algebras are Banach algebras. This goes back to Kwapien \cite[Theorem 4.2']{Kwa72} (also see \cite[Theorem 3.2]{LM96}), and is sufficient for our purposes.
\end{rem}

\begin{defn}
If $E$ is a subquotient (i.e., subspace of a quotient) of an $L_p$ space for some $p\in[1,\infty)$, $B(E)$ is the algebra of bounded linear operators on $E$, and $A\subset B(E)$ is a norm-closed subalgebra, then we call $A$ an $SQ_p$ algebra.
\end{defn}

If $A\subset B(E)$ is a non-unital $SQ_p$ algebra, we view $A^+$ as $A+\mathbb{C}I_E$ so that $A^+$ is a unital $SQ_p$ algebra.

\begin{rem}
If $A$ is a filtered $SQ_p$ algebra, then
\begin{itemize}
\item $||I_n||_n=1$ for all $n\in\mathbb{N}$, where $I_n$ is the identity in $M_n(\tilde{A})$;
\item $||a_{kl}||_1\leq||(a_{ij})||_n\leq\sum_{i,j}||a_{ij}||_1$ for all $(a_{ij})\in M_n(\tilde{A})$, all $k,l\in\{1,\ldots,n\}$, and all $n\in\mathbb{N}$;
\item $M_n(A)$ is closed in $M_n(\tilde{A})$ for each $n$;
\item if $A$ is unital and has a filtration $(A_r)_{r>0}$, then $M_n(A)$ has a filtration $(M_n(A_r))_{r>0}$ for each $n$;
\item if $A$ is non-unital and has a filtration $(A_r)_{r>0}$, then $M_n(A^+)$ has a filtration $(M_n(A_r+\mathbb{C}))_{r>0}$ for each $n$, and $M_n(A)$ has a filtration $(M_n(A_r))_{r>0}$ for each $n$.
\end{itemize}
\end{rem}

Given a filtered $SQ_p$ algebra, other Banach algebras constructed from $A$ will be given matrix norm structures naturally induced by the matrix norm structure on $A$. We now consider some of these constructions.

In some situations, we may want to adjoin a unit to $A$ even if $A$ is already unital. In this case, $A^+$ is isomorphic to $A\oplus\mathbb{C}$ via the homomorphism $A^+\ni(a,z)\mapsto(a+z,z)\in A\oplus\mathbb{C}$, and we will identify $M_n(A^+)$ isometrically with $M_n(A)\oplus M_n(\mathbb{C})$, where $M_n(\mathbb{C})$ is viewed as $B(\ell_p^n)$ and the norm on the direct sum is given by $||(x,y)||=\max(||x||,||y||)$. 

%More generally, when $A$ and $B$ are filtered Banach algebras, we will identify $M_n(\widetilde{A\oplus B})$ isometrically with $M_n(\tilde{A})\oplus_\infty M_n(\tilde{B})$.  \todo[inline]{Then the sequence of norms on $M_n(\widetilde{A\oplus B})$ has properties (i) - (v). But not p-sum - e.g. (i) fails.}

\begin{rem}
Matrix algebras of the form $M_k(M_n(A)^+)$ will be viewed as subalgebras of $M_{kn}(A^+)$ by identifying $(a,z)\in M_n(A)^+$ with $(a,z I_n)\in M_n(A^+)$ for $a\in M_n(A)$ and $z\in\mathbb{C}$.
\end{rem}

If $A$ is a Banach algebra, and $J$ is a closed ideal in $A$, then $J$ is a closed ideal in $\tilde{A}$. We will view $\widetilde{A/J}$ as $\tilde{A}/J$, and we will identify $M_n(\tilde{A}/J)$ isometrically with $M_n(\tilde{A})/M_n(J)$ equipped with the quotient norm.

%\todo[inline]{canonical embedding may not be isometric; may only have contraction; other ppties look ok, e.g. for $||a\oplus b||$, let $a',b'\in A$ such that $||a'||<||a||+\varepsilon$ and $||b'||<||b||+\varepsilon$. Then $||a\oplus b||\leq||a'\oplus b'||\leq\max(||a||,||b||)+\varepsilon$}

Given a Banach algebra $A$, recall that the cone of $A$ is defined as $CA=C_0((0,1],A)$, and the suspension of $A$ is defined as $SA=C_0((0,1),A)$. These are closed subalgebras of $C([0,1],A)$ with the supremum norm. We will sometimes denote $C([0,1],A)$ by $A[0,1]$.

\begin{lem}
If $A$ is a filtered Banach algebra, then $C([0,1],A)$ is a filtered Banach algebra with filtration $(C([0,1],A_r))_{r> 0}$. This induces filtrations on $CA$ and $SA$. 
\end{lem}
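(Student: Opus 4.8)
The plan is to verify the filtration axioms directly for $B:=C([0,1],A)$ and then deduce the statements for $CA$ and $SA$ by restriction together with a boundary-correction trick. Recall that $B$, with pointwise operations and the supremum norm, is a Banach algebra, and that each $C([0,1],A_r)$ is a linear subspace of $B$ which is moreover closed: if $f_n\to f$ uniformly with $f_n(t)\in A_r$ for all $t$, then $f(t)=\lim_n f_n(t)\in A_r$ because $A_r$ is closed in $A$. The nestedness axiom $C([0,1],A_r)\subseteq C([0,1],A_{r'})$ for $r\le r'$ is immediate from $A_r\subseteq A_{r'}$; for $f\in C([0,1],A_r)$ and $g\in C([0,1],A_{r'})$ the product $fg$ is continuous with $(fg)(t)=f(t)g(t)\in A_rA_{r'}\subseteq A_{r+r'}$, so $fg\in C([0,1],A_{r+r'})$; and if $A$ is unital, the constant function $1_A$ lies in $C([0,1],A_r)$ for every $r$ since $1_A\in A_r$.

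The one substantive point is density of $\bigcup_{r>0}C([0,1],A_r)$ in $B$. Given $f\in B$ and $\varepsilon>0$, I would use uniform continuity of $f$ on the compact interval $[0,1]$ to choose a partition $0=s_0<s_1<\cdots<s_n=1$ with $\|f(s)-f(t)\|<\varepsilon/3$ whenever $s,t$ lie in a common subinterval $[s_{i-1},s_i]$. Since $\bigcup_{r>0}A_r$ is dense in $A$, pick $a_i\in\bigcup_{r>0}A_r$ with $\|a_i-f(s_i)\|<\varepsilon/3$, and choose $r>0$ large enough that $a_0,\dots,a_n\in A_r$ (possible since there are finitely many). Let $g\in B$ be the function that equals $a_i$ at $s_i$ and is affine on each $[s_{i-1},s_i]$; because $A_r$ is a linear subspace, $g(t)\in A_r$ for every $t$, so $g\in C([0,1],A_r)$. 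For $t\in[s_{i-1},s_i]$, $g(t)$ is a convex combination of $a_{i-1}$ and $a_i$, and $\|a_{i-1}-f(t)\|\le\|a_{i-1}-f(s_{i-1})\|+\|f(s_{i-1})-f(t)\|<2\varepsilon/3$, and similarly $\|a_i-f(t)\|<2\varepsilon/3$; hence $\|g(t)-f(t)\|<2\varepsilon/3<\varepsilon$, so $\|g-f\|_\infty<\varepsilon$.

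Finally, $CA=C_0((0,1],A)$ and $SA=C_0((0,1),A)$ are closed subalgebras of $B$, so the induced families $(C_0((0,1],A_r))_{r>0}$ and $(C_0((0,1),A_r))_{r>0}$ inherit the nestedness and product axioms and consist of closed subspaces; it remains to check density. Given $f\in SA$ and $\varepsilon>0$, take $g\in C([0,1],A_r)$ with $\|f-g\|_\infty<\varepsilon/2$ from the previous paragraph; then $\|g(0)\|,\|g(1)\|<\varepsilon/2$, and $h(t):=g(t)-(1-t)g(0)-tg(1)$ is a linear combination of elements of $A_r$, so $h\in C([0,1],A_r)$, it vanishes at $0$ and $1$, and $\|h(t)-g(t)\|\le(1-t)\|g(0)\|+t\|g(1)\|<\varepsilon/2$, so $h\in C_0((0,1),A_r)$ with $\|h-f\|_\infty<\varepsilon$. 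The same argument with correction term $(1-t)g(0)$ handles $CA$. I expect the main obstacle to be the density step in the second paragraph: pointwise density of $\bigcup_{r>0}A_r$ is not by itself sufficient, and one genuinely needs compactness and uniform continuity, together with the convexity of the subspaces $A_r$, to produce a single uniform approximant of bounded propagation.
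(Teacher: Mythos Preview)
Your proof is correct and follows essentially the same approach as the paper: both use a fine partition of $[0,1]$ together with piecewise linear interpolation through approximants $a_i\in A_r$ to prove density. The only cosmetic difference is in handling $CA$ and $SA$: the paper simply takes $a_0=0$ (resp.\ $a_k=0$) when $f(0)=0$ (resp.\ $f(1)=0$), whereas you subtract the linear correction $(1-t)g(0)+tg(1)$ after the fact; both are equally valid.
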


\begin{proof}
It is clear that $(C([0,1],A_r))_{r>0}$ satisfies the first two conditions in the definition so we just need to prove the density condition. Given $\varepsilon>0$ and $f\in C([0,1],A)$, let $0=t_0<t_1<\cdots<t_k=1$ be such that whenever $s,t\in[t_{i-1},t_i]$, we have $||f(s)-f(t)||<\frac{\varepsilon}{6}$. For each $i$, let $a_i\in A_{r_i}$ be such that $||a_i-f(t_i)||<\frac{\varepsilon}{6}$. Let $r=\max_{0\leq i\leq k}r_i$. Define $g(s)=\frac{s-t_{i-1}}{t_i-t_{i-1}}a_i+\frac{t_i-s}{t_i-t_{i-1}}a_{i-1}$ for $s\in[t_{i-1},t_i]$. Then $g(s)\in A_r$ for all $s\in[0,1]$ so $g\in C([0,1],A_r)$. Moreover, for each $i$ and for all $s\in[t_{i-1},t_i]$, we have 
\begin{align*}||g(s)-f(s)|| &\leq ||g(s)-a_i||+||a_i-f(t_i)||+||f(t_i)-f(s)|| \\ &\leq ||a_i-a_{i-1}||+||a_i-f(t_i)||+||f(t_i)-f(s)|| \\ &\leq ||a_i-f(t_i)||+||f(t_i)-f(t_{i-1})||+||f(t_{i-1})-a_{i-1}|| \\ &\quad\quad +||a_i-f(t_i)||+||f(t_i)-f(s)|| \\ &< \varepsilon.
\end{align*}
If $f\in C([0,1],A)$ and $f(0)=0$, then we may take $a_0=0$ so that $g(0)=0$. Likewise, if $f(1)=0$, then we may take $a_k=0$ so that $g(1)=0$. It follows that $CA$ and $SA$ are filtered Banach algebras. 
\end{proof}

% More generally, if $A$ is a filtered Banach algebra and $X$ is a locally compact space, then $C_0(X,A)$ is a filtered Banach algebra with filtration $(C_0(X,A_r))_{r\geq 0}$. 

%If $A$ is a Banach algebra, then there are many norms that make the matrix algebra $M_n(A)$ a Banach algebra. For the purpose of quantitative $K$-theory, when $A$ is unital, we want the norm on $M_n(A)$ to have the following properties:
%\begin{itemize}
%\item If $a\in M_n(A)$ has norm bounded by $N$ for some $N\geq 1$, then $\begin{pmatrix} a & 0 \\ 0 & 1 \end{pmatrix}\in M_{n+1}(A)$ should also have norm bounded by $N$. 
%\item The embeddings $M_n(A)\ni a\mapsto\diag(a,0)\in M_{n+1}(A)$ are isometries. In this case, $M_\infty(A)=\bigcup_{n\in\mathbb{N}}M_n(A)$ is a local Banach algebra (in the sense of \cite{Bl}) with the induced norm. 
%\end{itemize}
%Thus, when $A$ is an arbitrary Banach algebra, we will only consider the operator norms on $M_n(A)$ from its action on $\ell_p^n(\tilde{A})$ for $p\in[1,\infty)$. When $A$ is a $C^*$-algebra, we will use the unique $C^*$-norm on $M_n(A)$. For $L_p$  operator algebras $A\subset B(L_p(X,\mu))$, we will use the norm on $M_n(A)$ obtained by regarding it as a subalgebra of $M_n(B(L_p(X,\mu)))$.
%
%When $A$ is a filtered Banach algebra with filtration $(A_r)_{r> 0}$, $M_n(A)$ will be a filtered Banach algebra with filtration $(M_n(A_r))_{r> 0}$.

We will view $M_n(\widetilde{A[0,1]})$ as a subalgebra of $M_n(\tilde{A})[0,1]$ for each $n$, and similarly for $M_n(\widetilde{SA})$ and $M_n(\widetilde{CA})$. 

%\todo[inline]{satisfies all ppties}
%
%
%\todo[inline]{other properties (e.g. quotient hom $\tilde{A}\rightarrow\mathbb{C}$ is completely contractive and/or inclusion $\mathbb{C}\rightarrow \tilde{A}$ is completely contractive)? specify choice of norm (say quotient norm?) on $M_n(\mathbb{C})$ in unitization extension}

\begin{defn}\cite{Pi90}
Let $X$ and $Y$ be $p$-operator spaces, and let $\phi:X\rightarrow Y$ be a bounded linear map. For each $n\in\mathbb{N}$, let $\phi_n:M_n(X)\rightarrow M_n(Y)$ be the induced map given by $\phi_n([x_{ij}])=[\phi(x_{ij})]$. We say that $\phi$ is $p$-completely bounded if $\sup_n||\phi_n||<\infty$. In this case, we let $||\phi||_{pcb}=\sup_n||\phi_n||$.

We say that $\phi$ is $p$-completely contractive if $||\phi||_{pcb}\leq 1$, and $\phi$ is $p$-completely isometric if $||\phi||_{pcb}=1$.
\end{defn}

%\begin{rem}
%When the matrix algebras are equipped with $p$-operator norms, we get the notion of $p$-complete boundedness introduced in \cite{Pi90} with the $p=2$ case corresponding to the usual notion of complete boundedness.
%\end{rem}

The following lemma was proved for $p\in(1,\infty)$ but the proof remains valid when $p=1$.

\begin{lem}\cite[Lemma 4.2]{Daws10}
Let $X$ be a $p$-operator space, and let $\mu$ be a bounded linear functional on $X$. Then $\mu$ is $p$-completely bounded as a map to $\mathbb{C}$, and $||\mu||_{pcb}=||\mu||$.
\end{lem}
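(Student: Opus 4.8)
The plan is to prove the two inequalities $\|\mu\|_{pcb}\geq\|\mu\|$ and $\|\mu\|_{pcb}\leq\|\mu\|$ separately. The first is immediate: identifying $M_1(X)=X$ and $M_1(\mathbb{C})=\mathbb{C}$ with their given norms, the map $\mu_1$ is just $\mu$, so $\|\mu\|_{pcb}=\sup_n\|\mu_n\|\geq\|\mu_1\|=\|\mu\|$. The substance is the reverse inequality $\|\mu_n\|\leq\|\mu\|$ for every $n$.

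Fix $n\in\mathbb{N}$ and $x=[x_{ij}]\in M_n(X)$; the goal is to bound $\|\mu_n(x)\|_{B(\ell_p^n)}=\|[\mu(x_{ij})]\|_{B(\ell_p^n)}$ by $\|\mu\|\,\|x\|_n$. Let $q$ denote the conjugate exponent of $p$ (with $q=\infty$ when $p=1$). By the standard duality description of the operator norm on $\ell_p^n$,
\[
\|[\mu(x_{ij})]\|_{B(\ell_p^n)}=\sup\Bigl\{\Bigl|\sum_{i,j}\eta_i\,\mu(x_{ij})\,\xi_j\Bigr| : \xi\in\ell_p^n,\ \|\xi\|_p\leq 1,\ \eta\in\ell_q^n,\ \|\eta\|_q\leq 1\Bigr\}.
\]
Given such $\xi$ and $\eta$, regard $\beta=(\xi_j)_j$ as an element of $M_{n,1}(\mathbb{C})$ (a column) and $\alpha=(\eta_i)_i$ as an element of $M_{1,n}(\mathbb{C})$ (a row). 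Then $\alpha x\beta\in M_1(X)=X$ equals $\sum_{i,j}\eta_i x_{ij}\xi_j$, so by linearity of $\mu$ we get $\mu(\alpha x\beta)=\sum_{i,j}\eta_i\,\mu(x_{ij})\,\xi_j$.

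Now apply axiom $\mathcal{M}_p$ to obtain $\|\alpha x\beta\|_1\leq\|\alpha\|_{B(\ell_p^n,\ell_p^1)}\,\|x\|_n\,\|\beta\|_{B(\ell_p^1,\ell_p^n)}$. A direct computation identifies these coefficient norms: the map $\alpha\colon\ell_p^n\to\ell_p^1\cong\mathbb{C}$ has norm $\|\eta\|_q\leq 1$ by Hölder's inequality, and the map $\beta\colon\ell_p^1\cong\mathbb{C}\to\ell_p^n$ has norm $\|\xi\|_p\leq 1$. Hence $\|\alpha x\beta\|_1\leq\|x\|_n$, and therefore $|\mu(\alpha x\beta)|\leq\|\mu\|\,\|\alpha x\beta\|_1\leq\|\mu\|\,\|x\|_n$. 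Taking the supremum over all admissible $\xi$ and $\eta$ gives $\|\mu_n(x)\|_{B(\ell_p^n)}\leq\|\mu\|\,\|x\|_n$, so $\|\mu_n\|\leq\|\mu\|$; since $n$ was arbitrary, $\|\mu\|_{pcb}\leq\|\mu\|$.

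I do not expect a genuine obstacle here. The only points needing care are stating the $\ell_p^n$ duality formula with the convention $q=\infty$ when $p=1$ (the sole place the $p=1$ case calls for a remark) and the bookkeeping that lets one view a row, respectively a column, of scalars as a bounded operator between the relevant $\ell_p$-spaces so that $\mathcal{M}_p$ applies; the rest is routine.
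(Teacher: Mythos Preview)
Your argument is correct and is essentially the standard proof of this fact; the paper does not supply its own proof but simply cites \cite[Lemma~4.2]{Daws10}, remarking only that the argument there (stated for $p\in(1,\infty)$) remains valid when $p=1$. Your handling of the $p=1$ case via the convention $q=\infty$ and H\"older is exactly the small check the paper alludes to.
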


Since all characters on Banach algebras are contractive, we get the following

\begin{cor}
If $A$ is a non-unital $SQ_p$ algebra, then the canonical homomorphism $\pi:A^+\rightarrow\mathbb{C}$ is $p$-completely contractive.
\end{cor}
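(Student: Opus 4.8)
The plan is to combine the preceding lemma of Daws with the elementary fact that characters on Banach algebras are norm-decreasing. First I would observe that $\pi$ is a \emph{character} on the unital Banach algebra $A^+$: writing elements of $A^+$ as $(a,z)$ with $a\in A$ and $z\in\mathbb{C}$, the map $\pi(a,z)=z$ is linear, nonzero, and multiplicative, since $\pi\big((a,z)(b,w)\big)=\pi(ab+zb+wa,zw)=zw=\pi(a,z)\pi(b,w)$. Moreover, because $A\subset B(E)$ for some $E\in SQ_p$ and we identify $A^+$ with $A+\mathbb{C}I_E\subset B(E)$, the algebra $A^+$ is itself an $SQ_p$ algebra, and in particular carries the $p$-operator space structure inherited from $B(E)$; its unit is $I_E$, which has norm $1$.

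Next I would verify that $\|\pi\|\le 1$. This is the standard spectral argument: for any $b\in A^+$, the element $b-\pi(b)1_{A^+}$ lies in $\ker\pi$, which is a proper (indeed maximal) ideal of $A^+$, so $b-\pi(b)1_{A^+}$ is not invertible, whence $\pi(b)$ belongs to the spectrum of $b$. Therefore $|\pi(b)|$ is bounded by the spectral radius of $b$, which is bounded by $\|b\|$. Since this holds for every $b\in A^+$, the functional $\pi$ is bounded with $\|\pi\|\le 1$.

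Finally I would invoke the lemma just stated, namely \cite[Lemma 4.2]{Daws10}, with $X=A^+$ and $\mu=\pi$: as $\pi$ is a bounded linear functional on the $p$-operator space $A^+$, we obtain $\|\pi\|_{pcb}=\|\pi\|\le 1$, which is precisely the assertion that $\pi$ is $p$-completely contractive.

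There is essentially no serious obstacle here; the only point requiring a modicum of care is to confirm that $A^+$ really satisfies the hypothesis of Daws' lemma (i.e., is a $p$-operator space), which is immediate once one views it inside $B(E)$, together with recalling the textbook spectral-radius bound on characters. In effect the corollary is just the specialization of the preceding lemma to the canonical unitization character.
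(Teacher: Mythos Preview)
Your proposal is correct and follows essentially the same approach as the paper: the paper's proof is simply the one-line observation that characters on Banach algebras are contractive, combined with the preceding lemma of Daws to upgrade $\|\pi\|\le 1$ to $\|\pi\|_{pcb}\le 1$. Your write-up just unpacks these two ingredients in more detail.
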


\begin{defn}
Let $A$ and $B$ be filtered $SQ_p$ algebras with filtrations $(A_r)_{r> 0}$ and $(B_r)_{r> 0}$ respectively. A filtered homomorphism $\phi:A\rightarrow B$ is an algebra homomorphism such that
\begin{itemize}
%\item the induced homomorphism $\phi^+:A^+\rightarrow  B^+$ is completely bounded;
\item $\phi$ is $p$-completely bounded;
\item $\phi(A_r)\subset B_r$ for all $r> 0$.
\end{itemize}
\end{defn}

Recall that any bounded homomorphism $\phi:A\rightarrow B$ between Banach algebras induces a bounded homomorphism $\phi^+:A^+\rightarrow B^+$ given by $\phi^+(a,z)=(\phi(a),z)$. If $\phi$ is a filtered homomorphism, then so is $\phi^+$.
We will see later that filtered homomorphisms induce homomorphisms between quantitative $K$-theory groups.

%Note that a filtered homomorphism $\phi:A\rightarrow B$ induces a unital filtered homomorphism $A^+\rightarrow B^+$ given by $(a,\lambda)\mapsto(\phi(a),\lambda)$, and also a filtered homomorphism $M_n(A)\rightarrow M_n(B)$ given by $(a_{ij})\mapsto(\phi(a_{ij}))$.

\subsection{Quasi-idempotent elements and quasi-invertible elements}

Elements of $K$-theory groups of Banach algebras are equivalence classes of idempotents or invertibles. For quantitative $K$-theory, we will consider quasi-idempotents and quasi-invertibles. We now define these elements, and give a list of simple results based on norm estimates that we will often use to conclude that two such elements represent the same class in some quantitative $K$-theory group.

\begin{defn}
Let $A$ be a filtered Banach algebra. For $0< \varepsilon<\frac{1}{20}$, $r> 0$, and $N\geq 1$, 
\begin{itemize}
\item an element $e\in A$ is called an $(\varepsilon,r,N)$-idempotent if $||e^2-e||<\varepsilon$, $e\in A_r$, and $\max(||e||,||1_{\tilde{A}}-e||)\leq N$;
\item if $A$ is unital, an element $u\in A$ is called an $(\varepsilon,r,N)$-invertible if $u\in A_r$, $||u||\leq N$, and there exists $v\in A_r$ with $||v||\leq N$ such that $\max(||uv-1||,||vu-1||)<\varepsilon$.
\end{itemize}
We call $v$ an $(\varepsilon,r,N)$-inverse for $u$, and we call $(u,v)$ an $(\varepsilon,r,N)$-inverse pair. 

We will use the terms quasi-idempotent, quasi-invertible, quasi-inverse, and quasi-inverse pair when the precise parameters are not crucial.
\end{defn}

Observe that if $A$ is unital and $a\in A_r$ satisfies $||a||\leq N$ and $||a-1||<\varepsilon$, then $(a,1)$ is an $(\varepsilon,r,N)$-inverse pair. Also, if $u\in A$ is an $(\varepsilon,r,N)$-invertible, then $u$ is invertible and $||u^{-1}||<\frac{N}{1-\varepsilon}$. Indeed, if $v$ is an $(\varepsilon,r,N)$-inverse for $u$, then $uv$ and $vu$ are invertible so $u$ is invertible and \[||u^{-1}||=||v(uv)^{-1}||<\frac{N}{1-\varepsilon}.\]

\begin{lem}\label{normestlem3}
Let $A$ be a filtered Banach algebra.
\begin{enumerate}
\item Let $e\in A$ be an idempotent, and let $N=||e||+1$. If $a\in A_r$ and $||a-e||<\frac{\varepsilon}{3N}$, then $a$ is an $(\varepsilon,r,N)$-idempotent in $A$.
\item Suppose that $A$ is unital. Let $u_0\in A$ be invertible, and let $N=||u_0||+||u_0^{-1}||+1$. If $u,v\in A_r$ and $\max(||u-u_0||,||v-u_0^{-1}||)<\frac{\varepsilon}{N}$, then $(u,v)$ is an $(\varepsilon,r,N)$-inverse pair in $A$.
\end{enumerate}
\end{lem}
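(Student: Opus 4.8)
The plan is to verify each of the two defining inequalities for a quasi-idempotent (resp. quasi-inverse pair) directly from the hypothesis, using only the triangle inequality and submultiplicativity of the norm, together with the observation that the propagation condition is inherited for free since $a$ (resp. $u,v$) are assumed to lie in $A_r$.

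For part (i), write $N=\|e\|+1$ and suppose $a\in A_r$ with $\|a-e\|<\frac{\varepsilon}{3N}$. First I would bound $\|a\|\le\|e\|+\|a-e\|<\|e\|+\frac{\varepsilon}{3N}\le\|e\|+1=N$, and similarly $\|1_{\tilde A}-a\|\le\|1_{\tilde A}-e\|+\|a-e\|$; here one uses $\|1_{\tilde A}-e\|\le\|1_{\tilde A}\|+\|e\|$ — but in fact the cleaner route is to note $\|1_{\tilde A}-e\|\le 1+\|e\|$ is too weak, so instead observe $1_{\tilde A}-e$ is also an idempotent of norm at most... actually the direct estimate $\|1_{\tilde A}-a\|\le\|1_{\tilde A}-e\|+\frac{\varepsilon}{3N}$ combined with $\|1_{\tilde A}-e\|\le\|1_{\tilde A}\|+\|e\|=1+\|e\|$ gives a bound that may exceed $N$; the fix is that we only need $\max(\|a\|,\|1_{\tilde A}-a\|)\le N$ where $N$ should be taken as $\|e\|+1$ and one checks $\|1_{\tilde A}-e\|\le\|e\|+1$ fails in general, so I would instead define $N$ via $\|1_{\tilde A}-e\|$ as well or simply record that the statement's $N=\|e\|+1$ forces $\|1_{\tilde A}-e\|\le\|e\|+1$ — this holds because $1_{\tilde A}-e$ and $e$ are complementary idempotents and $\|1-e\|=\|1-e\|$; the honest observation is that for idempotents $\|1-e\|\le 1+\|e\|$ always, and the $\frac{\varepsilon}{3N}<\frac{1}{60}$ slack absorbs the difference only if we are slightly generous. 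I would present this as: $\|1_{\tilde A}-a\|\le\|1_{\tilde A}-e\|+\|e-a\|$, and since $e$ idempotent gives $\|1_{\tilde A}-e\|\le 1+\|e\|=N$... if that inequality is what the paper intends then the estimate $\le N+\frac{\varepsilon}{3N}$ is what we get, which technically is slightly bigger than $N$; I would therefore expect the intended reading is simply to bound things by $N$ up to the harmless slack, or that the paper's earlier convention $\|1_{\tilde A}\|=1$ plus $\|1-e\|\le\|e\|+1$ is used loosely. Setting that subtlety aside, for the quadratic term I compute
\[
\|a^2-a\|=\|a^2-e^2+e-a\|\le\|a^2-e^2\|+\|e-a\|\le\|a\|\,\|a-e\|+\|a-e\|\,\|e\|+\|a-e\|,
\]
and bounding $\|a\|\le N$, $\|e\|\le N$ and $\|a-e\|<\frac{\varepsilon}{3N}$ yields $\|a^2-a\|<\frac{\varepsilon}{3N}(N+N+1)<\varepsilon$, using $N\ge 1$ so that $2N+1\le 3N$.

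For part (ii), with $N=\|u_0\|+\|u_0^{-1}\|+1$ and $u,v\in A_r$ satisfying $\max(\|u-u_0\|,\|v-u_0^{-1}\|)<\frac{\varepsilon}{N}$, I first bound $\|u\|\le\|u_0\|+\frac{\varepsilon}{N}\le\|u_0\|+1\le N$ and likewise $\|v\|\le\|u_0^{-1}\|+1\le N$. Then
\[
\|uv-1\|=\|uv-u_0u_0^{-1}\|\le\|u-u_0\|\,\|v\|+\|u_0\|\,\|v-u_0^{-1}\|<\frac{\varepsilon}{N}\,N+\|u_0\|\,\frac{\varepsilon}{N}=\varepsilon\Bigl(1+\frac{\|u_0\|}{N}\Bigr),
\]
and since $\|u_0\|<N$ this is at most... again slightly above $\varepsilon$ unless one is more careful: the sharp split is $\|uv-1\|\le\|u-u_0\|\|v-u_0^{-1}\|+\|u-u_0\|\|u_0^{-1}\|+\|u_0\|\|v-u_0^{-1}\|$, whose three terms are $<\frac{\varepsilon^2}{N^2}$, $<\frac{\varepsilon\|u_0^{-1}\|}{N}$, $<\frac{\varepsilon\|u_0\|}{N}$, summing to $<\frac{\varepsilon}{N}(\frac{\varepsilon}{N}+\|u_0^{-1}\|+\|u_0\|)<\frac{\varepsilon}{N}(1+\|u_0^{-1}\|+\|u_0\|)=\varepsilon$ since $\frac{\varepsilon}{N}<1$. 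The symmetric computation gives $\|vu-1\|<\varepsilon$. The propagation conditions $a\in A_r$ and $u,v\in A_r$ are hypotheses, so nothing is needed there. The only genuine obstacle — really a bookkeeping point rather than a mathematical one — is choosing the right three-term (rather than two-term) decomposition of $uv-1$ and of $1-e+a^2-a$ so that the product of two small quantities appears and the crude factor $\frac{\varepsilon}{N}<1$ can be used to close the estimate at exactly $\varepsilon$; I would carry out that decomposition carefully and leave the remaining arithmetic to the reader.
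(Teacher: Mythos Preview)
Your core computations are correct and match the paper's approach: the paper proves (i) via exactly your decomposition $a^2-a = a(a-e)+(a-e)e+(e-a)$, and (ii) via the two-term split $uv-1=(u-u_0)v+u_0(v-u_0^{-1})$. Two remarks are worth making.

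First, in part (ii) you abandoned the two-term decomposition too early. It does close at $\varepsilon$: instead of bounding $\|v\|\le N$ crudely, use the sharper $\|v\|<\|u_0^{-1}\|+\tfrac{\varepsilon}{N}$, so that
\[
\|uv-1\|\le \|u-u_0\|\,\|v\|+\|u_0\|\,\|v-u_0^{-1}\|<\tfrac{\varepsilon}{N}\bigl(\|u_0^{-1}\|+\tfrac{\varepsilon}{N}+\|u_0\|\bigr)<\tfrac{\varepsilon}{N}\cdot N=\varepsilon,
\]
since $\tfrac{\varepsilon}{N}<1$ and $N=\|u_0\|+\|u_0^{-1}\|+1$. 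This is exactly the paper's computation; your three-term split reaches the same inequality $\tfrac{\varepsilon}{N}+\|u_0\|+\|u_0^{-1}\|<N$ by a different route, so both are fine, but the two-term version is shorter.

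Second, your worry about $\|1_{\tilde A}-a\|\le N$ is legitimate and not a defect of your argument: the paper's proof simply does not verify this condition. From $\|1-e\|\le 1+\|e\|=N$ one only gets $\|1-a\|\le N+\tfrac{\varepsilon}{3N}$, which can exceed $N$. So strictly speaking the lemma as stated is slightly off; in the paper's later usage the parameter $N$ is always relaxed anyway, so this causes no trouble, but you are right that the statement would be cleaner with (say) $N=\max(\|e\|,\|1-e\|)+1$. Don't let this derail your write-up --- note it once and move on, as the paper does implicitly. Your presentation would benefit from trimming the false starts and stating the sharp inequalities directly.
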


\begin{proof}\leavevmode
\begin{enumerate}
\item $||a^2-a||\leq||a(a-e)||+||(a-e)e||+||e-a||<(2N+1)\frac{\varepsilon}{3N}\leq\varepsilon$.
\item $||uv-1||\leq||(u-u_0)v||+||u_0(v-u_0^{-1})||<\varepsilon$; similarly $||vu-1||<\varepsilon$.
\end{enumerate}
\end{proof}

\begin{defn}
Let $A$ be a filtered Banach algebra.
\begin{itemize}
\item Two $(\varepsilon,r,N)$-idempotents $e_0$ and $e_1$ in $A$ are $(\varepsilon',r',N')$-homotopic for some $\varepsilon'\geq\varepsilon,r'\geq r$, and $N'\geq N$ if there exists a norm-continuous path $(e_t)_{t\in[0,1]}$ of $(\varepsilon',r',N')$-idempotents in $A$ from $e_0$ to $e_1$. 
Equivalently, there exists an $(\varepsilon',r',N')$-idempotent $e$ in $A[0,1]$ such that $e(0)=e_0$ and $e(1)=e_1$. 
\item If $A$ is unital, two $(\varepsilon,r,N)$-invertibles $u_0$ and $u_1$ in $A$ are $(\varepsilon',r',N')$-homotopic for some $\varepsilon'\geq\varepsilon,r'\geq r$, and $N'\geq N$ if there exists an $(\varepsilon',r',N')$-invertible $u$ in $A[0,1]$ with $u(0)=u_0$ and $u(1)=u_1$. In this case, we get a norm-continuous path $(u_t)_{t\in[0,1]}$ of $(\varepsilon',r',N')$-invertibles in $A$ from $u_0$ to $u_1$ by setting $u_t=u(t)$.
\end{itemize}
We also call the paths $(e_t)$ and $(u_t)$ homotopies of $(\varepsilon',r',N')$-idempotents and $(\varepsilon',r',N')$-invertibles respectively, and we write $e_0\stackrel{\varepsilon',r',N'}{\sim}e_1$ and $u_0\stackrel{\varepsilon',r',N'}{\sim}u_1$.
\end{defn}

\begin{rem}
If $u_0$ and $u_1$ are homotopic as $(\varepsilon,r,N)$-invertibles, then the definition yields \emph{some} $(\varepsilon,r,N)$-inverse of $u_0$ that is $(\varepsilon,r,N)$-homotopic to \emph{some} $(\varepsilon,r,N)$-inverse of $u_1$. If $v_0$ and $v_1$ are arbitrarily chosen $(\varepsilon,r,N)$-inverses of $u_0$ and $u_1$ respectively, then $v_0$ and $v_1$ are only $(2\varepsilon,r,N)$-homotopic in general. Indeed, let $(u_t)$ be a homotopy of $(\varepsilon,r,N)$-invertibles between $u_0$ and $u_1$, and let $v_t$ be an $(\varepsilon,r,N)$-inverse for $u_t$ for each $t\in[0,1]$. Let $0=t_0<t_1<\cdots<t_k=1$ be such that $||u_t-u_s||<\frac{\varepsilon}{N}$ for all $s,t\in[t_{i-1},t_i]$ and $i=1,\ldots,k$. Let \[w_t=\frac{t_i-t}{t_i-t_{i-1}}v_{t_{i-1}}+\frac{t-t_{i-1}}{t_i-t_{i-1}}v_{t_i}\] if $t\in[t_{i-1},t_i]$. Then $\max(||u_tw_t-1||,||w_tu_t-1||)<2\varepsilon$ for all $t\in[0,1]$ so $(w_t)$ is a homotopy of $(2\varepsilon,r,N)$-invertibles between $v_0$ and $v_1$. Moreover, setting $w(t)=w_t$ for $t\in[0,1]$ defines an element $w\in A[0,1]$ such that $w(0)=v_0$, $w(1)=v_1$, and $w$ is a $(2\varepsilon,r,N)$-inverse of $u=(u_t)_{t\in[0,1]}$ in $A[0,1]$. 
\end{rem}

\begin{lem}\label{normestlem1}
Let $A$ be a filtered Banach algebra.
\begin{enumerate}
\item If $e$ is an $(\varepsilon,r,N)$-idempotent in $A$, and $f\in A_r$ satisfies $||f||\leq N$ and $||e-f||<\frac{\varepsilon-||e^2-e||}{2N+1}$, then $f$ is a quasi-idempotent that is $(\varepsilon,r,N)$-homotopic to $e$. More generally, if $||e-f||<\delta$ for some $\delta>0$, then $e$ and $f$ are $((2N+1)\delta+\varepsilon,r,N)$-homotopic.
\item Suppose $A$ is unital. If $(u,v)$ is an $(\varepsilon,r,N)$-inverse pair in $A$, and $a\in A_r$ is such that $||a||\leq N$ and $||u-a||<\frac{\varepsilon-\max(||uv-1||,||vu-1||)}{N}$, then $a$ and $u$ are $(\varepsilon,r,N)$-homotopic, with $v$ being an $(\varepsilon,r,N)$-inverse of $a$. More generally, if $||u-a||<\delta$ for some $\delta>0$, then $a$ and $u$ are $(N\delta+\varepsilon,r,N)$-homotopic, with $v$ being an $(N\delta+\varepsilon,r,N)$-inverse of $a$.
\end{enumerate}
\end{lem}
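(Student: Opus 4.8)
The plan is to use the straight-line homotopy in both parts and to check, by a short sequence of elementary norm estimates, that it never leaves the stated family of quasi-idempotents (resp. quasi-inverse pairs). There is no conceptual obstacle here; the only thing that requires care is organizing the algebra so that the constants come out to be exactly $2N+1$ and $N$ as claimed. It suffices to produce the relevant norm-continuous paths: once we have a path $(e_t)$ of $(\varepsilon',r,N)$-idempotents from $e$ to $f$, setting $e(t)=e_t$ gives the $(\varepsilon',r,N)$-idempotent in $A[0,1]$ demanded by the alternative formulation of homotopy (and symmetrically for inverse pairs, with $v(t)\equiv v$), so we never have to argue about $A[0,1]$ directly.

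For part (1), put $d=f-e$ and $h_t=e+td=(1-t)e+tf$ for $t\in[0,1]$. This is an affine, hence norm-continuous, path with $h_0=e$, $h_1=f$, and since $A_r$ is a linear subspace we have $h_t\in A_r$, so $t\mapsto h_t$ lands in $C([0,1],A_r)$. Convexity of the norm together with $\|e\|,\|1_{\tilde{A}}-e\|\le N$ and $\|f\|\le N$ gives $\|h_t\|\le N$ and (recording $\|1_{\tilde{A}}-f\|\le N$ along the way) $\|1_{\tilde{A}}-h_t\|\le N$. The one estimate to organize carefully is $\|h_t^2-h_t\|$: writing $h_t^2-h_t=(h_t-e)h_t+e(h_t-1_{\tilde{A}})+(e-h_t)$ and using $\|h_t-e\|=t\|d\|$, $e(h_t-1_{\tilde{A}})=(e^2-e)+t\,ed$, and $\|e\|,\|h_t\|\le N$, one obtains $\|h_t^2-h_t\|\le\|e^2-e\|+(2N+1)t\|d\|\le\|e^2-e\|+(2N+1)\|e-f\|$, uniformly in $t$. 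Plugging in $\|e-f\|<\frac{\varepsilon-\|e^2-e\|}{2N+1}$ yields $\|h_t^2-h_t\|<\varepsilon$, so $(h_t)$ is a homotopy of $(\varepsilon,r,N)$-idempotents and in particular $f$ is one; plugging in $\|e-f\|<\delta$ and using $\|e^2-e\|<\varepsilon$ yields $\|h_t^2-h_t\|<(2N+1)\delta+\varepsilon$, which is the general statement, so $e\stackrel{(2N+1)\delta+\varepsilon,r,N}{\sim}f$.

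For part (2), take the same straight-line path $u_t=(1-t)u+ta=u+t(a-u)$ on the first coordinate and the constant path $v$ on the second. As before $u_t$ is a norm-continuous path in $A_r$ from $u$ to $a$ with $\|u_t\|\le N$, while $v\in A_r$ and $\|v\|\le N$. Expanding $u_tv-1=(uv-1)+t(a-u)v$ and $vu_t-1=(vu-1)+tv(a-u)$ and using $\|v\|\le N$ gives $\max(\|u_tv-1\|,\|vu_t-1\|)\le\max(\|uv-1\|,\|vu-1\|)+N\|u-a\|$ for every $t$. Under the first hypothesis this is $<\varepsilon$, so $(u_t,v)$ is a path of $(\varepsilon,r,N)$-inverse pairs, showing $u\stackrel{\varepsilon,r,N}{\sim}a$ with $v$ an $(\varepsilon,r,N)$-inverse of $a$; under $\|u-a\|<\delta$, using $\max(\|uv-1\|,\|vu-1\|)<\varepsilon$, it is $<N\delta+\varepsilon$, giving the general statement. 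Thus the only real point of attention throughout is choosing the rearrangement of $h_t^2-h_t$ (resp. of $u_tv-1$ and $vu_t-1$) that produces the optimal constant; the rest is forced.
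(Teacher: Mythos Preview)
Your proposal is correct and takes essentially the same approach as the paper: the straight-line homotopy, together with an elementary rearrangement yielding the constants $2N+1$ and $N$. The paper's decomposition in part (i) is the algebraically equivalent $f^2-f=f(f-e)+(f-e)e+(e^2-e)+(e-f)$, applied first at $t=1$ and then transferred to $h_t$ via $\|h_t-e\|\le\|e-f\|$, while part (ii) is identical to yours.
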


\begin{proof}\leavevmode
\begin{enumerate}
	\item The following estimate shows that $f$ is a quasi-idempotent. \begin{align*} ||f^2-f||&\leq||f(f-e)||+||(f-e)e||+||e^2-e||+||e-f||\\ &<(||f||+||e||+1)||e-f||+||e^2-e||\\ &\leq(2N+1)||e-f||+||e^2-e||.\end{align*}
    
    The statement about homotopy follows from the observation that $||(1-t)e+tf||\leq N$ and $||((1-t)e+tf)-e||\leq||e-f||$ for $t\in[0,1]$.
    
    \item $||av-1||\leq ||a-u|| ||v||+||uv-1||\leq N||a-u||+||uv-1||$. Similarly, $||va-1||\leq N||a-u||+||vu-1||$.
    
    The statement about homotopy follows from the observation that $||(1-t)a+tu||\leq N$ and $||((1-t)a+tu)-u||\leq||u-a||$ for $t\in[0,1]$.
\end{enumerate}
\end{proof}

\begin{lem}\label{normestlem2}
Let $A$ be a filtered Banach algebra. If $e$ and $f$ are $(\varepsilon,r,N)$-idempotents in $A$, then $e$ and $f$ are $(\varepsilon',r,N)$-homotopic, where $\varepsilon'=\varepsilon+\frac{1}{4}||e-f||^2$.
\end{lem}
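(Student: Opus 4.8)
The plan is to take the straight-line path $e_t=(1-t)e+tf$, $t\in[0,1]$, and check directly that it is a homotopy of $(\varepsilon',r,N)$-idempotents from $e$ to $f$. Three of the four requirements are immediate: since $A_r$ is a closed linear subspace containing $e$ and $f$, we have $e_t\in A_r$ for all $t$; since $e_t$ and $1_{\tilde{A}}-e_t=(1-t)(1_{\tilde{A}}-e)+t(1_{\tilde{A}}-f)$ are convex combinations of elements of norm at most $N$, we have $\max(||e_t||,||1_{\tilde{A}}-e_t||)\le N$; and $t\mapsto e_t$ is affine, hence norm-continuous. So everything reduces to the quasi-idempotency estimate $||e_t^2-e_t||<\varepsilon'$.

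For that I would establish the algebraic identity
\[ e_t^2-e_t=(1-t)(e^2-e)+t(f^2-f)-t(1-t)(e-f)^2. \]
To obtain it, expand $e_t^2=(1-t)^2e^2+t(1-t)(ef+fe)+t^2f^2$, subtract $e_t$, and regroup the ``diagonal'' terms via $(1-t)^2e^2-(1-t)e=(1-t)^2(e^2-e)-t(1-t)e$ and $t^2f^2-tf=t^2(f^2-f)-t(1-t)f$, while the cross term is handled via $ef+fe=e^2+f^2-(e-f)^2$. Collecting coefficients and using $(1-t)^2+t(1-t)=1-t$ and $t^2+t(1-t)=t$ yields the identity.

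Taking norms, using $||e^2-e||<\varepsilon$, $||f^2-f||<\varepsilon$, and $t(1-t)\le\frac14$ on $[0,1]$, we get $||e_t^2-e_t||<\varepsilon+\frac14||e-f||^2=\varepsilon'$ for every $t$. Hence $(e_t)_{t\in[0,1]}$ is a norm-continuous path of $(\varepsilon',r,N)$-idempotents from $e$ to $f$, so $e\stackrel{\varepsilon',r,N}{\sim}f$.

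I do not expect a genuine obstacle: the entire content is in the identity above, and the only thing to be careful about is the bookkeeping — in particular, noticing that after regrouping the diagonal contributions collapse to the linear-in-$t$ terms $(1-t)(e^2-e)+t(f^2-f)$, with all the quadratic behaviour concentrated in the single term $-t(1-t)(e-f)^2$. This cancellation is precisely what produces the quadratic dependence on $||e-f||$, improving on the linear bound obtained from the cruder triangle-inequality argument in Lemma \ref{normestlem1}.
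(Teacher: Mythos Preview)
Your proof is correct and follows exactly the paper's approach: the straight-line homotopy together with the identity $e_t^2-e_t=(1-t)(e^2-e)+t(f^2-f)+(t^2-t)(e-f)^2$ and the bound $t(1-t)\le\frac{1}{4}$. You even spell out the norm and propagation checks that the paper leaves implicit.
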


\begin{proof}
If $e$ and $f$ are $(\varepsilon,r,N)$-idempotents, then for $t\in[0,1]$, we have
\begin{align*}
&\quad ((1-t)e+tf)^2-((1-t)e+tf) \\ &= (1-t)(e^2-e)+(t^2-t)e^2+t(f^2-f)+(t^2-t)f^2+(t-t^2)(ef+fe) \\ &= (1-t)(e^2-e)+t(f^2-f)+(t^2-t)(e-f)^2
\end{align*}
so $||((1-t)e+tf)^2-((1-t)e+tf)||<\varepsilon+\frac{1}{4}||e-f||^2$.
\end{proof}

%\begin{lem}
%If $e$ and $f$ are homotopic as $\varepsilon$-$r$-$N$-idempotents, then for any $\delta>0$ there is a piecewise linear homotopy of $(\varepsilon+\delta)$-$r$-$N$-idempotents between $e$ and $f$.
%\end{lem}

%\begin{proof}
%Let $(e_t)_{t\in[0,1]}$ be a homotopy of $\varepsilon$-$r$-$N$-idempotents with $e_0=e$ and $e_1=f$. Let $0=t_0<t_1<\cdots<t_k=1$ be such that $||e_{t_i}-e_{t_{i-1}}||<2\sqrt{\delta}$ for $i=1,\ldots,k$. By the previous lemma, $((1-s)e_{t_{i-1}}+se_{t_i})_{s\in[0,1]}$ is a homotopy of $(\varepsilon+\delta)$-$r$-$N$-idempotents between $e_{t_{i-1}}$ and $e_{t_i}$ for each $i$.
%\end{proof}

\begin{lem}\label{orthidem}
Let $A$ be a filtered $SQ_p$ algebra, and suppose that $e,f\in A$ are orthogonal $(\varepsilon,r,N)$-idempotents (i.e., $ef=fe=0$). Then $e+f$ is a $(2\varepsilon,r,2N)$-idempotent. Moreover, $\begin{pmatrix} e+f & 0 \\ 0 & 0 \end{pmatrix}$ and $\begin{pmatrix} e & 0 \\ 0 & f \end{pmatrix}$ are $(\frac{5}{2}\varepsilon,r,\frac{5}{2}N)$-homotopic in $M_2(A)$.
\end{lem}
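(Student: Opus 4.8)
The plan is to handle the first statement by a direct computation and the second by the standard rotation homotopy in $M_2(A)$, with the main effort going into keeping the constants small enough to reach $\tfrac52$. For the first statement, orthogonality gives $(e+f)^2-(e+f)=(e^2-e)+(f^2-f)$, so $||(e+f)^2-(e+f)||\le||e^2-e||+||f^2-f||<2\varepsilon$; moreover $e+f\in A_r$ since $A_r$ is a linear subspace, $||e+f||\le||e||+||f||\le 2N$, and $||1_{\tilde{A}}-(e+f)||\le||1_{\tilde{A}}-e||+||f||\le 2N$. Hence $e+f$ is a $(2\varepsilon,r,2N)$-idempotent.

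For the second statement, I would write $c=\cos\tfrac{\pi t}{2}$, $s=\sin\tfrac{\pi t}{2}$, put $R_t=\left(\begin{smallmatrix}c & -s\\ s & c\end{smallmatrix}\right)\in M_2(\mathbb{C})$, and set
\[
p_t=\begin{pmatrix}e & 0\\ 0 & 0\end{pmatrix}+R_t\begin{pmatrix}0 & 0\\ 0 & f\end{pmatrix}R_t^{-1}=\begin{pmatrix}e+s^2f & -scf\\ -scf & c^2f\end{pmatrix}\qquad(t\in[0,1]).
\]
Since $R_0=I_2$ and $R_1=\left(\begin{smallmatrix}0 & -1\\ 1 & 0\end{smallmatrix}\right)$, one checks $p_0=\diag(e,f)$ and $p_1=\diag(e+f,0)$; the map $t\mapsto p_t$ is norm-continuous, and since every entry of $p_t$ is a scalar combination of $e,f\in A_r$ we have $p_t\in M_2(A_r)$ for each $t$. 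So it suffices to show that each $p_t$ is a $(\tfrac52\varepsilon,r,\tfrac52N)$-idempotent. Put $u=\diag(e,0)$ and $w=R_t\left(\begin{smallmatrix}0 & 0\\ 0 & f\end{smallmatrix}\right)R_t^{-1}$. Every entry of $u$ is a scalar multiple of $e$ and every entry of $w$ is a scalar multiple of $f$, so orthogonality $ef=fe=0$ forces $uw=wu=0$, and hence
\[
p_t^2-p_t=(u^2-u)+(w^2-w),\qquad u^2-u=\diag(e^2-e,0),\qquad w^2-w=R_t\begin{pmatrix}0 & 0\\ 0 & f^2-f\end{pmatrix}R_t^{-1}.
\]

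The key estimate concerns terms of the shape $R_t\left(\begin{smallmatrix}0 & 0\\ 0 & g\end{smallmatrix}\right)R_t^{-1}$, which equals $\alpha g\beta$ with $\alpha=\left(\begin{smallmatrix}s\\ -c\end{smallmatrix}\right)\in M_{2,1}(\mathbb{C})$ and $\beta=\left(\begin{smallmatrix}s & -c\end{smallmatrix}\right)\in M_{1,2}(\mathbb{C})$. Applying axiom $\mathcal{M}_p$ with $g\in M_1(A)=A$ gives
\[
\Bigl\Vert R_t\begin{pmatrix}0 & 0\\ 0 & g\end{pmatrix}R_t^{-1}\Bigr\Vert\le(|s|^p+|c|^p)^{1/p}(|s|^q+|c|^q)^{1/q}\,||g||\le\sqrt2\,||g||,
\]
where $\tfrac1p+\tfrac1q=1$ and the last inequality holds because $s^2+c^2=1$ (compare the $\ell_p^2$- and $\ell_q^2$-norms of $(s,-c)$ with its $\ell_2^2$-norm). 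Taking $g=f^2-f$ gives $||w^2-w||\le\sqrt2\,\varepsilon$, hence $||p_t^2-p_t||\le||e^2-e||+\sqrt2\,\varepsilon<(1+\sqrt2)\varepsilon<\tfrac52\varepsilon$. Taking $g=f$ gives $||w||\le\sqrt2\,N$, and since axiom $\mathcal{D}_\infty$ gives $||u||=||e||\le N$ and $||\diag(1_{\tilde{A}}-e,1_{\tilde{A}})||=\max(||1_{\tilde{A}}-e||,1)\le N$, we obtain $||p_t||\le(1+\sqrt2)N$ and $||I_2-p_t||=||\diag(1_{\tilde{A}}-e,1_{\tilde{A}})-w||\le(1+\sqrt2)N$. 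As $1+\sqrt2<\tfrac52$, this is exactly what is needed, and the path $(p_t)$ provides the required homotopy.

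I expect the one delicate point to be extracting the constant $\tfrac52$: estimating $R_t(\cdot)R_t^{-1}$ crudely by $||R_t||_{B(\ell_p^2)}^2\,||\cdot||$ only produces the factor $(\sqrt2)^2=2$, hence $3\varepsilon$ and $3N$, which is too large. Factoring $\left(\begin{smallmatrix}0 & 0\\ 0 & g\end{smallmatrix}\right)$ through $M_1(A)=A$ as $\alpha g\beta$ is precisely what reduces the $\ell_p$-distortion to a single $\sqrt2$; everything else is a routine use of orthogonality together with the axioms $\mathcal{D}_\infty$ and $\mathcal{M}_p$.
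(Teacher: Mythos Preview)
Your proof is correct and follows essentially the same approach as the paper: the paper also writes the homotopy as $\diag(e,0)+R_t\diag(f,0)R_t^{-1}$ (you use $\diag(0,f)$ instead, which merely reverses the direction of the homotopy). The paper does not spell out the norm estimates leading to the constant $\tfrac52$, so your factorisation $\alpha g\beta$ through $M_1(A)$ and the resulting $\sqrt2$-bound via axiom $\mathcal{M}_p$ actually supply the detail the paper omits.
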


\begin{proof}
Since $(e+f)^2=e^2+f^2$, it follows that $e+f$ is a $(2\varepsilon,r,2N)$-idempotent. For $t\in[0,1]$, let $c_t=\cos\frac{\pi t}{2}$ and $s_t=\sin\frac{\pi t}{2}$. Then \[E_t=\begin{pmatrix} e & 0 \\ 0 & 0 \end{pmatrix}+\begin{pmatrix} c_t & -s_t \\ s_t & c_t \end{pmatrix}\begin{pmatrix} f & 0 \\ 0 & 0 \end{pmatrix}\begin{pmatrix} c_t & s_t \\ -s_t & c_t \end{pmatrix}\] is a homotopy of $(\frac{5}{2}\varepsilon,r,\frac{5}{2}N)$-idempotents in $M_2(A)$ between $\begin{pmatrix} e+f & 0 \\ 0 & 0 \end{pmatrix}$ and $\begin{pmatrix} e & 0 \\ 0 & f \end{pmatrix}$.
\end{proof}

Let $A$ be a unital filtered $SQ_p$ algebra. It is straightforward to see that if $(u,v)$ is an $(\varepsilon,r,N)$-inverse pair in $A$, then 
\begin{enumerate}
%\item $(uv,1)$ is an $\varepsilon$-$2r$-$c^2$-inverse pair;
\item $\left(\begin{pmatrix} u & 0 \\ 0 & v \end{pmatrix},\begin{pmatrix} v & 0 \\ 0 & u \end{pmatrix}\right)$ is an $(\varepsilon,r,N)$-inverse pair in $M_2(A)$;
\item $\left(\begin{pmatrix} uv & 0 \\ 0 & 1 \end{pmatrix},\begin{pmatrix} 1 & 0 \\ 0 & vu \end{pmatrix}\right)$ is an $(\varepsilon,2r,(1+\varepsilon))$-inverse pair in $M_2(A)$;
\item $\left(\begin{pmatrix} u & 0 \\ 0 & 1 \end{pmatrix},\begin{pmatrix} v & 0 \\ 0 & 1 \end{pmatrix}\right)$ is an $(\varepsilon,r,N)$-inverse pair in $M_2(A)$.
\end{enumerate}

\begin{lem} \label{inversepairhomotopy}
Let $A$ be a unital filtered $SQ_p$ algebra. If $(u,v)$ is an $(\varepsilon,r,N)$-inverse pair in $A$, then $\begin{pmatrix} u & 0 \\ 0 & v \end{pmatrix}$ and $I_2$ are $(2\varepsilon,2r,2(N+\varepsilon))$-homotopic in $M_2(A)$.
\end{lem}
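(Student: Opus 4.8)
The plan is to connect $\diag(u,v)$ to $I_2$ inside $M_2(A)$ through the auxiliary element $M=\begin{pmatrix}0&u\\-v&0\end{pmatrix}$, by means of two rotation homotopies. I deliberately avoid the usual Whitehead factorization of $\diag(u,v)$ into elementary matrices, because there the partial products acquire norm of order $N^2$, which would blow past the bound $2(N+\varepsilon)$. Routing through $M$ works because $u$ and $v$ then meet only in the combinations $uv$ and $vu$ — each within $\varepsilon$ of $1$ — and because $M^2=-\diag(uv,vu)$.

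For $t\in[0,1]$ set $R_t=\begin{pmatrix}\cos\frac{\pi t}{2}&\sin\frac{\pi t}{2}\\-\sin\frac{\pi t}{2}&\cos\frac{\pi t}{2}\end{pmatrix}$, so $R_0=I_2$ and $R_1=\begin{pmatrix}0&1\\-1&0\end{pmatrix}$. The first homotopy is $w(t)=\diag(u,v)R_t$, running from $\diag(u,v)$ to $\diag(u,v)R_1=M$, with quasi-inverse $R_t^{-1}\diag(v,u)$. Writing $R_t=\cos\frac{\pi t}{2}\,I_2+\sin\frac{\pi t}{2}\begin{pmatrix}0&1\\-1&0\end{pmatrix}$ and noting that $\begin{pmatrix}0&1\\-1&0\end{pmatrix}$ is an isometry of $\ell_p^2$, the axioms $\mathcal{D}_\infty$ and $\mathcal{M}_p$ for the $p$-operator space $M_2(A)$ give $||R_t||_{B(\ell_p^2)}=||R_t^{-1}||_{B(\ell_p^2)}\leq\cos\frac{\pi t}{2}+\sin\frac{\pi t}{2}\leq\sqrt2$, hence $||w(t)||\leq\sqrt2\,N$ and $||R_t^{-1}\diag(v,u)||\leq\sqrt2\,N$. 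One computes $w(t)\cdot R_t^{-1}\diag(v,u)=\diag(uv,vu)$ and $R_t^{-1}\diag(v,u)\cdot w(t)=I_2+R_t^{-1}\diag(vu-1,uv-1)R_t$, so both products differ from $I_2$ by at most $||R_t||\,||R_t^{-1}||\max(||uv-1||,||vu-1||)<2\varepsilon$; and all matrices above lie in $M_2(A_r)$. So each $w(t)$ is a $(2\varepsilon,r,\sqrt2\,N)$-invertible, a fortiori a $(2\varepsilon,2r,2(N+\varepsilon))$-invertible.

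The second homotopy is $h(s)=\cos\frac{\pi s}{2}\,I_2+\sin\frac{\pi s}{2}\,M$, run from $s=1$ (where $h(1)=M$) to $s=0$ (where $h(0)=I_2$), with quasi-inverse $g(s)=\cos\frac{\pi s}{2}\,I_2-\sin\frac{\pi s}{2}\,M$. Since $||M||=||\diag(u,v)R_1||\leq N$ we get $||h(s)||,||g(s)||\leq\cos\frac{\pi s}{2}+N\sin\frac{\pi s}{2}\leq2N$, and $M^2=-\diag(uv,vu)$ yields $h(s)g(s)=g(s)h(s)=\diag(\cos^2\tfrac{\pi s}{2}+\sin^2\tfrac{\pi s}{2}\,uv,\ \cos^2\tfrac{\pi s}{2}+\sin^2\tfrac{\pi s}{2}\,vu)$, which differs from $I_2$ by at most $\max(||uv-1||,||vu-1||)<\varepsilon$; again the matrices lie in $M_2(A_r)$. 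So each $h(s)$ is an $(\varepsilon,r,2N)$-invertible, hence a $(2\varepsilon,2r,2(N+\varepsilon))$-invertible. I would then concatenate, on $[0,\tfrac12]$ the path $t\mapsto w(2t)$ and on $[\tfrac12,1]$ the path $t\mapsto h(2-2t)$: they agree at the common value $M$, and their quasi-inverses agree there at $-M$, so the result is a norm-continuous path (with quasi-inverse) of $(2\varepsilon,2r,2(N+\varepsilon))$-invertibles from $\diag(u,v)$ to $I_2$; equivalently it defines a $(2\varepsilon,2r,2(N+\varepsilon))$-invertible in $M_2(A)[0,1]$ with these endpoints, which is the assertion.

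The crux is the norm bookkeeping: the target $2(N+\varepsilon)$ leaves no room for a homotopy in which $u$ and $v$ appear as a genuine product of two factors of size $\sim N$, which is exactly why the elementary-matrix proof must be replaced. The other delicate point is controlling the quasi-inverse along the first homotopy — one has to use that conjugating by $R_t$ inflates norms by at most $||R_t||\,||R_t^{-1}||\leq2$, so that the defect $\max(||uv-1||,||vu-1||)<\varepsilon$ only degrades to $<2\varepsilon$. The remaining points — norm-continuity of the paths, membership in the filtration, and that $\diag(u,v)$ and $I_2$ are themselves $(\varepsilon,r,N)$-invertibles — are routine.
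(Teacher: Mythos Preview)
Your proof is correct and takes a genuinely different route from the paper's. The paper homotopes $\diag(u,v)$ to $\diag(uv,1)$ via the four-factor product
\[
U_t=\begin{pmatrix}u&0\\0&1\end{pmatrix}\begin{pmatrix}c_t&-s_t\\s_t&c_t\end{pmatrix}\begin{pmatrix}1&0\\0&v\end{pmatrix}\begin{pmatrix}c_t&s_t\\-s_t&c_t\end{pmatrix},
\]
and then linearly from $\diag(uv,1)$ to $I_2$; the entries of $U_t$ contain $uv$, which is why the paper needs propagation $2r$. You instead pivot through $M=\begin{pmatrix}0&u\\-v&0\end{pmatrix}$, first rotating $\diag(u,v)\mapsto\diag(u,v)R_t$ and then using the ``square-root-of-$-I$'' trick $h(s)=\cos\tfrac{\pi s}{2}\,I_2+\sin\tfrac{\pi s}{2}\,M$, exploiting $M^2=-\diag(uv,vu)$. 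Your argument is slightly sharper: every element along both legs lies in $M_2(A_r)$, so you actually prove $(2\varepsilon,r,2(N+\varepsilon))$-homotopy rather than the stated $(2\varepsilon,2r,2(N+\varepsilon))$. The paper's route, on the other hand, makes the intermediate point $\diag(uv,1)$ explicit, which is what drives the remark immediately following the lemma (taking $v=u^{-1}$ gives $U_tV_t=V_tU_t=I_2$ along the whole path); your path also specializes cleanly in that case since then $M^2=-I_2$ exactly, but the observation about $\diag(uv,1)$ is not visible.
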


\begin{proof} 
For $t\in[0,1]$, let $c_t=\cos\frac{\pi t}{2}$ and $s_t=\sin\frac{\pi t}{2}$. Define 
\begin{align*} U_t&=\begin{pmatrix} u & 0 \\ 0 & 1 \end{pmatrix}\begin{pmatrix} c_t & -s_t \\ s_t & c_t \end{pmatrix}\begin{pmatrix} 1 & 0 \\ 0 & v \end{pmatrix}\begin{pmatrix} c_t & s_t \\ -s_t & c_t \end{pmatrix}\\ &=c_t^2\begin{pmatrix} u & 0 \\ 0 & v \end{pmatrix}+s_t^2\begin{pmatrix} uv & 0 \\ 0 & 1 \end{pmatrix}+c_ts_t\begin{pmatrix} 0 & u-uv \\ 1-v & 0 \end{pmatrix}
%\begin{pmatrix} c_t^2u+s_t^2uv & c_ts_t(u-uv) \\ c_ts_t(1-v) & s_t^2+c_t^2v \end{pmatrix}
,\end{align*}
\begin{align*} V_t&=\begin{pmatrix} c_t & -s_t \\ s_t & c_t \end{pmatrix}\begin{pmatrix} v & 0 \\ 0 & 1 \end{pmatrix}\begin{pmatrix} c_t & s_t \\ -s_t & c_t \end{pmatrix}\begin{pmatrix} 1 & 0 \\ 0 & u \end{pmatrix}\\ &=c_t^2\begin{pmatrix} v & 0 \\ 0 & u \end{pmatrix}+s_t^2\begin{pmatrix} 1 & 0 \\ 0 & vu \end{pmatrix}+c_ts_t\begin{pmatrix} 0 & vu-u \\ v-1 & 0 \end{pmatrix}
%\begin{pmatrix} c_t^2v+s_t^2 & c_ts_t(vu-u) \\ c_ts_t(v-1) & s_t^2vu+c_t^2u \end{pmatrix}
.\end{align*} 

Note that for each  $t\in[0,1]$, we have 
\begin{align*}
U_t V_t &= \begin{pmatrix} uv & 0 \\ 0 & vu \end{pmatrix}, \\
V_tU_t &= \begin{pmatrix} c_t & -s_t \\ s_t & c_t \end{pmatrix}\begin{pmatrix} vu & 0 \\ 0 & uv \end{pmatrix}\begin{pmatrix} c_t & s_t \\ -s_t & c_t \end{pmatrix}, \\
\max(||U_t||,||V_t||) &\leq c_t^2N+s_t^2(1+\varepsilon)+c_ts_t(N+1+\varepsilon) \\ &<(c_t+s_t)^2(N+\varepsilon) \\ &\leq 2(N+\varepsilon)
\end{align*} 
so $(U_t,V_t)$ is a $(2\varepsilon,2r,2(N+\varepsilon))$-inverse pair in $M_2(A)$ for each $t\in[0,1]$. In particular, $(U_t)_{t\in[0,1]}$ is a homotopy of $(2\varepsilon,2r,2(N+\varepsilon))$-invertibles between $\begin{pmatrix} u & 0 \\ 0 & v \end{pmatrix}$ and $\begin{pmatrix} uv & 0 \\ 0 & 1 \end{pmatrix}$, while $(V_t)_{t\in[0,1]}$ is a homotopy of $(2\varepsilon,2r,2(N+\varepsilon))$-invertibles between $\begin{pmatrix} v & 0 \\ 0 & u \end{pmatrix}$ and $\begin{pmatrix} 1 & 0 \\ 0 & vu \end{pmatrix}$. Then $\begin{pmatrix} (1-t)uv+t & 0 \\ 0 & 1 \end{pmatrix}$ is a homotopy of $(\varepsilon,2r,(1+\varepsilon))$-invertibles between $\begin{pmatrix} uv & 0 \\ 0 & 1 \end{pmatrix}$ and $I_2$.
\end{proof}

%\todo[inline]{If we just use submultiplicativity of the norm, then the estimate would be $4N^2$ instead of $2(N+\varepsilon)$.}

\begin{rem}
%Note that the homotopy used in the proof is Lipschitz (with Lipschitz constant depending on $N$).
From the proof we see that if $u^{-1}\in A_r$ and $||u^{-1}||\leq N$, then we may take $v=u^{-1}$ so that $(U_t)_{t\in[0,1]}$ is a homotopy between $\begin{pmatrix} u & 0 \\ 0 & u^{-1} \end{pmatrix}$ and $I_2$ with $U_t V_t=V_t U_t=I_2$ for each $t$. Moreover, in this case, we have $||U_t||\leq 2N$ and $||V_t||\leq 2N$.
\end{rem}

\begin{lem} \label{invcomm}
Let $A$ be a unital filtered $SQ_p$ algebra. If $u,v\in A$ are $(\varepsilon,r,N)$-invertibles, then $\begin{pmatrix} u & 0 \\ 0 & v \end{pmatrix}$ and $\begin{pmatrix} v & 0 \\ 0 & u \end{pmatrix}$ are $(2\varepsilon,r,2N)$-homotopic in $M_2(A)$.
\end{lem}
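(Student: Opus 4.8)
The plan is to connect the two matrices by the standard rotation homotopy $U_t=R_t\,\diag(u,v)\,R_t^{-1}$; the only step that is not routine in the $SQ_p$ setting (as opposed to the Hilbert-space case) will be controlling the norm of the scalar rotation matrix $R_t$ inside $M_2(A)$. Concretely, I would fix an $(\varepsilon,r,N)$-inverse $u'$ of $u$ and an $(\varepsilon,r,N)$-inverse $v'$ of $v$, so that $\bigl(\diag(u,v),\diag(u',v')\bigr)$ is an $(\varepsilon,r,N)$-inverse pair in $M_2(A)$, set $c_t=\cos\frac{\pi t}{2}$, $s_t=\sin\frac{\pi t}{2}$ for $t\in[0,1]$,
\[
R_t=\begin{pmatrix} c_t & -s_t \\ s_t & c_t \end{pmatrix},\qquad R_t^{-1}=\begin{pmatrix} c_t & s_t \\ -s_t & c_t \end{pmatrix}
\]
(regarded as elements of $M_2(A)$ via $\mathbb{C}1_A$), and define $U_t=R_t\,\diag(u,v)\,R_t^{-1}$ and $V_t=R_t\,\diag(u',v')\,R_t^{-1}$. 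Then $t\mapsto U_t$ is norm-continuous, $U_0=\diag(u,v)$, and $U_1=\diag(v,u)$ because conjugation by $R_1=\begin{pmatrix}0&-1\\1&0\end{pmatrix}$ interchanges the diagonal entries; so it suffices to show that $U=(U_t)_t$ is a $(2\varepsilon,r,2N)$-invertible in $M_2(A)[0,1]$ with inverse $(V_t)_t$.

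For the propagation, I would note that $R_t,R_t^{-1}\in M_2(\mathbb{C}1_A)$ and that left or right multiplication by a scalar matrix sends $M_2(A_r)$ into itself (each entry of the product being a finite linear combination of entries of $\diag(u,v)$, resp. $\diag(u',v')$), so $U_t,V_t\in M_2(A_r)$. For the norm bounds, applying axiom $\mathcal{M}_p$ with central element $I_2\in M_2(A)$ (which has norm $1$), $\alpha=R_t$, and $\beta=I_2$ gives $\|R_t\|_{M_2(A)}\le\|R_t\|_{B(\ell_p^2)}$, and similarly for $R_t^{-1}$; and $\|R_t\|_{B(\ell_p^2)}\le c_t+s_t\le\sqrt2$ follows by Riesz--Thorin interpolation from the elementary identities $\|R_t\|_{B(\ell_1^2)}=\|R_t\|_{B(\ell_\infty^2)}=c_t+s_t$. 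Since $\mathcal{D}_\infty$ gives $\|\diag(u,v)\|=\max(\|u\|,\|v\|)\le N$ and likewise $\|\diag(u',v')\|\le N$, we obtain $\|U_t\|\le\|R_t\|\,\|\diag(u,v)\|\,\|R_t^{-1}\|\le 2N$ and $\|V_t\|\le 2N$. Finally, using $R_t^{-1}R_t=I_2$,
\[
U_tV_t-I_2=R_t\bigl(\diag(uu',vv')-I_2\bigr)R_t^{-1},\qquad V_tU_t-I_2=R_t\bigl(\diag(u'u,v'v)-I_2\bigr)R_t^{-1},
\]
and $\|\diag(uu',vv')-I_2\|=\max(\|uu'-1\|,\|vv'-1\|)<\varepsilon$ (and similarly for $\diag(u'u,v'v)$), so $\|U_tV_t-I_2\|<2\varepsilon$ and $\|V_tU_t-I_2\|<2\varepsilon$. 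Hence $(U_t,V_t)$ is a $(2\varepsilon,r,2N)$-inverse pair for every $t$, and $(U_t)_{t\in[0,1]}$ is the desired homotopy.

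The one genuinely delicate point is the estimate $\|R_t\|_{M_2(A)}\le\sqrt2$: because the $p$-operator-space matrix norms need not be the ``obvious'' ones, one cannot simply take $\|R_t\|=1$ as in the $p=2$ case, and must route the estimate through $\mathcal{M}_p$ and interpolation. The constant $\sqrt2$ is precisely what makes $\|R_t\|\,\|R_t^{-1}\|\le 2$, which produces the factor $2$ in both $2\varepsilon$ and $2N$; everything else is a direct computation in the spirit of Lemma~\ref{inversepairhomotopy}.
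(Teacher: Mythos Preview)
Your proof is correct and follows essentially the same rotation-conjugation approach as the paper: both define $U_t=R_t\,\diag(u,v)\,R_t^{-1}$ and $V_t=R_t\,\diag(u',v')\,R_t^{-1}$ and verify that $(U_t,V_t)$ is a $(2\varepsilon,r,2N)$-inverse pair. The only cosmetic difference is that the paper expands $U_t$ as $c_t^2\diag(u,v)+s_t^2\diag(v,u)+c_ts_t\bigl(\begin{smallmatrix}0&u-v\\u-v&0\end{smallmatrix}\bigr)$ and (implicitly) bounds the norm via the triangle inequality, obtaining the same factor $(c_t+s_t)^2\le 2$, whereas you route the estimate through $\mathcal{M}_p$ and the Riesz--Thorin bound $\|R_t\|_{B(\ell_p^2)}\le\sqrt2$; these are two packagings of the same computation.
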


\begin{proof}
Suppose that $(u,u')$ and $(v,v')$ are $(\varepsilon,r,N)$-inverse pairs. For $t\in[0,1]$, let $c_t=\cos\frac{\pi t}{2}$ and $s_t=\sin\frac{\pi t}{2}$. Define 
\begin{align*} U_t&=\begin{pmatrix} c_t & -s_t \\ s_t & c_t \end{pmatrix}\begin{pmatrix} u & 0 \\ 0 & v \end{pmatrix}\begin{pmatrix} c_t & s_t \\ -s_t & c_t \end{pmatrix}\\ &=c_t^2\begin{pmatrix} u & 0 \\ 0 & v \end{pmatrix}+s_t^2\begin{pmatrix} v & 0 \\ 0 & u \end{pmatrix}+c_ts_t\begin{pmatrix} 0 & u-v \\ u-v & 0 \end{pmatrix}
%\begin{pmatrix} c_t^2u+s_t^2v & c_ts_t(u-v) \\ c_ts_t(u-v) & s_t^2u+c_t^2v \end{pmatrix}
,\end{align*}
\begin{align*} V_t&=\begin{pmatrix} c_t & -s_t \\ s_t & c_t \end{pmatrix}\begin{pmatrix} u' & 0 \\ 0 & v' \end{pmatrix}\begin{pmatrix} c_t & s_t \\ -s_t & c_t \end{pmatrix}\\ &=c_t^2\begin{pmatrix} u' & 0 \\ 0 & v' \end{pmatrix}+s_t^2\begin{pmatrix} v' & 0 \\ 0 & u' \end{pmatrix}+c_ts_t\begin{pmatrix} 0 & u'-v' \\ u'-v' & 0 \end{pmatrix}
%\begin{pmatrix} c_t^2u'+s_t^2v' & c_ts_t(u'-v') \\ c_ts_t(u'-v') & s_t^2u'+c_t^2v' \end{pmatrix}
.\end{align*}
Then $||U_t||\leq 2N$ and $||V_t||\leq 2N$.
Also, \[U_tV_t-I=\begin{pmatrix} c_t & -s_t \\ s_t & c_t \end{pmatrix}\begin{pmatrix} uu'-1 & 0 \\ 0 & vv'-1 \end{pmatrix}\begin{pmatrix} c_t & s_t \\ -s_t & c_t \end{pmatrix}\] and \[V_tU_t-I=\begin{pmatrix} c_t & -s_t \\ s_t & c_t \end{pmatrix}\begin{pmatrix} u'u-1 & 0 \\ 0 & v'v-1 \end{pmatrix}\begin{pmatrix} c_t & s_t \\ -s_t & c_t \end{pmatrix}\] so $\max(||U_tV_t-I||,||V_tU_t-I||)<2\varepsilon$.
%$U_tV_t=\begin{pmatrix} c_t^2uu'+s_t^2vv' & c_ts_t(uu'-vv') \\ c_ts_t(uu'-vv') & s_t^2uu'+c_t^2vv' \end{pmatrix}$ so $||U_tV_t-I||<2\varepsilon$. Similarly, $||V_tU_t-I||<2\varepsilon$.
\end{proof}

%\todo[inline]{If we just use submultiplicativity, then the estimate would be $4N$ instead of $2N$.}

% Homotopy v.s. similarity

It is a standard fact in $K$-theory for Banach algebras that if we consider matrices of all sizes simultaneously, then the homotopy relation and the similarity relation give us the same equivalence classes of idempotents \cite[Section 4]{Bl}. In the remainder of this section, we will examine the relationship between these two equivalence relations in our context.

\begin{lem}\label{simtohom1}
Let $A$ be a unital filtered $SQ_p$ algebra. Suppose that $e\in A$ is an $(\varepsilon,r,N)$-idempotent and $(u,v)$ is an $(\varepsilon',r',N')$-inverse pair in $A$. Then $uev$ is an $((NN')^2\varepsilon'+N'^2\varepsilon,r+2r',NN'^2)$-idempotent. In particular, if $e\in M_n(A)$ is an $(\varepsilon,r,N)$-idempotent and $u\in M_n(\mathbb{C})$ is invertible with $\max(||u||,||u^{-1}||)\leq 1$, then $ueu^{-1}$ is an $(\varepsilon,r,N)$-idempotent in $M_n(A)$.
\end{lem}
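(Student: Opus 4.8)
The plan is to check the three defining conditions for $uev$ to be an $(\varepsilon'',r'',N'')$-idempotent, with $\varepsilon''=(NN')^2\varepsilon'+N'^2\varepsilon$, $r''=r+2r'$, and $N''=NN'^2$, and then to recover the ``in particular'' statement as the special case $v=u^{-1}$.

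For the propagation, the filtration axioms give $uev\in A_{r'}A_rA_{r'}\subseteq A_{r+2r'}$. For the norm bounds, submultiplicativity of the Banach-algebra norm gives $||uev||\leq||u||\,||e||\,||v||\leq N'\cdot N\cdot N'=NN'^2$; for the complementary estimate one writes $uev=uv-u(1_A-e)v$, so that $1_A-uev=(1_A-uv)+u(1_A-e)v$ and hence $||1_A-uev||\leq||1_A-uv||+||u||\,||1_A-e||\,||v||\leq\varepsilon'+NN'^2$.

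The one step that requires a small idea is the quasi-idempotent estimate. Factoring $(uev)^2=ue(vu)ev$ and substituting $vu=1_A+(vu-1_A)$, one obtains
\[(uev)^2-uev=u(e^2-e)v+ue(vu-1_A)ev,\]
whence $||(uev)^2-uev||\leq||u||\,||e^2-e||\,||v||+||u||\,||e||\,||vu-1_A||\,||e||\,||v||<N'^2\varepsilon+(NN')^2\varepsilon'=\varepsilon''$. Pulling $vu$, rather than $uv$, out of the middle of $(uev)^2$ is essentially the whole content of the proof; everything else is the filtration axioms together with submultiplicativity of the norm (on $A$, and on $M_n(A)$ for the second part, where each $M_n(A)$ is a Banach algebra by the $SQ_p$ structure).

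For the ``in particular'' statement, take $v=u^{-1}$. Since $u$ and $u^{-1}$ are scalar matrices in $M_n(\mathbb{C})$, each entry of $ueu^{-1}$ is a fixed $\mathbb{C}$-linear combination of entries of $e$, so $ueu^{-1}\in M_n(A_r)$ because $A_r$ is a closed linear subspace; moreover $||ueu^{-1}||\leq||u||\,||e||\,||u^{-1}||\leq N$, and since $uu^{-1}=u^{-1}u=I_n$ exactly we have $1-ueu^{-1}=u(1-e)u^{-1}$ with norm $\leq N$ and $(ueu^{-1})^2-ueu^{-1}=u(e^2-e)u^{-1}$ with norm $<\varepsilon$. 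Hence $ueu^{-1}$ is an $(\varepsilon,r,N)$-idempotent. The one wrinkle worth flagging in the general statement is that, exactly as in the analogous lemmas above, one should implicitly assume the output parameters remain in the admissible range, in particular $\varepsilon''<\frac{1}{20}$.
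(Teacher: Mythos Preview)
Your proof is correct and uses exactly the same decomposition $(uev)^2-uev=u(e^2-e)v+ue(vu-1)ev$ as the paper; the paper's proof only writes out this quasi-idempotent estimate and then obtains the ``in particular'' by formally setting $\varepsilon'=0$, $r'=0$, $N'=1$. Your more careful bookkeeping on $\|1-uev\|$ in fact gives $\varepsilon'+NN'^2$ rather than the stated $NN'^2$, a harmless imprecision that the paper's terse proof does not address either.
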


\begin{proof}
The first statement holds since \[||uevuev-uev||\leq||ue(vu-1)ev||+||u(e^2-e)v||<(NN')^2\varepsilon'+N'^2\varepsilon.\] The second statement follows from the first by setting $\varepsilon'=0$ and $r'=0$.
\end{proof}

\begin{lem}\label{simtohom2}
Let $A$ be a unital filtered $SQ_p$ algebra. If $e\in A$ is an $(\varepsilon,r,N)$-idempotent, and $(u,v)$ is an $(\varepsilon',r',N')$-inverse pair in $A$, then $\diag(uev,0)$ and $\diag(e,0)$ are $(\varepsilon'',r'',N'')$-homotopic in $M_2(A)$, where
\begin{align*}
\varepsilon''&=4(N'+\varepsilon')^2(2N^2\varepsilon'+\varepsilon),\\
r''&=r+4r', \text{and} \\
N''&=4N(N'+\varepsilon')^2.
\end{align*} 
In particular, if $e\in M_n(A)$ is an $(\varepsilon,r,N)$-idempotent and $u\in M_n(\mathbb{C})$ is invertible with $\max(||u||,||u^{-1}||)\leq 1$, then $\diag(ueu^{-1},0)$ and $\diag(e,0)$ are $(4\varepsilon,r,4N)$-homotopic in $M_{2n}(A)$.
\end{lem}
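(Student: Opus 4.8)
The plan is to pass from $\diag(uev,0)$ to $\diag(e,0)$ by conjugating the idempotent $\diag(e,0)$ along a path joining $\diag(u,v)$ to $I_2$ inside $M_2(A)$, keeping track of all three parameters via Lemma~\ref{simtohom1}.

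First I would record that $\diag(e,0)$ is an $(\varepsilon,r,N)$-idempotent in the unital filtered $SQ_p$ algebra $M_2(A)$: by $\mathcal{D}_\infty$ we have $||\diag(e,0)||=||e||\leq N$ and $||I_2-\diag(e,0)||=\max(||1_A-e||,1)\leq N$ (using $N\geq 1$), while $\diag(e,0)\in M_2(A_r)$ and $||\diag(e,0)^2-\diag(e,0)||=||e^2-e||<\varepsilon$. Next I would revisit the construction in the proof of Lemma~\ref{inversepairhomotopy}, applied to the $(\varepsilon',r',N')$-inverse pair $(u,v)$: concatenating the path $(U_t,V_t)$ there with the path $\bigl(\diag((1-t)uv+t,1),\,\diag(1,(1-t)vu+t)\bigr)$, and checking that the two pieces agree at the junction $t=1$, produces a norm-continuous path $(W_t,W_t')_{t\in[0,1]}$ of $(2\varepsilon',2r',2(N'+\varepsilon'))$-inverse pairs in $M_2(A)$ with $(W_0,W_0')=(\diag(u,v),\diag(v,u))$ and $(W_1,W_1')=(I_2,I_2)$. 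It is essential here to use this explicit path rather than merely the statement of Lemma~\ref{inversepairhomotopy}, since we need the approximate inverse at $t=0$ to be \emph{exactly} $\diag(v,u)$.

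Now set $f_t=W_t\,\diag(e,0)\,W_t'$ for $t\in[0,1]$. Applying Lemma~\ref{simtohom1} for each fixed $t$, with the idempotent $\diag(e,0)$ of type $(\varepsilon,r,N)$ and the inverse pair $(W_t,W_t')$ of type $(2\varepsilon',2r',2(N'+\varepsilon'))$, shows that $f_t$ is a $\bigl((N\cdot2(N'+\varepsilon'))^2\cdot2\varepsilon'+(2(N'+\varepsilon'))^2\varepsilon,\ r+4r',\ N\cdot(2(N'+\varepsilon'))^2\bigr)$-idempotent in $M_2(A)$; a direct simplification identifies this triple with $(\varepsilon'',r'',N'')$. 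Since $t\mapsto f_t$ is norm-continuous with $f_0=\diag(u,v)\,\diag(e,0)\,\diag(v,u)=\diag(uev,0)$ and $f_1=\diag(e,0)$, the path $(f_t)$ is the desired $(\varepsilon'',r'',N'')$-homotopy.

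For the ``in particular'' statement I would apply the first part with $A$ replaced by the unital filtered $SQ_p$ algebra $M_n(A)$, the idempotent $e\in M_n(A)$, and the pair $(u,u^{-1})$, which plays the role of a ``$(0,0,1)$-inverse pair'' since $u,u^{-1}\in M_n(\mathbb{C})\subseteq M_n(A_0)$ and $\max(||u||,||u^{-1}||)\leq 1$ (exactly as in the second statement of Lemma~\ref{simtohom1}); substituting $\varepsilon'=0$, $r'=0$, $N'=1$ gives $(\varepsilon'',r'',N'')=(4\varepsilon,r,4N)$, and the homotopy takes place in $M_2(M_n(A))\cong M_{2n}(A)$. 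The only step that is not routine substitution is pinning down the endpoint of the inverse path coming out of Lemma~\ref{inversepairhomotopy}; once that is settled, everything else amounts to matching the composite parameters produced by Lemma~\ref{simtohom1} against $\varepsilon''$, $r''$, $N''$.
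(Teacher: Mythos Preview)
Your proof is correct and follows essentially the same approach as the paper: conjugate $\diag(e,0)$ along the explicit inverse-pair homotopy from the proof of Lemma~\ref{inversepairhomotopy} and read off the parameters via Lemma~\ref{simtohom1}. Your observation that one must use the explicit path (so that the inverse at $t=0$ is exactly $\diag(v,u)$), and your care in concatenating the two pieces so that the inverse path also ends at $I_2$, make explicit a point the paper leaves implicit.
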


\begin{proof}
Applying Lemma \ref{inversepairhomotopy}, let $U_t$ be a homotopy of $(2\varepsilon',2r',2(N'+\varepsilon'))$-invertibles between $\diag(u,v)$ and $I_2$, and let $V_t$ be a homotopy of $(2\varepsilon',2r',2(N'+\varepsilon'))$-invertibles between $\diag(v,u)$ and $I_2$ such that $(U_t,V_t)$ is a $(2\varepsilon',2r',2(N'+\varepsilon'))$-inverse pair in $M_2(A)$ for each $t\in[0,1]$. Then $U_t\diag(e,0)V_t$ is a homotopy of $(\varepsilon'',r'',N'')$-idempotents between $\diag(uev,0)$ and $\diag(e,0)$ by Lemma \ref{simtohom1}, where $\varepsilon'',r'',N''$ are given by the expressions in the statement. The second statement follows from the first by setting $\varepsilon'=0$ and $r'=0$.
\end{proof}

%\begin{rem}\label{simtohom3}
%In the previous lemma, if $u$ is a permutation matrix, then
%%in $M_n(\mathbb{C})$ then $||u||=||u^{-1}||=1$ so 
%$\diag(ueu^{-1},0)$ and $\diag(e,0)$ are homotopic as $4\varepsilon$-$r$-$4N$-idempotents in $M_{2n}(A)$.
%\end{rem} 

\begin{lem} \label{closeimpliessimilar}
Let $A$ be a unital filtered Banach algebra. If $e$ and $f$ are $(\varepsilon,r,N)$-idempotents in $A$ such that $||e-f||<\frac{\varepsilon}{2N+1}$, then there exists an $(\varepsilon,n_\varepsilon r,\frac{1}{1-3\varepsilon})$-inverse pair $(u,v)$ in $A$ such that $||uev-f||<\frac{(5N+2)\varepsilon}{1-3\varepsilon}$, where $n_\varepsilon\geq 1$ and $\varepsilon\mapsto n_\varepsilon$ is non-increasing.
\end{lem}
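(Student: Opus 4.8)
\medskip

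\noindent\emph{Proof proposal.}
The plan is to use the classical element that implements a similarity between nearby idempotents, namely
\[
u = fe + (1-f)(1-e) = 1 - e - f + 2fe,
\]
and to exploit the fact that, because $e$ and $f$ are \emph{quasi}-idempotents that are close, $u$ is actually a small perturbation of $1_A$. This is what will give both the sharp norm bound $\frac{1}{1-3\varepsilon}$ for the inverse pair and the estimate on $||uev-f||$; the genuine inverse $u^{-1}$ has infinite propagation, so it must be replaced by a finite truncation of its Neumann series, and this is precisely the source of the dilation factor $n_\varepsilon$.

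First I would record that $u\in A_{2r}$ (it is a sum of products of at most two elements of $A_r$, using $1_A\in A_r$), and estimate $||u-1_A||=||2fe-e-f||$. Writing $2fe-e-f=(fe-e)+(fe-f)$ with $fe-e=(f-e)e+(e^2-e)$ and $fe-f=f(e-f)+(f^2-f)$, and using $\max(||e||,||f||)\leq N$, $\max(||e^2-e||,||f^2-f||)<\varepsilon$, and $||e-f||<\frac{\varepsilon}{2N+1}$, one gets $||u-1_A||<\frac{2N\varepsilon}{2N+1}+2\varepsilon<3\varepsilon$. Since $3\varepsilon<1$, $u$ is invertible with $||u^{-1}||\leq\frac{1}{1-3\varepsilon}$ and $||u||\leq 1+||u-1_A||<\frac{1}{1-3\varepsilon}$. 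Next I would set $v=\sum_{k=0}^{m_\varepsilon}(1_A-u)^k$, where $m_\varepsilon\geq 1$ is chosen so that $(3\varepsilon)^{m_\varepsilon+1}<\varepsilon$ (one may simply take $m_\varepsilon=1$, since $9\varepsilon^2<\varepsilon$ as $\varepsilon<\frac1{20}$, so that $\varepsilon\mapsto n_\varepsilon$ is trivially non-increasing), and put $n_\varepsilon=2m_\varepsilon$. Because $1_A-u=e+f-2fe\in A_{2r}$, we have $v\in A_{2m_\varepsilon r}=A_{n_\varepsilon r}$; moreover $||v||\leq\sum_{k=0}^{m_\varepsilon}(3\varepsilon)^k<\frac{1}{1-3\varepsilon}$, and since $1_A-u$ commutes with itself, $uv=vu=1_A-(1_A-u)^{m_\varepsilon+1}$, whence $\max(||uv-1_A||,||vu-1_A||)\leq(3\varepsilon)^{m_\varepsilon+1}<\varepsilon$. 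Thus $(u,v)$ is an $(\varepsilon,n_\varepsilon r,\frac1{1-3\varepsilon})$-inverse pair.

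Finally I would estimate $||ue-fu||$ via the direct identities $ue-fe=(2f-1_A)(e^2-e)$ and $fu-fe=(f^2-f)(2e-1_A)$, which give $||ue-fu||\leq||ue-fe||+||fe-fu||\leq(2N+1)||e^2-e||+(2N+1)||f^2-f||<(4N+2)\varepsilon$. Then
\[
||uev-f||\leq||(ue-fu)v||+||f(uv-1_A)||\leq||ue-fu||\,||v||+||f||\,||uv-1_A||<(4N+2)\varepsilon\cdot\tfrac{1}{1-3\varepsilon}+N\varepsilon\leq\tfrac{(5N+2)\varepsilon}{1-3\varepsilon},
\]
using $N\varepsilon\leq\frac{N\varepsilon}{1-3\varepsilon}$ in the last step. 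This gives the stated conclusion.

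The bulk of the work is routine bookkeeping with norm estimates; the one real point is the observation that the classical similarity-implementing element $u=fe+(1-f)(1-e)$ is, under the hypothesis $||e-f||<\frac{\varepsilon}{2N+1}$, within $3\varepsilon$ of the identity, which simultaneously pins down $||u^{-1}||\leq\frac1{1-3\varepsilon}$ and controls $||uev-f||$. The secondary subtlety is that one cannot take $v=u^{-1}$ directly because of propagation; truncating the Neumann series resolves this at the cost of inflating the propagation from $2r$ to $n_\varepsilon r$, and checking that a truncation length independent of $\varepsilon$ already suffices (so $n_\varepsilon$ may be taken constant, hence non-increasing) is where one uses $\varepsilon<\frac1{20}$.
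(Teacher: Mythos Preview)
Your proof is correct and follows essentially the same route as the paper: the paper takes the ``other'' classical intertwiner $v=ef+(1-e)(1-f)$ (so the roles of $u$ and $v$ are swapped relative to yours), shows $\|v-1\|<3\varepsilon$ by an equivalent computation, bounds $\|ev-vf\|<(4N+2)\varepsilon$, and then replaces $v^{-1}$ by a Neumann truncation $u=\sum_{k=0}^{m_\varepsilon}(1-v)^k$. Your additional observation that $m_\varepsilon=1$ already suffices (so $n_\varepsilon$ may be taken equal to $2$ for all $\varepsilon<\tfrac{1}{20}$) is a small sharpening not made explicit in the paper.
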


\begin{proof}
%Let $a=(2e-1)(2f-1)+1$. Then $a-2=2[(2e-1)(f-e)+2(e^2-e)]$. If $||e-f||<\frac{\varepsilon}{2N+1}$, then $||a-2||<6\varepsilon$ so $\frac{a}{2}$ is invertible. Let $v=\frac{a}{2}$. Then $||v||<1+3\varepsilon$ and $||v^{-1}||\leq\frac{1}{1-||1-v||}<\frac{1}{1-3\varepsilon}$.

Let $v=ef+(1-e)(1-f)$. Then $v-1=(2e-1)(f-e)+2(e^2-e)$. If $||e-f||<\frac{\varepsilon}{2N+1}$, then $||v-1||<3\varepsilon$ so $v$ is invertible. We also have $||v||<1+3\varepsilon$ and $||v^{-1}||\leq\frac{1}{1-||1-v||}<\frac{1}{1-3\varepsilon}$.

Now $ev=2e^2 f-e^2-ef+e$ and $vf=2ef^2-ef-f^2+f$ so \[ev-vf=2(e^2-e)f+2e(f-f^2)-(e^2-e)+(f^2-f)\] and $||ev-vf||\leq(4N+2)\varepsilon$. Since $||v^{-1}||<\frac{1}{1-3\varepsilon}$, we have \[||v^{-1}ev-f||<\frac{(4N+2)\varepsilon}{1-3\varepsilon}.\]

Let $m_\varepsilon$ be the smallest positive integer such that $\left\Vert\sum_{k=m_\varepsilon+1}^\infty (1-v)^k\right\Vert<\frac{\varepsilon}{2}$, and let $u=\sum_{k=0}^{m_\varepsilon} (1-v)^k$. Then $||u||<\frac{1}{1-3\varepsilon}$ and \[||uev-f||\leq ||(u-v^{-1})ev||+||v^{-1}ev-f||<N\varepsilon+\frac{(4N+2)\varepsilon}{1-3\varepsilon}<\frac{(5N+2)\varepsilon}{1-3\varepsilon}.\]

Also note that $v\in A_{2r}$, $u\in A_{2m_\varepsilon r}$, \[||uv-1||\leq\left\Vert\sum_{k=m_\varepsilon+1}^\infty (1-v)^k\right\Vert ||v||<\varepsilon,\] and similarly $||vu-1||<\varepsilon$ so $(u,v)$ is an $(\varepsilon,2m_\varepsilon r,\frac{1}{1-3\varepsilon})$-inverse pair in $A$.
\end{proof}

% unused result:

%\begin{prop}\label{homtosim1} 
%Let $A$ be a unital filtered Banach algebra. If $e,f\in A$ are homotopic as $\varepsilon$-$r$-$N$-idempotents, then there exist $k=k(\varepsilon,N)\geq 1$, $n_{\varepsilon}\geq 1$, an $\varepsilon$-$r$-$N$-idempotent $E \in M_{k-1}(A)$ and an $\varepsilon$-$2n_{\varepsilon}r$-$\frac{1}{1-3\varepsilon}$-inverse pair $(u,v)$ in $M_k(A)$ such that $||u\diag(e,E)v-\diag(E,f)||<\frac{(5N+2)\varepsilon}{1-3\varepsilon}$.
%\end{prop}
%
%\begin{proof}
%Let $(e_t)_{t\in[0,1]}$ be a homotopy of $\varepsilon$-$r$-$N$-idempotents between $e$ and $f$. Let $0=t_0<t_1<\cdots<t_k=1$ be such that $||e_{t_i}-e_{t_{i-1}}||<\frac{\varepsilon}{2N+1}$ for $i=1,\ldots,k$. Then there exists $n_{\varepsilon}\geq 1$ such that for each $i$, there exists an $\varepsilon$-$2n_{\varepsilon}r$-$\frac{1}{1-3\varepsilon}$-inverse pair $(u_i,v_i)$ in $A$ such that $||u_i e_{t_{i-1}} v_i-e_{t_i}||<\frac{(5N+2)\varepsilon}{1-3\varepsilon}$. Let $u=\diag(u_1,u_2,\ldots,u_k)$ and $v=\diag(v_1,v_2,\ldots,v_k)$. Then $||u\diag(e,E)v-\diag(E,f)||<\frac{(5N+2)\varepsilon}{1-3\varepsilon}$ where $E=\diag(e_1,e_2,\ldots,e_{k-1})$.
%\end{proof}
%
%
%\todo[inline]{Either the previous or the next result will be used in proving half-exactness of $K_0$ but to use the previous one I need a satisfactory lemma for lifting quasi-idempotents (along a homotopy) which I lack. On the other hand, the coefficients involved in the next result can be large.}

\begin{prop}\label{homtosim2}
Let $A$ be a unital filtered Banach algebra. If there is an $M$-Lipschitz homotopy of $(\varepsilon,r,N)$-idempotents in $A$ between $e$ and $f$ with $\frac{1}{M}\leq\varepsilon$, then there exists an $(\varepsilon',r',N')$-inverse pair $(u,v)$ in $A$ such that $||uev-f||<3(\frac{9}{4})^{M(2N+1)+1}(5N+2)\varepsilon$, where $\varepsilon'=2(\frac{9}{4})^{M(2N+1)+1}\varepsilon$, $r'=(M(2N+1)+1)n_\varepsilon r$ with $n_\varepsilon\geq 1$ and $\varepsilon\mapsto n_\varepsilon$ non-increasing, and $N'=(\frac{3}{2})^{M(2N+1)+1}$.
\end{prop}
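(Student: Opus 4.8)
The plan is to discretize the given $M$-Lipschitz homotopy, reduce each short piece to the ``close implies similar'' situation handled by Lemma~\ref{closeimpliessimilar}, and then multiply the quasi-inverse pairs obtained on the pieces, tracking how $\varepsilon$, $r$, $N$ and the conjugation error accumulate.

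Write $(e_t)_{t\in[0,1]}$ for the homotopy, put $k=M(2N+1)+1$, use the partition $t_j=j/k$, and set $e_j=e_{t_j}$, so $e_0=e$ and $e_k=f$. The Lipschitz bound gives $\|e_{j-1}-e_j\|\le M/k$, and this is the one place where the normalization $1/M\le\varepsilon$ is consumed: it is what makes each consecutive pair $(e_{j-1},e_j)$ of $(\varepsilon,r,N)$-idempotents close enough to feed into (the proof of) Lemma~\ref{closeimpliessimilar} while keeping only $M(2N+1)+1$ partition points. That lemma then supplies, for each $j$, an $(\varepsilon,n_\varepsilon r,\tfrac{1}{1-3\varepsilon})$-inverse pair $(a_j,b_j)$ with $\|a_je_{j-1}b_j-e_j\|<\tfrac{(5N+2)\varepsilon}{1-3\varepsilon}$; since $\varepsilon<\tfrac1{20}$ we have $\|a_j\|,\|b_j\|\le\tfrac1{1-3\varepsilon}<\tfrac32$, hence $\|a_j\|\,\|b_j\|<\tfrac94$, and $\max(\|a_jb_j-1\|,\|b_ja_j-1\|)<\varepsilon$.

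Next I would put $u=a_k\cdots a_1$ and $v=b_1\cdots b_k$, so that $u,v\in A_{kn_\varepsilon r}=A_{r'}$ and $\|u\|,\|v\|\le(\tfrac32)^k=N'$, and then check that $(u,v)$ is the required $(\varepsilon',r',N')$-inverse pair via two parallel telescoping recursions. Setting $w_0=1$ and $w_j=a_jw_{j-1}b_j$ (so $w_k=uv$) gives $\|w_j-1\|\le\|a_j\|\,\|b_j\|\,\|w_{j-1}-1\|+\|a_jb_j-1\|\le\tfrac94\|w_{j-1}-1\|+\varepsilon$; setting $e^{(0)}=e$ and $e^{(j)}=a_je^{(j-1)}b_j$ (so $e^{(k)}=uev$), the identity $e^{(j)}-e_j=a_j(e^{(j-1)}-e_{j-1})b_j+(a_je_{j-1}b_j-e_j)$ gives $\|e^{(j)}-e_j\|\le\tfrac94\|e^{(j-1)}-e_{j-1}\|+\tfrac{(5N+2)\varepsilon}{1-3\varepsilon}$. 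Both recursions start from $0$, so they are governed by $\sum_{j=0}^{k-1}(\tfrac94)^j=\tfrac45\big((\tfrac94)^k-1\big)$, and since the multiplier $\tfrac94$ exceeds $1$ this sum stays within a fixed factor of its last term; that is what keeps an extra factor of $k$ out of the estimates and yields $\|uv-1\|,\|vu-1\|<2(\tfrac94)^k\varepsilon=\varepsilon'$ and $\|uev-f\|<3(\tfrac94)^k(5N+2)\varepsilon$ (in fact with a constant better than $3$).

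The step I expect to be the real obstacle is the discretization. Making the partition count as small as $M(2N+1)+1$ while still invoking Lemma~\ref{closeimpliessimilar} is delicate: a naive equal-spacing estimate only produces consecutive pairs that are $\approx\tfrac1{2N+1}$-close rather than $\tfrac{\varepsilon}{2N+1}$-close, so one must be cleverer here -- either by arranging the partition more efficiently or by sharpening the close-implies-similar input (running the argument of Lemma~\ref{closeimpliessimilar} in the more permissive regime where $\|ef+(1-e)(1-f)-1\|$ is only kept bounded away from $1$, at the cost of letting $n_\varepsilon$ grow mildly as $\varepsilon\downarrow0$). This is the technical heart, and it is where $1/M\le\varepsilon$ earns its keep. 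Everything after the partition is routine; the only care needed there is to carry $\varepsilon'$, $r'$, $N'$ and the conjugation bound through the $k$-fold product simultaneously, using $\varepsilon<\tfrac1{20}$ to collapse the geometric series into the constants $(\tfrac94)^k$ and $(\tfrac32)^k$ of the statement.
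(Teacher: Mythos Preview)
Your approach is essentially identical to the paper's: discretize the Lipschitz homotopy into $k\le M(2N+1)+1$ pieces, apply Lemma~\ref{closeimpliessimilar} on each piece to get $(\varepsilon,n_\varepsilon r,\frac{1}{1-3\varepsilon})$-inverse pairs $(u_i,v_i)$, multiply them, and bound $\|uv-1\|$, $\|vu-1\|$, and $\|uev-f\|$ via the same telescoping geometric-series argument you outline. The constants $(\tfrac32)^k$, $2(\tfrac94)^k\varepsilon$, and $3(\tfrac94)^k(5N+2)\varepsilon$ arise exactly as you describe.

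Your worry about the discretization step is well-founded, and interestingly the paper's proof has the very same issue: it takes $|t_i-t_{i-1}|<\frac{1}{M(2N+1)}$, so the $M$-Lipschitz bound only gives $\|e_{t_{i-1}}-e_{t_i}\|<\frac{1}{2N+1}$, not the $\frac{\varepsilon}{2N+1}$ that the stated hypothesis of Lemma~\ref{closeimpliessimilar} demands. The paper simply invokes the lemma anyway. The fix you propose --- rerunning the argument of Lemma~\ref{closeimpliessimilar} with the weaker closeness assumption $\|e-f\|<\frac{1}{2N+1}$, so that $\|v-1\|<1+2\varepsilon$ is not quite enough and one instead uses a slightly sharper estimate or accepts worse (but still controlled) constants --- is exactly what is needed, and is presumably what the authors intended. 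Since the whole framework only tracks control pairs rather than sharp constants, this sloppiness is harmless for the downstream applications, but you are right to flag it.
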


\begin{proof}
Let $(e_t)$ be an $M$-Lipschitz homotopy of $(\varepsilon,r,N)$-idempotents in $A$ between $e$ and $f$ with $\frac{1}{M}\leq\varepsilon$. Let $0=t_0<t_1<\cdots<t_k=1$ be such that $\frac{1}{M(2N+1)+1}<|t_i-t_{i-1}|<\frac{1}{M(2N+1)}$. Note that $k<M(2N+1)+1$. By Lemma \ref{closeimpliessimilar}, there exists an $(\varepsilon,n_\varepsilon r,\frac{1}{1-3\varepsilon})$-inverse pair $(u_i,v_i)$ in $A$ such that \[||u_i e_{t_{i-1}}v_i-e_{t_i}||<\frac{(5N+2)\varepsilon}{1-3\varepsilon}.\] Set $(u,v)=(u_k\cdots u_1,v_1\cdots v_k)$. Then \[||uv-1||<2(\frac{9}{4})^{k}\varepsilon\] and similarly for $||vu-1||$. Thus $(u,v)$ is a $(2(\frac{9}{4})^{k}\varepsilon,kn_\varepsilon r,(\frac{3}{2})^k)$-inverse pair, and $||uev-f||<3(\frac{9}{4})^{k}(5N+2)\varepsilon$.
\end{proof}

If a homotopy of quasi-idempotents is not Lipschitz, the following lemma enables us to replace the homotopy with a Lipschitz homotopy by enlarging matrices, after which Proposition \ref{homtosim2} becomes applicable. The Lipschitz constant depends only on the parameter $N$.

\begin{lem}\label{idemliphom}
Let $A$ be a unital filtered $SQ_p$ algebra. If $e$ and $f$ are homotopic as $(\varepsilon,r,N)$-idempotents in $A$, then there exist $\alpha_N>0$, $k\in\mathbb{N}$, and an $\alpha_N$-Lipschitz homotopy of $(2\varepsilon,r,\frac{5}{2}N)$-idempotents between $\diag(e,I_k,0_k)$ and $\diag(f,I_k,0_k)$.
\end{lem}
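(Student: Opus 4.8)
The plan is to reduce, via uniform continuity, to a chain of pairwise very close quasi-idempotents, and then to rebuild a homotopy out of just two ``cheap'' primitives. The first is an affine path between two quasi-idempotents lying at distance $<\delta$; by Lemma~\ref{normestlem2} such a path runs through $(2\varepsilon,r,N)$-idempotents once $\tfrac14\delta^2<\varepsilon$. The second is conjugation by a scalar rotation matrix $\mathrm{Rot}_t=\left(\begin{smallmatrix}\cos\frac{\pi t}{2} & -\sin\frac{\pi t}{2}\\ \sin\frac{\pi t}{2} & \cos\frac{\pi t}{2}\end{smallmatrix}\right)\otimes I$, applied in configurations where the idempotent defect stays $<2\varepsilon$ and the norm stays $\le\tfrac52 N$. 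Both primitives keep propagation $\le r$, both are Lipschitz with constant a fixed multiple of $N$, and carrying out several of them simultaneously in disjoint diagonal blocks preserves all three bounds, since the defect, the norm, and the Lipschitz constant of a block-diagonal path are the maxima of the corresponding quantities over the blocks. Consequently, concatenating \emph{boundedly many} such homotopies --- ``boundedly'' meaning by a constant that does not depend on the given homotopy --- produces an $\alpha_N$-Lipschitz homotopy with $\alpha_N$ depending only on $N$.

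First I would choose $0=t_0<\dots<t_m=1$ with $\|e_{t_i}-e_{t_{i-1}}\|<\delta$ and write $p_i=e_{t_i}$, so that $p_0=e$ and $p_m=f$. The step that makes the Lipschitz constant independent of the homotopy is to traverse the chain $p_0\rightsquigarrow p_1\rightsquigarrow\dots\rightsquigarrow p_m$ \emph{in parallel rather than sequentially}: in $M_m(\tilde A)$, the block-diagonal path whose $i$-th block is the affine homotopy from $p_{i-1}$ to $p_i$ is a \emph{single} homotopy, Lipschitz with constant $<\delta$, running through $(2\varepsilon,r,N)$-idempotents, from $\diag(p_0,p_1,\dots,p_{m-1})$ to $\diag(p_1,\dots,p_m)$. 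Regrouping the blocks, this is precisely a cheap homotopy $\diag(e,Q)\sim\diag(Q,f)$, with $Q=\diag(p_1,\dots,p_{m-1})$.

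It remains to put the non-standard ``junk'' $Q$ into the form $\diag(I_k,0_k)$ without leaving the class of $(2\varepsilon,r,\tfrac52 N)$-idempotents of propagation $\le r$. For this I would pair each $p_i$ with its complement $1-p_i\in\tilde A_r$ (which is again an $(\varepsilon,r,N)$-idempotent) and use the merge homotopy $E_t=\diag(p_i,0)+\mathrm{Rot}_t\,\diag(0,1-p_i)\,\mathrm{Rot}_t^{-1}$, which runs from $\diag(p_i,1-p_i)$ to $\diag(1,0)$; a short computation shows that the off-diagonal entries of $E_t^2-E_t$ vanish identically and the diagonal ones reduce to $\cos^2\!\frac{\pi t}{2}\,(p_i^2-p_i)$, so $E_t$ stays among $(\varepsilon,r,2N)$-idempotents. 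Running these merges simultaneously over all $i$, bracketed by finitely many rounds of rotation-based reshuffling --- cheap because each round only rotates a block against a $0$-block or swaps a scalar $1$-block with a $0$-block, and any permutation of the nonzero blocks can be realised in a couple of such rounds once enough spare $0$-blocks are present --- gives $\diag(e,I_k,0_k)\sim\diag(e,Q,1-Q,\mathrm{pad})$ and, with the \emph{same} $k$ and ``$\mathrm{pad}$'', $\diag(f,I_k,0_k)\sim\diag(f,Q,1-Q,\mathrm{pad})$. Since the parallel-chain homotopy above, padded by the untouched blocks $1-Q$ and $\mathrm{pad}$ and followed by moving $f$ to the front, connects the two middle terms, concatenating the three homotopies completes the proof. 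Here $k$ is of order $m$, so it depends on the homotopy, but $\alpha_N$ does not.

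The crux --- and the reason the $I_k,0_k$ stabilisation is unavoidable --- is that a general homotopy of quasi-idempotents may have unbounded length, so no reparametrisation can make it Lipschitz with a constant depending on $N$ alone, and, unlike in the $C^*$-algebra case, one cannot absorb the length into a single conjugator of bounded norm: a product of the local conjugators $p_ip_{i-1}+(1-p_i)(1-p_{i-1})$ would have both norm and propagation growing with the number of pieces. The parallel-chain trick sidesteps this by spreading the $m$ small moves across $m$ independent matrix blocks; the rest is bookkeeping, the only genuinely delicate point being the cancellation in the defect of the merge homotopy that keeps it below $2\varepsilon$ rather than the larger multiple of $\varepsilon$ a crude estimate would give.
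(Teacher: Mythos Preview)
Your proof is correct and follows essentially the same strategy as the paper's: subdivide the homotopy into a chain $p_0,\dots,p_m$ of nearby quasi-idempotents, run the $m$ short affine homotopies \emph{in parallel} across diagonal blocks (so the Lipschitz constant is independent of $m$), and use the merge homotopy $\diag(p,1-p)\sim\diag(1,0)$ together with rotation-based reshuffles to convert the auxiliary blocks into $\diag(I_k,0_k)$.

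The only difference is organizational. The paper interleaves the $e_{t_i}$ with their complements from the outset, writing the intermediate stage as $\diag(e_{t_0},1-e_{t_1},e_{t_1},1-e_{t_2},\dots)$; the parallel affine step then shifts only the $1-e_{t_i}$ blocks down by one index, after which each adjacent pair $(e_{t_i},1-e_{t_i})$ merges directly to $(1,0)$ without further reshuffling. Your version keeps $Q=\diag(p_1,\dots,p_{m-1})$ and $1-Q$ in separate contiguous blocks, which costs one or two extra reshuffle rounds but is otherwise equivalent. Both arrangements use a bounded number of rounds, so both yield an $\alpha_N$-Lipschitz homotopy with $\alpha_N$ independent of the original path.

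One small correction: your claim that the merge homotopy $E_t$ stays among $(\varepsilon,r,2N)$-idempotents is slightly too optimistic for general $p$. Your defect computation is right (and indeed gives $\varepsilon$, not $2\varepsilon$), but the norm bound uses $\|R_t\|_{B(\ell_p^2)}$, which exceeds $1$ when $p\neq 2$; the sharp bound is $\tfrac52 N$, matching the lemma statement. This does not affect the argument.
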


\begin{proof}
Let $(e_t)_{t\in[0,1]}$ be a homotopy of $(\varepsilon,r,N)$-idempotents between $e$ and $f$, and let $0=t_0<t_1<\cdots<t_k=1$ be such that \[||e_{t_i}-e_{t_{i-1}}||<\inf_{t\in[0,1]}\frac{\varepsilon-||e_t^2-e_t||}{2N+1}.\] For each $t$, we have a Lipschitz homotopy of $(\varepsilon,r,\frac{5}{2}N)$-idempotents between $\diag(e_t,1-e_t)$ and $\diag(1,0)$ given by \[\begin{pmatrix} e_t & 0 \\ 0 & 0 \end{pmatrix}+\begin{pmatrix} c_l & -s_l \\ s_l & c_l \end{pmatrix}\begin{pmatrix} 1-e_t & 0 \\ 0 & 0 \end{pmatrix}\begin{pmatrix} c_l & s_l \\ -s_l & c_l \end{pmatrix},\] where $c_l=\cos\frac{\pi l}{2}$ and $s_l=\sin\frac{\pi l}{2}$ for $l\in[0,1]$. Also, there is a Lipschitz (in fact linear) homotopy of $(\varepsilon,r,N)$-idempotents between $e_{t_{i-1}}$ and $e_{t_i}$ for each $i$. Then we have the following sequence of Lipschitz homotopies of $(2\varepsilon,r,\frac{5}{2}N)$-idempotents in which the first and last homotopies are obtained from Lemma \ref{invcomm}:
\begin{footnotesize}
\begin{align*}
&\quad\quad\quad \begin{pmatrix} e_{t_0} & & \\ & I_k & \\ & & 0_k \end{pmatrix}\\ &\stackrel{2\varepsilon,r,2N}{\sim} \begin{pmatrix} e_{t_0} & & & & & \\ & 1 & & & & \\ & & 0 & & & \\ & & & \ddots & & \\ & & & & 1 & \\ & & & & & 0 \end{pmatrix} \stackrel{\varepsilon,r,\frac{5}{2}N}{\sim} \begin{pmatrix} e_{t_0} & & & & & \\ & 1-e_{t_1} & & & & \\ & & e_{t_1} & & & \\ & & & \ddots & & \\ & & & & 1-e_{t_k} & \\ & & & & & e_{t_k} \end{pmatrix} \\ 
&\stackrel{\varepsilon,r,N}{\sim} \begin{pmatrix} e_{t_0} & & & & & \\ & 1-e_{t_0} & & & & \\ & & e_{t_1} & & & \\ & & & \ddots & & \\ & & & & 1-e_{t_{k-1}} & \\ & & & & & e_{t_k} \end{pmatrix} 
\stackrel{\varepsilon,r,\frac{5}{2}N}{\sim} \begin{pmatrix} 1 & & & & & \\ & 0 & & & & \\ & & 1 & & & \\ & & & \ddots & & \\ & & & & 0 & \\ & & & & & e_{t_k} \end{pmatrix} \\ &\stackrel{2\varepsilon,r,2N}{\sim} \begin{pmatrix} e_{t_k} & & \\ & I_k & \\ & & 0_k \end{pmatrix}.
\end{align*}
\end{footnotesize}
\end{proof}

\begin{rem}
From the proofs above, we see that in Proposition \ref{homtosim2}, if $A$ is a non-unital filtered Banach algebra, and $e,f\in A^+$ are such that $e-1\in A$ and $f-1\in A$, then we can find $(u,v)$ satisfying the conclusion of Proposition \ref{homtosim2} such that $u-1\in A$ and $v-1\in A$.
\end{rem}

\section{Quantitative $K$-Theory}

In this section, we define the quantitative $K$-theory groups for a filtered $SQ_p$ algebra $A$. Then we establish some basic properties of these groups, and we examine the relation between these groups and the ordinary $K$-theory groups. Our setup is in parallel with the theory developed in \cite{OY15} for filtered $C^*$-algebras.

\subsection{Definitions of quantitative $K$-theory groups}

Given a filtered $SQ_p$ algebra $A$, we denote by $Idem^{\varepsilon,r,N}(A)$ the set of $(\varepsilon,r,N)$-idempotents in $A$. For each positive integer $n$, we set $Idem^{\varepsilon,r,N}_n(A)=Idem^{\varepsilon,r,N}(M_n(A))$. Then we have inclusions $Idem^{\varepsilon,r,N}_n(A)\hookrightarrow Idem^{\varepsilon,r,N}_{n+1}(A)$ given by $e\mapsto\begin{pmatrix} e & 0 \\ 0 & 0 \end{pmatrix}$, and we set \[Idem^{\varepsilon,r,N}_\infty(A)=\bigcup_{n\in\mathbb{N}}Idem^{\varepsilon,r,N}_n(A).\]

Consider the equivalence relation $\sim$ on $Idem^{\varepsilon,r,N}_\infty(A)$ defined by $e\sim f$ if $e$ and $f$ are $(4\varepsilon,r,4N)$-homotopic in $M_\infty(A)$. We will denote the equivalence class of $e\in Idem^{\varepsilon,r,N}_\infty(A)$ by $[e]$. We will sometimes write $[e]_{\varepsilon,r,N}$ if we wish to emphasize the parameters. 

% was $2\varepsilon$-$r$-$2N$

%\begin{rem}
%We defined almost idempotent elements in the context of filtered Banach algebras but the definition still makes sense in $M_\infty(A)$, which is not a Banach algebra.
%\end{rem}

We define addition on $Idem_\infty^{\varepsilon,r,N}(A)/{\sim}$ by $[e]+[f]=[\diag(e,f)]$, where $\diag(e,f)=\begin{pmatrix} e & 0 \\ 0 & f \end{pmatrix}$. 

\begin{prop}
For any filtered $SQ_p$ algebra $A$, $Idem_\infty^{\varepsilon,r,N}(A)/{\sim}$ is an abelian semigroup with identity $[0]$. If $B$ is another filtered $SQ_p$ algebra and $\phi:A\rightarrow B$ is a filtered homomorphism, then there is an induced homomorphism \[\phi_*:Idem_\infty^{\varepsilon,r,N}(A)/{\sim}\rightarrow Idem_\infty^{||\phi||_{pcb}\varepsilon,r,||\phi||_{pcb}N}(B)/{\sim}.\]
\end{prop}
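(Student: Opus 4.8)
The plan is to verify the semigroup axioms one at a time, using the homotopy lemmas from Section 2, and then to check that a filtered homomorphism respects the equivalence relation and the addition with the asserted change of parameters.

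First I would observe that addition is well-defined: if $e \stackrel{4\varepsilon,r,4N}{\sim} e'$ and $f \stackrel{4\varepsilon,r,4N}{\sim} f'$ via homotopies $(e_t)$ and $(f_t)$ in $M_\infty(A)$, then $(\diag(e_t,f_t))_{t\in[0,1]}$ is a norm-continuous path; by property $\mathcal{D}_\infty$ of the $p$-operator space structure each $\diag(e_t,f_t)$ is a $(4\varepsilon,r,4N)$-idempotent (the norm of a block-diagonal matrix is the max of the norms of the blocks, and likewise for $I - \diag(e_t,f_t) = \diag(I-e_t, I-f_t)$; the estimate $\|\diag(e_t,f_t)^2 - \diag(e_t,f_t)\| = \max(\|e_t^2-e_t\|,\|f_t^2-f_t\|) < 4\varepsilon$ holds since the square is block-diagonal), so $\diag(e,f) \sim \diag(e',f')$. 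Associativity is immediate since $\diag(\diag(e,f),g)$ and $\diag(e,\diag(f,g))$ are literally equal as elements of $M_\infty(A)$ after the standard identification. That $[0]$ is an identity is clear: $\diag(e,0) = \iota(e)$ under the inclusion $M_n(A)\hookrightarrow M_{n+1}(A)$ built into the definition of $Idem^{\varepsilon,r,N}_\infty(A)$, and so $[\diag(e,0)] = [e]$.

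For commutativity, given $e\in Idem^{\varepsilon,r,N}_m(A)$ and $f\in Idem^{\varepsilon,r,N}_n(A)$, I would exhibit a homotopy between $\diag(e,f)$ and $\diag(f,e)$ inside $Idem^{4\varepsilon,r,4N}_{m+n}(A)$ by the usual rotation trick: conjugate $\diag(e,f)$ by the path of invertible scalar matrices $R_t = \begin{pmatrix} c_t I_m & -s_t I_n \\ s_t I_m & c_t I_n \end{pmatrix}$ with $c_t = \cos\frac{\pi t}{2}$, $s_t = \sin\frac{\pi t}{2}$ (so $R_0 = I$, $R_1 = \begin{pmatrix} 0 & -I_n \\ I_m & 0 \end{pmatrix}$), whose inverses are uniformly bounded by $1$ in the relevant $B(\ell_p^{m+n})$-norms — here one uses $\mathcal{M}_p$ together with $\|R_t\|_{B(\ell_p^{m+n})}, \|R_t^{-1}\|_{B(\ell_p^{m+n})} \le 1$, which is why the $SQ_p$ hypothesis matters. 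By Lemma \ref{simtohom1} (applied in the scalar-conjugation form, with $\varepsilon'=0$), each $R_t\,\diag(e,f)\,R_t^{-1}$ remains an $(\varepsilon,r,N)$-idempotent, and at $t=1$ one gets $\diag(f,e)$; this is well within the $(4\varepsilon,r,4N)$ tolerance defining $\sim$. (Alternatively one can invoke Lemma \ref{orthidem} repeatedly, but the rotation argument is cleaner here.)

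Finally, for the induced map: if $\phi:A\to B$ is a filtered homomorphism, then $\phi$ induces $\phi_n: M_n(A)\to M_n(B)$ with $\|\phi_n\|\le \|\phi\|_{pcb}$, and these are compatible with the inclusions, hence assemble to a map on $M_\infty(A)$. If $e\in Idem^{\varepsilon,r,N}_\infty(A)$ then $\phi(e)\in B_r$ (since $\phi$ preserves propagation), $\|\phi(e)\|\le \|\phi\|_{pcb}\|e\|\le \|\phi\|_{pcb}N$ and similarly $\|1-\phi(e)\| = \|\phi(1-e)\|\le \|\phi\|_{pcb}N$ (using $\phi(1_{\tilde A}) = 1_{\tilde B}$ for the unitization, which is part of the definition of $\phi^+$), and $\|\phi(e)^2 - \phi(e)\| = \|\phi(e^2-e)\|\le \|\phi\|_{pcb}\varepsilon$, so $\phi(e)\in Idem^{\|\phi\|_{pcb}\varepsilon,\,r,\,\|\phi\|_{pcb}N}_\infty(B)$. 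Applying $\phi$ to a homotopy of $(4\varepsilon,r,4N)$-idempotents gives a homotopy of $(4\|\phi\|_{pcb}\varepsilon,r,4\|\phi\|_{pcb}N)$-idempotents (continuity is preserved since $\phi$ is bounded), so $\phi_*$ is well-defined on equivalence classes; and $\phi(\diag(e,f)) = \diag(\phi(e),\phi(f))$ shows it is a semigroup homomorphism. The main obstacle is purely bookkeeping — keeping track of which parameters the $p$-operator space inequalities $\mathcal{D}_\infty$ and $\mathcal{M}_p$ allow one to preserve versus inflate — and the one genuine point to be careful about is that the conjugating matrices in the commutativity step have $B(\ell_p)$-norm at most $1$ in both directions, without which $\mathcal{M}_p$ would force a worse constant than $4N$; the rotation matrices above satisfy this for every $p\in[1,\infty)$, so the argument goes through.
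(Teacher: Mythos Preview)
Your overall approach matches the paper's: rotate by scalar matrices for commutativity, and use direct norm estimates for functoriality. However, there is a genuine error in your commutativity argument.

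You assert that $\|R_t\|_{B(\ell_p^{m+n})}$ and $\|R_t^{-1}\|_{B(\ell_p^{m+n})}$ are at most $1$ for every $p \in [1,\infty)$, and you even flag this as ``the one genuine point to be careful about.'' This is false for $p \neq 2$. For example, at $t = 1/2$ the $2 \times 2$ rotation by $\pi/4$ sends $(1,0)^T$ to $(1/\sqrt{2}, 1/\sqrt{2})^T$, which has $\ell_1$-norm $\sqrt{2}$; a dual computation shows the operator norm exceeds $1$ for $p > 2$ as well. The correct uniform bound is $\|R_t\|,\,\|R_t^{-1}\| \le \sqrt{2}$. Fortunately this does not destroy the proof: feeding $N' = \sqrt{2}$, $\varepsilon' = 0$, $r' = 0$ into the general form of Lemma~\ref{simtohom1} (not the special case with $\|u\|,\|u^{-1}\| \le 1$, which is precisely what fails here) shows that $R_t\,\diag(e,f)\,R_t^{-1}$ is a path of $(2\varepsilon, r, 2N)$-idempotents, which is exactly what the paper records and is still within the $(4\varepsilon, r, 4N)$ tolerance defining $\sim$. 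So your conclusion stands, but the justification you gave is incorrect, and the constants you actually obtain are $2\varepsilon$ and $2N$, not $\varepsilon$ and $N$.

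A minor secondary point: your matrix $R_t = \begin{pmatrix} c_t I_m & -s_t I_n \\ s_t I_m & c_t I_n \end{pmatrix}$ is only well-formed when $m = n$ (otherwise the off-diagonal blocks have the wrong shape). The paper first pads $e$ and $f$ with zeros so that both lie in the same $M_n(A)$; you should do likewise before writing down $R_t$.
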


\begin{proof}
To show commutativity, we need to show that if $e,f\in Idem^{\varepsilon,r,N}_\infty(A)$, then $[\diag(e,f)]=[\diag(f,e)]$. We may assume that $e,f\in M_n(A)$ for some $n\in\mathbb{N}$. Letting \[R_t=\begin{pmatrix} (\cos\frac{\pi t}{2})I_n & (\sin\frac{\pi t}{2})I_n \\ -(\sin\frac{\pi t}{2})I_n & (\cos\frac{\pi t}{2})I_n \end{pmatrix}\] for $t\in[0,1]$, one sees that $R_t\diag(e,f)R_t^{-1}$ is a homotopy of $(2\varepsilon,r,2N)$-idempotents between $\diag(e,f)$ and $\diag(f,e)$.

Using the same notation $\phi$ for the induced homomorphism $M_\infty(A)\rightarrow M_\infty(B)$, note that $\phi(e)\in Idem^{||\phi||_{pcb}\varepsilon,r,||\phi||_{pcb}N}_\infty(B)$ whenever $e\in Idem^{\varepsilon,r,N}_\infty(A)$. Moreover, if $e\sim e'$, then $\phi(e)\sim\phi(e')$. Thus \[\phi_*:Idem_\infty^{\varepsilon,r,N}(A)/{\sim}\rightarrow Idem_\infty^{||\phi||_{pcb}\varepsilon,r,||\phi||_{pcb}N}(B)/{\sim}\] given by $\phi_*([e])=[\phi(e)]$ is a well-defined homomorphism of semigroups.
\end{proof}

\begin{defn}
Let $A$ be a unital filtered $SQ_p$ algebra. For $0<\varepsilon<\frac{1}{20},r> 0$ and $N\geq 1$, define $K_0^{\varepsilon,r,N}(A)$ to be the Grothendieck group of $Idem_\infty^{\varepsilon,r,N}(A)/{\sim}$. 
\end{defn}

By the universal property of the Grothendieck group, when $A$ and $B$ are unital filtered $SQ_p$ algebras and $\phi:A\rightarrow B$ is a filtered homomorphism, the induced homomorphism \[\phi_*:Idem_\infty^{\varepsilon,r,N}(A)/{\sim}\rightarrow Idem_\infty^{||\phi||_{pcb}\varepsilon,r,||\phi||_{pcb}N}(B)/{\sim}\] extends to a group homomorphism \[\phi_*:K_0^{\varepsilon,r,N}(A)\rightarrow K_0^{||\phi||_{pcb}\varepsilon,r,||\phi||_{pcb}N}(B).\] Moreover, if $\psi:B\rightarrow C$ is another filtered homomorphism between unital filtered $SQ_p$ algebras, then \[(\psi\circ\phi)_*=\psi_*\circ\phi_*:K_0^{\varepsilon,r,N}(A)\rightarrow K_0^{||\psi||_{pcb}||\phi||_{pcb}\varepsilon,r,||\psi||_{pcb}||\phi||_{pcb}N}(C).\]

If $A$ is a non-unital filtered $SQ_p$ algebra, we have the usual quotient homomorphism $\pi:A^+\rightarrow\mathbb{C}$, which is $p$-completely contractive by \cite[Lemma 4.2]{Daws10} and the standard fact that all characters on Banach algebras are contractive. Thus $\pi$ induces a homomorphism $\pi_*:K_0^{\varepsilon,r,N}(A^+)\rightarrow K_0^{\varepsilon,r,N}(\mathbb{C})$.

\begin{defn}
Let $A$ be a non-unital filtered $SQ_p$ algebra. For $0<\varepsilon<\frac{1}{20},r> 0$ and $N\geq 1$, define \[K_0^{\varepsilon,r,N}(A)=\ker(\pi_*:K_0^{\varepsilon,r,N}(A^+)\rightarrow K_0^{\varepsilon,r,N}(\mathbb{C})).\]
\end{defn}

Note that for $0<\varepsilon\leq\varepsilon'<\frac{1}{20},0< r\leq r'$, and $1\leq N\leq N'$, we have a canonical group homomorphism \[\iota_0^{\varepsilon,\varepsilon',r,r',N,N'}:K_0^{\varepsilon,r,N}(A)\rightarrow K_0^{\varepsilon',r',N'}(A)\] given by $\iota_0^{\varepsilon,\varepsilon',r,r',N,N'}([e]_{\varepsilon,r,N})=[e]_{\varepsilon',r',N'}$.

We have already observed that if $A$ and $B$ are both unital filtered $SQ_p$ algebras, then a filtered homomorphism $\phi:A\rightarrow B$ induces a group homomorphism \[\phi_*:K_0^{\varepsilon,r,N}(A)\rightarrow K_0^{||\phi||_{pcb}\varepsilon,r,||\phi||_{pcb}N}(B).\] If $A$ and $B$ are both non-unital and $\phi^+:A^+\rightarrow B^+$ denotes the induced homomorphism between their unitizations, then we get a homomorphism \[K_0^{\varepsilon,r,N}(A^+)\rightarrow K_0^{||\phi^+||_{pcb}\varepsilon,r,||\phi^+||_{pcb}N}(B^+),\] which restricts to a homomorphism \[\phi_*:K_0^{\varepsilon,r,N}(A)\rightarrow K_0^{||\phi^+||_{pcb}\varepsilon,r,||\phi^+||_{pcb}N}(B).\]

Given a unital filtered $SQ_p$ algebra $A$, we denote by $GL^{\varepsilon,r,N}(A)$ the set of $(\varepsilon,r,N)$-invertibles in $A$. For each positive integer $n$, we set $GL_n^{\varepsilon,r,N}(A)=GL^{\varepsilon,r,N}(M_n(A))$. Then we have inclusions $GL_n^{\varepsilon,r,N}(A)\hookrightarrow GL_{n+1}^{\varepsilon,r,N}(A)$ given by $u\mapsto\begin{pmatrix} u & 0 \\ 0 & 1 \end{pmatrix}$, and we set \[GL_\infty^{\varepsilon,r,N}(A)=\bigcup_{n\in\mathbb{N}} GL_n^{\varepsilon,r,N}(A).\]

Consider the equivalence relation $\sim$ on $GL_\infty^{\varepsilon,r,N}(A)$ given by $u\sim v$ if $u$ and $v$ are $(4\varepsilon,2r,4N)$-homotopic in $M_\infty(A)$. We will denote the equivalence class of $u\in GL^{\varepsilon,r,N}_\infty(A)$ by $[u]$. We will sometimes write $[u]_{\varepsilon,r,N}$ if we wish to emphasize the parameters.  

% was $2\varepsilon$-$2r$-$3N$

We define addition on $GL_\infty^{\varepsilon,r,N}(A)/{\sim}$ by $[u]+[v]=[\diag(u,v)]$.

\begin{prop}
For any unital filtered $SQ_p$ algebra $A$, $GL_\infty^{\varepsilon,r,N}(A)/{\sim}$ is an abelian group. If $B$ is another unital filtered $SQ_p$ algebra and $\phi:A\rightarrow B$ is a unital filtered homomorphism, then there is an induced homomorphism \[\phi_*:GL_\infty^{\varepsilon,r,N}(A)/{\sim}\rightarrow GL_\infty^{||\phi||_{pcb}\varepsilon,r,||\phi||_{pcb}N}(B)/{\sim}.\]
\end{prop}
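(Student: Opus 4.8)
The plan is to verify the group axioms for $GL_\infty^{\varepsilon,r,N}(A)/{\sim}$ one at a time and then check functoriality; the argument runs parallel to the treatment of $Idem_\infty^{\varepsilon,r,N}(A)/{\sim}$, the one genuinely new ingredient being the existence of inverses.

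\textbf{Monoid structure.} First I would note that $\diag(u,v)$ is again an $(\varepsilon,r,N)$-invertible whenever $u,v$ are: if $(u,u')$ and $(v,v')$ are $(\varepsilon,r,N)$-inverse pairs in the relevant matrix algebras over $A$, then so is $(\diag(u,v),\diag(u',v'))$, since $\|\diag(u,v)\|=\max(\|u\|,\|v\|)\le N$ and $\|\diag(u,v)\diag(u',v')-I\|=\max(\|uu'-1\|,\|vv'-1\|)<\varepsilon$ by axiom $\mathcal{D}_\infty$ (the filtration and propagation conditions being obvious). Running the same computation pointwise in the path algebra $C([0,1],M_\infty(A))$, which also satisfies $\mathcal{D}_\infty$ with the supremum norm, shows that $\diag$ respects the relation $\sim$, so $[u]+[v]:=[\diag(u,v)]$ descends to $GL_\infty^{\varepsilon,r,N}(A)/{\sim}$. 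Associativity is immediate from $\diag(\diag(u,v),w)=\diag(u,\diag(v,w))$ as elements of $M_\infty(A)$, and $[1_A]$ is a two-sided identity because $\diag(u,1)$ is, by definition, the image of $u$ under the inclusion $GL_n^{\varepsilon,r,N}(A)\hookrightarrow GL_{n+1}^{\varepsilon,r,N}(A)$ and hence represents the same element of $GL_\infty^{\varepsilon,r,N}(A)$ as $u$.

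\textbf{Commutativity and inverses.} For commutativity I would conjugate $\diag(u,v)$ by the rotation matrices $R_t=\begin{pmatrix}(\cos\frac{\pi t}{2})I_n & (\sin\frac{\pi t}{2})I_n\\ -(\sin\frac{\pi t}{2})I_n & (\cos\frac{\pi t}{2})I_n\end{pmatrix}$, exactly as in the proof that $Idem_\infty^{\varepsilon,r,N}(A)/{\sim}$ is commutative: since the entries of $R_t$ and $R_t^{-1}$ lie in $\mathbb{C}1_A\subseteq A_0$, conjugation preserves propagation $r$, and since $R_t,R_t^{-1}$ are nonnegative matrices whose row and column sums are $\cos\frac{\pi t}{2}+\sin\frac{\pi t}{2}\le\sqrt2$, their norms on $\ell_p^{2n}$ are at most $\sqrt2$ (interpolating the trivial $\ell_1$ and $\ell_\infty$ bounds), so $\mathcal{M}_p$ produces a homotopy of $(2\varepsilon,r,2N)$-invertibles between $\diag(u,v)$ and $\diag(v,u)$; as $2\varepsilon\le4\varepsilon$, $r\le2r$, $2N\le4N$, this homotopy is admissible for $\sim$ and $[\diag(u,v)]=[\diag(v,u)]$. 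For inverses, given $u\in GL_\infty^{\varepsilon,r,N}(A)$ with $(\varepsilon,r,N)$-inverse $v$, observe that $v$ is itself an $(\varepsilon,r,N)$-invertible (with inverse $u$), and Lemma~\ref{inversepairhomotopy} shows $\diag(u,v)$ and $I_2$ are $(2\varepsilon,2r,2(N+\varepsilon))$-homotopic in $M_2(A)$; since $\varepsilon<\frac{1}{20}\le1\le N$ gives $2(N+\varepsilon)\le4N$, this homotopy witnesses $[u]+[v]=[\diag(u,v)]=[I_2]=[1_A]$. Hence every class has an inverse and $GL_\infty^{\varepsilon,r,N}(A)/{\sim}$ is an abelian group. (Using the Remark after Lemma~\ref{inversepairhomotopy} on close inverses one can also check $[v]$ is independent of the chosen quasi-inverse, but this is not needed.)

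\textbf{Functoriality.} Writing $\phi$ also for the entrywise map $M_\infty(A)\to M_\infty(B)$, the inequality $\|\phi_n\|\le\|\phi\|_{pcb}$, the inclusion $\phi(A_r)\subseteq B_r$, and unitality (so $\phi(uv-1)=\phi(u)\phi(v)-1$) show that $\phi$ carries an $(\varepsilon,r,N)$-inverse pair over $M_n(A)$ to a $(\|\phi\|_{pcb}\varepsilon,r,\|\phi\|_{pcb}N)$-inverse pair over $M_n(B)$, the new parameters being admissible because $\|\phi\|_{pcb}\ge\|\phi(1_A)\|=1$. Applying this pointwise to homotopies — on which $\phi$ again acts entrywise with $pcb$-norm $\le\|\phi\|_{pcb}$ — shows that $u\sim u'$ in $GL_\infty^{\varepsilon,r,N}(A)$ implies $\phi(u)\sim\phi(u')$ in $GL_\infty^{\|\phi\|_{pcb}\varepsilon,r,\|\phi\|_{pcb}N}(B)$, so $\phi_*([u]):=[\phi(u)]$ is well defined, and it is a homomorphism since $\phi$ sends block-diagonal matrices to block-diagonal matrices. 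The only delicate point in the whole argument is bookkeeping: making sure the auxiliary homotopies coming from the rotations and from Lemma~\ref{inversepairhomotopy} stay inside the $(4\varepsilon,2r,4N)$ tolerance built into the definition of $\sim$, and controlling the $\ell_p^{2n}$-operator norms of the $R_t$, which fail to be isometries when $p\neq2$.
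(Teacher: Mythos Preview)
Your proof is correct and follows the same approach as the paper, which simply cites Lemma~\ref{inversepairhomotopy} for the existence of inverses and Lemma~\ref{invcomm} (precisely your rotation-conjugation argument) for commutativity. One small slip: $R_t$ is not a nonnegative matrix (it has a $-\sin\frac{\pi t}{2}$ entry), though your interpolation bound $\|R_t\|_{B(\ell_p^{2n})}\le\sqrt2$ remains valid since the $\ell_1$ and $\ell_\infty$ operator norms depend only on the \emph{absolute} row and column sums.
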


\begin{proof}
By Lemma \ref{inversepairhomotopy}, if $(u,v)$ is an $(\varepsilon,r,N)$-inverse pair in $M_n(A)$, then $[u]+[v]=[1]$. By Lemma \ref{invcomm}, we have $[u]+[v]=[v]+[u]$. Hence $GL_\infty^{\varepsilon,r,N}(A)/{\sim}$ is an abelian group.

If $\phi:A\rightarrow B$ is a unital filtered homomorphism, and $u\in GL_n^{\varepsilon,r,N}(A)$, then $\phi(u)\in GL_n^{||\phi||_{pcb}\varepsilon,r,||\phi||_{pcb}N}(B)$. Moreover, if $u\sim u'$, then $\phi(u)\sim\phi(u')$. Thus \[\phi_*:GL_\infty^{\varepsilon,r,N}(A)/{\sim}\rightarrow GL_\infty^{||\phi||_{pcb}\varepsilon,r,||\phi||_{pcb}N}(B)/{\sim}\] given by $\phi_*([u])=[\phi(u)]$ is a well-defined group homomorphism.
\end{proof}

\begin{defn}
Let $A$ be a unital filtered $SQ_p$ algebra. For $0<\varepsilon<\frac{1}{20}$, $r> 0$, and $N\geq 1$, define \[K_1^{\varepsilon,r,N}(A)=GL_\infty^{\varepsilon,r,N}(A)/{\sim}.\]
If $A$ is non-unital, define $K_1^{\varepsilon,r,N}(A)=\ker(\pi_*:K_1^{\varepsilon,r,N}(A^+)\rightarrow K_1^{\varepsilon,r,N}(\mathbb{C}))$.
\end{defn}

By the previous proposition, $K_1^{\varepsilon,r,N}(A)$ is an abelian group for any filtered $SQ_p$ algebra $A$. 
%Moreover, it follows from the previous proposition that if $B$ is another filtered Banach algebra and $\phi:A\rightarrow B$ is a filtered homomorphism, then there is an induced homomorphism $\phi_*:K_1^{\varepsilon,r,N}(A)\rightarrow K_1^{||\phi||\varepsilon,r,||\phi||N}(B)$.

For $0<\varepsilon\leq\varepsilon'<\frac{1}{20}$, $0< r\leq r'$, and $1\leq N\leq N'$, we have a canonical group homomorphism \[\iota_1^{\varepsilon,\varepsilon',r,r',N,N'}:K_1^{\varepsilon,r,N}(A)\rightarrow K_1^{\varepsilon',r',N'}(A)\] given by $\iota_1^{\varepsilon,\varepsilon',r,r',N,N'}([u]_{\varepsilon,r,N})=[u]_{\varepsilon',r',N'}$. 

\begin{rem}
We will sometimes refer to the canonical homomorphisms \[\iota_*^{\varepsilon,\varepsilon',r,r',N,N'}:K_*^{\varepsilon,r,N}(A)\rightarrow K_*^{\varepsilon',r',N'}(A)\] as relaxation of control maps, and we will also omit the superscripts, writing just $\iota_*$, when they are clear from the context so as to reduce notational clutter.
\end{rem}

Just as in ordinary $K$-theory, we can give a unified treatment of the unital and non-unital cases. 
%First, we make some observations about the behavior of quantitative $K$-theory with respect to direct sums of unital filtered $SQ_p$ algebras.
%
%
%Let $A_1$ and $A_2$ be unital filtered $SQ_p$ algebras and let \[\pi_i:A_1\oplus A_2\rightarrow A_i \; (i=1,2)\] be the respective projection homomorphisms. 
%If $e\in M_n(A_1\oplus A_2)$ is an $(\varepsilon,r,N)$-idempotent, then $\pi_i(e)$ is an $(\varepsilon,r,N)$-idempotent in $M_n(A_i)$ for $i=1,2$. Conversely, if $e_i\in M_n(A_i)$ is an $(\varepsilon,r,N)$-idempotent for $i=1,2$, then $(e_1,e_2)\in M_n(A_1\oplus A_2)$ is an $(\varepsilon,r,N)$-idempotent. It follows that we have an isomorphism \[\pi_{1*}\oplus\pi_{2*}:K_0^{\varepsilon,r,N}(A_1\oplus A_2)\stackrel{\cong}{\rightarrow} K_0^{\varepsilon,r,N}(A_1)\oplus K_0^{\varepsilon,r,N}(A_2).\] Similarly, we see that \[\pi_{1*}\oplus\pi_{2*}:K_1^{\varepsilon,r,N}(A_1\oplus A_2)\rightarrow K_1^{\varepsilon,r,N}(A_1)\oplus K_1^{\varepsilon,r,N}(A_2)\] is an isomorphism.
First, observe that if $A_1$ and $A_2$ are unital filtered $SQ_p$ algebras, then the coordinate projections induce isomorphisms \[K_i^{\varepsilon,r,N}(A_1\oplus A_2)\cong K_i^{\varepsilon,r,N}(A_1)\oplus K_i^{\varepsilon,r,N}(A_2)\] for $i=0,1$.

In ordinary $K$-theory, when $A$ is unital, we have \[K_0(A)=\ker(\pi_*:K_0(A^+)\rightarrow K_0(\mathbb{C}))\] and \[K_1(A)= K_1(A^+)=\ker(\pi_*:K_1(A^+)\rightarrow K_1(\mathbb{C}))\] since $K_1(\mathbb{C})=0$. Recall that when $A$ is unital, we identify $M_n(A^+)$ isometrically with $M_n(A)\oplus M_n(\mathbb{C})$ (equipped with the max-norm) via the canonical isomorphism $A^+\rightarrow A\oplus\mathbb{C}$. In our current setting, we have a homomorphism $K_0^{\varepsilon,r,N}(A)\rightarrow \ker\pi_*^{\varepsilon,r,N}$ given by $[e]\mapsto[(e,0)]$, and we also have a homomorphism $\ker\pi_*^{\varepsilon,r,N}\rightarrow K_0^{\varepsilon,r,N}(A)$ given by the composition
\[ \ker\pi_*^{\varepsilon,r,N}\hookrightarrow K_0^{\varepsilon,r,N}(A^+)\stackrel{\cong}{\rightarrow} K_0^{\varepsilon,r,N}(A\oplus\mathbb{C})\cong K_0^{\varepsilon,r,N}(A)\oplus K_0^{\varepsilon,r,N}(\mathbb{C})\rightarrow K_0^{\varepsilon,r,N}(A). \]
The composition $K_0^{\varepsilon,r,N}(A)\rightarrow \ker\pi_*^{\varepsilon,r,N}\rightarrow K_0^{\varepsilon,r,N}(A)$ is the identity map while the composition $\ker\pi_*^{\varepsilon,r,N}\rightarrow K_0^{\varepsilon,r,N}(A)\rightarrow \ker\pi_*^{\varepsilon,r,N}$ is given by $[(e,z)]_{\varepsilon,r,N}\mapsto[(e+z,0)]_{\varepsilon,r,N}$.
Let $\psi:A^+\rightarrow A\oplus\mathbb{C}$ be the canonical isomorphism. Since $[z]=0$ in $K_0^{\varepsilon,r,N}(\mathbb{C})$, we have \[ [\psi((e,z))]=[(e+z,z)]=[(e+z,0)]=[\psi((e+z,0))] \] in $K_0^{\varepsilon,r,N}(A\oplus\mathbb{C})$. It follows that $[(e,z)]=[(e+z,0)]$ in $K_0^{\varepsilon,r,N}(A^+)$, and we have $K_0^{\varepsilon,r,N}(A)\cong \ker\pi_*^{\varepsilon,r,N}$.

It is not clear that $K_1^{\varepsilon,r,N}(\mathbb{C})=0$ for all $\varepsilon,r,N$, and for all choices of norms on $M_n(\mathbb{C})$ that we are considering, but by a similar argument as in the even case above, we still have $K_1^{\varepsilon,r,N}(A)\cong\ker\pi_*^{\varepsilon,r,N}$.

Thus we have
\begin{prop}
For any filtered $SQ_p$ algebra $A$, \[K_*^{\varepsilon,r,N}(A)\cong \ker(\pi_*^{\varepsilon,r,N}:K_*^{\varepsilon,r,N}(A^+)\rightarrow K_*^{\varepsilon,r,N}(\mathbb{C})).\]
\end{prop}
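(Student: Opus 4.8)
The plan is to reduce everything to two cases. When $A$ is non-unital there is nothing to prove, since the right-hand side is by definition $K_*^{\varepsilon,r,N}(A)$. So the real content is the unital case, where $K_0^{\varepsilon,r,N}(A)$ was defined directly as a Grothendieck group and $K_1^{\varepsilon,r,N}(A)=GL_\infty^{\varepsilon,r,N}(A)/{\sim}$, and we must identify each with $\ker(\pi_*^{\varepsilon,r,N}\colon K_*^{\varepsilon,r,N}(A^+)\to K_*^{\varepsilon,r,N}(\mathbb C))$. First I would set up the bookkeeping: via the canonical isomorphism $\psi\colon A^+\to A\oplus\mathbb C$, $(a,z)\mapsto(a+z,z)$, which is a filtered homomorphism, and the induced isometric identifications $M_n(A^+)\cong M_n(A)\oplus M_n(\mathbb C)$, the character $\pi$ becomes the second coordinate projection. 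Combined with the already-noted isomorphism $K_*^{\varepsilon,r,N}(A\oplus\mathbb C)\cong K_*^{\varepsilon,r,N}(A)\oplus K_*^{\varepsilon,r,N}(\mathbb C)$ induced by the coordinate projections, this shows abstractly that $\psi_*$ carries $\ker\pi_*^{\varepsilon,r,N}$ isomorphically onto the first summand; the remaining work is to realise this isomorphism by the natural maps so that it can be used later.

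For $*=1$ I would define $\Phi\colon K_1^{\varepsilon,r,N}(A)\to\ker\pi_*^{\varepsilon,r,N}$ by $[u]\mapsto[(u,I_n)]$ for $u\in GL_n^{\varepsilon,r,N}(A)$, and $\Psi\colon\ker\pi_*^{\varepsilon,r,N}\to K_1^{\varepsilon,r,N}(A)$ as the composite $\ker\pi_*^{\varepsilon,r,N}\hookrightarrow K_1^{\varepsilon,r,N}(A^+)\xrightarrow{\psi_*}K_1^{\varepsilon,r,N}(A\oplus\mathbb C)\cong K_1^{\varepsilon,r,N}(A)\oplus K_1^{\varepsilon,r,N}(\mathbb C)\to K_1^{\varepsilon,r,N}(A)$, with the last arrow the first-factor projection. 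Then I would verify the routine points: $\Phi$ is well defined (a homotopy of $(\varepsilon,r,N)$-invertibles in $M_\infty(A)$, held constant in the $\mathbb C$-coordinate, is such a homotopy in $M_\infty(A^+)$) and lands in $\ker\pi_*^{\varepsilon,r,N}$ because $\pi_*[(u,I_n)]=[I_n]$ is the identity; both maps are group homomorphisms, using $\diag((u,I_n),(v,I_n))=(\diag(u,v),I_{2n})$; and $\Psi\Phi=\mathrm{id}$ since the first-factor projection sends $[(u,I_n)]$ to $[u]$, while $\Phi\Psi=\mathrm{id}$ because any class in $\ker\pi_*^{\varepsilon,r,N}$ is represented by a pair $w=(w_1,w_2)$ with $w_1\in GL_n^{\varepsilon,r,N}(A)$, $w_2\in GL_n^{\varepsilon,r,N}(\mathbb C)$ and $[w_2]$ trivial, so $[w]$ and $[(w_1,I_n)]$ have the same image under the (injective) decomposition isomorphism. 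Unlike the even case, this needs no vanishing statement about $K_1^{\varepsilon,r,N}(\mathbb C)$: since $K_1^{\varepsilon,r,N}$ is literally a quotient group, injectivity of the decomposition isomorphism suffices.

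For $*=0$ I would run the same scheme: $[e]\mapsto[(e,0)]$ into $\ker\pi_*^{\varepsilon,r,N}$, and back through $\psi_*$ followed by the first-factor projection. This is precisely the computation carried out in the paragraphs preceding the proposition; the one extra subtlety is that $K_0^{\varepsilon,r,N}$ is a Grothendieck group, so one argues on generators and uses $[z]=0$ in $K_0^{\varepsilon,r,N}(\mathbb C)$ for the relevant $z$ to see that the round trip $[(e,z)]\mapsto[(e+z,0)]$ is the identity of $\ker\pi_*^{\varepsilon,r,N}$. Assembling the unital $K_0$ case, the unital $K_1$ case, and the non-unital case yields the proposition. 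I expect the only real difficulty to be organisational rather than mathematical: keeping track, at each step, of whether $M_n(A^+)$ is being viewed via abstract pairs $(a,z)$ or via their images $(a+z,z)$ under $\psi$, and checking well-definedness of the maps on homotopy classes --- there is no analytic content here.
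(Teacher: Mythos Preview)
Your proposal is correct and follows essentially the same approach as the paper: both set up the forward map $[e]\mapsto[(e,0)]$ (resp.\ $[u]\mapsto[(u,I_n)]$) and the backward map via $\psi_*$ and the direct-sum decomposition $K_*^{\varepsilon,r,N}(A\oplus\mathbb C)\cong K_*^{\varepsilon,r,N}(A)\oplus K_*^{\varepsilon,r,N}(\mathbb C)$, then use that the second component of any representative in $\ker\pi_*$ has trivial class to conclude the round trip is the identity. Your write-up is slightly more explicit in the odd case, but the argument is the one the paper gives in the paragraphs preceding the proposition.
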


\subsection{Some basic properties of quantitative $K$-theory groups}

% Functoriality

If $A$ and $B$ are filtered $SQ_p$ algebras, and $\phi:A\rightarrow B$ is a filtered homomorphism, then we have the following commutative diagram:
\[
\begindc{\commdiag}[100]		% [10]
\obj(0,5)[2a]{$\ker\pi_{A*}^{\varepsilon,r,N}$}	\obj(14,5)[2b]{$K_*^{\varepsilon,r,N}(A^+)$}	\obj(28,5)[2c]{$K_*^{\varepsilon,r,N}(\mathbb{C})$}

\mor{2a}{2b}{}[1,\injectionarrow]	\mor{2b}{2c}{} 

\obj(0,0)[3a]{$\ker\pi_{B*}^{||\phi^+||_{pcb}\varepsilon,r,||\phi^+||_{pcb}N}$}	\obj(28,0)[3c]{$K_*^{||\phi^+||_{pcb}\varepsilon,r,||\phi^+||_{pcb}N}(\mathbb{C})$}	\obj(14,0)[3b]{$K_*^{||\phi^+||_{pcb}\varepsilon,r,||\phi^+||_{pcb}N}(B^+)$}

\mor{2a}{3a}{}	\mor{2c}{3c}{$\iota_*$}		\mor{3a}{3b}{}[1,\injectionarrow]	\mor{3b}{3c}{} \mor{2b}{3b}{$\phi^+_*$}
\enddc
\]
The homomorphism on the left is the restriction of $\phi_*^+$. By the discussion above, we then get a homomorphism \[\phi_*:K_*^{\varepsilon,r,N}(A)\rightarrow K_*^{||\phi^+||_{pcb}\varepsilon,r,||\phi^+||_{pcb}N}(B).\] Moreover, if $\psi:B\rightarrow C$ is another filtered homomorphism between filtered $SQ_p$ algebras, then $\psi_*\circ\phi_*=(\psi\circ\phi)_*$.

% Homotopy invariance

\begin{defn}
A homotopy between two filtered homomorphisms $\phi_0:A\rightarrow B$ and $\phi_1:A\rightarrow B$ of filtered $SQ_p$ algebras is a filtered homomorphism $\Phi:A\rightarrow C([0,1],B)$ such that $ev_0\circ\Phi=\phi_0$ and $ev_1\circ\Phi=\phi_1$, where $ev_t$ denotes the homomorphism $C([0,1],B)\rightarrow B$ given by evaluation at $t$.
%Two filtered homomorphisms $\phi_0:A\rightarrow B$ and $\phi_1:A\rightarrow B$ of filtered Banach algebras are homotopic if there exists a point-norm continuous path $(\phi_t)_{t\in[0,1]}$ of filtered homomorphisms from $A$ to $B$.
\end{defn}

Recall that a filtered homomorphism $\Phi:A\rightarrow C([0,1],B)$ is a $p$-completely bounded homomorphism such that $\Phi(A_r)\subset C([0,1],B_r)$ for each $r>0$. Given such a filtered homomorphism $\Phi$, we may consider $\phi_t=ev_t\circ\Phi$ for $t\in[0,1]$. Then each $\phi_t:A\rightarrow B$ is a filtered homomorphism, the map $t\mapsto\phi_t(a)$ is continuous for each $a\in A$, and $\sup_{t\in[0,1]}||\phi_t||_{pcb}\leq||\Phi||_{pcb}$.

%\todo[inline]{On the other hand, given a path as above, it does not seem clear that $\sup_t||\phi_t||_{cb}$ is finite, and without this finiteness we do not have a filtered hom $A\rightarrow C([0,1],B)$.}

\begin{prop}
Let $\phi_0:A\rightarrow B$ and $\phi_1:A\rightarrow B$ be filtered homomorphisms between filtered $SQ_p$ algebras. Suppose that $\Phi:A\rightarrow C([0,1],B)$ is a homotopy between $\phi_0$ and $\phi_1$.
%Suppose that $(\phi_t)_{t\in[0,1]}$ is a homotopy of filtered homomorphisms between $\phi_0$ and $\phi_1$. Let $M=\sup_{t\in[0,1]}||\phi_t^+||_{cb}$. 
Then \[\phi_{0*}=\phi_{1*}:K_*^{\varepsilon,r,N}(A)\rightarrow K_*^{||\Phi^+||_{pcb}\varepsilon,r,||\Phi^+||_{pcb}N}(B). \]
\end{prop}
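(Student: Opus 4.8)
The plan is to reduce this to the standard "homotopy invariance via evaluation maps" argument, adapted to the controlled setting. Observe that $\Phi \colon A \to C([0,1],B)$ is a filtered homomorphism, so by the functoriality already established it induces $\Phi_* \colon K_*^{\varepsilon,r,N}(A) \to K_*^{\|\Phi^+\|_{pcb}\varepsilon,\,r,\,\|\Phi^+\|_{pcb}N}(C([0,1],B))$. Similarly, the evaluation maps $ev_0, ev_1 \colon C([0,1],B) \to B$ are filtered homomorphisms (they send $C([0,1],B_r)$ into $B_r$ and are $p$-completely contractive, being restrictions of point evaluations), so they induce maps on quantitative $K$-theory without changing the parameters. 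Since $\phi_i = ev_i \circ \Phi$, functoriality of the induced maps gives $\phi_{i*} = ev_{i*} \circ \Phi_*$ for $i = 0,1$. Therefore it suffices to prove that $ev_{0*} = ev_{1*}$ on $K_*^{\varepsilon',r,N'}(C([0,1],B))$, where $\varepsilon' = \|\Phi^+\|_{pcb}\varepsilon$ and $N' = \|\Phi^+\|_{pcb}N$; composing with $\Phi_*$ then yields the claim (one should record that $\|\Phi\|_{pcb} \le \|\Phi^+\|_{pcb}$, or just work with $\Phi^+$ throughout, to match the stated parameters exactly).

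Next I would prove $ev_{0*} = ev_{1*}$ directly at the level of quasi-idempotents and quasi-invertibles. Take a representative $x \in M_n(\widetilde{C([0,1],B)})$ of a class in $K_*^{\varepsilon',r,N'}$; viewing $M_n(\widetilde{C([0,1],B)})$ inside $M_n(\widetilde B)[0,1]$, this is precisely a norm-continuous path $t \mapsto x_t$ of $(\varepsilon',r,N')$-idempotents (resp. $(\varepsilon',r,N')$-invertibles, together with a path of quasi-inverses) in $M_n(\widetilde B)$, with $ev_i$ picking out $x_i$. But such a path $x$ is, essentially by definition of the homotopy relations in Section 2, exactly a homotopy of $(\varepsilon',r,N')$-idempotents (resp. invertibles) between $x_0$ and $x_1$ in $M_n(\widetilde B)$. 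Hence $x_0$ and $x_1$ define the same class: in the $K_1$ case directly from $u \sim v$ meaning $(4\varepsilon',2r,4N')$-homotopic, and in the $K_0$ case, $x_0 \sim x_1$ meaning $(4\varepsilon',r,4N')$-homotopic, which is implied by being an $(\varepsilon',r,N')$-homotopy since $\varepsilon' \le 4\varepsilon'$ and $N' \le 4N'$. One should double-check the unitization bookkeeping: for non-unital $B$, restrict to the kernel of $\pi_*$ and use that $\widetilde{C([0,1],B)} = C([0,1],B)^+$ sits compatibly inside $C([0,1],\widetilde B)$, so the path description still applies; the Remark following Lemma~\ref{homtosim2} and the identifications of matrix algebras over unitizations in Section 2 handle this.

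The one point requiring a little care — and the main (mild) obstacle — is the identification of $M_n(\widetilde{C([0,1],B)})$ with paths in $M_n(\widetilde B)$ and the verification that a quasi-idempotent/quasi-invertible in the path algebra really is the same data as a continuous path of quasi-idempotents/quasi-invertibles with uniformly controlled parameters. This is where one uses that the norm on $M_n(C([0,1],B))$ is the supremum norm (so $\|x^2 - x\| < \varepsilon'$ in the path algebra is equivalent to $\|x_t^2 - x_t\| < \varepsilon'$ uniformly in $t$, after a compactness argument to upgrade a strict pointwise bound to a strict uniform bound — actually the supremum-norm inequality is immediate, no compactness needed), that propagation $r$ means $x_t \in M_n(B_r)$ for all $t$ since the filtration on $C([0,1],B)$ is $C([0,1],B_r)$, and that the norm bound $\|x\| \le N'$ is the uniform bound. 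For $K_1$ one also needs a continuous path of quasi-inverses, which is exactly what an $(\varepsilon',r,N')$-inverse $y$ in the path algebra provides. Once this dictionary is in place, the proposition is essentially immediate. I would write it up as: (1) reduce via $\phi_{i*} = ev_{i*}\circ\Phi_*$; (2) state the path-algebra dictionary as a short observation (or cite the identifications already made in Section 2); (3) conclude $ev_{0*} = ev_{1*}$ from the definition of the homotopy equivalence relations, hence $\phi_{0*} = \phi_{1*}$.
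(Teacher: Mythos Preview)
Your proposal is correct and is essentially the same argument as the paper's proof, just with an extra layer of scaffolding. The paper works directly: for an $(\varepsilon,r,N)$-idempotent $e$ in $M_n(A^+)$ it observes that $t\mapsto\phi_t^+(e)$ is a homotopy of $(\|\Phi^+\|_{pcb}\varepsilon,r,\|\Phi^+\|_{pcb}N)$-idempotents in $M_n(B^+)$ between $\phi_0^+(e)$ and $\phi_1^+(e)$, and similarly for invertibles. Your factorization $\phi_{i*}=ev_{i*}\circ\Phi_*$ followed by ``$ev_{0*}=ev_{1*}$'' unwinds to exactly this computation, since $\phi_t^+(e)=ev_t(\Phi^+(e))$; the ``path-algebra dictionary'' you spell out is precisely what the paper uses implicitly.
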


\begin{proof}
For $t\in[0,1]$, let $\phi_t=ev_t\circ\Phi$. For any $(\varepsilon,r,N)$-idempotent $e$ in $M_n(A^+)$, $\phi_t^+(e)$ is a homotopy of $(||\Phi^+||_{pcb}\varepsilon,r,||\Phi^+||_{pcb}N)$-idempotents in $M_n(B^+)$ between $\phi_0^+(e)$ and $\phi_1^+(e)$. Similarly, for any $(\varepsilon,r,N)$-invertible $u$ in $M_n(A^+)$, $\phi_t^+(u)$ is a homotopy of $(||\Phi^+||_{pcb}\varepsilon,r,||\Phi^+||_{pcb}N)$-invertibles in $M_n(B^+)$ between $\phi_0^+(u)$ and $\phi_1^+(u)$.
\end{proof}

\begin{prop}\label{stab}
If $A$ is a filtered $SQ_p$ algebra, then the canonical embedding $A\rightarrow M_n(A)$ induces an isomorphism \[K_*^{\varepsilon,r,N}(A)\cong K_*^{\varepsilon,r,N}(M_n(A))\] for each $n\in\mathbb{N}$.

%If $A$ is non-unital, and the inclusion $\mathbb{C}\hookrightarrow A^+$ is completely contractive, then we also have an isomorphism $K_*^{\varepsilon,r,N}(A)\cong K_*^{\varepsilon,r,N}(M_n(A))$ for all $n\in\mathbb{N}$.
\end{prop}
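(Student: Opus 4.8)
The isomorphism is the one induced by the corner embedding $\iota_n\colon A\to M_n(A)$, $a\mapsto\diag(a,0,\dots,0)$; when $*=1$ or $A$ is non-unital this means the map induced by the unital homomorphism $\iota_n^+\colon A^+\to M_n(A)^+$ in the way explained in Section~3 (restrict the induced map on the unitizations to $\ker\pi_*$). The plan is to prove $\iota_{n*}$ is an isomorphism by exhibiting an explicit inverse. First I would note that $\iota_n$ is a filtered homomorphism and, by the $\mathcal{D}_\infty$ axiom (so $\Vert\diag(a,0,\dots,0)\Vert=\Vert a\Vert$) together with the contractivity of the augmentation character, is $p$-completely isometric; consequently $\iota_{n*}$ is defined with the \emph{same} control data $(\varepsilon,r,N)$, not a relaxed one.

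The candidate inverse comes from cofinality. Using the canonical identifications $M_k(M_n(A))=M_{kn}(A)$ (which are $p$-completely isometric and satisfy $M_k(M_n(A_r))=M_{kn}(A_r)$), we have $Idem_\infty^{\varepsilon,r,N}(M_n(A))=\bigcup_k Idem^{\varepsilon,r,N}(M_{kn}(A))$ as a subset of $Idem_\infty^{\varepsilon,r,N}(A)=\bigcup_m Idem^{\varepsilon,r,N}(M_m(A))$, and the $(4\varepsilon,r,4N)$-homotopies defining $\sim$ take place in the very same matrix algebras over $A$; likewise for $GL_\infty$ with the $(4\varepsilon,2r,4N)$-homotopies. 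Since $\{kn\}_k$ is cofinal among $\{m\}_m$, this inclusion descends to a homomorphism $\rho\colon K_*^{\varepsilon,r,N}(M_n(A))\to K_*^{\varepsilon,r,N}(A)$, and I would check $\rho$ is bijective directly: surjective because a quasi-idempotent (resp.\ quasi-invertible) over $M_m(A)$ is stably equal to one padded with zeros (resp.\ ones) to a size divisible by $n$, and injective because any connecting homotopy lives in some $M_{m'}(A)$, which can be enlarged to size a multiple of $n$, i.e.\ to $M_{k'}(M_n(A))$.

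Finally I would show $\rho\circ\iota_{n*}=\mathrm{id}$. For a quasi-idempotent $e\in M_m(A)$, the matrix $M_m(\iota_n)(e)\in M_m(M_n(A))=M_{mn}(A)$ is supported in the rows and columns indexed by $\{1,n+1,\dots,(m-1)n+1\}$, where it reproduces $e$, so conjugation by the permutation matrix $\sigma\in M_{mn}(\mathbb{C})$ carrying those indices to $\{1,\dots,m\}$ turns it into $\diag(e,0_{mn-m})$. As a permutation matrix $\sigma$ is an $\ell_p$-isometry, so $\max(\Vert\sigma\Vert,\Vert\sigma^{-1}\Vert)\le 1$, and Lemmas~\ref{simtohom1}--\ref{simtohom2} give $M_m(\iota_n)(e)\sim\diag(e,0)$; hence $\rho(\iota_{n*}[e]_A)=[\diag(e,0)]_A=[e]_A$. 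The same computation applies verbatim to a quasi-invertible, the only change being that invariance of the class under conjugation by $\sigma$ now follows from the rotation homotopy of the Remark after Lemma~\ref{inversepairhomotopy} (conjugate $\diag(w,1)$ by the homotopy joining $\diag(\sigma,\sigma^{-1})$ to the identity, which exists since $\Vert\sigma^{-1}\Vert\le 1$). So $\rho\circ\iota_{n*}=\mathrm{id}$; since $\rho$ is bijective this forces $\iota_{n*}=\rho^{-1}$, an isomorphism. For $K_0$ one first obtains this for the semigroups $Idem_\infty^{\varepsilon,r,N}/\sim$ and then passes to Grothendieck groups. In the non-unital case (and in the $K_1$ case, which in any event goes through $\iota_n^+$), one runs the identical argument with $A^+$ and $M_n(A)^+$ in place of $A$ and $M_n(A)$, using that $\iota_n^+$ intertwines the augmentations $\pi$, and then restricts to the kernels.

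I expect the one genuinely delicate point to be the permutation-conjugation invariance of quasi-idempotent and quasi-invertible classes, which is precisely what Lemma~\ref{simtohom2} and the rotation-homotopy remark are engineered to deliver, and which relies essentially on permutation matrices being $\ell_p$-isometries (for $p\ne 2$ one cannot instead join $\sigma$ to the identity through isometries, so one really does want the $\max(\Vert u\Vert,\Vert u^{-1}\Vert)\le 1$ form of those lemmas). The remaining work is bookkeeping: keeping the identifications $M_k(M_n(A))=M_{kn}(A)$ consistent for norms and filtrations, and, for non-unital $A$, being careful that $M_n(A)^+$ is a proper subalgebra of $M_n(A^+)$ so that one argues directly with $M_n(A)^+$ rather than assuming any identity $M_n(A^+)=M_n(A)^+$.
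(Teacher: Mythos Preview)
Your unital-case argument is correct and matches the paper's: both use the identification $M_k(M_n(A))\cong M_{kn}(A)$, observe that the corner embedding composed with this identification is permutation-conjugate to the usual stabilization, and invoke Lemma~\ref{simtohom2} (plus the rotation homotopy for the odd case) to see that conjugation by a permutation matrix---an $\ell_p$-isometry---preserves classes. Your cofinality packaging is a mild reorganization: rather than checking both compositions $\zeta_*\circ i_*$ and $i_*\circ\zeta_*$ are identities via two permutation arguments as the paper does, you argue directly that $\rho$ is bijective and then check only one composition.

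In the non-unital case, however, ``run the identical argument with $A^+$ and $M_n(A)^+$'' hides a real issue: because $M_k(M_n(A)^+)$ is a \emph{proper} subalgebra of $M_{kn}(A^+)$, your cofinality identification $Idem_\infty^{\varepsilon,r,N}(M_n(A)^+)=\bigcup_k Idem^{\varepsilon,r,N}(M_{kn}(A^+))$ fails, so the direct bijectivity argument for $\rho$ does not go through as written. The paper supplies the needed adjustment: for $e\in M_k(A^+)$ one has $\zeta(i_k^+(e))=u\,\diag(e,\pi(e),\ldots,\pi(e))\,u^{-1}$ rather than $u\,\diag(e,0,\ldots,0)\,u^{-1}$. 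The extra copies of $\pi(e)$ contribute $(n-1)[\pi(e)]$ in $K_0^{\varepsilon,r,N}(A^+)$, and these cancel against $(n-1)[I_m]$ precisely when one restricts to $\ker\pi_*$, since $[\pi(e)]=[I_m]$ in $K_0^{\varepsilon,r,N}(\mathbb{C})$ implies the same in $K_0^{\varepsilon,r,N}(A^+)$ (the homotopy is through scalar matrices). You correctly warn that $M_n(A)^+\neq M_n(A^+)$, but you don't supply this computation.
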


%\todo[inline]{Is the extra assumption needed in the non-unital case? (in statement, what norm on $M_n(\mathbb{C})$?) i.e. for the extension $0\rightarrow A\rightarrow A^+\rightarrow\mathbb{C}\rightarrow 0$, can we make a consistent choice of matrix norms so that the quotient hom and the inclusion are both completely contractive? Can still have controlled isomorphism in non-unital case?}

\begin{proof}
Let $A$ be a unital filtered $SQ_p$ algebra, and let $i:A\rightarrow M_n(A)$ be the canonical embedding $a\mapsto\diag(a,0)$. Then we get the induced maps $i_k:M_k(A)\rightarrow M_k(M_n(A))$ for $k\in\mathbb{N}$. Let \[\zeta:M_k(M_n(A))\rightarrow M_{kn}(A)\] be the isomorphism given by removing parentheses. Recall that we equip $M_k(M_n(A))$ with the norm induced by this isomorphism. If $e$ is an $(\varepsilon,r,N)$-idempotent in $M_k(A)$, then $i_k(e)$ is an $(\varepsilon,r,N)$-idempotent in $M_k(M_n(A))$, and $\zeta(i_k(e))$ is an $(\varepsilon,r,N)$-idempotent in $M_{kn}(A)$. Now \[\zeta(i_k(e))=u\diag(e,0)u^{-1}\] for some permutation matrix $u$. By Lemma \ref{simtohom2}, $\diag(\zeta(i_k(e)),0)$ and $\diag(e,0)$ are $(4\varepsilon,r,4N)$-homotopic in $M_{2kn}(A)$. Hence the composition $\zeta_*\circ i_*$ is the identity on $K_0^{\varepsilon,r,N}(A)$.
% canonical hom $K_0^{\varepsilon,r,N}(A)\rightarrow K_0^{2\varepsilon,r,2N}(A)$ if using original equiv rel

On the other hand, if $f$ is an $(\varepsilon,r,N)$-idempotent in $M_k(M_n(A))$, then $\zeta(f)$ is an $(\varepsilon,r,N)$-idempotent in $M_{kn}(A)$, and $i_{kn}(\zeta(f))$ is an $(\varepsilon,r,N)$-idempotent in $M_{kn}(M_n(A))$. As above, we see that $\diag(i_{kn}(\zeta(f)),0)$ and $\diag(f,0)$ are $(4\varepsilon,r,4N)$-homotopic in $M_{2kn}(M_n(A))$. Hence the composition $i_*\circ\zeta_*$ is the identity on $K_0^{\varepsilon,r,N}(M_n(A))$.
% canonical hom $K_0^{\varepsilon,r,N}(M_n(A))\rightarrow K_0^{2\varepsilon,r,2N}(M_n(A))$ if using original equiv rel

The odd case and the non-unital case are proved in a similar way. We just remark that in the non-unital case, we have \[\zeta(i_k(e))=u\diag(e,\pi(e),\ldots,\pi(e))u^{-1}\] for some permutation matrix $u$, where $\pi:A^+\rightarrow\mathbb{C}$ is the canonical quotient homomorphism.
\end{proof}

%\todo[inline]{If $A$ is non-unital, then from the unital case we have $K_0^{\varepsilon,r,N}(A^+)\cong K_0^{\varepsilon,r,N}(M_n(A^+))$; may only have controlled isom in non-unital case even if the equiv rel is changed; may have actual isom for Lp operator algebras; if $\mathbb{C}\hookrightarrow A^+$ is completely contractive, then similar arg as unital case works}

We also have a ``standard picture'' for the quantitative $K_0$ groups analogous to that for the usual $K_0$ group.

\begin{lem}
If $e,f\in M_\infty(A)$ are $(\varepsilon,r,N)$-idempotents and $[e]-[f]=0$ in $K_0^{\varepsilon,r,N}(A)$, then there exists $m\in\mathbb{N}$ such that $\diag(e,I_m,0_m)$ and $\diag(f,I_m,0_m)$ are homotopic as $(4\varepsilon,r,4N)$-idempotents in $M_\infty(\tilde{A})$.
\end{lem}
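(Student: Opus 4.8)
The plan is to carry out, with explicit control on the parameters, the standard $K$-theory argument showing that equality of $K_0$-classes forces homotopy after stabilization.

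\emph{Step 1: unwinding the Grothendieck group.} When $A$ is unital, $K_0^{\varepsilon,r,N}(A)$ is the Grothendieck group of the abelian semigroup $S=Idem_\infty^{\varepsilon,r,N}(A)/{\sim}$, where $e\sim f$ means $(4\varepsilon,r,4N)$-homotopic in $M_\infty(A)$; and two elements of $S$ have the same image in the Grothendieck group exactly when they become equal in $S$ after adding a common element of $S$. Since $[e]=[f]$, there is therefore an $(\varepsilon,r,N)$-idempotent $g$ in some $M_k(A)$ such that $\diag(e,g)$ and $\diag(f,g)$ are $(4\varepsilon,r,4N)$-homotopic in $M_\infty(A)$. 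If $A$ is non-unital I would instead use that $K_0^{\varepsilon,r,N}(A)=\ker\pi_*$ is a subgroup of $K_0^{\varepsilon,r,N}(A^+)$, run the same argument inside $A^+=\tilde A$, and obtain $g$ in some $M_k(\tilde A)$.

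\emph{Step 2: adjoining the complement.} After padding $e,f$ to a common size $n$ and enlarging $g$ by a zero block, I may assume the homotopy between $\diag(e,g)$ and $\diag(f,g)$ lives in $M_{n+K}(\tilde A)$ for some $K\ge k$. Put $g^\perp=I_K-g$; a one-line estimate ($\|(g^\perp)^2-g^\perp\|=\|g^2-g\|$, $\max(\|g^\perp\|,\|I_K-g^\perp\|)=\max(\|I_K-g\|,\|g\|)\le N$, propagation $r$) shows $g^\perp$ is again an $(\varepsilon,r,N)$-idempotent, and $g+g^\perp=I_K$. Appending the constant block $g^\perp$ to the given homotopy gives $\diag(e,g,g^\perp)\stackrel{4\varepsilon,r,4N}{\sim}\diag(f,g,g^\perp)$ in $M_{n+2K}(\tilde A)$, since adjoining a fixed $(\varepsilon,r,N)$-idempotent never breaks the bounds $(4\varepsilon,r,4N)$.

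\emph{Step 3: the rotation homotopy (the main point).} It remains to connect $\diag(I_K,0_K)$ to $\diag(g,g^\perp)$ through $(4\varepsilon,r,4N)$-idempotents in $M_{2K}(\tilde A)$. I would take the path used in the proof of Lemma~\ref{orthidem},
\[
E_t=\begin{pmatrix} g & 0\\ 0 & 0\end{pmatrix}+R_t\begin{pmatrix} g^\perp & 0\\ 0 & 0\end{pmatrix}R_t^{-1},\qquad
R_t=\begin{pmatrix}(\cos\tfrac{\pi t}{2})I_K & -(\sin\tfrac{\pi t}{2})I_K\\ (\sin\tfrac{\pi t}{2})I_K & (\cos\tfrac{\pi t}{2})I_K\end{pmatrix},
\]
so that $E_0=\diag(g+g^\perp,0_K)=\diag(I_K,0_K)$ and $E_1=\diag(g,g^\perp)$. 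The main obstacle, and essentially the only place where the $SQ_p$ setting differs from the $C^*$-case, is norm control: $R_t$ is no longer an isometry, but a routine interpolation estimate gives $\|R_t\|_{B(\ell_p^{2K})}\le\sqrt2$ (its absolute-value matrix has nonnegative entries and row and column sums $\le\sqrt2$), so by axiom $\mathcal{M}_p$ conjugation by $R_t$ inflates norms by a factor at most $2$, yielding $\max(\|E_t\|,\|I_{2K}-E_t\|)\le 3N\le 4N$. For the quasi-idempotent defect a direct computation — using only $g+g^\perp=I_K$, so that the approximate-orthogonality cross terms cancel identically — shows that $E_t^2-E_t$ is the block-scalar matrix $\sin^2\tfrac{\pi t}{2}\cdot\diag(g^2-g,\,g^2-g)$, hence $\|E_t^2-E_t\|\le\varepsilon<4\varepsilon$. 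Propagation and norm-continuity in $t$ are immediate, so $E_t$ is a homotopy of $(4\varepsilon,r,4N)$-idempotents.

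\emph{Step 4: assembling.} Pre-appending $e$ to the reversed path $E_{1-t}$ gives $\diag(e,I_K,0_K)\stackrel{4\varepsilon,r,4N}{\sim}\diag(e,g,g^\perp)$, and likewise for $f$; these stay within $(4\varepsilon,r,4N)$ precisely because $e$ and $f$ are $(\varepsilon,r,N)$-idempotents. Concatenating this with the homotopy of Step~2 and using transitivity of homotopy, I obtain $\diag(e,I_K,0_K)\stackrel{4\varepsilon,r,4N}{\sim}\diag(f,I_K,0_K)$ in $M_\infty(\tilde A)$, so $m=K$ works. Beyond the norm bookkeeping in Step~3 and the small amount of care needed in Step~1 for the non-unital case, everything here is routine manipulation of block-diagonal quasi-idempotents.
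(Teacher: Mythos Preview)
Your proof is correct and follows essentially the same route as the paper: unwind the Grothendieck relation to get a stabilizing $g$, adjoin the complementary quasi-idempotent $I_m-g$, and use the rotation $E_t=\diag(g,0)+R_t\diag(I_m-g,0)R_t^{-1}$ to swing $\diag(g,I_m-g)$ to $\diag(I_m,0_m)$. Your explicit computation $E_t^2-E_t=\sin^2\tfrac{\pi t}{2}\,\diag(g^2-g,g^2-g)$ and the norm bound via $\|R_t\|_{B(\ell_p)}\le\sqrt{2}$ are exactly the estimates underlying the paper's claim that this path stays within $(\varepsilon,r,\tfrac{5}{2}N)$-idempotents (your $3N$ is marginally weaker but still well under $4N$).
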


%\begin{proof}
%If $[e]-[f]=0$ in $K_0^{\varepsilon,r,N}(A)$, then $[e]+[g]=[f]+[g]$ in $V^{\varepsilon,r,N}(A^+)$ for some $g\in Idem^{\varepsilon,r,N}_m(A^+)$, which means that $\diag(e,g)$ and $\diag(f,g)$ are homotopic as $2\varepsilon$-$r$-$2N$-idempotents in $M_{k+m+l}(A^+)$ for some $l\in\mathbb{N}$. Now $I_m-g$ is an $\varepsilon$-$r$-$(N+1)$-idempotent in $M_m(A^+)$ so $\diag(e,g,I_m-g)$ and $\diag(f,g,I_m-g)$ are homotopic as $2\varepsilon$-$r$-$2N$-idempotents. But $\diag(g,0)+R_t\diag(I_m-g,0)R_t^{-1}$ is a homotopy of $\varepsilon$-$r$-$2(N+1)$-idempotents between $\diag(I_m,0_m)$ and $\diag(g,I_m-g)$ where $R_t=\begin{pmatrix}(\cos\frac{\pi t}{2})I_m & (\sin\frac{\pi t}{2})I_m \\ -(\sin\frac{\pi t}{2})I_m & (\cos\frac{\pi t}{2})I_m \end{pmatrix}$. Hence $\diag(e,I_m,0_m)$ and $\diag(f,I_m,0_m)$ are homotopic as $2\varepsilon$-$r$-$2(N+1)$-idempotents in $M_{k+2m+l}(A^+)$.
%\end{proof}

\begin{proof}
If $[e]-[f]=0$ in $K_0^{\varepsilon,r,N}(A)$, then $[e]+[g]=[f]+[g]$ in $Idem_\infty^{\varepsilon,r,N}(\tilde{A})/{\sim}$ for some $g\in Idem_m^{\varepsilon,r,N}(\tilde{A})$, so $\diag(e,g)$ and $\diag(f,g)$ are $(4\varepsilon,r,4N)$-homotopic in $M_\infty(\tilde{A})$. Now $I_m-g$ is $(\varepsilon,r,N)$-idempotent in $M_m(\tilde{A})$, and $\diag(e,g,I_m-g)\stackrel{4\varepsilon,r,4N}{\sim}\diag(f,g,I_m-g)$ in $M_\infty(\tilde{A})$. But we have a homotopy of $(\varepsilon,r,\frac{5}{2}N)$-idempotents between $\diag(I_m,0_m)$ and $\diag(g,I_m-g)$ given by \[\diag(g,0)+R_t\diag(I_m-g,0)R_t^{-1},\] where $R_t=\begin{pmatrix} (\cos\frac{\pi t}{2})I_m & (\sin\frac{\pi t}{2})I_m \\ -(\sin\frac{\pi t}{2})I_m & (\cos\frac{\pi t}{2})I_m \end{pmatrix}$, and hence a homotopy of $(4\varepsilon,r,4N)$-idempotents between $\diag(e,I_m,0_m)$ and $\diag(f,I_m,0_m)$.
\end{proof}

\begin{lem}
If $[e]-[f]\in K_0^{\varepsilon,r,N}(A)$, where $e,f\in M_k(\tilde{A})$, then $[e]-[f]=[e']-[I_k]$ in $K_0^{\varepsilon,r,N}(A)$ for some $e'\in M_{2k}(\tilde{A})$.
\end{lem}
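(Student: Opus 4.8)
The plan is to realize $[I_k] - [f]$ as the class of a single quasi-idempotent, namely $[I_k - f]$, and then set $e' = \diag(e, I_k - f) \in M_{2k}(\tilde{A})$. First I would check that $I_k - f$ is an $(\varepsilon,r,N)$-idempotent in $M_k(\tilde{A})$: since $\tilde{A}$ is unital we have $I_k \in M_k(\tilde{A})_r$, hence $I_k - f \in M_k(\tilde{A})_r$; the computation $(I_k - f)^2 - (I_k - f) = f^2 - f$ gives $\|(I_k - f)^2 - (I_k - f)\| < \varepsilon$; and $\max(\|I_k - f\|, \|I_k - (I_k - f)\|) = \max(\|I_k - f\|, \|f\|) \le N$ because $f$ is an $(\varepsilon,r,N)$-idempotent. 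Using axiom $\mathcal{D}_\infty$ of the $p$-operator space structure, the same checks show that $e' = \diag(e, I_k - f)$ is an $(\varepsilon,r,N)$-idempotent in $M_{2k}(\tilde{A})$, so $[e']$ is defined.

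The key step is to prove $[f] + [I_k - f] = [I_k]$ in $K_0^{\varepsilon,r,N}(\tilde{A})$. By definition of the addition this means $[\diag(f, I_k - f)] = [\diag(I_k, 0_k)]$, so it suffices to exhibit a $(4\varepsilon, r, 4N)$-homotopy in $M_{2k}(\tilde{A})$ between $\diag(f, I_k - f)$ and $\diag(I_k, 0_k)$. This is precisely the rotation homotopy appearing in the proof of the preceding lemma: setting
\[
R_t = \begin{pmatrix} (\cos\tfrac{\pi t}{2})I_k & (\sin\tfrac{\pi t}{2})I_k \\ -(\sin\tfrac{\pi t}{2})I_k & (\cos\tfrac{\pi t}{2})I_k \end{pmatrix},
\]
the path $t \mapsto \diag(f, 0) + R_t\, \diag(I_k - f, 0)\, R_t^{-1}$ equals $\diag(I_k, 0_k)$ at $t = 0$ and $\diag(f, I_k - f)$ at $t = 1$, and as noted there it is a homotopy of $(\varepsilon, r, \tfrac{5}{2}N)$-idempotents, hence in particular a $(4\varepsilon, r, 4N)$-homotopy. (One may view this as a near-orthogonality argument in the spirit of Lemma~\ref{orthidem}, the point being that $f$ and $I_k - f$ multiply to $f - f^2$, which has norm $< \varepsilon$.)

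With this in hand the result follows by pure bookkeeping: $[e'] = [\diag(e, I_k - f)] = [e] + [I_k - f] = [e] + ([I_k] - [f])$, so $[e'] - [I_k] = [e] - [f]$ as an identity in $K_0^{\varepsilon,r,N}(\tilde{A})$ (which in the non-unital case is $K_0^{\varepsilon,r,N}(A^+)$); and since $[e] - [f]$ lies in the subgroup $K_0^{\varepsilon,r,N}(A)$ by hypothesis, so does $[e'] - [I_k]$. I do not expect a genuine obstacle, as this is a standard manipulation; the only points requiring care are keeping the homotopy-class parameters consistent — the equivalence relation defining $K_0^{\varepsilon,r,N}$ permits $(4\varepsilon, r, 4N)$-homotopies, which comfortably absorbs the factor $\tfrac{5}{2}$ above — and, in the non-unital case, recalling that $K_0^{\varepsilon,r,N}(A)$ is a subgroup of $K_0^{\varepsilon,r,N}(A^+)$, so that membership transfers across the displayed equality.
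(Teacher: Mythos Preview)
Your proposal is correct and follows essentially the same approach as the paper: both take $e' = \diag(e, I_k - f)$ and use the rotation homotopy $\diag(f,0) + R_t\,\diag(I_k-f,0)\,R_t^{-1}$ to identify $[f]+[I_k-f]$ with $[I_k]$ (the paper phrases this as a homotopy between $\diag(e,I_k-f,f)$ and $\diag(e,I_k,0)$, which is the same move with $e$ tacked on). Your treatment of the non-unital case via the subgroup property is equivalent to the paper's direct check that $[\pi(e')]=[I_k]$.
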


\begin{proof}
Let $e'=\diag(e,I_k-f)$. Then $e'\in Idem_{2k}^{\varepsilon,r,N}(\tilde{A})$. We have a homotopy of $(\varepsilon,r,\frac{5}{2}N)$-idempotents between $\diag(e,I_k-f,f)$ and $\diag(e,I_k,0)$. If $A$ is non-unital, and $[\pi(e)]=[\pi(f)]$ in $K_0^{\varepsilon,r,N}(\mathbb{C})$, then $[\pi(e')]=[I_k]$.
%Since $\diag(\pi(e),g)$ and $\diag(\pi(f),g)$ are homotopic as $2\varepsilon$-$r$-$2N$ idempotents for some $g\in Idem_m^{\varepsilon,r,N}(\mathbb{C})\subset Idem_m^{\varepsilon,r,N}(\tilde{A})$, we have the following sequence of homotopies between $\diag(\pi(e'),g)$ and $\diag(I_k,0,g)$ as $2\varepsilon$-$r$-$(3N+2)$-idempotents:
%\begin{align*} 
%\begin{pmatrix} \pi(e') &  \\  & g \end{pmatrix}&=\begin{pmatrix} \pi(e) & & \\ & I_k-\pi(f) & \\ & & g \end{pmatrix} \sim \begin{pmatrix} \pi(e) & & \\ & g & \\ & & I_k-\pi(f) \end{pmatrix} \sim \begin{pmatrix} \pi(f) & & \\ & g & \\ & & I_k-\pi(f) \end{pmatrix} \\
%&\sim \begin{pmatrix} \pi(f) & & \\ & I_k-\pi(f) & \\ & & g \end{pmatrix} \sim \begin{pmatrix} I_k & & \\ & 0_k & \\ & & g \end{pmatrix}
%\end{align*}
Hence $[e]-[f]=[e']-[I_k]$ in $K_0^{\varepsilon,r,N}(A)$.
\end{proof}

% writing elements as $[e]-[I_k]$ where $\pi(e)=I_k$. This requires relaxing control according to a control pair.
If we allow ourselves to relax control, then we can write elements in $K_0^{\varepsilon,r,N}(A)$ in the form $[e]-[I_k]$ with $\pi(e)=\diag(I_k,0)$.

\begin{lem}
There exist a non-decreasing function $\lambda:[1,\infty)\rightarrow[1,\infty)$ and a function $h:(0,\frac{1}{20})\times[1,\infty)\rightarrow[1,\infty)$ with $h(\cdot,N)$ non-increasing for each fixed $N$ such that for any filtered $SQ_p$ algebra $A$, if $[e]-[f]\in K_0^{\varepsilon,r,N}(A)$, where $e,f\in M_k(\tilde{A})$, then $[e]-[f]=[e'']-[I_k]$ in $K_0^{\lambda_N\varepsilon,h_{\varepsilon,N} r,\lambda_N}(A)$ for some $e''\in M_{2k}(\tilde{A})$ with $\pi(e'')=\diag(I_k,0_k)$.
\end{lem}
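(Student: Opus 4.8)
The plan is to combine the standard picture coming from the previous lemma with a normalisation of the scalar part of a representing idempotent. Throughout I write $\pi$ for the augmentation $\tilde A\to\mathbb{C}$ (in the unital case one first passes to $A^+\cong A\oplus\mathbb{C}$ and takes the coordinate projection; the argument is identical), and I treat the non-unital case for concreteness. By the previous lemma, $[e]-[f]=[e_1]-[I_k]$ in $K_0^{\varepsilon,r,N}(A)$ with $e_1=\diag(e,I_k-f)\in M_{2k}(\tilde A)$ an $(\varepsilon,r,N)$-idempotent, and since $[e]-[f]\in\ker\pi_*$ we get $[\pi(e_1)]=[I_k]$ in $K_0^{\varepsilon,r,N}(\mathbb{C})$; note $\pi(e_1)=\diag(\pi(e),I_k-\pi(f))$. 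It therefore suffices to replace $e_1$ by some $e''\in M_{2k}(\tilde A)$ representing the same class in a controlled $K_0$-group and with $\pi(e'')=\diag(I_k,0_k)$. First I would make the scalar parts honest: for $q\in M_m(\mathbb{C})$ with $\|q^2-q\|<\varepsilon<\tfrac14$, holomorphic functional calculus (equivalently the power series $\tfrac12\bigl(1+(2q-1)(1+4(q^2-q))^{-1/2}\bigr)$) produces an idempotent $P$ that is a power series in $q$, commutes with $q$ and with $\pi$, and satisfies $\|P-q\|\le C(N)\varepsilon$ and $\|P\|,\|I-P\|\le N+C(N)\varepsilon$. Applying this to $q=\pi(e)$ and $q=\pi(f)$ gives honest $P_e,P_f\in M_k(\mathbb{C})$; replacing $e,f$ by $\hat e=e-\pi(e)+P_e$, $\hat f=f-\pi(f)+P_f$ does not change propagation and, by Lemma \ref{normestlem1}, changes the classes only in a controlled way, while now $\pi(\hat e)=P_e$, $\pi(\hat f)=P_f$ are idempotents sharing a common rank $j$ (because $[\pi(e)]=[\pi(f)]$ and the number of spectral values near $1$ is invariant under controlled homotopies of quasi-idempotents).

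The heart of the matter is to conjugate $P_e$ to $P_f$ by an invertible $S\in M_k(\mathbb{C})$ whose norm is bounded \emph{in terms of $N$ only}. Over a $C^*$-algebra this is immediate since projections of equal rank are unitarily equivalent; over $B(\ell_p^n)$ one must use that $K_0^{\varepsilon,r,N}(\mathbb{C})$-equivalence of $P_e$ and $P_f$ is a strong condition: combined with cancellation in $K_0^{\bullet}(\mathbb{C})$ and the homotopy-to-similarity machinery (Lemma \ref{idemliphom} and Proposition \ref{homtosim2}) it should already force $P_e$ and $P_f$ to be similar inside $M_k$ via an $N$-controlled invertible (this is the step in which the propagation acquires a factor $n_\varepsilon$, producing the $\varepsilon$-dependence of $h$). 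Conjugating $\hat e$ by such an $S$ — legitimate by Lemmas \ref{simtohom1}--\ref{simtohom2}, which cost only an $N$-dependent enlargement of the parameters and no propagation — we may assume $\pi(\hat e)=\pi(\hat f)=:P$. Setting $e_2=\diag(\hat e,I_k-\hat f)\in M_{2k}(\tilde A)$ we then have $\pi(e_2)=\diag(P,I_k-P)$ and $[e_2]-[I_k]=[e]-[f]$ in a controlled $K_0$-group.

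Finally I would straighten out the scalar part by an explicit involution. Let $W=\begin{pmatrix}P&I_k-P\\ I_k-P&P\end{pmatrix}\in M_{2k}(\mathbb{C})$. Since $P(I_k-P)=(I_k-P)P=0$ one checks $W^2=I_{2k}$, so $W$ is its own inverse and $\max(\|W\|,\|W^{-1}\|)\le\|P\|+\|I_k-P\|$ is controlled by $N$; a direct computation gives $W\,\diag(P,I_k-P)\,W^{-1}=\diag(I_k,0_k)$. Put $e''=W e_2 W^{-1}\in M_{2k}(\tilde A)$. Then $\pi(e'')=\diag(I_k,0_k)$, conjugation by the scalar matrix $W$ preserves propagation, and by Lemmas \ref{simtohom1}--\ref{simtohom2} $[e'']=[e_2]$ after an $N$-dependent enlargement of $(\varepsilon,N)$. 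Collecting all the enlargements — the constant $C(N)$ from the functional calculus, the norms of $S$ and of $W$, and the factors $4$ built into the relation $\sim$ — yields a non-decreasing $\lambda$ and a function $h$ of the required shape (with $h$ essentially governed by $n_\varepsilon$) such that $[e]-[f]=[e'']-[I_k]$ in $K_0^{\lambda_N\varepsilon,\,h_{\varepsilon,N}r,\,\lambda_N}(A)$.

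I expect the conjugation of $P_e$ to $P_f$ by a norm-controlled invertible to be the genuine obstacle. This is the one place where the failure of the $C^*$ structure really bites: a rank-preserving similarity of idempotents over $B(\ell_p^n)$ need not be norm-bounded, so one has to extract the controlled similarity purely from the hypothesis that $[\pi(e)]=[\pi(f)]$ in the quantitative group $K_0^{\varepsilon,r,N}(\mathbb{C})$, and do so while staying inside $M_k$ rather than stabilising (otherwise the size $2k$ in the conclusion would not be respected). Everything else — the reduction, the functional calculus, and the involution — is routine, controlled, and propagation-preserving.
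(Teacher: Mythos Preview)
You have correctly located the only nontrivial step, but you have then made it harder than it needs to be, and in doing so left a genuine gap. Your insistence on producing an \emph{exact} invertible $S\in M_k(\mathbb{C})$ with $SP_eS^{-1}=P_f$ is precisely what you cannot get for free in $B(\ell_p^k)$; the machinery of Lemma~\ref{idemliphom} and Proposition~\ref{homtosim2} only yields, after stabilisation, a quasi-inverse pair $(u,v)$ with $u\pi(e')v$ \emph{close} to the target. Your proposal offers no argument bridging ``close'' to ``exactly equal'' in a norm-controlled way, and your separate normalisation of $\pi(e)$ and $\pi(f)$ to honest idempotents, followed by the (nice) involution $W$, does not help with this step.

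The paper avoids this obstacle entirely by an additive correction rather than an exact conjugation. Work directly with $e'=\diag(e,I_k-f)$; use Lemma~\ref{idemliphom} and Proposition~\ref{homtosim2} (up to stabilisation) to produce a scalar quasi-inverse pair $(u,v)$ with $\|u\,\pi(e')\,v-\diag(I_k,0_k)\|<\lambda_N\varepsilon$, and then simply set
\[
e''\;=\;u\,e'\,v\;-\;u\,\pi(e')\,v\;+\;\diag(I_k,0_k).
\]
Since $u,v$ are scalar, $\pi(e'')=\diag(I_k,0_k)$ on the nose, while $\|e''-ue'v\|<\lambda_N\varepsilon$, so Lemma~\ref{normestlem1} and Lemma~\ref{simtohom2} give $[e'']=[e']$ in a controlled $K_0$-group. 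This bypasses the exact-similarity problem, the functional-calculus step, and the involution $W$ altogether. Note also that the paper's proof \emph{does} stabilise when invoking Lemma~\ref{idemliphom}, so the ``$e''\in M_{2k}(\tilde A)$'' in the statement is to be read up to first enlarging $k$; your worry that stabilisation would violate the conclusion is therefore misplaced, and once you allow it your approach could also be completed (e.g.\ via Lemma~\ref{closeimpliessimilar} to upgrade approximate to exact similarity), though less cleanly.
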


\begin{proof}
By the previous proposition, $[e]-[f]=[e']-[I_k]$ in $K_0^{\varepsilon,r,N}(A)$ for some $e'\in M_{2k}(\tilde{A})$. Since $[\pi(e')]=[I_k]$, up to rescaling $\varepsilon$ and $N$, and up to stabilization, $\pi(e')$ and $\diag(I_k,0_k)$ are homotopic as $(\varepsilon,r,N)$-idempotents in $M_{2k}(\mathbb{C})$. By Lemma \ref{homtosim2} and Lemma \ref{idemliphom}, up to stabilization, there exist functions $\lambda$ and $h$ depending only on $\varepsilon$ and $N$, and there exists a $(\lambda_N\varepsilon,h_{\varepsilon,N} r,\lambda_N)$-inverse pair $(u,v)$ in $M_{2k}(\mathbb{C})$ such that $||u\pi(e')v-\diag(I_k,0_k)||<\lambda_N\varepsilon$. Then \[||\diag(u,v)\diag(\pi(e'),0_{2k})\diag(v,u)-\diag(I_k,0_{3k})||<\lambda_N\varepsilon.\]

Let $e''=ue'v-u\pi(e')v+\diag(I_k,0_k)$. Then $\pi(e'')=\diag(I_k,0_k)$ and $||e''-ue'v||<\lambda_N\varepsilon$. By Lemma \ref{simtohom2} and Lemma \ref{normestlem1}, $\diag(e'',0_{2k})$ is homotopic to $\diag(e',0_{2k})$ as $(\lambda_N'\varepsilon,h'_{\varepsilon,N}r,\lambda'_N)$-idempotents.
\end{proof}

In the odd case, if we allow ourselves to relax control, then we can write elements in $K_1^{\varepsilon,r,N}(A)$ in the form $[u]$ with $\pi(u)=I_k$.

\begin{lem}
There exist a non-decreasing function $\lambda:[1,\infty)\rightarrow[1,\infty)$ and a function $h:(0,\frac{1}{20})\times[1,\infty)\rightarrow[1,\infty)$ with $h(\cdot,N)$ non-increasing for each fixed $N$ such that for any filtered $SQ_p$ algebra $A$, if $[u]\in K_1^{\varepsilon,r,N}(A)$ with $u\in M_k(\tilde{A})$, then $[u]=[w]$ in $K_1^{\lambda_N\varepsilon,h_{\varepsilon,N}r,\lambda_N}(A)$ for some $w\in M_k(\tilde{A})$ with $\pi(w)=I_k$.

Moreover, if $u$ and $v$ are homotopic as $(\varepsilon,r,N)$-invertibles in $M_k(\tilde{A})$, and $\pi(u)=\pi(v)=I_k$, then there is a homotopy of $(\lambda_N\varepsilon,h_{\varepsilon,N}r,\lambda_N)$-invertibles $w_t$ between $u$ and $v$ such that $\pi(w_t)=I_k$ for each $t\in[0,1]$.
\end{lem}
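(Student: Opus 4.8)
The plan is to deduce both assertions from a single renormalization: given an $(\varepsilon,r,N)$-invertible $u\in M_k(\tilde A)$, replace it by $w:=\pi(u)^{-1}u$. First I would record the elementary facts. Since $\pi$ is $p$-completely contractive, $\pi(u)\in M_k(\mathbb C)$ is an $(\varepsilon,r,N)$-invertible, hence genuinely invertible with $||\pi(u)||\leq N$ and $||\pi(u)^{-1}||<\frac{N}{1-\varepsilon}<2N$. Every element of $M_k(\mathbb C)$, viewed inside $M_k(\tilde A)$, is a scalar matrix and so lies in $M_k(\tilde A_0)$; consequently multiplying an element of $M_k(\tilde A_r)$ on either side by such a matrix keeps it in $M_k(\tilde A_r)$. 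Thus $w\in M_k(\tilde A_r)$ with $\pi(w)=I_k$, and if $(u,v)$ is an $(\varepsilon,r,N)$-inverse pair then a one-line estimate shows $(w,\,v\pi(u))$ is a $(2N^2\varepsilon,r,2N^2)$-inverse pair, using $||w||<2N^2$, $||v\pi(u)||\leq N^2$, $||w\cdot v\pi(u)-I||=||\pi(u)^{-1}(uv-I)\pi(u)||<2N^2\varepsilon$, and $||v\pi(u)\cdot w-I||=||vu-I||<\varepsilon$.

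The substantive step is to show $[u]=[w]$ after a bounded relaxation of control. Since $[u]\in K_1^{\varepsilon,r,N}(A)=\ker\pi_*$, we have $[\pi(u)]=0$ in $K_1^{\varepsilon,r,N}(\mathbb C)$, so by the definition of the equivalence relation on $GL_\infty^{\varepsilon,r,N}(\mathbb C)$ there exist $m\in\mathbb N$ and a homotopy $(\gamma_t)_{t\in[0,1]}$ of $(4\varepsilon,2r,4N)$-invertibles in $M_{k+m}(\mathbb C)$ with $\gamma_0=\diag(\pi(u),I_m)$ and $\gamma_1=I_{k+m}$; here $||\gamma_t^{-1}||<\frac{4N}{1-4\varepsilon}<5N$ for all $t$. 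Then $t\mapsto\gamma_t^{-1}\diag(u,I_m)$ is a norm-continuous path in $M_{k+m}(\tilde A)$ from $\diag(w,I_m)$ to $\diag(u,I_m)$; each term has propagation $r$ (scalar matrices do not raise propagation), norm $<5N^2$, and admits $\diag(v,I_m)\gamma_t$ as a quasi-inverse with both products within $20N^2\varepsilon$ of the identity. Hence $\diag(w,I_m)$ and $\diag(u,I_m)$ are homotopic through $(20N^2\varepsilon,r,5N^2)$-invertibles, so $[w]=[u]$ in $K_1^{\lambda_N\varepsilon,r,\lambda_N}(A)$ with $\lambda(N)$ a fixed multiple of $N^2$ (an identity in $K_1(A)$, since $\pi_*$ kills both classes because $\pi(w)=I_k$). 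Taking this $\lambda$ and $h\equiv 1$ gives the first statement, with $\lambda$ non-decreasing and $h(\cdot,N)$ trivially non-increasing.

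For the ``moreover'' clause I would repeat the renormalization along the homotopy. Choose an $(\varepsilon,r,N)$-inverse pair $(\mathbf u,\mathbf v)$ in $M_k(\tilde A)[0,1]$ with $\mathbf u(0)=u$, $\mathbf u(1)=v$, put $u_t=\mathbf u(t)$, $v_t=\mathbf v(t)$, and set $w_t:=\pi(u_t)^{-1}u_t$, $w_t':=v_t\pi(u_t)$. The map $t\mapsto\pi(u_t)$ is norm-continuous and the $\pi(u_t)$ stay uniformly away from the non-invertibles ($||\pi(u_t)^{-1}||<2N$), so inversion is continuous along this path and $t\mapsto w_t$, $t\mapsto w_t'$ are norm-continuous; by the estimates above $(w_t,w_t')$ is a $(2N^2\varepsilon,r,2N^2)$-inverse pair for each $t$, so $(w_t)$ is a homotopy of $(\lambda_N\varepsilon,r,\lambda_N)$-invertibles. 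Finally $\pi(u)=\pi(v)=I_k$ forces $w_0=u$ and $w_1=v$, while $\pi(w_t)=I_k$ for every $t$, which is exactly what is required (with the same $\lambda$ and $h\equiv 1$).

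The main obstacle I anticipate is the control bookkeeping in the middle step: one must verify that $u\mapsto\pi(u)^{-1}u$ alters the $K_1$-class only within the permitted relaxation and, crucially, that every constant produced depends on $N$ alone and not on the matrix size $k$. This is precisely why the vanishing of $[\pi(u)]$ must be invoked through the homotopy built into $[u]\in\ker\pi_*$ (whose length is governed by $4N$) rather than through an a priori uncontrolled path in $GL_k(\mathbb C)$; it is also the one place where the hypothesis $[u]\in K_1^{\varepsilon,r,N}(A)$ — as opposed to merely $[u]\in K_1^{\varepsilon,r,N}(A^+)$ — is used in an essential way.
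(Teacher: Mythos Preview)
Your proof is correct and follows essentially the same approach as the paper: both define $w=\pi(u)^{-1}u$ (the paper writes $\pi(u^{-1})u$, which is the same thing), verify it is a quasi-invertible with $\pi(w)=I_k$, and then use the homotopy of $\pi(u)$ to $I_k$ coming from $[u]\in\ker\pi_*$ to connect $w$ and $u$; the ``moreover'' clause is handled identically by setting $w_t=\pi(u_t)^{-1}u_t$. Your observation that scalar matrices lie in every $M_k(\tilde A_r)$, so that one may take $h\equiv 1$, is a slight sharpening of the paper's bookkeeping (which records propagation $2r$ simply because that is what the equivalence relation allows), and your choice of quasi-inverse $v\pi(u)$ differs cosmetically from the paper's $v\pi(v^{-1})$, but neither difference is substantive.
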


\begin{proof}
Suppose that $\max(||uv-1||,||vu-1||)<\varepsilon$. Let $w=\pi(u^{-1})u$. Then $||w||\leq||u^{-1}|| ||u||<\frac{N^2}{1-\varepsilon}<\frac{20}{19}N^2$ and $\pi(w)=I_k$. Note that \[||u^{-1}v^{-1}-1||\leq||(vu)^{-1}|| ||1-vu||<\frac{\varepsilon}{1-\varepsilon}<\frac{20}{19}\varepsilon.\] Similarly $||v^{-1}u^{-1}-1||<\frac{20}{19}\varepsilon$. It follows that \[\max(||\pi(u^{-1})uv\pi(v^{-1})-1||,||v\pi(v^{-1})\pi(u^{-1})u-1||)<\biggl(\biggl(\frac{20}{19}N\biggr)^2+\frac{20}{19}\biggr)\varepsilon.\] Thus $w$ is a $(((\frac{20}{19}N)^2+\frac{20}{19})\varepsilon,r,\frac{20}{19}N^2)$-invertible in $M_k(\tilde{A})$. 

Up to stabilization, we may assume that $\pi(u)$ is $(4\varepsilon,2r,4N)$-homotopic to $I_k$, so $\pi(u^{-1})$ is $(4\varepsilon,2r,\frac{80}{19}N)$-homotopic to $I_k$. It follows that $w$ and $u$ are $(((\frac{80}{19}N)^2+1)\varepsilon,2r,\frac{80}{19}N^2)$-homotopic.

For the second statement, if $(u_t)$ is a homotopy of $(\varepsilon,r,N)$-invertibles between $u$ and $v$, then $w_t=\pi(u_t^{-1})u_t$ defines the desired homotopy.
\end{proof}

If $A$ is a filtered $SQ_p$ algebra with a filtration $(A_r)_{r>0}$, and $(B_k)_{k\in\mathbb{N}}$ is an increasing sequence of Banach subalgebras of $A$ such that $\bigcup_{k\in\mathbb{N}}B_k$ is dense in $A$ and $\bigcup_{r>0}(B_k\cap A_r)$ is dense in $B_k$ for each $k$, then each $B_k$ has a filtration $(B_k\cap A_r)_{r>0}$. In this case, we have a canonical homomorphism $\varinjlim_k K_*^{\varepsilon,r,N}(B_k)\rightarrow K_*^{\varepsilon,r,N}(A)$ but we can say more. 

\begin{prop}
Let $A$ be a unital $SQ_p$ algebra with filtration $(A_r)_{r>0}$ and let $(B_k)_{k\in\mathbb{N}}$ be an increasing sequence of unital Banach subalgebras of $A$ such that
\begin{itemize}
\item $\bigcup_{r>0}(B_k\cap A_r)$ is dense in $B_k$ for each $k\in\mathbb{N}$;
\item $\bigcup_{k\in\mathbb{N}}(B_k\cap A_r)$ is dense in $A_r$ for each $r>0$.
\end{itemize}
Then for each $0<\varepsilon<\frac{1}{20}$, $r>0$, and $N\geq 1$, there is a homomorphism \[K_*^{\varepsilon,r,N}(A)\rightarrow\varinjlim_k K_*^{\varepsilon,r,N+\varepsilon}(B_k)\] such that the compositions \[K_*^{\varepsilon,r,N}(A)\rightarrow\varinjlim_k K_*^{\varepsilon,r,N+\varepsilon}(B_k)\rightarrow K_*^{\varepsilon,r,N+\varepsilon}(A)\] and \[\varinjlim_k K_*^{\varepsilon,r,N}(B_k)\rightarrow K_*^{\varepsilon,r,N}(A)\rightarrow\varinjlim_k K_*^{\varepsilon,r,N+\varepsilon}(B_k)\] are (induced by) the relaxation of control maps $\iota_*$.
\end{prop}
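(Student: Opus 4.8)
The plan is to build the map by approximating quasi-idempotents and quasi-inverse pairs of propagation $r$ by elements of $M_n(B_k\cap A_r)$ for $k$ large. The first hypothesis, that $\bigcup_{r>0}(B_k\cap A_r)$ is dense in $B_k$, is exactly what makes $(B_k\cap A_r)_{r>0}$ a filtration of $B_k$, so the groups $K_*^{\varepsilon,r,N+\varepsilon}(B_k)$ are defined; and the second hypothesis, together with the matrix-norm estimates $\|a_{kl}\|_1\le\|(a_{ij})\|_n\le\sum_{i,j}\|a_{ij}\|_1$, shows that $\bigcup_k M_n(B_k\cap A_r)$ is dense in $M_n(A_r)$. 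Since the inclusions $B_k\hookrightarrow A$ are $p$-completely isometric filtered homomorphisms, the canonical maps $K_*^{\varepsilon,r,N+\varepsilon}(B_k)\to K_*^{\varepsilon,r,N+\varepsilon}(A)$ do not change parameters. It suffices to define the homomorphism on the generating semigroups and check additivity, invariance under the relevant homotopy relation, and the two composition formulas.

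Consider $K_0$ first. Given an $(\varepsilon,r,N)$-idempotent $e\in M_n(A)$, since $\|e^2-e\|<\varepsilon$ strictly we fix $\delta=\delta(e)>0$ so small that any $e'\in M_n(A_r)$ with $\|e-e'\|<\delta$ satisfies $\|e'^2-e'\|\le(\|e'\|+\|e\|+1)\|e-e'\|+\|e^2-e\|<\varepsilon$, and with $\delta\le\varepsilon$ so that $\max(\|e'\|,\|1-e'\|)\le N+\varepsilon$; then every such $e'$ is an $(\varepsilon,r,N+\varepsilon)$-idempotent. Choosing $e'\in M_n(B_k\cap A_r)$ within $\delta$ of $e$ for $k$ large, we send $[e]$ to the image of $[e']_{\varepsilon,r,N+\varepsilon}$ in $\varinjlim_k K_0^{\varepsilon,r,N+\varepsilon}(B_k)$. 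If $e',e''$ are two such approximations (pass to a common $B_k$), then $\|e'-e''\|<2\delta$, so for $\delta$ small Lemma \ref{normestlem1}(1) provides a linear homotopy of $(4\varepsilon,r,4(N+\varepsilon))$-idempotents between them inside $M_n(B_k\cap A_r)$; hence the class in the limit is independent of the choices, and approximating $e$ and $f$ separately and taking direct sums gives additivity, so the assignment factors through $K_0^{\varepsilon,r,N}(A)$. For $K_1$ one argues identically with an $(\varepsilon,r,N)$-inverse pair $(u,v)$: approximate $u$ and $v$ within $\delta$ by elements of $M_n(B_k\cap A_r)$, note that the product estimates make the pair of approximations an $(\varepsilon,r,N+\varepsilon)$-inverse pair, use Lemma \ref{normestlem1}(2) (whose homotopy has propagation $r\le 2r$) for independence of the choice, and take direct sums for additivity; the target is already a group by Lemma \ref{inversepairhomotopy} and Lemma \ref{invcomm}, so no Grothendieck completion is needed.

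The remaining and main point is invariance under the defining homotopy relations. Suppose $e\sim f$, witnessed by a homotopy $(e_t)_{t\in[0,1]}$ of $(4\varepsilon,r,4N)$-idempotents in some $M_n(A_r)$. Since $t\mapsto\|e_t^2-e_t\|$ is continuous on the compact interval, $\sup_t\|e_t^2-e_t\|<4\varepsilon$, so we fix $\rho>0$ with $\sup_t\|e_t^2-e_t\|<4\varepsilon-3\rho$. Choose a partition $0=t_0<\cdots<t_m=1$ on which $(e_t)$ varies by at most $\eta$, approximate each $e_{t_j}$ within $\eta$ by $e_{t_j}'\in M_n(B_k\cap A_r)$ for a common large $k$, and let $(g_t)$ be the piecewise-linear path through the $e_{t_j}'$; then $(g_t)$ lies in $M_n(B_k\cap A_r)$ and $\|g_t-e_t\|<2\eta$ for all $t$. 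For $\eta$ small depending on $\rho$ and $N$, the estimate $\|g_t^2-g_t\|\le(\|g_t\|+\|e_t\|+1)\|g_t-e_t\|+\|e_t^2-e_t\|$ stays below $4\varepsilon$, and $\eta\le 2\varepsilon$ keeps $\max(\|g_t\|,\|1-g_t\|)\le 4(N+\varepsilon)$, so $(g_t)$ is a homotopy of $(4\varepsilon,r,4(N+\varepsilon))$-idempotents in $M_n(B_k)$ between approximations of $e$ and $f$; combined with the independence of choice this yields equality of the classes. The $K_1$ case is identical, approximating both the given path of quasi-invertibles and a chosen path of quasi-inverses and using $\sup_t\max(\|u_tv_t-1\|,\|v_tu_t-1\|)<4\varepsilon$ with propagation $2r$. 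Finally, both compatibility statements are immediate: composing our map with the one induced by the inclusions $B_k\hookrightarrow A$ returns $[e']_{\varepsilon,r,N+\varepsilon}\in K_*^{\varepsilon,r,N+\varepsilon}(A)$, which equals $[e]_{\varepsilon,r,N+\varepsilon}=\iota_*([e]_{\varepsilon,r,N})$ by Lemma \ref{normestlem1} since $\|e-e'\|<\delta$ is small; and a class in $\varinjlim_k K_*^{\varepsilon,r,N}(B_k)$ represented by $e\in M_n(B_k\cap A_r)$ has image in $K_*^{\varepsilon,r,N}(A)$ with the same representative, which already lies in $M_n(B_{k'}\cap A_r)$ for $k'\ge k$, so it serves as its own approximation and the composition is the relaxation of control map. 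The principal obstacle is this homotopy-invariance step, where the strict inequality $\sup_t\|e_t^2-e_t\|<4\varepsilon$ obtained from compactness is exactly what lets the piecewise-linear approximation remain within the allotted parameter $4\varepsilon$.
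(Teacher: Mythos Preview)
Your proof is correct and follows essentially the same approximation strategy as the paper: pick $\delta$ so that any $e'\in M_n(B_k\cap A_r)$ within $\delta$ of $e$ is an $(\varepsilon,r,N+\varepsilon)$-idempotent, send $[e]$ to $[e']$, and verify independence of choice via a linear homotopy (the paper does this with the explicit $\delta=\frac{\varepsilon-\|e^2-e\|}{6(N+1)}$ and computes directly that the linear path stays an $(\varepsilon,r,N+\varepsilon)$-idempotent). Your treatment is in fact more complete than the paper's: the paper checks only that two approximations of the \emph{same} $e$ give the same class and then declares the homomorphism well-defined, whereas you explicitly approximate an entire $(4\varepsilon,r,4N)$-homotopy by a piecewise-linear path in $M_n(B_k\cap A_r)$, using compactness to get uniform slack in $\sup_t\|e_t^2-e_t\|$, so that the approximated path remains a $(4\varepsilon,r,4(N+\varepsilon))$-homotopy---this is the step the paper suppresses.
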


\begin{proof}
Note that $\bigcup_{k\in\mathbb{N}}B_k$ is dense in $A$. Let $e$ be an $(\varepsilon,r,N)$-idempotent in $M_n(A)$, and let $\delta=\frac{\varepsilon-||e^2-e||}{6(N+1)}$. Since $\bigcup_{k\in\mathbb{N}}(B_k\cap A_r)$ is dense in $A_r$, there exist $k\in\mathbb{N}$ and $f\in M_n(B_k\cap A_r)$ such that $||e-f||<\delta$. By Lemma \ref{normestlem1}, $f$ is an $(\varepsilon,r,N+\varepsilon)$-idempotent. If $f'\in M_n(B_k\cap A_r)$ also satisfies $||e-f'||<\delta$, then $||f-f'||<2\delta$. Set $f_t=(1-t)f+tf'$. Then
\begin{align*}
||f_t^2-f_t|| &\leq ||f_t^2-f_tf||+||f_tf-f^2||+||f^2-f||+||f-f_t|| \\
&\leq ||f_t-f||(||f_t||+||f||+1)+||f^2-f|| \\
&< 2\delta(2(N+\delta)+1)+(2N+2)\delta+||e^2-e|| \\
&< \varepsilon.
\end{align*}
Thus $f$ and $f'$ are $(\varepsilon,r,N+\varepsilon)$-homotopic. This gives us a well-defined homomorphism $K_0^{\varepsilon,r,N}(A)\rightarrow\varinjlim_k K_0^{\varepsilon,r,N+\varepsilon}(B_k)$ given by \[[e]-[I_n]\mapsto[f]-[I_n].\] It is straightforward to see that the compositions \[K_0^{\varepsilon,r,N}(A)\rightarrow\varinjlim_k K_0^{\varepsilon,r,N+\varepsilon}(B_k)\rightarrow K_0^{\varepsilon,r,N+\varepsilon}(A)\] and \[\varinjlim_k K_0^{\varepsilon,r,N}(B_k)\rightarrow K_0^{\varepsilon,r,N}(A)\rightarrow\varinjlim_k K_0^{\varepsilon,r,N+\varepsilon}(B_k)\] are the canonical maps $\iota_0$.

The proof for the odd case is similar.
\end{proof}

%\vfill

\begin{rem}\leavevmode
\begin{enumerate}
\item In the preceding proposition, if each $a\in M_n(A_r)$ can be approximated arbitrarily closely by some $b\in\bigcup_{k\in\mathbb{N}}M_n(B_k\cap A_r)$ with $||b||\leq||a||$, then we have $K_*^{\varepsilon,r,N}(A)\cong\varinjlim_k K_*^{\varepsilon,r,N}(B_k)$.

\item Regarding $M_n(\mathbb{C})$ as $B(\ell_p^n)$, we may view $M_n(A)$ as $M_n(\mathbb{C})\otimes_p A$ when $A$ is an $L_p$ operator algebra and $\otimes_p$ denotes the spatial $L_p$ operator tensor product (see Remark 1.14 and Example 1.15 in \cite{Phil13}). Writing $\overline{M^p_\infty}$ for $\overline{\bigcup_{n\in\mathbb{N}}M_n(\mathbb{C})}$, we see that $\overline{M^p_\infty}$ is a closed subalgebra of $B(\ell_p)$. Let $P_n$ be the projection onto the first $n$ coordinates with respect to the standard basis in $\ell_p$. When $p\in(1,\infty)$, we have $\lim_{n\rightarrow\infty}||a-P_n aP_n||=0$ for any compact operator $a\in K(\ell_p)$. It follows that $\overline{M^p_\infty}=K(\ell_p)$ for $p\in(1,\infty)$. However, when $p=1$, we can only say that $\lim_{n\rightarrow\infty}||a-P_n a||=0$ for $a\in K(\ell_1)$. In fact, there is a rank one operator on $\ell_1$ that is not in $\overline{M^1_\infty}$. We refer the reader to Proposition 1.8 and Example 1.10 in \cite{Phil13} for details.
\end{enumerate}
\end{rem}

\begin{cor}
If $A$ is a filtered $L_p$ operator algebra for some $p\in[1,\infty)$, then \[K_*^{\varepsilon,r,N}(\overline{M^p_\infty}\otimes_p A)\cong K_*^{\varepsilon,r,N}(A).\] In particular, when $p\in(1,\infty)$, we have \[K_*^{\varepsilon,r,N}(K(\ell_p)\otimes_p A)\cong K_*^{\varepsilon,r,N}(A).\]
\end{cor}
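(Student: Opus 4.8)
The plan is to exhibit $A':=\overline{M^p_\infty}\otimes_p A$ as a filtered inductive limit of the matrix algebras $M_k(A)$, and then combine the preceding proposition and remark with Proposition~\ref{stab}. First I would equip $A'$ — which, being represented on $\ell_p\otimes_p E$ when $A\subset B(E)$, is again an $L_p$ operator algebra — with the filtration $(A'_r)_{r>0}$ defined by $A'_r:=\overline{\bigcup_k M_k(A_r)}$; the only point needing a word here is the density of $\bigcup_r A'_r$ in $A'$, which follows from the density of $\bigcup_r A_r$ in $A$ and of $\bigcup_k M_k(A)$ in $A'$. I then take $B_k:=M_k(A)$, embedded in $A'$ via $x\mapsto\diag(x,0)$; these form an increasing sequence of closed subalgebras with dense union, and, since $M_k(A_r)$ is closed, one checks directly that $B_k\cap A'_r=M_k(A_r)$, so that $\bigcup_r(B_k\cap A'_r)$ is dense in $B_k$ and $\bigcup_k(B_k\cap A'_r)$ is dense in $A'_r$. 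Thus the hypotheses of the preceding proposition are satisfied.

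Next I would verify the norm-controlled approximation condition appearing in the remark preceding this corollary. For $j\in\mathbb{N}$ let $P_j\in B(\ell_p)$ be the coordinate projection onto the first $j$ basis vectors; then $P_j\otimes I_E$ is a norm-one idempotent, compression by it carries $A'_r$ contractively into $M_j(A_r)\subset B_j\cap A'_r$ (it takes the top-left $j\times j$ corner), and $(P_j\otimes I_E)\,a\,(P_j\otimes I_E)\to a$ as $j\to\infty$ for every $a\in A'_r$ — the analogous statement for $M_n(A'_r)$ being obtained after enlarging $\ell_p$. The one place care is needed is that this convergence holds for \emph{all} $p\in[1,\infty)$: for $p=1$ one cannot expect $P_jaP_j\to a$ for a general $a\in K(\ell_1)$, but the elements of $A'_r$ lie by construction in the closed linear span of $\bigcup_k M_k(A_r)$ and so are uniformly approximable by finite matrices on which $P_j$ acts as the identity once $j$ is large, whence the usual $\varepsilon/3$ estimate applies. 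By the remark preceding this corollary it then follows that $K_*^{\varepsilon,r,N}(A')\cong\varinjlim_k K_*^{\varepsilon,r,N}(B_k)$. Note that $A'$ is non-unital even when $A$ is unital, because $\overline{M^p_\infty}$ is non-unital, so one should really invoke the non-unital form of the proposition and remark, deduced from the unital form in the usual way by passing to unitizations.

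It remains to identify $\varinjlim_k K_*^{\varepsilon,r,N}(M_k(A))$ with $K_*^{\varepsilon,r,N}(A)$. By Proposition~\ref{stab} the canonical embedding $A\hookrightarrow M_k(A)$ induces an isomorphism $K_*^{\varepsilon,r,N}(A)\cong K_*^{\varepsilon,r,N}(M_k(A))$ for each $k$, and these isomorphisms are compatible with the connecting maps of the inductive system, since the composite $A\hookrightarrow M_k(A)\hookrightarrow M_{k+1}(A)$ coincides with $A\hookrightarrow M_{k+1}(A)$. Hence each connecting map is an isomorphism and the inductive limit is $K_*^{\varepsilon,r,N}(A)$, which proves the first assertion. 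The ``in particular'' statement is then immediate, since $\overline{M^p_\infty}=K(\ell_p)$ for $p\in(1,\infty)$ (as recorded in the preceding remark), so that $\overline{M^p_\infty}\otimes_p A=K(\ell_p)\otimes_p A$.

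The main obstacle I anticipate is in the second step: the routine-looking but essential claim that compression by $P_j\otimes I_E$ is at once a contraction, filtration-preserving, and convergent, and in particular that convergence survives at $p=1$ even though $\overline{M^1_\infty}$ is a proper subalgebra of $K(\ell_1)$. The rest is bookkeeping — matrix-size reindexing and the unital/non-unital reduction.
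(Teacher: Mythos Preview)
Your proposal is correct and follows exactly the route the paper intends: the corollary is stated without proof, immediately after the proposition on inductive limits and the remark about norm-controlled approximation, so the paper is tacitly asking the reader to combine that proposition, part (i) of the remark (via compression by $P_j\otimes I_E$), and Proposition~\ref{stab}. You have spelled out precisely these steps, including the $p=1$ subtlety and the unital/non-unital reduction, in more detail than the paper itself provides.
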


\subsection{Relating quantitative $K$-theory to ordinary $K$-theory}

% Spectrum and holomorphic functional calculus

If $e$ is an $(\varepsilon,r,N)$-idempotent in a unital filtered Banach algebra $A$, then its spectrum $\sigma(e)$ is contained in ${B}_{\sqrt{\varepsilon}}(0)\cup B_{\sqrt{\varepsilon}}(1)\subset\mathbb{C}$, where $B_r(z)$ denotes the open ball of radius $r$ centered at $z\in\mathbb{C}$. In particular, if $\varepsilon<\frac{1}{4}$, then the two balls are disjoint. By choosing a function $\kappa_0$ that is holomorphic on a neighborhood of $\sigma(e)$, takes value 0 on $\bar{B}_{\sqrt{\varepsilon}}(0)$, and takes value 1 on $\bar{B}_{\sqrt{\varepsilon}}(1)$, we may apply the holomorphic functional calculus to get an idempotent \[\kappa_0(e)=\frac{1}{2\pi i}\int_\gamma \kappa_0(z)(z-e)^{-1}dz\in A,\] where $\gamma$ may be taken to be the contour \[\{z\in\mathbb{C}:|z|=\sqrt{\varepsilon}\}\cup\{z\in\mathbb{C}:|z-1|=\sqrt{\varepsilon}\}.\] This enables us to pass from the quantitative $K_0$ groups to the ordinary $K_0$ groups.

Note that if $e$ is an idempotent, then $\kappa_0(e)=e$. Indeed, for $z\in\gamma$, we have \begin{align*} \kappa_0(e)-e &=\frac{1}{2\pi i}\int_\gamma(\kappa_0(z)-z)(z-e)^{-1}dz \\ &=\frac{1}{2\pi i}\int_\gamma(\kappa_0(z)-z)\left(\frac{1-e}{z}+\frac{e}{z-1}\right)dz \\ &=0. \end{align*}

% $(z-e)^{-1}=\frac{1}{z}(1-e)+\frac{1}{z-1}e$ since $(z-e)[\frac{1}{z}(1-e)+\frac{1}{z-1}e]=[\frac{1}{z}(1-e)+\frac{1}{z-1}e](z-e)=(\frac{1}{z-1}-\frac{1}{z})(e-e^2)+1$. It follows that $\kappa_0(e)-e=\frac{1}{2\pi i}\int_\gamma(\kappa_0(z)-z)(z-e)^{-1}dz=0$.

When $e$ is an $(\varepsilon,r,N)$-idempotent, we expect $\kappa_0(e)$ to be close to $e$. In fact, we can estimate $||\kappa_0(e)-e||$ and $||\kappa_0(e)||$ in terms of $\varepsilon$ and $N$. We can also estimate $||\kappa_0(e)-\kappa_0(f)||$ in terms of $||e-f||$.

\begin{prop}
Let $e$ be an $(\varepsilon,r,N)$-idempotent in a unital filtered Banach algebra $A$. Then \[||\kappa_0(e)-e||<\frac{2(N+1)\varepsilon}{(1-\sqrt{\varepsilon})(1-2\sqrt{\varepsilon})}\] and \[||\kappa_0(e)||<\frac{N+1}{1-2\sqrt{\varepsilon}}.\]
\end{prop}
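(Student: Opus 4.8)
The plan is to estimate both quantities directly from the contour integral representation of $\kappa_0(e)$, using that $\kappa_0(z)-z$ vanishes identically on the exact idempotent (as shown just above the statement) together with a resolvent bound on the contour $\gamma$.

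First I would bound the resolvent $(z-e)^{-1}$ for $z\in\gamma$. Write $z-e = (z)(1) - e$; since $\|e^2-e\|<\varepsilon$ we may compare $(z-e)^{-1}$ with the ``model'' resolvent $\tfrac{1-e}{z}+\tfrac{e}{z-1}$, which satisfies $(z-e)\left(\tfrac{1-e}{z}+\tfrac{e}{z-1}\right) = 1 + \left(\tfrac{1}{z-1}-\tfrac{1}{z}\right)(e^2-e) = 1 + \tfrac{1}{z(z-1)}(e^2-e)$. On the contour $\gamma$ we have $|z|=\sqrt{\varepsilon}$ or $|z-1|=\sqrt{\varepsilon}$, and in either case $|z||z-1|\geq \sqrt{\varepsilon}(1-\sqrt{\varepsilon})$, so the error term has norm at most $\tfrac{\varepsilon}{\sqrt{\varepsilon}(1-\sqrt{\varepsilon})} = \tfrac{\sqrt{\varepsilon}}{1-\sqrt{\varepsilon}}$, which is $<1$ for $\varepsilon<\tfrac{1}{4}$; hence $1 + \tfrac{e^2-e}{z(z-1)}$ is invertible with inverse of norm $\leq \tfrac{1}{1-\sqrt{\varepsilon}/(1-\sqrt{\varepsilon})} = \tfrac{1-\sqrt{\varepsilon}}{1-2\sqrt{\varepsilon}}$. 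Combining, $(z-e)^{-1} = \left(\tfrac{1-e}{z}+\tfrac{e}{z-1}\right)\left(1+\tfrac{e^2-e}{z(z-1)}\right)^{-1}$, and since $\|\tfrac{1-e}{z}+\tfrac{e}{z-1}\|\leq \tfrac{N+1}{\sqrt{\varepsilon}}$ on $\gamma$ (using $\|1-e\|,\|e\|\leq N$, so $\|1-e\|+\|e\|\le 2N$ — actually one gets $\le N/\sqrt\varepsilon + N/\sqrt\varepsilon$; being slightly careful, $\|1-e\|\le N$ on the circle $|z|=\sqrt\varepsilon$ contributes $N/\sqrt\varepsilon$ while the $e/(z-1)$ term contributes $\le N$, and vice versa on the other circle, so the bound $\tfrac{N+1}{\sqrt\varepsilon}$ holds after using $\varepsilon$ small — I would track this constant carefully), we obtain $\|(z-e)^{-1}\|\leq \tfrac{(N+1)(1-\sqrt{\varepsilon})}{\sqrt{\varepsilon}(1-2\sqrt{\varepsilon})}$ on $\gamma$.

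For the second estimate, $\|\kappa_0(e)\| \leq \tfrac{1}{2\pi}\,\mathrm{length}(\gamma)\cdot \sup_\gamma|\kappa_0(z)|\cdot\sup_\gamma\|(z-e)^{-1}\|$; with $\mathrm{length}(\gamma)=4\pi\sqrt{\varepsilon}$ and $|\kappa_0(z)|\le 1$ on $\gamma$, this gives $\|\kappa_0(e)\|\leq 2\sqrt{\varepsilon}\cdot\tfrac{(N+1)(1-\sqrt\varepsilon)}{\sqrt\varepsilon(1-2\sqrt\varepsilon)}$, and I would then absorb the factor $2(1-\sqrt\varepsilon)$ against the remaining room — here I need to recheck, since the stated bound is $\tfrac{N+1}{1-2\sqrt\varepsilon}$, so in fact I should choose $\gamma$ more efficiently or use a sharper resolvent bound (for instance noting $\|(z-e)^{-1}\|$ is really governed by the model term whose integral can be computed exactly: $\tfrac{1}{2\pi i}\int_\gamma \kappa_0(z)\left(\tfrac{1-e}{z}+\tfrac{e}{z-1}\right)dz = e$ exactly, leaving only the correction). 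This suggests the cleaner route below.

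The cleaner route, which I would actually carry out: using $\tfrac{1}{2\pi i}\int_\gamma\kappa_0(z)\big(\tfrac{1-e}{z}+\tfrac{e}{z-1}\big)dz = e$ (residue computation as above), write
\[
\kappa_0(e)-e = \frac{1}{2\pi i}\int_\gamma \kappa_0(z)\left[(z-e)^{-1}-\left(\tfrac{1-e}{z}+\tfrac{e}{z-1}\right)\right]dz,
\]
and from the identity $(z-e)^{-1} - M(z) = -M(z)\,\tfrac{e^2-e}{z(z-1)}\,(z-e)^{-1}$ where $M(z)=\tfrac{1-e}{z}+\tfrac{e}{z-1}$, bound the bracket on $\gamma$ by $\|M(z)\|\cdot\tfrac{\varepsilon}{\sqrt\varepsilon(1-\sqrt\varepsilon)}\cdot\|(z-e)^{-1}\|$. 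Feeding in $\|M(z)\|\le\tfrac{N+1}{\sqrt\varepsilon}$ and $\|(z-e)^{-1}\|\le \tfrac{(N+1)}{\sqrt\varepsilon}\cdot\tfrac{1-\sqrt\varepsilon}{1-2\sqrt\varepsilon}$ — hmm, this would give a bound quadratic in $N$, whereas the statement is linear. So the right bound on the bracket must instead be $\le \tfrac{N+1}{\sqrt\varepsilon}\cdot\tfrac{\varepsilon}{(1-\sqrt\varepsilon)(1-2\sqrt\varepsilon)}$ obtained by multiplying $M(z)(z-e)^{-1}$ first (this product equals $\big(1+\tfrac{e^2-e}{z(z-1)}\big)^{-1}$, of norm $\le\tfrac{1-\sqrt\varepsilon}{1-2\sqrt\varepsilon}$, times... ), i.e. I would reorganize as $(z-e)^{-1}-M(z) = -\tfrac{e^2-e}{z(z-1)}(z-e)^{-1}$ directly (no extra $M$), giving bracket norm $\le \tfrac{\varepsilon}{\sqrt\varepsilon(1-\sqrt\varepsilon)}\cdot\tfrac{(N+1)(1-\sqrt\varepsilon)}{\sqrt\varepsilon(1-2\sqrt\varepsilon)} = \tfrac{N+1}{1-2\sqrt\varepsilon}$; then $\|\kappa_0(e)-e\|\le\tfrac{1}{2\pi}\cdot 4\pi\sqrt\varepsilon\cdot\tfrac{N+1}{1-2\sqrt\varepsilon}$ — still off by $\sqrt\varepsilon$ versus $\varepsilon$. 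The genuine main obstacle is thus the bookkeeping of the constants: getting the resolvent bound sharp enough (the factor of $\varepsilon$ rather than $\sqrt\varepsilon$ in the numerator of $\|\kappa_0(e)-e\|$ must come from the $e^2-e$ factor together with the length $\sim\sqrt\varepsilon$ of $\gamma$ beating the $1/\sqrt\varepsilon$ in $\|(z-e)^{-1}\|$, so I must avoid any step that loses a power of $\sqrt\varepsilon$). I would therefore bound $\|(z-e)^{-1}\|$ not by the crude product above but by observing $(z-e)^{-1} = M(z)\big(1+\tfrac{e^2-e}{z(z-1)}\big)^{-1}$ and that $\|M(z)\|\le\tfrac{N+1}{1-\sqrt\varepsilon}$ when restricted appropriately (on $|z|=\sqrt\varepsilon$ the term $\tfrac{1-e}{z}$ has norm $\le N/\sqrt\varepsilon$... ), concluding after careful estimation that $\|(z-e)^{-1}\|\le\tfrac{N+1}{\sqrt\varepsilon(1-2\sqrt\varepsilon)}$, and then $\|\kappa_0(e)-e\|\le 2\sqrt\varepsilon\cdot\tfrac{\varepsilon/\sqrt\varepsilon}{1-\sqrt\varepsilon}\cdot\tfrac{N+1}{\sqrt\varepsilon(1-2\sqrt\varepsilon)}$ — and $2\sqrt\varepsilon\cdot\tfrac{\sqrt\varepsilon}{1-\sqrt\varepsilon}\cdot\tfrac{N+1}{\sqrt\varepsilon(1-2\sqrt\varepsilon)} = \tfrac{2(N+1)\sqrt\varepsilon}{(1-\sqrt\varepsilon)(1-2\sqrt\varepsilon)}$, so the final reconciliation forces $|z||z-1|\ge\sqrt\varepsilon$ to be improved to $\ge$ a constant on the relevant portions, which it is not; hence the correct reading is that one splits $\gamma$ into its two circles and on each circle one factor of $|z^{-1}(z-1)^{-1}|$ is $1/\sqrt\varepsilon$ but the corresponding piece of $M(z)$ is bounded (norm $\le N+$const, not $N/\sqrt\varepsilon$), which is exactly the cancellation that produces the $\varepsilon$. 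I would write the estimate circle-by-circle to make this cancellation manifest; that per-circle analysis is the technical heart of the proof.
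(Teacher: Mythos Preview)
Your overall strategy---compare $(z-e)^{-1}$ to the model resolvent $M(z)=\frac{1-e}{z}+\frac{e}{z-1}$ and control the difference via the Neumann series for $(z-e)M(z)=1+\frac{e-e^2}{z(z-1)}$---is exactly what the paper does. But the proposal never actually closes, and the final diagnosis (``on each circle the corresponding piece of $M(z)$ is bounded'') is wrong: on $\gamma_1$ the term $\tfrac{e}{z-1}$ has norm $N/\sqrt{\varepsilon}$, so $\|M(z)\|$ really is of order $1/\sqrt{\varepsilon}$ on both circles. That is not where the missing $\sqrt{\varepsilon}$ comes from.

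The trick you are missing is to integrate $(\kappa_0(z)-z)(z-e)^{-1}$ rather than $\kappa_0(z)\bigl[(z-e)^{-1}-M(z)\bigr]$. Since $\kappa_0(z)-z$ equals $-z$ on $\gamma_0$ and $1-z$ on $\gamma_1$, it has modulus exactly $\sqrt{\varepsilon}$ on all of $\gamma$; this extra $\sqrt{\varepsilon}$, combined with the arc-length factor $\sqrt{\varepsilon}$, is what produces the $\varepsilon$ in the numerator. Concretely, one checks that $\int_{\gamma_0}(-z)M(z)\,dz=\int_{\gamma_1}(1-z)M(z)\,dz=0$ (trivial residue computations), so
\[
\|\kappa_0(e)-e\|\leq \varepsilon\Bigl(\max_{\gamma_0}\|(z-e)^{-1}-M(z)\|+\max_{\gamma_1}\|(z-e)^{-1}-M(z)\|\Bigr),
\]
and then $\|(z-e)^{-1}-M(z)\|\leq\|M(z)\|\cdot\|((z-e)M(z))^{-1}-1\|<\frac{N+1}{\sqrt{\varepsilon}(1-\sqrt{\varepsilon})}\cdot\frac{\sqrt{\varepsilon}}{1-2\sqrt{\varepsilon}}$ gives the stated bound. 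For $\|\kappa_0(e)\|$ the issue is simpler: since $\kappa_0\equiv 0$ on $\gamma_0$, only the $\gamma_1$ integral contributes, so the length is $2\pi\sqrt{\varepsilon}$, not $4\pi\sqrt{\varepsilon}$; then $\|\kappa_0(e)\|\leq\sqrt{\varepsilon}\bigl(\max_{\gamma_1}\|(z-e)^{-1}-M(z)\|+\max_{\gamma_1}\|M(z)\|\bigr)$ collapses to $\frac{N+1}{1-2\sqrt{\varepsilon}}$ after the same estimates.
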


\begin{proof}
Let $\gamma=\gamma_0\cup\gamma_1$, where $\gamma_j=\{z\in\mathbb{C}:|z-j|=\sqrt{\varepsilon}\}$ for $j=0,1$. Let $y=\frac{1}{z}(1-e)+\frac{1}{z-1}e$ for $z\in\gamma$. Then 
\begin{align*}
||\kappa_0(e)-e|| &= \frac{1}{2\pi}\left\Vert\int_\gamma (\kappa_0(z)-z)(z-e)^{-1}dz\right\Vert \\ &= \frac{1}{2\pi}\left\Vert\int_{\gamma_0}-z(z-e)^{-1}dz+\int_{\gamma_1}(1-z)(z-e)^{-1}dz\right\Vert \\
&\leq 
%\frac{1}{2\pi} \left[\left\Vert\int_{\gamma_0}-zy \;dz\right\Vert+\left\Vert\int_{\gamma_0}-z((z-e)^{-1}-y)dz\right\Vert+\left\Vert\int_{\gamma_1}(1-z)y \;dz\right\Vert+\left\Vert\int_{\gamma_1}(1-z)((z-e)^{-1}-y)dz\right\Vert\right] \\
%&= 
\frac{1}{2\pi} \left[\left\Vert\int_{\gamma_0}-z((z-e)^{-1}-y)dz\right\Vert+\left\Vert\int_{\gamma_1}(1-z)((z-e)^{-1}-y)dz\right\Vert\right] \\
&\leq \varepsilon\left[\max_{z\in\gamma_0}||(z-e)^{-1}-y||+\max_{z\in\gamma_1}||(z-e)^{-1}-y||\right].
\end{align*}
For $z\in\gamma$, we have 
\begin{align*} ||(z-e)y-1||&=\left\Vert\biggl(\frac{1}{z-1}-\frac{1}{z}\biggr)(e-e^2)\right\Vert <\frac{1}{|z(z-1)|}\varepsilon \\ &\leq\frac{1}{\sqrt{\varepsilon}(1-\sqrt{\varepsilon})}\varepsilon =\frac{\sqrt{\varepsilon}}{1-\sqrt{\varepsilon}}.\end{align*}
Thus $||((z-e)y)^{-1}-1||<\frac{\frac{\sqrt{\varepsilon}}{1-\sqrt{\varepsilon}}}{1-\frac{\sqrt{\varepsilon}}{1-\sqrt{\varepsilon}}}=\frac{\sqrt{\varepsilon}}{1-2\sqrt{\varepsilon}}$. Also, \[||y||=\left\Vert\frac{1}{z}+\biggl(\frac{1}{z-1}-\frac{1}{z}\biggr)e\right\Vert\leq\frac{1}{\sqrt{\varepsilon}}+\frac{N}{\sqrt{\varepsilon}(1-\sqrt{\varepsilon})}<\frac{N+1}{\sqrt{\varepsilon}(1-\sqrt{\varepsilon})}.\] Hence 
\begin{align*} ||(z-e)^{-1}-y||&\leq||y||\cdot||((z-e)y)^{-1}-1|| \\ &<\frac{N+1}{\sqrt{\varepsilon}(1-\sqrt{\varepsilon})}\frac{\sqrt{\varepsilon}}{1-2\sqrt{\varepsilon}} \\ &=\frac{N+1}{(1-\sqrt{\varepsilon})(1-2\sqrt{\varepsilon})}
\end{align*} 
for all $z\in\gamma$, and $||\kappa_0(e)-e||<\frac{2(N+1)\varepsilon}{(1-\sqrt{\varepsilon})(1-2\sqrt{\varepsilon})}$.

We also get 
\begin{align*}
||\kappa_0(e)|| &= \frac{1}{2\pi}\left\Vert\int_{\gamma_1} (z-e)^{-1}dz\right\Vert \leq \sqrt{\varepsilon}(\max_{z\in\gamma_1}||(z-e)^{-1}-y||+||y||) \\
&< \sqrt{\varepsilon}\left(\frac{N+1}{(1-\sqrt{\varepsilon})(1-2\sqrt{\varepsilon})}+\frac{N+1}{\sqrt{\varepsilon}(1-\sqrt{\varepsilon})}\right) \\
&= \frac{N+1}{1-2\sqrt{\varepsilon}}.
\end{align*}
\end{proof}

\begin{prop}
If $e$ and $f$ are $(\varepsilon,r,N)$-idempotents in a unital filtered Banach algebra $A$, then \[||\kappa_0(e)-\kappa_0(f)||<\frac{(N+1)^2}{\sqrt{\varepsilon}(1-2\sqrt{\varepsilon})^2}||e-f||.\]
In particular, if $||e-f||<\frac{\varepsilon}{9(2N+3)(N+1)^2}$ and $0<\varepsilon<\frac{1}{9}$, then $\kappa_0(e)$ and $\kappa_0(f)$ are homotopic idempotents.

%In particular, if $||e-f||<\frac{\sqrt{\varepsilon}(1-2\sqrt{\varepsilon})^3}{(2N+3)(N+1)^2}$, then since $||2\kappa_0(e)-1||<\frac{2N+3-2\sqrt{\varepsilon}}{1-2\sqrt{\varepsilon}}$, we have $||\kappa_0(e)-\kappa_0(f)||<\frac{1-2\sqrt{\varepsilon}}{2N+3}<\frac{1}{||2\kappa_0(e)-1||}$ so $\kappa_0(e)$ and $\kappa_0(f)$ are homotopic idempotents.
\end{prop}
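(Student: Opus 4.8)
The plan is to read off $\kappa_0(e)-\kappa_0(f)$ from the holomorphic functional calculus formula and to bound the result with the resolvent identity, reusing the resolvent estimates appearing in the proof of the preceding proposition. Since $\kappa_0$ vanishes on a neighbourhood of $\bar B_{\sqrt\varepsilon}(0)$ and is identically $1$ on a neighbourhood of $\bar B_{\sqrt\varepsilon}(1)$, only the component $\gamma_1=\{z\in\mathbb{C}:|z-1|=\sqrt\varepsilon\}$ of the contour contributes, so $\kappa_0(e)=\frac{1}{2\pi i}\int_{\gamma_1}(z-e)^{-1}\,dz$ and likewise for $f$. The first step is therefore to write
\[
\kappa_0(e)-\kappa_0(f)=\frac{1}{2\pi i}\int_{\gamma_1}\bigl[(z-e)^{-1}-(z-f)^{-1}\bigr]\,dz=\frac{1}{2\pi i}\int_{\gamma_1}(z-e)^{-1}(e-f)(z-f)^{-1}\,dz.
\]

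The second step is to record a uniform resolvent bound on $\gamma_1$. With $y=\frac1z(1-e)+\frac1{z-1}e$ exactly as in the proof of the preceding proposition, the same estimates give $||(z-e)y-1||<\frac{\sqrt\varepsilon}{1-\sqrt\varepsilon}$ and $||y||<\frac{N+1}{\sqrt\varepsilon(1-\sqrt\varepsilon)}$ for all $z\in\gamma$, hence
\[
||(z-e)^{-1}||\leq||y||\,||((z-e)y)^{-1}||<\frac{N+1}{\sqrt\varepsilon(1-\sqrt\varepsilon)}\cdot\frac{1-\sqrt\varepsilon}{1-2\sqrt\varepsilon}=\frac{N+1}{\sqrt\varepsilon(1-2\sqrt\varepsilon)},
\]
and the identical bound holds with $f$ in place of $e$. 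Since $\gamma_1$ has length $2\pi\sqrt\varepsilon$, estimating the displayed integral then yields
\[
||\kappa_0(e)-\kappa_0(f)||\leq\sqrt\varepsilon\cdot\Bigl(\frac{N+1}{\sqrt\varepsilon(1-2\sqrt\varepsilon)}\Bigr)^2||e-f||=\frac{(N+1)^2}{\sqrt\varepsilon(1-2\sqrt\varepsilon)^2}||e-f||,
\]
which is the first assertion.

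For the second assertion, set $p=\kappa_0(e)$ and $q=\kappa_0(f)$; these are honest idempotents and $||q||<\frac{N+1}{1-2\sqrt\varepsilon}$ by the preceding proposition. Consider $v=pq+(1-p)(1-q)$. A direct computation gives $pv=vq=pq$ and $v-1=(p-q)(2q-1)$, so $||v-1||\leq||p-q||\,(2||q||+1)$. Combining the inequality just proved with $2||q||+1<\frac{2N+3}{1-2\sqrt\varepsilon}$ and the hypotheses $0<\varepsilon<\frac19$ (so $1-2\sqrt\varepsilon>\frac13$) and $||e-f||<\frac{\varepsilon}{9(2N+3)(N+1)^2}$, one gets $||v-1||<3\sqrt\varepsilon<1$. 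Hence $v$ is invertible with $p=vqv^{-1}$, the path $v_t=1+t(v-1)$ stays invertible for $t\in[0,1]$, and $t\mapsto v_tqv_t^{-1}$ is a norm-continuous path of idempotents from $q$ to $p$; thus $\kappa_0(e)$ and $\kappa_0(f)$ are homotopic idempotents.

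The only genuinely delicate point is squeezing the resolvent estimate on $\gamma_1$ down to the constant $\frac{N+1}{\sqrt\varepsilon(1-2\sqrt\varepsilon)}$, so that the stated coefficient emerges; everything else — the resolvent identity, the contour-length estimate, and the passage from ``close idempotents'' to ``homotopic idempotents'' via $v$ — is routine, and each of the auxiliary bounds is already available from the proof of the preceding proposition.
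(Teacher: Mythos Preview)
Your proof is correct and follows essentially the same approach as the paper. The first part is identical: both use the resolvent identity on $\gamma_1$ together with the resolvent bound $||(z-e)^{-1}||<\frac{N+1}{\sqrt{\varepsilon}(1-2\sqrt{\varepsilon})}$ derived from the preceding proposition. For the second part, the paper bounds $||2\kappa_0(e)-1||<6N+7$ and checks $||\kappa_0(e)-\kappa_0(f)||<\frac{1}{||2\kappa_0(e)-1||}$, then cites \cite[Proposition 4.3.2]{Bl}; your explicit construction of $v=pq+(1-p)(1-q)$ with $pv=vq$ and $||v-1||<1$ is precisely the standard argument behind that citation, so you are simply unpacking the reference rather than taking a different route. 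One cosmetic point: your final display for the first estimate should carry a strict inequality (as stated in the proposition), which it does since the resolvent bound itself is strict.
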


\begin{proof}
We have
\begin{align*} ||\kappa_0(e)-\kappa_0(f)||&=\left\Vert\frac{1}{2\pi i}\int_\gamma\kappa_0(z)[(z-e)^{-1}-(z-f)^{-1}]dz\right\Vert \\ &= \left\Vert\frac{1}{2\pi i}\int_{\gamma_1}(z-e)^{-1}(e-f)(z-f)^{-1}dz\right\Vert \\ &<\sqrt{\varepsilon}\max_{z\in\gamma_1}||(z-e)^{-1}(e-f)(z-f)^{-1}|| \\ &\leq\frac{(N+1)^2}{\sqrt{\varepsilon}(1-2\sqrt{\varepsilon})^2}||e-f|| \end{align*}.

If $||e-f||<\frac{\varepsilon}{9(2N+3)(N+1)^2}$ and $0<\varepsilon<\frac{1}{9}$, then since \[||2\kappa_0(e)-1||<\frac{2N+3-2\sqrt{\varepsilon}}{1-2\sqrt{\varepsilon}}<6N+7,\] we have \[||\kappa_0(e)-\kappa_0(f)||<\frac{\sqrt{\varepsilon}}{9(2N+3)(1-2\sqrt{\varepsilon})^2}<\frac{1}{6N+9}<\frac{1}{||2\kappa_0(e)-1||}\] so $\kappa_0(e)$ and $\kappa_0(f)$ are homotopic idempotents \cite[Proposition 4.3.2]{Bl}.
\end{proof}

If $A$ is a filtered $SQ_p$ algebra, and $e$ is an $(\varepsilon,r,N)$-idempotent in $M_n(\tilde{A})$, then we may apply the holomorphic functional calculus to get an idempotent $\kappa_0(e)$ in $M_n(\tilde{A})$. This gives us a group homomorphism $K_0^{\varepsilon,r,N}(A)\rightarrow K_0(A)$ given by $[e]\mapsto[\kappa_0(e)]$. Also, since every $(\varepsilon,r,N)$-invertible is actually invertible, we have a homomorphism $K_1^{\varepsilon,r,N}(A)\rightarrow K_1(A)$ given by $[u]_{\varepsilon,r,N}\mapsto[u]$. We will denote this homomorphism by $\kappa_1$. These homomorphisms allow us to represent elements in $K_0(A)$ and $K_1(A)$ in terms of quasi-idempotents and quasi-invertibles respectively.

\begin{prop}\leavevmode \label{qKtoKsurj}
\begin{enumerate}
\item Let $A$ be a filtered $SQ_p$ algebra. Let $f$ be an idempotent in $M_n(\tilde{A})$, and let  $0<\varepsilon<\frac{1}{20}$. Then there exist $r>0$, $N\geq 1$, and $[e]\in K_0^{\varepsilon,r,N}(A)$ with $e\in Idem_n^{\varepsilon,r,N}(\tilde{A})$ such that $[\kappa_0(e)]=[f]$ in $K_0(A)$. 
\item Let $A$ be a filtered $SQ_p$ algebra. Let $u$ be an invertible element in $M_n(\tilde{A})$, and let $0<\varepsilon<\frac{1}{20}$. Then there exist $r>0$, $N\geq 1$, and $[v]\in K_1^{\varepsilon,r,N}(A)$ with $v\in GL_n^{\varepsilon,r,N}(\tilde{A})$ such that $[v]=[u]$ in $K_1(A)$.
\end{enumerate}
\end{prop}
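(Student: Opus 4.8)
The plan is to exploit the density of $\bigcup_{r>0} A_r$ in $A$ together with the norm estimates from Lemma~\ref{normestlem3} and the stability results of the previous subsection. Consider first part (i). Given an idempotent $f\in M_n(\tilde A)$, set $N=\|f\|+1$. Since $M_n(\bigcup_{r>0}A_r)$ (together with the scalar part coming from $\tilde A$) is dense in $M_n(\tilde A)$, I would choose $r>0$ and an element $e\in M_n(\tilde A)$ with $e$ of propagation $r$ and $\|e-f\|<\frac{\varepsilon}{3N}$ (being slightly careful in the non-unital case so that $\pi(e)$ equals the corresponding scalar idempotent $\pi(f)$, which can be arranged because $\bigcup_{r>0}(\tilde A)_r$ contains $\mathbb C 1_{\tilde A}$ and one can first perturb the $A$-part). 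By Lemma~\ref{normestlem3}(i), $e$ is an $(\varepsilon,r,N)$-idempotent, so $[e]\in K_0^{\varepsilon,r,N}(A)$ once one checks $[\pi(e)]=[\pi(f)]$, which holds by construction.

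It then remains to identify $[\kappa_0(e)]$ with $[f]$ in $K_0(A)$. By the two propositions immediately preceding this statement, $\|\kappa_0(e)-e\|$ is controlled by $\varepsilon$ and $N$, and $\kappa_0$ is norm-continuous on $(\varepsilon,r,N)$-idempotents with an explicit modulus; moreover $\kappa_0(f)=f$ since $f$ is already idempotent. Therefore, by shrinking $\frac{\varepsilon}{3N}$ further (replacing it by $\min\{\frac{\varepsilon}{3N},\frac{\varepsilon}{9(2N+3)(N+1)^2}\}$, say, and using $\varepsilon<\tfrac19$ which is implied by $\varepsilon<\tfrac1{20}$), the second of those propositions gives that $\kappa_0(e)$ and $\kappa_0(f)=f$ are homotopic idempotents in $M_n(\tilde A)$, hence $[\kappa_0(e)]=[f]$ in $K_0(A)$. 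This yields the desired class.

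Part (ii) is easier and follows the same template. Given an invertible $u\in M_n(\tilde A)$, set $N=\|u\|+\|u^{-1}\|+1$. Using density, pick $r>0$ and $v,v'\in M_n(\tilde A)$ of propagation $r$ with $\max(\|v-u\|,\|v'-u^{-1}\|)<\frac{\varepsilon}{N}$ (again arranging $\pi(v)=\pi(u)$ in the non-unital case). By Lemma~\ref{normestlem3}(ii), $(v,v')$ is an $(\varepsilon,r,N)$-inverse pair, so $v\in GL_n^{\varepsilon,r,N}(\tilde A)$ and $[v]\in K_1^{\varepsilon,r,N}(A)$ (with $[\pi(v)]=[\pi(u)]$ so the class lies in the right kernel). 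Since $\|v-u\|<\frac{\varepsilon}{N}$ and $u$ is invertible, the straight-line path $(1-t)u+tv$ consists of invertibles in $M_n(\tilde A)$, so $v$ and $u$ are homotopic through invertibles; hence $\kappa_1[v]=[v]=[u]$ in $K_1(A)$.

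The main obstacle is not analytic but bookkeeping: in the non-unital case one must ensure that the perturbed quasi-idempotent (resp.\ quasi-invertible) has the \emph{same} image under $\pi:M_n(\tilde A)\to M_n(\mathbb C)$ as $f$ (resp.\ $u$), so that the constructed class actually lies in $K_0^{\varepsilon,r,N}(A)=\ker\pi_*$ (resp.\ $K_1^{\varepsilon,r,N}(A)$) and maps to the correct class in $\ker(\pi_*:K_*(A^+)\to K_*(\mathbb C))$. This is handled by choosing the approximating element so that its scalar part is unchanged—perturb only the $A$-component, which is dense in $A_r\cap A$—and then the norm estimates above apply verbatim since the holomorphic functional calculus and the relevant homotopies are compatible with $\pi$.
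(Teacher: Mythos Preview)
Your proposal is correct and follows essentially the same approach as the paper: approximate $f$ (resp.\ $u$ and $u^{-1}$) by finite-propagation elements with small enough error, invoke Lemma~\ref{normestlem3} to obtain a quasi-idempotent (resp.\ quasi-inverse pair), and then use the norm-continuity of $\kappa_0$ (resp.\ the straight-line homotopy of invertibles) to conclude. The only cosmetic difference is in the non-unital bookkeeping: you fix $\pi(e)=\pi(f)$ by perturbing only the $A$-component, whereas the paper instead allows an arbitrary perturbation and then enlarges $N$ so that $[\pi(e)]=0$ in $K_0^{\varepsilon,r,N}(\mathbb{C})$; both are valid and lead to the same conclusion.
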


\begin{proof}\leavevmode
\begin{enumerate}
\item Let $N=||f||+1$. There exist $r>0$ and $e\in M_n(\tilde{A}_r)$ such that $||e-f||<\frac{\varepsilon}{9N(N+1)^2}$. Then $e$ is an $(\varepsilon,r,N)$-idempotent in $M_n(\tilde{A})$ by Lemma \ref{normestlem3}. Moreover, 
\begin{align*} ||\kappa_0(e)-f||&=||\kappa_0(e)-\kappa_0(f)|| \\ &<\frac{(N+1)^2}{\sqrt{\varepsilon}(1-2\sqrt{\varepsilon})^2}\frac{\varepsilon}{9N(N+1)^2} \\ &<\frac{1}{2N}<\frac{1}{||2f-1||} \end{align*}
so $\kappa_0(e)$ and $f$ are homotopic as idempotents \cite[Proposition 4.3.2]{Bl}. When $A$ is non-unital, by increasing $N$ if necessary, we get $[\pi(e)]=0$ in $K_0^{\varepsilon,r,N}(\mathbb{C})$ so that $[e]\in K_0^{\varepsilon,r,N}(A)$.
\item Let $N=||u||+||u^{-1}||+1$. There exist $r>0$ and $v,v'\in M_n(\tilde{A}_r)$ such that $||v-u||<\frac{\varepsilon}{N}$ and $||v'-u^{-1}||<\frac{\varepsilon}{N}$. Then $v\in GL_n^{\varepsilon,r,N}(\tilde{A})$ by Lemma \ref{normestlem3}. Moreover, since $||v-u||<\frac{1}{||u^{-1}||}$, we have $[v]=[u]$ in $K_1(A)$ \cite[Lemma 4.2.1]{WO}. When $A$ is non-unital, we may assume that $\pi(u)\sim I_n$. Then, by increasing $N$ if necessary, we get $[\pi(v)]=[I_n]$ in $K_1^{\varepsilon,r,N}(\mathbb{C})$ so that $[v]\in K_1^{\varepsilon,r,N}(A)$.
\end{enumerate}
\end{proof}

\begin{prop} \label{qKtoKinj} \leavevmode
\begin{enumerate}
\item There exists a (quadratic) polynomial $\rho$ with positive coefficients such that for any filtered $SQ_p$ algebra $A$, if $0<\varepsilon<\frac{1}{20\rho(N)}$, and $[e]_{\varepsilon,r,N},[f]_{\varepsilon,r,N}\in K_0^{\varepsilon,r,N}(A)$ are such that $[\kappa_0(e)]=[\kappa_0(f)]$ in $K_0(A)$, then there exist $r'\geq r$ and $N'\geq N$ such that $[e]_{\rho(N)\varepsilon,r',N'}=[f]_{\rho(N)\varepsilon,r',N'}$ in $K_0^{\rho(N)\varepsilon,r',N'}(A)$.
\item Let $A$ be a filtered $SQ_p$ algebra. If $0<\varepsilon<\frac{1}{20}$, and $[u]_{\varepsilon,r,N},[v]_{\varepsilon,r,N}\in K_1^{\varepsilon,r,N}(A)$ are such that $[u]=[v]$ in $K_1(A)$, then there exist $r'\geq r$ and $N'\geq N$ such that $[u]_{\varepsilon,r',N'}=[v]_{\varepsilon,r',N'}$ in $K_1^{\varepsilon,r',N'}(A)$.
\end{enumerate}
\end{prop}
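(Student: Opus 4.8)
The plan is to treat the odd case first --- where no inflation of $\varepsilon$ is needed --- and then the even case, where the quadratic polynomial $\rho$ enters only to absorb a second-order error.

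\emph{The odd case (2).} Since $[u]=[v]$ in $K_1(A)\subseteq K_1(\tilde A)$, the standard picture of $K_1$ for the unital Banach algebra $\tilde A$ \cite{Bl,WO} provides an integer $m$ and a norm-continuous path $w\colon[0,1]\to GL_{n+m}(\tilde A)$ with $w(0)=\diag(u,I_m)$ and $w(1)=\diag(v,I_m)$; note that $\diag(u,I_m)$ and $\diag(v,I_m)$ are $(\varepsilon,r,N)$-invertibles representing $[u]$ and $[v]$ in $GL_\infty^{\varepsilon,r,N}(A)/{\sim}$. The path is uniformly continuous and $M:=\sup_t\max(\|w(t)\|,\|w(t)^{-1}\|)<\infty$. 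Choose $0=t_0<\cdots<t_j=1$ so that $\|w(s)-w(t)\|<\eta$ whenever $s,t$ lie in a common subinterval, and, using density of $\bigcup_\varrho\tilde A_\varrho$, pick finite-propagation $a_i$ with $\|a_i-w(t_i)\|<\eta$ and $b_i$ with $\|b_i-w(t_i)^{-1}\|<\eta$, taking $a_0=\diag(u,I_m)$ and $a_j=\diag(v,I_m)$ (which already have propagation $r$). For $\eta$ small enough, Lemma \ref{normestlem3}(2) makes each $(a_i,b_i)$, after relaxing, an $(\varepsilon,R,N_1)$-inverse pair with $N_1:=\max(N,M+1)$ and $R$ the largest propagation occurring, while consecutive $a_i$'s are within $3\eta$, so Lemma \ref{normestlem1}(2) gives a linear homotopy of $(4\varepsilon,R,N_1)$-invertibles between $a_{i-1}$ and $a_i$. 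Concatenating, $\diag(u,I_m)$ and $\diag(v,I_m)$ are joined through $(4\varepsilon,R,N_1)$-invertibles; relaxing $R$ to $2r'$ and $N_1$ to $4N'$ and recalling that the relation defining $K_1^{\varepsilon,r',N'}$ is $(4\varepsilon,2r',4N')$-homotopy, we get $[u]_{\varepsilon,r',N'}=[v]_{\varepsilon,r',N'}$ in $K_1^{\varepsilon,r',N'}(A)$ for suitable $r'\geq r$, $N'\geq N$. In the non-unital case the equality holds a priori in $K_1^{\varepsilon,r',N'}(A^+)$ and, since $[u],[v]$ already lie in $\ker\pi_*^{\varepsilon,r,N}$, it descends to $\ker\pi_*^{\varepsilon,r',N'}=K_1^{\varepsilon,r',N'}(A)$.

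\emph{The even case (1).} Put $\rho(N)=(N+1)^2$ and assume $0<\varepsilon<\frac{1}{20\rho(N)}$, so in particular $\rho(N)\varepsilon<\frac{1}{20}$. Passing through the holomorphic functional calculus, $\kappa_0(e)$ and $\kappa_0(f)$ are genuine idempotents in $M_n(\tilde A)$ with $\max(\|\kappa_0(e)-e\|,\|\kappa_0(f)-f\|)<C(N+1)\varepsilon$ for an absolute constant $C$ (by the $\kappa_0$ norm estimates preceding this proposition, using $\varepsilon<\frac{1}{80}$), and $[\kappa_0(e)]=[\kappa_0(f)]$ in $K_0(A)\subseteq K_0(\tilde A)$ by hypothesis. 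The standard picture of $K_0$ for $\tilde A$ \cite{Bl} yields $m\in\mathbb{N}$ and a norm-continuous path $t\mapsto P_t$ of idempotents in $M_{n+2m}(\tilde A)$ from $P_0=\diag(\kappa_0(e),I_m,0_m)$ to $P_1=\diag(\kappa_0(f),I_m,0_m)$; set $M_1=\sup_t\|P_t\|<\infty$. Choose $0=t_0<\cdots<t_j=1$ with $\|P_s-P_t\|<\eta$ on each subinterval, and finite-propagation $a_i$ with $\|a_i-P_{t_i}\|<\eta$. For $\eta$ small enough each $a_i$ is a quasi-idempotent of norm at most $M_1+1$, and --- after relaxing parameters --- all of $\diag(e,I_m,0_m)$, the $a_i$, and $\diag(f,I_m,0_m)$ become $(\varepsilon,r',N')$-idempotents in $M_{n+2m}(\tilde A)$, with $r'\geq r$ at least all propagations involved and $N'=\max(N,M_1+1)$. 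By Lemma \ref{normestlem2}, the linear path between two consecutive such elements, distance $\delta$ apart, consists of $(\varepsilon+\frac{1}{4}\delta^2,r',N')$-idempotents. Along the partition $\delta\leq 3\eta$ is negligible; at the two ends, $\delta\leq\eta+\|\kappa_0(e)-e\|<\eta+C(N+1)\varepsilon$ (and similarly at the $f$ end), with \emph{no} dependence on $M_1$ or on the path. Hence $\frac{1}{4}\delta^2$ is, for $\eta$ small, bounded by $(N+1)^2\varepsilon^2$ up to an absolute constant, and since $\varepsilon<\frac{1}{20(N+1)^2}$ this is at most a fixed multiple of $\varepsilon$; arranging the constants so that every step's parameter stays below $\rho(N)\varepsilon$, concatenation shows $\diag(e,I_m,0_m)$ and $\diag(f,I_m,0_m)$ are $(\rho(N)\varepsilon,r',N')$-homotopic in $M_{n+2m}(\tilde A)$. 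Cancelling $[I_m]$ in the Grothendieck group (and, in the non-unital case, using $[e],[f]\in\ker\pi_*^{\varepsilon,r,N}$ as before) yields $[e]_{\rho(N)\varepsilon,r',N'}=[f]_{\rho(N)\varepsilon,r',N'}$ in $K_0^{\rho(N)\varepsilon,r',N'}(A)$.

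The delicate point --- and the reason the path pictures of $K_*$ are used rather than the similarity pictures --- is to keep the uncontrolled norm of the connecting path ($M$, resp.\ $M_1$, which depends on $e$, $f$ and the chosen path, not just on $N$) confined to the propagation and norm parameters $r'$, $N'$, where arbitrary growth is harmless, and out of the multiplicative constant in front of $\varepsilon$. Approximating the connecting homotopy pointwise and bridging with \emph{linear} homotopies achieves exactly this: each bridging distance is either negligible (along the partition) or of size $O((N+1)\varepsilon)$ (only at the two endpoints, where $e$ is compared directly with $\kappa_0(e)$), and the two are never multiplied together, so the only genuine error is the second-order term $O((N+1)^2\varepsilon^2)$, which the hypothesis $\varepsilon<1/(20\rho(N))$ converts into a bounded multiple of $\varepsilon$. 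Pinning down an explicit quadratic $\rho$ from here is routine bookkeeping of the constants in Lemmas \ref{normestlem1}, \ref{normestlem2}, \ref{normestlem3} and the $\kappa_0$ estimates.
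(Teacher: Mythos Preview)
Your proof is correct and follows essentially the same strategy as the paper: approximate the connecting homotopy of genuine idempotents (resp.\ invertibles) by finite-propagation elements, then bridge the endpoints to $e,f$ (resp.\ $u,v$) using the $\kappa_0$ norm estimates together with Lemmas \ref{normestlem1}--\ref{normestlem3}. The only cosmetic difference is that the paper approximates the entire path at once as a single element of $C([0,1],M_n(\tilde A_{r'}))$ (using the filtration on $C([0,1],A)$ established earlier), whereas you discretize and linearly interpolate; also, a minor arithmetic slip: your $N_1=\max(N,M+1)$ in the odd case should be at least $2M+1$ to match the hypothesis of Lemma \ref{normestlem3}(ii).
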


\begin{proof}\leavevmode
\begin{enumerate}
\item Let $(p_t)_{t\in[0,1]}$ be a homotopy of idempotents in $M_n(\tilde{A})$ between $\kappa_0(e)$ and $\kappa_0(f)$. Then $P:=(p_t)$ is an idempotent in $C([0,1],M_n(\tilde{A}))$. There exist $r'\geq r$ and $E:=(e_t)\in C([0,1],M_n(\tilde{A}_{r'}))$ such that $||E-P||<\frac{\varepsilon}{4N'}$, where $N'=\max(N,||P||+1)$.
In particular, we have $||e_0-\kappa_0(e)||<\frac{\varepsilon}{4N'}$ and $||e_1-\kappa_0(f)||<\frac{\varepsilon}{4N'}$. By Lemma \ref{normestlem3}, $e_t$ is an $(\varepsilon,r',N')$-idempotent in $M_n(\tilde{A})$ for each $t\in[0,1]$. Also 
\begin{align*} ||e_0-e|| &\leq||e_0-\kappa_0(e)||+||\kappa_0(e)-e|| \\ &<\frac{\varepsilon}{4N'}+\frac{2(N+1)\varepsilon}{(1-\sqrt{\varepsilon})(1-2\sqrt{\varepsilon})} \\ &<(6N+7)\varepsilon \end{align*}
and similarly $||e_1-f||<(6N+7)\varepsilon$. By Lemma \ref{normestlem2}, $e_0$ and $e$ are $(\varepsilon',r',N')$-homotopic, where $\varepsilon'=\varepsilon+\frac{1}{4}(6N+7)^2\varepsilon^2$, and similarly for $e_1$ and $f$. Hence $[e]_{\varepsilon',r',N'}=[f]_{\varepsilon',r',N'}$.

\item Let $(u_t)_{t\in[0,1]}$ be a homotopy of invertibles in $M_n(\tilde{A})$ between $u$ and $v$. We may regard $U=(u_t)$ as an invertible element in $C([0,1],M_n(\tilde{A}))$. 
Let $N'=\max(N,||U||+||U^{-1}||+1)$. There exist $r'\geq r$ and $W\in C([0,1],M_n(\tilde{A}_{r'}))$ such that \[\quad\quad\quad ||W-U||<\frac{\varepsilon-\max(||uu'-1||,||u'u-1||,||vv'-1||,||v'v-1||)}{N'},\] where $u'$ is an $(\varepsilon,r,N)$-inverse for $u$, and $v'$ is an $(\varepsilon,r,N)$-inverse for $v$. Then $W$ is an $(\varepsilon,r',N')$-invertible in $C([0,1],M_n(\tilde{A}))$ by Lemma \ref{normestlem3}, and we have a homotopy of $(\varepsilon,r',N')$-invertibles $u\sim W_0\sim W_1\sim v$ by Lemma \ref{normestlem1}.
\end{enumerate}
\end{proof}

\section{Controlled Long Exact Sequence in Quantitative $K$-Theory}

In this section, we establish a controlled long exact sequence in quantitative $K$-theory analogous to the one in ordinary $K$-theory. The proofs are adaptations of those in ordinary $K$-theory (cf. \cite{Bl}) and also those in quantitative $K$-theory for filtered $C^*$-algebras (cf. \cite{OY15}). First, we introduce some terminology, adapted from \cite{OY15} and \cite{OY}, that will allow us to describe the functorial properties of quantitative $K$-theory.

\subsection{Controlled morphisms and controlled exact sequences}

%For the purpose of getting information about the actual $K$-theory groups using quantitative $K$-theory, we want to consider the families of quantitative $K$-theory groups $(K_*^{\varepsilon,r,N}(A))_{0<\varepsilon<\frac{1}{20},r>0,N\geq 1}$. Moreover, the functorial properties of quantitative $K$-theory may be better viewed from this perspective, and we now introduce terminology, adapted from \cite{OY15} and \cite{OY}, that will allow us to do so.

The following notion of a control pair provides a convenient way to keep track of increases in the parameters associated with quantitative $K$-theory groups. It has already appeared in some of the earlier results, and we now give a formal definition.

\begin{defn}
A control pair is a pair $(\lambda,h)$ such that
\begin{itemize}
\item $\lambda:[1,\infty)\rightarrow[1,\infty)$ is a non-decreasing function;
\item $h:(0,\frac{1}{20})\times[1,\infty)\rightarrow[1,\infty)$ is a function such that $h(\cdot,N)$ is non-increasing for fixed $N$. \end{itemize}
We will write $\lambda_N$ for $\lambda(N)$, and $h_{\varepsilon,N}$ for $h(\varepsilon,N)$.

Given two control pairs $(\lambda,h)$ and $(\lambda',h')$, we write $(\lambda,h)\leq(\lambda',h')$ if $\lambda_N\leq \lambda'_N$ and $h_{\varepsilon,N}\leq h'_{\varepsilon,N}$ for all $\varepsilon\in(0,\frac{1}{20})$ and $N\geq 1$.
\end{defn}

\begin{rem}
These functions will appear as coefficients attached to the parameters $\varepsilon,r$, and $N$. One may choose to use three functions, one for each of the parameters, but we have chosen to use just two to reduce notational clutter, with $\lambda$ controlling both $\varepsilon$ and $N$.
\end{rem}

Given a filtered $SQ_p$ algebra $A$, we consider the families 
%\[\mathcal{K}_0(A)=(K_0^{\varepsilon,r,N}(A))_{0<\varepsilon<\frac{1}{20},r>0,N\geq 1}\] and \[\mathcal{K}_1(A)=(K_1^{\varepsilon,r,N}(A))_{0<\varepsilon<\frac{1}{20},r>0,N\geq 1}.\]
\begin{align*}
\mathcal{K}_0(A)&=(K_0^{\varepsilon,r,N}(A))_{0<\varepsilon<\frac{1}{20},r>0,N\geq 1}, \\
\mathcal{K}_1(A)&=(K_1^{\varepsilon,r,N}(A))_{0<\varepsilon<\frac{1}{20},r>0,N\geq 1}.
\end{align*}

\begin{defn}
Let $A$ and $B$ be filtered $SQ_p$ algebras, and let $(\lambda,h)$ be a control pair. A $(\lambda,h)$-controlled morphism $\mathcal{F}:\mathcal{K}_i(A)\rightarrow\mathcal{K}_j(B)$, where $i,j\in\{0,1\}$, is a family \[\mathcal{F}=(F^{\varepsilon,r,N})_{0<\varepsilon<\frac{1}{20\lambda_N},r>0,N\geq 1}\] of group homomorphisms \[F^{\varepsilon,r,N}:K_i^{\varepsilon,r,N}(A)\rightarrow K_j^{\lambda_N\varepsilon,h_{\varepsilon,N}r,\lambda_N}(B)\] such that whenever $0<\varepsilon\leq\varepsilon'<\frac{1}{20\lambda_{N'}}$, $h_{\varepsilon,N} r\leq h_{\varepsilon',N'}r'$, and $N\leq N'$, we have the following commutative diagram:
\[
\begindc{\commdiag}[100]		%[10]
\obj(10,5)[2a]{$K_i^{\varepsilon,r,N}(A)$}	\obj(25,5)[2b]{$K_i^{\varepsilon',r',N'}(A)$}	

\mor{2a}{2b}{$\iota_i$}	 

\obj(10,0)[3a]{$K_j^{\lambda_N\varepsilon,h_{\varepsilon,N} r,\lambda_N}(B)$}	\obj(25,0)[3b]{$K_j^{\lambda_{N'}\varepsilon',h_{\varepsilon',N'}r',\lambda_{N'}}(B)$}

\mor{2a}{3a}{$F^{\varepsilon,r,N}$}[\atright,\solidarrow]	\mor{3a}{3b}{$\iota_j$} 	\mor{2b}{3b}{$F^{\varepsilon',r',N'}$}
\enddc
\]
We say that $\mathcal{F}$ is a controlled morphism if it is a $(\lambda,h)$-controlled morphism for some control pair $(\lambda,h)$.
\end{defn}

In some cases, the family of group homomorphisms $F^{\varepsilon,r,N}$ may only be defined for values of $r$ within some finite interval rather than for all $r>0$, for example the boundary homomorphism in our controlled Mayer-Vietoris sequence in section \ref{sec:MV}. Thus we make the following refinement of the above definition.

\begin{defn}
Let $A$ and $B$ be filtered $SQ_p$ algebras, let $(\lambda,h)$ be a control pair, and let $R>0$. A $(\lambda,h)$-controlled morphism $\mathcal{F}:\mathcal{K}_i(A)\rightarrow\mathcal{K}_j(B)$ of order $R$, where $i,j\in\{0,1\}$, is a family \[\mathcal{F}=(F^{\varepsilon,r,N})_{0<\varepsilon<\frac{1}{20\lambda_N},0<r\leq\frac{R}{h_{\varepsilon,N}},N\geq 1}\] of group homomorphisms \[F^{\varepsilon,r,N}:K_i^{\varepsilon,r,N}(A)\rightarrow K_j^{\lambda_N\varepsilon,h_{\varepsilon,N}r,\lambda_N}(B)\] such that whenever $0<\varepsilon\leq\varepsilon'<\frac{1}{20\lambda_{N'}}$, $h_{\varepsilon,N}r\leq h_{\varepsilon',N'}r'\leq R$, and $N\leq N'$, we have the same commutative diagram as above.
\end{defn}

\begin{rem}
The definitions in the rest of this section will be stated in terms of controlled morphisms but they have obvious extensions to the setting of controlled morphisms of a given order.
\end{rem}

Given a filtered $SQ_p$ algebra $A$, we will denote by $\mathcal{I}d_{\mathcal{K}_i(A)}$ the $(1,1)$-controlled morphism given by the family $(Id_{K_i^{\varepsilon,r,N}(A)})_{0<\varepsilon<\frac{1}{20},r>0,N\geq 1}$, where $i\in\{0,1\}$.

A filtered homomorphism between filtered $SQ_p$ algebras $A$ and $B$ will induce a controlled morphism between $\mathcal{K}_*(A)$ and $\mathcal{K}_*(B)$. Moreover, such a controlled morphism will induce a homomorphism in $K$-theory.

\begin{prop}
Let $A$ and $B$ be filtered $SQ_p$ algebras. If $\mathcal{F}=(F^{\varepsilon,r,N}):\mathcal{K}_i(A)\rightarrow\mathcal{K}_j(B)$ is a controlled morphism, then there is a unique group homomorphism $F:K_i(A)\rightarrow K_j(B)$ satisfying \[F([\kappa_i(x)])=[\kappa_j(F^{\varepsilon,r,N}([x]))]\] for all $0<\varepsilon<\frac{1}{20}$, $r>0$, $N\geq 1$, and $[x]\in K_i^{\varepsilon,r,N}(A)$.

Moreover, if $\mathcal{F}:\mathcal{K}_i(A)\rightarrow\mathcal{K}_i(B)$ is induced by a filtered homomorphism $\phi:A\rightarrow B$, then $F=\phi_*:K_i(A)\rightarrow K_i(B)$.
\end{prop}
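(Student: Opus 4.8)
The plan rests on two facts already available. By Proposition~\ref{qKtoKsurj}, every class in $K_i(A)$ has the form $[\kappa_i(x)]$ for some admissible $(\varepsilon,r,N)$ and some $[x]\in K_i^{\varepsilon,r,N}(A)$, and moreover $\varepsilon$ here may be taken as small as we wish (in that proposition the size parameter $N$ is governed only by a chosen honest representative of the class, not by $\varepsilon$). By Proposition~\ref{qKtoKinj}, if $[\kappa_i(x)]=[\kappa_i(y)]$ in $K_i(A)$ with $[x],[y]$ both in $K_i^{\varepsilon,r,N}(A)$, then there are $r'\ge r$, $N'\ge N$ with $[x]=[y]$ in $K_i^{\varepsilon',r',N'}(A)$, where $\varepsilon'=\rho(N)\varepsilon$ in the even case and $\varepsilon'=\varepsilon$ in the odd case. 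The first step of the proof is then the elementary observation that, for $(\varepsilon,r,N)\le(\varepsilon',r',N')$ in the domain of $\mathcal F$, the commuting square defining a controlled morphism gives $F^{\varepsilon',r',N'}\circ\iota_i=\iota_j\circ F^{\varepsilon,r,N}$, while $\kappa_j\circ\iota_j=\kappa_j$ since relaxing control alters neither a quasi-idempotent nor a quasi-invertible; hence the composite $G^{\varepsilon,r,N}:=\kappa_j\circ F^{\varepsilon,r,N}\colon K_i^{\varepsilon,r,N}(A)\to K_j(B)$ satisfies $G^{\varepsilon',r',N'}\circ\iota_i=G^{\varepsilon,r,N}$, i.e. it is unchanged by relaxing control on either side.

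Next I would define $F\colon K_i(A)\to K_j(B)$ by choosing, for $z\in K_i(A)$, any $[x]\in K_i^{\varepsilon,r,N}(A)$ with $[\kappa_i(x)]=z$ and setting $F(z):=G^{\varepsilon,r,N}([x])$. For well-definedness, take $[x_1]\in K_i^{\varepsilon_1,r_1,N_1}(A)$ and $[x_2]\in K_i^{\varepsilon_2,r_2,N_2}(A)$ with $[\kappa_i(x_1)]=[\kappa_i(x_2)]=z$. Relaxing control, bring both to the common triple $(\varepsilon_0,r_0,N_0)$ with $\varepsilon_0=\max(\varepsilon_1,\varepsilon_2)$, $r_0=\max(r_1,r_2)$, $N_0=\max(N_1,N_2)$; by the first step this does not change $G^{\varepsilon_1,r_1,N_1}([x_1])$ or $G^{\varepsilon_2,r_2,N_2}([x_2])$. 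Since $[\kappa_i(x_1)]=[\kappa_i(x_2)]$ with both classes now in $K_i^{\varepsilon_0,r_0,N_0}(A)$, Proposition~\ref{qKtoKinj} yields $r'\ge r_0$, $N'\ge N_0$ and the appropriate rescaling $\varepsilon_0'$ of $\varepsilon_0$ with $[x_1]=[x_2]$ in $K_i^{\varepsilon_0',r',N'}(A)$; applying $G^{\varepsilon_0',r',N'}$ and invoking the first step once more gives $G^{\varepsilon_1,r_1,N_1}([x_1])=G^{\varepsilon_2,r_2,N_2}([x_2])$. Thus $F$ is well defined; additivity is routine, as block-diagonal sum is compatible with $\iota_i$, with $F^{\varepsilon,r,N}$, and with $\kappa_j$. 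The displayed formula, for an arbitrary admissible $(\varepsilon,r,N)$ and $[x]$, is exactly this last equality applied to $[x]$ and the representative used to define $F([\kappa_i(x)])$; and uniqueness is immediate, since by Proposition~\ref{qKtoKsurj} the formula determines $F$ on every element of $K_i(A)$.

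The step I expect to be the main obstacle is the bookkeeping of control parameters: the even-case assertion of Proposition~\ref{qKtoKinj} requires $\varepsilon$ small relative to a fixed polynomial $\rho(N)$, whereas $F^{\varepsilon,r,N}$ is only defined for $\varepsilon<\frac{1}{20\lambda_N}$, and forming a common triple enlarges $N$. The remedy is to first pass to a larger control pair with $\lambda\ge\rho$ — legitimate, since a $(\lambda,h)$-controlled morphism is automatically a $(\lambda',h')$-controlled morphism for any $(\lambda',h')\ge(\lambda,h)$, by post-composing each $F^{\varepsilon,r,N}$ with $\iota_j$ — and, in each individual comparison, to take the representative furnished by Proposition~\ref{qKtoKsurj} with $\varepsilon$ small enough to meet the finitely many smallness constraints dictated by the (already fixed) values of $N$ occurring there. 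Carrying this out carefully is where the real work lies.

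Finally, for the ``moreover'' statement: if $\mathcal F$ is the controlled morphism induced by a filtered homomorphism $\phi\colon A\to B$ (so $j=i$), then $F^{\varepsilon,r,N}([x])$ equals, up to a relaxation of control, the class $[\phi^+(x)]$, whence $[\kappa_i(F^{\varepsilon,r,N}([x]))]=[\kappa_i(\phi^+(x))]$. Naturality of the holomorphic functional calculus gives $\kappa_0(\phi^+(x))=\phi^+(\kappa_0(x))$ in the even case, while in the odd case $\kappa_1$ merely forgets control and thus commutes with $\phi^+_*$ on the nose. Hence $F([\kappa_i(x)])=[\phi^+(\kappa_i(x))]=\phi_*([\kappa_i(x)])$, and since every class in $K_i(A)$ has this form, $F=\phi_*$.
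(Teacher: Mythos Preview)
Your proposal is correct and follows essentially the same approach as the paper's proof. The paper's argument is extremely terse---it simply defines $F([f])=[\kappa_j(F^{\varepsilon,r,N}([e]))]$ for a representative $[e]$ furnished by Proposition~\ref{qKtoKsurj} and declares well-definedness ``by Propositions~\ref{qKtoKsurj} and~\ref{qKtoKinj}''---whereas you have spelled out the compatibility $G^{\varepsilon',r',N'}\circ\iota_i=G^{\varepsilon,r,N}$ and the two-representative comparison explicitly, and you have correctly flagged the parameter-bookkeeping issue (keeping $\varepsilon$ small relative to both $\rho(N)$ and $\lambda_{N'}$) that the paper leaves implicit.
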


\begin{proof}
Given $[f]\in K_0(A)$ and $0<\varepsilon<\frac{1}{20}$, there exist $r>0$, $N\geq 1$, and $e\in Idem_n^{\varepsilon,r,N}(\tilde{A})$ such that $[\kappa_0(e)]=[f]$. Then define \[F([f])=[\kappa_j(F^{\varepsilon,r,N}([e]))]\in K_j(B).\] This is well-defined by Propositions \ref{qKtoKsurj} and \ref{qKtoKinj}. The proof in the odd case is similar, and the last statement follows from the definition of $F$.
\end{proof}

Let $A,B$, and $C$ be filtered $SQ_p$ algebras, and let $i,j,l\in\{0,1\}$. Suppose that $\mathcal{F}:\mathcal{K}_i(A)\rightarrow\mathcal{K}_j(B)$ is a $(\lambda,h)$-controlled morphism, and that $\mathcal{G}:\mathcal{K}_j(B)\rightarrow\mathcal{K}_l(C)$ is a $(\lambda',h')$-controlled morphism. Then we denote by $\mathcal{G}\circ\mathcal{F}:\mathcal{K}_i(A)\rightarrow\mathcal{K}_l(C)$ the family \[(G^{\lambda_N\varepsilon,h_{\varepsilon,N} r,\lambda_N}\circ F^{\varepsilon,r,N})_{0<\varepsilon<\frac{1}{20\lambda'_{\lambda_N}\lambda_N},r>0,N\geq 1}.\] Note that $\mathcal{G}\circ\mathcal{F}$ is a $(\lambda'',h'')$-controlled morphism, where $\lambda''_N=\lambda'_{\lambda_N}\lambda_N$ and $h''_{\varepsilon,N}=h'_{\lambda_N\varepsilon,\lambda_N}h_{\varepsilon,N}$.

Hereafter, given two control pairs $(\lambda,h)$ and $(\lambda',h')$, we will write $(\lambda'\cdot\lambda)_N$ for $\lambda'_{\lambda_N}\lambda_N$, and $(h'\cdot h)_{\varepsilon,N}$ for $h'_{\lambda_N\varepsilon,\lambda_N}h_{\varepsilon,N}$.

\begin{defn}
Let $A$ and $B$ be filtered $SQ_p$ algebras. Let $\mathcal{F}:\mathcal{K}_i(A)\rightarrow\mathcal{K}_j(B)$ and $\mathcal{G}:\mathcal{K}_i(A)\rightarrow\mathcal{K}_j(B)$ be $(\lambda^\mathcal{F},h^\mathcal{F})$-controlled and $(\lambda^\mathcal{G},h^\mathcal{G})$-controlled morphisms respectively. Let $(\lambda,h)$ be a control pair. We write $\mathcal{F}\stackrel{(\lambda,h)}{\sim}\mathcal{G}$ if $(\lambda^\mathcal{F},h^\mathcal{F})\leq(\lambda,h)$, $(\lambda^\mathcal{G},h^\mathcal{G})\leq(\lambda,h)$, and the following diagram commutes whenever $0<\varepsilon<\frac{1}{20\lambda_N}$, $r>0$, and $N\geq 1$:
\[
\begindc{\commdiag}[100]		%[10]
\obj(0,3)[2a]{$K_i^{\varepsilon,r,N}(A)$}	\obj(26,3)[2c]{$K_j^{\lambda_N\varepsilon,h_{\varepsilon,N}r,\lambda_N}(B)$}	

\obj(13,6)[1b]{$K_j^{\lambda^\mathcal{F}_N\varepsilon,h^\mathcal{F}_{\varepsilon,N} r,\lambda^\mathcal{F}_N}(B)$}	
\obj(13,0)[3b]{$K_j^{\lambda^\mathcal{G}_N\varepsilon,h^\mathcal{G}_{\varepsilon,N}r,\lambda^\mathcal{G}_N}(B)$}

\mor{2a}{1b}{$F^{\varepsilon,r,N}$}	 \mor{2a}{3b}{$G^{\varepsilon,r,N}$}

\mor{1b}{2c}{$\iota_j$}	\mor{3b}{2c}{$\iota_j$} 	
\enddc
\]
\end{defn}

Observe that if $\mathcal{F}\stackrel{(\lambda,h)}{\sim}\mathcal{G}$ for some control pair $(\lambda,h)$, then $\mathcal{F}$ and $\mathcal{G}$ induce the same homomorphism in $K$-theory.

\begin{defn}
Let $A,B,C$, and $D$ be filtered $SQ_p$ algebras. Let $\mathcal{F}:\mathcal{K}_i(A)\rightarrow\mathcal{K}_j(B)$, $\mathcal{F}':\mathcal{K}_i(A)\rightarrow\mathcal{K}_l(C)$, $\mathcal{G}:\mathcal{K}_j(B)\rightarrow\mathcal{K}_m(D)$, and $\mathcal{G}':\mathcal{K}_l(C)\rightarrow\mathcal{K}_m(D)$ be controlled morphisms, where $i,j,l,m\in\{0,1\}$, and let $(\lambda,h)$ be a control pair. We say that the diagram
\[
\begindc{\commdiag}[100]		%[10]
\obj(0,5)[1a]{$\mathcal{K}_i(A)$}		\obj(10,5)[1b]{$\mathcal{K}_j(B)$}	

\obj(0,0)[2a]{$\mathcal{K}_l(C)$}	
\obj(10,0)[2b]{$\mathcal{K}_m(D)$}

\mor{1a}{1b}{$\mathcal{F}$}	 \mor{1b}{2b}{$\mathcal{G}$}

\mor{1a}{2a}{$\mathcal{F}'$}[\atright,\solidarrow]	\mor{2a}{2b}{$\mathcal{G}'$} 	
\enddc
\]
is $(\lambda,h)$-commutative if $\mathcal{G}\circ\mathcal{F}\stackrel{(\lambda,h)}{\sim}\mathcal{G}'\circ\mathcal{F}'$.
\end{defn}

\begin{defn}
Let $A$ and $B$ be filtered $SQ_p$ algebras. Let $(\lambda,h)$ be a control pair, and let $\mathcal{F}:\mathcal{K}_i(A)\rightarrow\mathcal{K}_j(B)$ be a $(\lambda^\mathcal{F},h^\mathcal{F})$-controlled morphism with $(\lambda^\mathcal{F},h^\mathcal{F})\leq(\lambda,h)$.
\begin{itemize}
\item We say that $\mathcal{F}$ is left (resp. right) $(\lambda,h)$-invertible if there exists a controlled morphism $\mathcal{G}:\mathcal{K}_j(B)\rightarrow\mathcal{K}_i(A)$ such that $\mathcal{G}\circ\mathcal{F}\stackrel{(\lambda,h)}{\sim}\mathcal{I}d_{\mathcal{K}_i(A)}$ (resp. $\mathcal{F}\circ\mathcal{G}\stackrel{(\lambda,h)}{\sim}\mathcal{I}d_{\mathcal{K}_j(B)}$). In this case, we call $\mathcal{G}$ a left (resp. right) $(\lambda,h)$-inverse for $\mathcal{F}$.
\item We say that $\mathcal{F}$ is $(\lambda,h)$-invertible or a $(\lambda,h)$-isomorphism if there exists a controlled morphism $\mathcal{G}:\mathcal{K}_j(B)\rightarrow\mathcal{K}_i(A)$ that is both a left $(\lambda,h)$-inverse and a right $(\lambda,h)$-inverse for $\mathcal{F}$. In this case, we call $\mathcal{G}$ a $(\lambda,h)$-inverse for $\mathcal{F}$.

We say that $\mathcal{F}$ is a controlled isomorphism if it is a $(\lambda,h)$-isomorphism for some control pair $(\lambda,h)$.
\end{itemize}
\end{defn}

Note that if $\mathcal{F}$ is left $(\lambda,h)$-invertible and right $(\lambda,h)$-invertible, then there exists a control pair $(\lambda',h')\geq(\lambda,h)$, depending only on $(\lambda,h)$, such that $\mathcal{F}$ is $(\lambda',h')$-invertible. Also, a controlled isomorphism will induce an isomorphism in $K$-theory.

\begin{defn}
Let $A$ and $B$ be filtered $SQ_p$ algebras. Let $(\lambda,h)$ be a control pair, and let $\mathcal{F}:\mathcal{K}_i(A)\rightarrow\mathcal{K}_j(B)$ be a $(\lambda^\mathcal{F},h^\mathcal{F})$-controlled morphism.
\begin{itemize}
\item We say that $\mathcal{F}$ is $(\lambda,h)$-injective if $(\lambda^\mathcal{F},h^\mathcal{F})\leq(\lambda,h)$, and for any $0<\varepsilon<\frac{1}{20\lambda_N}$, $r>0$, $N\geq 1$, and $x\in K_i^{\varepsilon,r,N}(A)$, if $F^{\varepsilon,r,N}(x)=0$ in $K_j^{\lambda^\mathcal{F}_N\varepsilon,h^\mathcal{F}_{\varepsilon,N}r,\lambda^\mathcal{F}_N}(B)$, then $\iota_i(x)=0$ in $K_i^{\lambda_N\varepsilon,h_{\varepsilon,N}r,\lambda_N}(A)$.
\item We say that $\mathcal{F}$ is $(\lambda,h)$-surjective if for any $0<\varepsilon<\frac{1}{20(\lambda^\mathcal{F}\cdot\lambda)_N}$, $r>0$, $N\geq 1$, and $y\in K_j^{\varepsilon,r,N}(B)$, there exists $x\in K_i^{\lambda_N\varepsilon,h_{\varepsilon,N}r,\lambda_N}(A)$ such that $F^{\lambda_N\varepsilon,h_{\varepsilon,N}r,\lambda_N}(x)=\iota_j(y)$ in $K_j^{(\lambda^\mathcal{F}\cdot\lambda)_N\varepsilon,(h^\mathcal{F}\cdot h)_{\varepsilon,N}r,(\lambda^\mathcal{F}\cdot\lambda)_N}(B)$.
\end{itemize}
\end{defn}

It is clear from the definitions that if $\mathcal{F}$ is left $(\lambda,h)$-invertible, then $\mathcal{F}$ is $(\lambda,h)$-injective. If $\mathcal{F}$ is right $(\lambda,h)$-invertible, then there exists a control pair $(\lambda',h')\geq(\lambda,h)$, depending only on $(\lambda,h)$, such that $\mathcal{F}$ is $(\lambda',h')$-surjective. On the other hand, if $\mathcal{F}$ is both $(\lambda,h)$-injective and $(\lambda,h)$-surjective, then there exists a control pair $(\lambda',h')\geq(\lambda,h)$, depending only on $(\lambda,h)$, such that $\mathcal{F}$ is a $(\lambda',h')$-isomorphism.

\begin{defn}
Let $A,B$, and $C$ be filtered $SQ_p$ algebras, and let $(\lambda,h)$ be a control pair. Let $\mathcal{F}:\mathcal{K}_i(A)\rightarrow\mathcal{K}_j(B)$ be a $(\lambda^\mathcal{F},h^\mathcal{F})$-controlled morphism, and let $\mathcal{G}:\mathcal{K}_j(B)\rightarrow\mathcal{K}_l(C)$ be a $(\lambda^\mathcal{G},h^\mathcal{G})$-controlled morphism, where $i,j,l\in\{0,1\}$. Then the composition \[\mathcal{K}_i(A)\stackrel{\mathcal{F}}{\rightarrow}\mathcal{K}_j(B)\stackrel{\mathcal{G}}{\rightarrow}\mathcal{K}_l(C)\] is said to be $(\lambda,h)$-exact (at $\mathcal{K}_j(B)$) if 
\begin{itemize}
\item $\mathcal{G}\circ\mathcal{F}=0$;
\item for any $0<\varepsilon<\frac{1}{20\max((\lambda^\mathcal{F}\cdot\lambda)_N,\lambda^\mathcal{G}_N)}$, $r>0$, $N\geq 1$, and $y\in K_j^{\varepsilon,r,N}(B)$ such that $G^{\varepsilon,r,N}(y)=0$ in $K_l^{\lambda^\mathcal{G}_N\varepsilon,h^\mathcal{G}_{\varepsilon,N}r,\lambda^\mathcal{G}_N}(C)$, there exists $x\in K_i^{\lambda_N\varepsilon,h_{\varepsilon,N}r,\lambda_N}(A)$ such that $F^{\lambda_N\varepsilon,h_{\varepsilon,N}r,\lambda_N}(x)=\iota_j(y)$ in $K_j^{(\lambda^\mathcal{F}\cdot\lambda)_N\varepsilon,(h^\mathcal{F}\cdot h)_{\varepsilon,N}r,(\lambda^\mathcal{F}\cdot\lambda)_N}(B)$. 
\end{itemize}
A sequence of controlled morphisms
\[ \cdots\rightarrow\mathcal{K}_{i_{k-1}}(A_{k-1})\rightarrow\mathcal{K}_{i_k}(A_k)\rightarrow\mathcal{K}_{i_{k+1}}(A_{k+1})\rightarrow\mathcal{K}_{i_{k+2}}(A_{k+2})\rightarrow\cdots \]
is said to be $(\lambda,h)$-exact if the composition $\mathcal{K}_{i_{k-1}}(A_{k-1})\rightarrow\mathcal{K}_{i_k}(A_k)\rightarrow\mathcal{K}_{i_{k+1}}(A_{k+1})$ is $(\lambda,h)$-exact for every $k$.
\end{defn}

Note that controlled exact sequences in quantitative $K$-theory induce exact sequences in $K$-theory.

%\todo[inline]{restate some of the basic ppties in the previous section in terms of controlled morphisms?}

\subsection{Completely filtered extensions of Banach algebras}

Let $A$ be a filtered Banach algebra with filtration $(A_r)_{r> 0}$, and let $J$ be a closed ideal in $A$. Then $A/J$ has filtration \[(q(A_r))_{r>0}=((A_r+J)/J)_{r> 0},\] where $q:A\rightarrow A/J$ is the quotient homomorphism. We will consider extensions of filtered Banach algebras in which the ideal $J$ has the natural filtration inherited from the filtration for $A$, and for which we can perform controlled lifting from $A/J$ to $A$.

For $r>0$, let $J_r=J\cap A_r$. Suppose there exists $C\geq 1$ such that $\inf_{y\in J_r}||x+y||\leq C\inf_{y\in J}||x+y||$ for all $r>0$ and $x\in A_r$. For any $y\in J$ and $\varepsilon>0$, there exist $r>0$ and $a\in A_r$ such that $||a-y||<\frac{\varepsilon}{C+1}$. Then there exists $z\in J_r$ such that $||a-z||<\frac{C\varepsilon}{C+1}$, and $||y-z||<\varepsilon$. Hence $(J_r)_{r>0}$ is a filtration for $J$.

\begin{defn}
Let $A$ be a filtered $SQ_p$ algebra with filtration $(A_r)_{r> 0}$, and let $J$ be a closed ideal of $A$. The extension of Banach algebras \[0\rightarrow J\rightarrow A\rightarrow A/J\rightarrow 0\] is called a $C$-completely filtered extension of $SQ_p$ algebras if there exists $C\geq 1$ such that for any $n\in\mathbb{N}$, $r>0$, and $x\in M_n(A_r)$, we have \[\inf_{y\in M_n(J_r)}||x+y||\leq C\inf_{y\in M_n(J)}||x+y||.\]
\end{defn}

\begin{rem}\leavevmode
\begin{enumerate}
\item If $A$ is a non-unital filtered $SQ_p$ algebra, $J$ is a closed ideal of $A$, and the extension $0\rightarrow J\rightarrow A\rightarrow A/J\rightarrow 0$ is $C$-completely filtered, then the extension \[0\rightarrow J\rightarrow \tilde{A}\rightarrow\tilde{A}/J\rightarrow 0\] is $3C$-completely filtered.
\item If $0\rightarrow J\rightarrow A\rightarrow A/J\rightarrow 0$ is a $C$-completely filtered extension of filtered $SQ_p$ algebras, then the suspended extension \[0\rightarrow SJ\rightarrow SA\rightarrow S(A/J)\rightarrow 0\] is $3C$-completely filtered.
\end{enumerate}
\end{rem}

%\todo[inline]{}
%
%When $A$ is non-unital, $J$ is also a closed ideal of $\tilde{A}$. Assume that there exists $C\geq 1$ such that for any $n\in\mathbb{N},r>0$, and $x\in M_n(A_r)$, we have $\inf_{y\in M_n(J_r)}||x+y||\leq C\inf_{y\in M_n(J)}||x+y||$. If $x\in M_n(\tilde{A}_r)$, and $x=a+z$ where $a\in M_n(A_r)$ and $z\in M_n(\mathbb{C})$, then for any $y\in M_n(J)$, we have $||x+y||=||a+z+y||\geq||a+y||-||z||\geq\inf_{j\in M_n(J)}||a+j||-||z||\geq\frac{1}{C}\inf_{j\in M_n(J_r)}||a+j||-||z||\geq\frac{1}{C}\inf_{j\in M_n(J_r)}||x+j||-2||z||$ so $\frac{1}{C}\inf_{j\in M_n(J_r)}||x+j||\leq||x+y||+2||z||\leq 3||x+y||$. Hence $\inf_{j\in M_n(J_r)}||x+j||\leq 3C\inf_{j\in M_n(J)}||x+j||$.
%
%\todo[inline]{}

A particular class of completely filtered extensions are the extensions
\[0\rightarrow J\rightarrow A\stackrel{q}{\rightarrow} A/J\rightarrow 0\] 
that admit $p$-completely contractive sections $s:A/J\rightarrow A$ with $s(q(A_r))\subset A_r$ for all $r>0$.

%\begin{defn}
%Let $A$ be a filtered $SQ_p$ algebra with filtration $(A_r)_{r>0}$, and let $J$ be a closed ideal of $A$. The extension $0\rightarrow J\rightarrow A\stackrel{q}{\rightarrow} A/J\rightarrow 0$ is called a semisplit extension if there exists a completely contractive linear map $s:A/J\rightarrow A$ such that $s(q(A_r))\subset A_r$ for all $r>0$.
%\end{defn}

\begin{ex}
If $A$ is a filtered $SQ_p$ algebra, then the cone $CA$ and the suspension $SA$ have filtrations induced by the filtration for $A$. The extension $0\rightarrow SA\rightarrow CA\rightarrow A\rightarrow 0$ admits a $p$-completely contractive section $s$ such that $s(q((CA)_r))\subset (CA)_r$ for all $r>0$.
\end{ex}

\begin{lem}
Extensions that admit $p$-completely contractive sections $s$ with $s(q(A_r))\subset A_r$ for all $r>0$ are $1$-completely filtered.
\end{lem}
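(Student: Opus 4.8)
\emph{Plan.} The goal is to verify the inequality in the definition of a $C$-completely filtered extension with $C=1$. Fix $n\in\mathbb{N}$, $r>0$, and $x\in M_n(A_r)$. Since $M_n(J_r)\subset M_n(J)$, the bound $\inf_{y\in M_n(J_r)}\|x+y\|\geq\inf_{y\in M_n(J)}\|x+y\|$ is automatic, so it suffices to establish the reverse inequality. Here $\inf_{y\in M_n(J)}\|x+y\|$ is the quotient norm of the image of $x$ in $M_n(A)/M_n(J)$, which by the standing identifications is $M_n(A/J)$ (equipped with the quotient norm) via the induced homomorphism $q_n\colon M_n(A)\to M_n(A/J)$; thus $\inf_{y\in M_n(J)}\|x+y\|=\|q_n(x)\|$.

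Next I would use the hypothesized section. Let $s_n\colon M_n(A/J)\to M_n(A)$ be the induced map, $s_n([z_{ij}])=[s(z_{ij})]$. Because $s$ is $p$-completely contractive, $\|s_n\|\leq 1$, so $\|s_n(q_n(x))\|\leq\|q_n(x)\|$; this is precisely where one needs $p$-\emph{complete} contractivity rather than mere contractivity, since the estimate must hold at every matrix level simultaneously. Because the filtration on $A/J$ is $(q(A_r))_{r>0}$ and $s(q(A_r))\subset A_r$, applying $s_n$ entrywise gives $s_n\big(q_n(M_n(A_r))\big)\subset M_n(A_r)$; in particular $x':=s_n(q_n(x))\in M_n(A_r)$. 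Since $q\circ s=\mathrm{id}_{A/J}$, we get $q_n\circ s_n=\mathrm{id}_{M_n(A/J)}$, hence $q_n(x')=q_n(x)$, so $x-x'\in\ker q_n=M_n(J)$; combined with $x,x'\in M_n(A_r)$ this yields $y_0:=x'-x\in M_n(J)\cap M_n(A_r)=M_n(J_r)$.

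Putting these together, $\inf_{y\in M_n(J_r)}\|x+y\|\leq\|x+y_0\|=\|x'\|=\|s_n(q_n(x))\|\leq\|q_n(x)\|=\inf_{y\in M_n(J)}\|x+y\|$, which is the required inequality with $C=1$ (so in fact equality holds). I do not anticipate any genuine obstacle: the proof is essentially bookkeeping about the induced maps $q_n,s_n$ on matrix algebras, the identity $M_n(J)\cap M_n(A_r)=M_n(J_r)$, and the isometric identification of $M_n(A)/M_n(J)$ with $M_n(A/J)$; the one substantive ingredient is that $s$ is $p$-completely contractive, which is exactly what produces the norm bound $\|s_n\|\leq 1$ uniformly in $n$.
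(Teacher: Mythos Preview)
Your proof is correct and follows essentially the same approach as the paper: both produce the element $s_n(q_n(x))\in M_n(A_r)$, observe that it differs from $x$ by something in $M_n(J_r)$, and use $p$-complete contractivity of $s$ to bound its norm by $\|q_n(x)\|$. Your version is somewhat more detailed in spelling out the matrix-level maps and the identification $M_n(J)\cap M_n(A_r)=M_n(J_r)$, but the argument is the same.
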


\begin{proof}
Let $x\in M_n(A_r)$. Since $s(q(A_r))\subset A_r$, there exists $z\in M_n(J_r)$ such that $s(q(x))=x+z$. Then \[||x+z||=||s(q(x))||\leq||q(x)||=\inf_{y\in M_n(J)}||x+y||\] so $\inf_{y\in M_n(J_r)}||x+y||\leq\inf_{y\in M_n(J)}||x+y||$.
\end{proof}

\subsection{Controlled half-exactness of $\mathcal{K}_0$ and $\mathcal{K}_1$}

% Lifting almost invertibles homotopic to the identity

\begin{lem} \label{invlift1}
Let $A$ be a unital filtered $SQ_p$ algebra. For any $C\geq 1$ and any $C$-completely filtered extension $0\rightarrow J\rightarrow A\stackrel{q}{\rightarrow}A/J\rightarrow 0$, for any $0<\varepsilon<\frac{1}{20}$, $r>0$, and $N\geq 1$, if $(u,v)$ is an $(\varepsilon,r,N)$-inverse pair in $M_n(A/J)$, then there exists an invertible $w$ in $M_{2n}(A)$ such that
\begin{itemize}
\item $w,w^{-1}\in M_{2n}(A_{3r})$;
\item $\max(||w||,||w^{-1}||)\leq (CN+\varepsilon+1)^3$; 
\item $\max(||q(w)-\diag(u,v)||,||q(w^{-1})-\diag(v,u)||)<(N+1)\varepsilon$;
\item $w$ is homotopic to $I_{2n}$ via a homotopy of invertible elements in $M_{2n}(A_{3r})$ with norm at most $\sqrt{2}(CN+\varepsilon+1)^3$, and likewise for $w^{-1}$.
\end{itemize}
\end{lem}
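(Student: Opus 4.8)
The plan is to run the classical Whitehead-type lifting of $\diag(u,u^{-1})$, but with the approximate inverse $v$ in place of $u^{-1}$ (to keep propagation bounded) and lifting the elementary-matrix factors one at a time. First I would lift $u$ and $v$: pick any $x_0\in M_n(A_r)$ with $q(x_0)=u$; under the isometric identification $M_n(A)/M_n(J)\cong M_n(A/J)$ we have $\inf_{y\in M_n(J)}\|x_0+y\|=\|u\|\le N$, so the hypothesis that the extension is $C$-completely filtered gives $\inf_{y\in M_n(J_r)}\|x_0+y\|\le CN$; choosing $y\in M_n(J_r)$ with $\|x_0+y\|<CN+\varepsilon$ and setting $\tilde u=x_0+y$ produces a lift $\tilde u\in M_n(A_r)$ of $u$ with $\|\tilde u\|\le CN+\varepsilon$, and similarly a lift $\tilde v\in M_n(A_r)$ of $v$ with $\|\tilde v\|\le CN+\varepsilon$.

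Next, set (with all blocks $n\times n$)
\[w=\begin{pmatrix}1&\tilde u\\0&1\end{pmatrix}\begin{pmatrix}1&0\\-\tilde v&1\end{pmatrix}\begin{pmatrix}1&\tilde u\\0&1\end{pmatrix}\begin{pmatrix}0&-1\\1&0\end{pmatrix}\in M_{2n}(A).\]
Each factor is invertible, so $w$ is invertible, with $w^{-1}=\begin{pmatrix}0&1\\-1&0\end{pmatrix}\begin{pmatrix}1&-\tilde u\\0&1\end{pmatrix}\begin{pmatrix}1&0\\\tilde v&1\end{pmatrix}\begin{pmatrix}1&-\tilde u\\0&1\end{pmatrix}$. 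Since $\tilde u,\tilde v\in M_n(A_r)$, each elementary factor lies in $M_{2n}(A_r)$, while the permutation factor is a scalar matrix (so in $M_{2n}(A_0)$, and multiplication by it preserves propagation), whence $w,w^{-1}\in M_{2n}(A_{3r})$. Compressing and expanding by norm-one scalar rectangular matrices, axiom $\mathcal M_p$ gives $\bigl\|\begin{pmatrix}0&\tilde u\\0&0\end{pmatrix}\bigr\|\le\|\tilde u\|$, so each elementary factor has norm $\le 1+(CN+\varepsilon)$ and the permutation factor has norm $1$; multiplying, $\max(\|w\|,\|w^{-1}\|)\le(CN+\varepsilon+1)^3$. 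Applying $q$ factorwise and multiplying out gives $q(w)=\begin{pmatrix}2u-uvu&uv-1\\1-vu&v\end{pmatrix}$, and the exact identity
\[q(w)-\diag(u,v)=\begin{pmatrix}1-uv&0\\0&1-vu\end{pmatrix}\begin{pmatrix}u&-1\\1&0\end{pmatrix}\]
combined with $\|\diag(1-uv,1-vu)\|<\varepsilon$ (axiom $\mathcal D_\infty$) and $\bigl\|\begin{pmatrix}u&-1\\1&0\end{pmatrix}\bigr\|\le\|u\|+1\le N+1$ yields $\|q(w)-\diag(u,v)\|<(N+1)\varepsilon$; the parallel computation $q(w^{-1})=\begin{pmatrix}v&1-vu\\uv-1&2u-uvu\end{pmatrix}$ with $q(w^{-1})-\diag(v,u)=\diag(1-vu,1-uv)\begin{pmatrix}0&1\\-1&u\end{pmatrix}$ gives $\|q(w^{-1})-\diag(v,u)\|<(N+1)\varepsilon$.

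For the last bullet, construct the homotopy by concatenation: first keep the three elementary factors fixed and rotate the permutation factor $\begin{pmatrix}0&-1\\1&0\end{pmatrix}$ to $I_2$ through the scalar rotations $\begin{pmatrix}\cos\theta&-\sin\theta\\\sin\theta&\cos\theta\end{pmatrix}\otimes I_n$; then scale the off-diagonal block of the third, then the second, then the first elementary factor linearly down to $0$, ending at $I_{2n}$. At every parameter the value is a product of at most three elementary factors (each of propagation $\le r$ and norm $\le CN+\varepsilon+1$, since scaling the off-diagonal block keeps it in the linear subspace $M_n(A_r)$) times a scalar rotation (propagation $0$, and $M_{2n}(A)$-norm $\le\sqrt2$ by $\mathcal M_p$), hence it is invertible, lies in $M_{2n}(A_{3r})$, and has norm $\le\sqrt2(CN+\varepsilon+1)^3$. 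Running the same scheme on the four-factor expression for $w^{-1}$ produces the corresponding homotopy for $w^{-1}$.

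The conceptual content here is classical; the real work is bookkeeping. I expect the main obstacle to be the $p$-operator-space norm estimates for block and scalar matrices — confirming that off-diagonal embeddings and rotations contribute only the constants claimed — and checking that \emph{every} intermediate term of the concatenated homotopy is an honest product of factors of propagation $\le r$, so that it stays inside $M_{2n}(A_{3r})$ throughout and not merely at the endpoints. The one small algebraic observation that must be spotted is the factorization $q(w)-\diag(u,v)=\diag(1-uv,1-vu)\begin{pmatrix}u&-1\\1&0\end{pmatrix}$, which is what produces the clean constant $(N+1)\varepsilon$ rather than something larger.
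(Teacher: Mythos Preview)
Your proposal is correct and follows essentially the same approach as the paper: the same Whitehead-type elementary-matrix product for $w$, the same controlled lifts of $u$ and $v$, and the same norm and propagation bookkeeping. The only cosmetic differences are that the paper writes $q(w)-\diag(u,v)$ directly as $\begin{pmatrix} u(1-vu) & uv-1 \\ 1-vu & 0 \end{pmatrix}$ (rather than via your factorization through $\diag(1-uv,1-vu)$, which gives the same $(N+1)\varepsilon$ bound), and the paper builds the homotopy in one stroke as $w_t=\begin{pmatrix} I & tU \\ 0 & I \end{pmatrix}\begin{pmatrix} I & 0 \\ -tV & I \end{pmatrix}\begin{pmatrix} I & tU \\ 0 & I \end{pmatrix}\begin{pmatrix} \cos\frac{\pi t}{2} & -\sin\frac{\pi t}{2} \\ \sin\frac{\pi t}{2} & \cos\frac{\pi t}{2} \end{pmatrix}$ rather than concatenating.
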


\begin{proof}
Given an $(\varepsilon,r,N)$-inverse pair $(u,v)$ in $M_n(A/J)$, there exist $U,V\in M_n(A_r)$ such that $q(U)=u$, $q(V)=v$, and $\max(||U||,||V||)<CN+\varepsilon$. Consider $w=\begin{pmatrix} I & U \\ 0 & I \end{pmatrix}\begin{pmatrix} I & 0 \\ -V & I \end{pmatrix}\begin{pmatrix} I & U \\ 0 & I \end{pmatrix}\begin{pmatrix} 0 & -I \\ I & 0 \end{pmatrix}\in M_{2n}(A_{3r})$. Then $w^{-1}
%=\begin{pmatrix} 0 & I \\ -I & 0 \end{pmatrix}\begin{pmatrix} I & -U \\ 0 & I \end{pmatrix}\begin{pmatrix} I & 0 \\ V & I \end{pmatrix}\begin{pmatrix} I & -U \\ 0 & I \end{pmatrix}
\in M_{2n}(A_{3r})$, and $\max(||w||,||w^{-1}||)
%<(N+\varepsilon)^3+(N+\varepsilon)^2+2(N+\varepsilon)+1
<(CN+\varepsilon+1)^3$. 

Moreover, $q(w)-\diag(u,v)=\begin{pmatrix} u(1-vu) & uv-1 \\ 1-vu & 0 \end{pmatrix}$ so \[||q(w)-\diag(u,v)||<(N+1)\varepsilon,\] and similarly $||q(w^{-1})-\diag(v,u)||<(N+1)\varepsilon$. Finally, \[w_t=\begin{pmatrix} I & tU \\ 0 & I \end{pmatrix}\begin{pmatrix} I & 0 \\ -tV & I \end{pmatrix}\begin{pmatrix} I & tU \\ 0 & I \end{pmatrix}\begin{pmatrix} \cos\frac{\pi t}{2} & -\sin\frac{\pi t}{2} \\ \sin\frac{\pi t}{2} & \cos\frac{\pi t}{2} \end{pmatrix}\] is a homotopy of invertible elements in $M_{2n}(A_{3r})$ between $w$ and $I_{2n}$, $w_t^{-1}$ is a homotopy of invertible elements in $M_{2n}(A_{3r})$ between $w^{-1}$ and $I_{2n}$, and $\max(||w_t||,||w_t^{-1}||)<\sqrt{2}(CN+\varepsilon+1)^3$ for all $t\in[0,1]$.
\end{proof}

\begin{prop}
For any $C\geq 1$, there exists a control pair $(\lambda,h)$ such that if $0\rightarrow J\stackrel{j}{\rightarrow} A\stackrel{q}{\rightarrow} A/J\rightarrow 0$ is a $C$-completely filtered extension of $SQ_p$ algebras, then we have a $(\lambda,h)$-exact sequence \[\mathcal{K}_0(J)\stackrel{j_*}{\rightarrow}\mathcal{K}_0(A)\stackrel{q_*}{\rightarrow}\mathcal{K}_0(A/J).\]
\end{prop}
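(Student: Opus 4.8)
The plan is to verify the two requirements in the definition of a $(\lambda,h)$-exact composition. The equality $q_*\circ j_*=0$ is immediate: $q\circ j$ is the zero homomorphism, so $(q\circ j)_*=0$ on every $K_0^{\varepsilon,r,N}$; note also that $j$ is $p$-completely isometric and $q$ is $p$-completely contractive for the quotient matrix norms, so $j_*$ and $q_*$ are $(1,1)$-controlled morphisms. The real work is the second requirement: given $y\in K_0^{\varepsilon,r,N}(A)$ with $q_*^{\varepsilon,r,N}(y)=0$ in $K_0^{\varepsilon,r,N}(A/J)$, produce $x\in K_0^{\lambda_N\varepsilon,h_{\varepsilon,N}r,\lambda_N}(J)$ with $j_*(x)=\iota_0(y)$ in $K_0^{\lambda_N\varepsilon,h_{\varepsilon,N}r,\lambda_N}(A)$, the control pair $(\lambda,h)$ depending only on $C$.

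First I would pass to a convenient normal form. Replacing $A$ by $A^+$ (which is unital, carries an augmentation $\pi$, and, by the remarks on unitization, sits in a $C'$-completely filtered extension $0\to J\to A^+\to A^+/J\to 0$ with $C'$ depending only on $C$), and using the standard-picture lemmas of Section 3.1, I would, after a first relaxation of control and after enlarging matrices: (a) write $\iota_0(y)=[e]-[p_0]$ with $e\in M_n(\tilde A)$ an $(\varepsilon_1,r_1,N_1)$-idempotent and $p_0=\pi(e)$ a scalar idempotent (whose nonzero entries equal $1_{A^+}$, hence lie in $J^+$); and (b) use $q_*(\iota_0(y))=[q(e)]-[p_0]=0$ together with the lemma ``$[e]-[f]=0$ implies stable homotopy'' to arrange that $q(e)$ is homotopic to $p_0$ as $(4\varepsilon_1,r_1,4N_1)$-idempotents in $M_n(\widetilde{A/J})$.

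Next, Lemma \ref{idemliphom} makes this homotopy Lipschitz after one more stabilization, and Proposition \ref{homtosim2} (in its non-unital refinement) then yields an $(\varepsilon_2,r_2,N_2)$-inverse pair $(u,v)$ over $\widetilde{A/J}$, with $u-I$ and $v-I$ in the matrix algebra over $A/J$, such that $\|u\,q(e)\,v-p_0\|$ is at most a constant depending only on $N$ times $\varepsilon$, with $(\varepsilon_2,r_2,N_2)$ controlled. Lemma \ref{invlift1} lifts $(u,v)$ to an invertible $w$ over $A$ with $w,w^{-1}\in M(A_{3r_2})$, of controlled norm, with $q(w)$ close to $\diag(u,v)$, with $w$ connected to the identity through invertibles over $A_{3r_2}$, and with $\pi(w)=I$ (from the explicit shape of $w$, since $u-I,v-I$ lie in the ideal). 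Set $e_1=w\,\diag(e,0)\,w^{-1}$. By Lemmas \ref{simtohom1} and \ref{simtohom2}, $e_1$ is a quasi-idempotent, $\diag(e_1,0)$ is homotopic to $\diag(e,0,0)$ over $A$ with controlled parameters, so $[e_1]-[p_0]=\iota_0(y)$, while $\pi(e_1)=p_0$ and $\|q(e_1)-p_0\|<\delta$ for a controlled $\delta=O(\varepsilon)$. Now comes the one step where the hypothesis is used: $e_1-p_0$ has propagation at most some $r_3=O(r_2)$ and image of norm $<\delta$ in the quotient, so $\inf_{z\in M(J)}\|(e_1-p_0)+z\|<\delta$; $C'$-complete filtration upgrades this to $\inf_{z\in M(J_{r_3})}\|(e_1-p_0)+z\|\le C'\delta$, giving $z\in M(J_{r_3})$ with $\|e_1-(p_0+z)\|<C'\delta$. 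Put $e_2=p_0+z$: then $q(e_2)=p_0$ exactly, $e_2-p_0=z\in M(J_{r_3})$ and $p_0$ has entries in $J^+$, so $e_2$ is a quasi-idempotent in $M((J^+)_{r_3})$ with $\pi(e_2)=p_0$; Lemma \ref{normestlem1} shows $e_2$ is homotopic to $e_1$ over $A$ after a last relaxation of $\varepsilon$. Hence $x:=[e_2]-[p_0]$ lies in $\ker\pi_*=K_0^{\lambda_N\varepsilon,h_{\varepsilon,N}r,\lambda_N}(J)$ and $j_*(x)=[e_2]-[p_0]=[e_1]-[p_0]=\iota_0(y)$.

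The main obstacle is precisely that last correction step: in the $C^*$-setting one has a completely positive contractive section and can lift genuine projections and unitaries verbatim, whereas here the $C$-complete filtration is exactly what allows the quasi-idempotent $e_1$ to be nudged, by an element of the ideal of controlled propagation, onto one whose quotient image is an honest scalar idempotent. The accompanying nuisance is the bookkeeping with unitizations and augmentations needed to keep every intermediate class inside the correct $\ker\pi_*$ subgroup and to ensure the lift $w$ satisfies $\pi(w)=I$; and one must check that the relaxations accumulated from the standard-picture lemmas, Lemma \ref{idemliphom}, Proposition \ref{homtosim2}, Lemma \ref{invlift1}, and the final norm estimates compose into a single control pair $(\lambda,h)$ that, as required, depends only on $C$.
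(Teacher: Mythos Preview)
Your proposal is correct and follows essentially the same approach as the paper: reduce to a homotopy of quasi-idempotents in the quotient, convert the homotopy to a quasi-similarity via Lemma~\ref{idemliphom} and Proposition~\ref{homtosim2}, lift the quasi-invertible using Lemma~\ref{invlift1}, conjugate, and then invoke the $C$-completely filtered hypothesis to correct by an element of $M_n(J_r)$. Your treatment of the unitization and augmentation bookkeeping is, if anything, more explicit than the paper's; one small wording slip is that ``$u-I,v-I$ lie in the ideal'' should read ``lie in $M_n(A/J)$'' (the non-scalar part of the unitization), but this is exactly what is needed to choose lifts $U,V$ with $\pi(U)=\pi(V)=I$ and hence $\pi(w)=I$.
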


\begin{proof}
Clearly the composition $K_0^{\varepsilon,r,N}(J)\rightarrow K_0^{\varepsilon,r,N}(A)\rightarrow K_0^{\varepsilon,r,N}(A/J)$ is the zero map. Let $[e]-[I_n]\in K_0^{\varepsilon,r,N}(A)$ be such that $[q(e)]-[I_n]=0$ in $K_0^{\varepsilon,r,N}(A/J)$, where $e$ is an $(\varepsilon,r,N)$-idempotent in $M_n(\tilde{A})$. Up to stabilization and relaxing control, we may assume that $q(e)$ and $I_n$ are $(\varepsilon,r,N)$-homotopic in $M_n(\tilde{A}/J)$.
%Then $\diag(q(e),I_m,O_m)$ and $\diag(I_n,I_m,O_m)$ are homotopic as $2\varepsilon$-$r$-$\frac{5}{4}(N+1)$-idempotents in $M_k((A/J)^+)$ for some $k\geq n+2m$. We may assume that $k=n+2m$. 
By Proposition \ref{homtosim2} and Lemma \ref{idemliphom}, there exists a control pair $(\lambda,h)$ such that, up to stabilization, $||uq(e)v-I_n||<\lambda_N\varepsilon$ for some $(\lambda_N\varepsilon,h_{\varepsilon,N}r,\lambda_N)$-inverse pair $(u,v)$ in $M_n(\tilde{A}/J)$.

By Lemma \ref{invlift1}, there exists an invertible $w \in M_{2k}(\tilde{A}_{3h_{\varepsilon,N}r})$ with $w^{-1}\in M_{2k}(\tilde{A}_{3h_{\varepsilon,N}r})$ such that $\max(||w||,||w^{-1}||)\leq (C\lambda_N+\varepsilon+1)^3$, and $\max(||q(w)-\diag(u,v)||,||q(w^{-1})-\diag(v,u)||)<(\lambda_N+1)\varepsilon$. Set \[e'=w\diag(e,0)w^{-1}.\] Since $||q(e')-\diag(I_n,0)||<3N(\lambda_N+1)(C\lambda_N+\varepsilon+1)^3\varepsilon$, there exists $f\in M_{2n}(\tilde{J}_{(6h_{\varepsilon,N}+1)r})$ such that \[||f-e'||<3CN(\lambda_N+1)(C\lambda_N+\varepsilon+1)^3\varepsilon.\] By further enlarging the control pair $(\lambda,h)$ if necessary, and applying Lemma \ref{simtohom2} and Lemma \ref{normestlem1}, we get $[f]-[I_n]=[e]-[I_n]$ in $K_0^{\lambda_N\varepsilon,h_{\varepsilon,N}r,\lambda_N}(A)$.
\end{proof}

%\todo[inline]{need a lemma on lifting almost idempotents along a homotopy to avoid passing to Lipschitz homotopy}

%\begin{lem}
%Suppose that $0\rightarrow J\stackrel{j}{\rightarrow} A\stackrel{q}{\rightarrow} A/J\rightarrow 0$ is a $C$-completely filtered extension of $SQ_p$ algebras where $A$ is unital. Let $u\in GL_n^{\varepsilon,r,N}(A/J)$. If $v\in GL_n^{\varepsilon,r,N}(A/J)$ such that $||u-v||<k\varepsilon$ and there exists $a\in GL_n^{\varepsilon,r,N}(A)$ such that $||q(a)-u||<k\varepsilon$ for some $k\in\mathbb{N}$, then there exists $b\in GL_n^{(2CkN+1)\varepsilon,r,N+2Ck\varepsilon}(A)$ such that $||q(b)-v||<2Ck\varepsilon$. Moreover, if $a$ is homotopic to $I_n$ as $(\varepsilon,r,N)$-invertibles, then $b$ is homotopic to $I_n$ as $((2CkN+1)\varepsilon,r,N+2Ck\varepsilon)$-invertibles.
%\end{lem}
%
%\begin{proof}
%Let $c\in M_n(A_r)$ be a lift of $v$. Then $||q(a-c)||<2k\varepsilon$ so there exists $d\in M_n(J_r)$ such that $||a-c+d||<2Ck\varepsilon$. By Lemma \ref{normestlem1}, $c-d\in GL_n^{(2CkN+1)\varepsilon,r,N+2Ck\varepsilon}(A)$. Set $b=c-d$.
%\end{proof}

\begin{lem} \label{invlift2}
For any $C\geq 1$, there exists a control pair $(\lambda,h)$ such that for any $C$-completely filtered extension $0\rightarrow J\stackrel{j}{\rightarrow} A\stackrel{q}{\rightarrow} A/J\rightarrow 0$ of $SQ_p$ algebras with $A$ unital, if $u\in GL_n^{\varepsilon,r,N}(A/J)$ is $(\varepsilon,r,N)$-homotopic to $I_n$, then there exist $k\in\mathbb{N}$ and $a\in GL_{(2k+2)n}^{\lambda_N\varepsilon,h_{\varepsilon,N}r,\lambda_N}(A)$ homotopic to $I_{(2k+2)n}$ such that $q(a)=\diag(u,I_{(2k+1)n})$.
%$||q(a)-\diag(u,I_{(2k+1)n})||<\lambda_N\varepsilon$. 
\end{lem}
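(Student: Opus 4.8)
The plan is to lift the given quantitative null-homotopy of $u$ by passing to the path algebra and applying the elementary-matrix construction from the proof of Lemma \ref{invlift1}, then correcting the resulting approximate lift to an exact one, taking advantage of the fact that every matrix involved has propagation a bounded multiple of $r$.

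First I would unwind the hypothesis. An $(\varepsilon,r,N)$-homotopy from $u$ to $I_n$ is a norm-continuous path $(u_t)_{t\in[0,1]}$ of $(\varepsilon,r,N)$-invertibles in $M_n(A/J)$ with $u_0=u$ and $u_1=I_n$; choosing quasi-inverses $v_t$ continuously with $v_1=I_n$ gives a companion path (at the cost of relaxing $\varepsilon$ to $2\varepsilon$), and $U=(u_t)$, $V=(v_t)$ form a quasi-inverse pair in $M_n((A/J)[0,1])$. The extension $0\to J[0,1]\to A[0,1]\to (A/J)[0,1]\to 0$ is $C'$-completely filtered for a constant $C'$ depending only on $C$, by the same estimate that handles the suspended extension. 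Since $u_t,v_t\in M_n(q(A_r))$ for all $t$, I can pick continuous lifts $(U_t),(V_t)$ in $M_n(C([0,1],A_r))$ with $U_1=V_1=I_n$ and $\max(\|U_t\|,\|V_t\|)\le C'N+\varepsilon$, using the completely-filtered inequality for the norm bound and a cutoff near $t=1$ to pin down the endpoint values.

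Next I would run the construction of Lemma \ref{invlift1} fibrewise, setting \[w_t=\begin{pmatrix} I & U_t \\ 0 & I\end{pmatrix}\begin{pmatrix} I & 0 \\ -V_t & I\end{pmatrix}\begin{pmatrix} I & U_t \\ 0 & I\end{pmatrix}\begin{pmatrix} 0 & -I \\ I & 0\end{pmatrix}\in M_{2n}(A_{3r}).\] This is a norm-continuous path of honestly invertible matrices with $w_1=I_{2n}$, with $w_t^{-1}\in M_{2n}(A_{3r})$, with $\|w_t\|$ and $\|w_t^{-1}\|$ bounded by a polynomial in $C'N$, and with $q(w_t)-\diag(u_t,v_t)$ equal to the explicit matrix $\left(\begin{smallmatrix} u_t(I-v_tu_t) & u_tv_t-I \\ I-v_tu_t & 0\end{smallmatrix}\right)$ of norm $<(N+1)\varepsilon$. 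In particular $w:=w_0$ is $(\lambda_N\varepsilon,3r,\lambda_N)$-homotopic to $I_{2n}$ through the $w_t$ (after relaxing control by a pair $(\lambda,h)$ depending only on $C$), and both $q(w)$ and $q(w)^{-1}=q(w^{-1})$ have propagation $3r$ in $A/J$ and lie within $(N+1)\varepsilon$ of $\diag(u,v_0)$ and $\diag(v_0,u)$ respectively. Because of the last point, $q(w)^{-1}\diag(u,v_0)=I_{2n}+q(w^{-1})(\diag(u,v_0)-q(w))$ is within a controlled multiple of $\varepsilon$ of $I_{2n}$ \emph{and} has propagation $6r$; lifting it to $\xi\in M_{2n}(A_{6r})$ with $\|\xi-I_{2n}\|$ a controlled multiple of $\varepsilon$, the element $w\xi\in M_{2n}(A_{9r})$ is a controlled quasi-invertible (using a truncated Neumann series for a controlled-propagation quasi-inverse), is $(\lambda_N\varepsilon,9r,\lambda_N)$-homotopic to $I_{2n}$ by concatenating the homotopies of $w$ and $\xi$, and satisfies $q(w\xi)=\diag(u,v_0)$ \emph{exactly}.

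The final step — and the main obstacle — is to remove the spurious $v_0$ and insert identity blocks while keeping the $q$-image exactly $\diag(u,I_{(2k+1)n})$ and everything controlled. Since $v_0$ is $(2\varepsilon,r,N)$-homotopic to $I_n$ via $(v_t)$, I would run the same two-step procedure on that homotopy (a quasi-inverse of $v_0$ being furnished by $u$) to get an exact controlled null-homotopic lift of $\diag(v_0,u)$, and then assemble these lifts together with a fixed exact lift $U_0\in M_n(A_r)$ of $u$ (with $\|U_0\|\le C'N$) into a block matrix, using coordinate permutations so that each $v_0$-coordinate gets multiplied against a $u$-coordinate; this yields an element whose $q$-image agrees with $\diag(u,I_{(2k+1)n})$ up to a term of norm a controlled multiple of $\varepsilon$ and of propagation a controlled multiple of $r$, and one further correction of the kind used above turns it into an exact lift. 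The resulting matrix $a$ is $(\lambda_N\varepsilon,h_{\varepsilon,N}r,\lambda_N)$-homotopic to $I_{(2k+2)n}$ by concatenating the homotopies of the constituent lifts with the straight-line homotopy of Lemma \ref{normestlem1}, and $k$ is the number of extra blocks introduced by the doublings. The point that keeps $(\lambda,h)$ independent of the homotopy is that the whole lifting happens over $C([0,1],A/J)$ in one stroke, so the (uncontrolled) modulus of continuity of the homotopy never enters any estimate; the delicate part is organizing the permutation bookkeeping that cancels the $v_0$-coordinates so that propagation stays within a fixed multiple of $r$, which is where the control pair acquires its final dependence on $C$.
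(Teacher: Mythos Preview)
Your path-algebra idea is natural, and the first two paragraphs are essentially sound: running the elementary-matrix construction of Lemma~\ref{invlift1} fibrewise over $[0,1]$ does yield an honest invertible $w_0\in M_{2n}(A_{3r})$, null-homotopic via $(w_t)$, with $q(w_0)$ within $(N+1)\varepsilon$ of $\diag(u,v_0)$, and one correction gives an exact lift of $\diag(u,v_0)$. The gap is in your ``assembly'' step. You never exhibit, even in outline, an element whose $q$-image is close to $\diag(u,I_{(2k+1)n})$. Writing out any concrete combination of your lift of $\diag(u,v_0)$, your lift of $\diag(v_0,u)$, the raw lift $U_0$, and permutations, one finds that every cancellation of a $v_0$-block against a $u$-block leaves a new unwanted block: e.g.\ $\diag(w^{(1)},I_n)\cdot\diag(I_n,(w^{(2)})^{-1})$ has image near $\diag(u,v_0u,v_0)\approx\diag(u,I_n,v_0)$, and iterating merely pushes the stray $v_0$ further right. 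The standalone $U_0$ cannot help since it is neither invertible nor null-homotopic in $A$. There is no terminating procedure here with a bounded number of blocks.

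The paper's proof supplies the one device a continuous approach lacks: a \emph{shift}. It discretises the homotopy at $t_0<\cdots<t_k$ with $\|u_{t_i}-u_{t_{i-1}}\|<\varepsilon/N$, lifts $V=\diag(u_{t_0},\dots,u_{t_k},u'_{t_0},\dots,u'_{t_k})$ by one application of Lemma~\ref{invlift1}, and lifts $W=\diag(I_n,u'_{t_0},\dots,u'_{t_{k-1}},u_{t_0},\dots,u_{t_k})$ by a second (this is the same diagonal with the primed entries cyclically shifted). The shift makes $VW$ telescope: block~1 is $u_{t_0}\cdot I_n=u$ exactly, while every other block is $u_{t_i}u'_{t_{i-1}}$ or $u'_{t_j}u_{t_j}$, each within $2\varepsilon$ of $I_n$; an additive correction into $J$ then gives the exact lift. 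Note that the paper's control pair is also independent of the homotopy---only the matrix size $(2k+2)n$ depends on $k$---so your stated motivation for the path-algebra detour is unnecessary.
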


\begin{proof}
Let $(u_t)_{t\in[0,1]}$ be a homotopy of $(\varepsilon,r,N)$-invertibles with $u_0=u$ and $u_1=I_n$. Let $0=t_0<t_1<\cdots<t_k=1$ be such that $||u_{t_i}-u_{t_{i-1}}||<\frac{\varepsilon}{N}$ for $i=1,\ldots,k$. For each $t$, let $u_t'$ be an $(\varepsilon,r,N)$-inverse for $u_t$ with $u_1'=I_n$. Set \begin{align*} V&=\diag(u_{t_0},\ldots,u_{t_k},u_{t_0}',\ldots,u_{t_k}'),\\ W&=\diag(I_n,u_{t_0}',\ldots,u_{t_{k-1}}',u_{t_0},\ldots,u_{t_k}).\end{align*} By Lemma \ref{invlift1}, there exists an invertible $v\in M_{(2k+2)n}(A_{3r})$ homotopic to $I_{(2k+2)n}$ such that $||v||<(CN+\varepsilon+1)^3$ and $||q(v)-V||<(N+1)\varepsilon$. Since \[W=\diag(P,I)\diag(u_{t_0}',\ldots,u_{t_k}',u_{t_0},\ldots,u_{t_k})\diag(P^{-1},I)\] for some permutation matrix $P$, by Lemma \ref{invlift1} again, there exists an invertible $w\in M_{(2k+2)n}(A_{3r})$ homotopic to $I_{(2k+2)n}$ such that $||w||<(CN+\varepsilon+1)^3$ and $||q(w)-W||<(N+1)\varepsilon$. Then $vw$ is homotopic to $I_{(2k+2)n}$ and \begin{align*} ||q(vw)-VW|| &\leq||(q(v)-V)q(w)||+||V(q(w)-W)|| \\ &<(N+1)((CN+\varepsilon+1)^3+N)\varepsilon.\end{align*}

%Now $VW\in GL_{(2k+2)n}^{2\varepsilon,2r,2N}(A/J)$ and $||\diag(u,I_{(2k+1)n})-VW||<2\varepsilon$ so by the previous lemma, there exist a control pair $(\lambda,h)$ and $a\in GL_{(2k+2)n}^{\lambda_N\varepsilon,h_{\varepsilon,N}r,\lambda_N}(A)$ homotopic to $I_{(2k+2)n}$ such that $||q(a)-\diag(u,I_{(2k+1)n})||<\lambda_N\varepsilon$.

Let $b\in M_n(A_r)$ be a lift of $u$. Then \[||q(\diag(b,I_{(2k+1)n})-vw)||<((N+1)((CN+\varepsilon+1)^3+N)+2)\varepsilon\] so there exists $d\in M_{(2k+2)n}(J_r)$ such that \[||\diag(b,I_{(2k+1)n})-vw+d||<((N+1)((CN+\varepsilon+1)^3+N)+2)C\varepsilon.\] Now $a=\diag(b,I_{(2k+1)n})+d$ has the desired properties.
\end{proof}

\begin{prop}
For any $C\geq 1$, there exists a control pair $(\lambda,h)$ such that if $0\rightarrow J\stackrel{j}{\rightarrow} A\stackrel{q}{\rightarrow} A/J\rightarrow 0$ is a $C$-completely filtered extension of $SQ_p$ algebras, then we have a $(\lambda,h)$-exact sequence \[\mathcal{K}_1(J)\stackrel{j_*}{\rightarrow}\mathcal{K}_1(A)\stackrel{q_*}{\rightarrow}\mathcal{K}_1(A/J).\]
\end{prop}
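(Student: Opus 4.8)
The argument runs parallel to the proof of the preceding proposition and to the odd half-exactness in \cite{OY15}, with Lemma \ref{invlift2} serving as the main lifting device. Since $q\circ j=0$ we have $q_*\circ j_*=0$, so the real content is controlled exactness at $\mathcal{K}_1(A)$. Fix $0<\varepsilon<\frac{1}{20\lambda_N}$ (for the control pair $(\lambda,h)$ to be produced), $r>0$, $N\ge1$, and a class $y\in K_1^{\varepsilon,r,N}(A)$ with $q_*^{\varepsilon,r,N}(y)=0$ in $K_1^{\varepsilon,r,N}(A/J)$. Represent $y$ by an $(\varepsilon,r,N)$-invertible $u\in M_n(\tilde A)$ with $(\varepsilon,r,N)$-inverse $v$; since $q$ is $p$-completely contractive, $q(u)$ is an $(\varepsilon,r,N)$-invertible in $M_n(\tilde A/J)$ that, by hypothesis, is $(4\varepsilon,2r,4N)$-homotopic to the identity in $M_\infty(\tilde A/J)$. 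After stabilizing $u$ by a diagonal block of identities (which does not change $y$) we may assume this homotopy takes place in $M_n(\tilde A/J)$ itself. By the remark following the definition of a $C$-completely filtered extension, the extension $0\to J\to\tilde A\to\tilde A/J\to0$ is $3C$-completely filtered, and $\tilde A$ is unital, so Lemma \ref{invlift2} applied to it gives a control pair depending only on $C$, an integer $m$, and an invertible $a\in GL_M^{\lambda'_N\varepsilon,\,h'_{\varepsilon,N}r,\,\lambda'_N}(\tilde A)$ with $M=(2m+2)n$, such that $q(a)=\diag\bigl(q(u),I_{(2m+1)n}\bigr)$ and $a$ is homotopic to $I_M$ through a homotopy of invertibles with parameters controlled by $C$. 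Tracing the explicit formulas in Lemmas \ref{invlift1} and \ref{invlift2}, we may furthermore choose a quasi-inverse $a'$ of $a$, with control of the same order, that is itself homotopic to $I_M$ with controlled parameters.

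Next, put $u''=\diag(u,I_{(2m+1)n})$ and $v''=\diag(v,I_{(2m+1)n})$, so that $[u'']=y$ and $q(u'')=q(a)$, and set $c=u''a'$. Since $q(a')$ is a quasi-inverse of $q(a)=q(u'')$, we get $\|q(c)-I_M\|<\lambda'_N\varepsilon$ up to an absolute constant, while $c\in M_M(\tilde A_{r'})$ with $r'=(1+h'_{\varepsilon,N})r$. Applying the $3C$-complete filtration property to $c-I_M$ yields $d\in M_M(J_{r'})$ with $\|c-(I_M+d)\|<3C\lambda'_N\varepsilon$; set $\hat c=I_M+d\in M_M\bigl((J^+)_{r'}\bigr)$. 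Then $q(\hat c)=I_M$, hence $\hat c$ lies in $M_M(J^+)$ and $\pi(\hat c)=I_M$ for the canonical map $\pi:J^+\to\mathbb{C}$. Since $c$ has a quasi-inverse over $\tilde A$ with controlled parameters, built from $a'$ and $v''$, that quasi-inverse perturbs into $M_M((J^+)_{r'})$ by the same filtration argument; together with Lemma \ref{normestlem3} this shows $\hat c$ is a quasi-invertible over $J^+$ with parameters controlled by $C$, so it determines a class $x=[\hat c]\in K_1^{\lambda_N\varepsilon,\,h_{\varepsilon,N}r,\,\lambda_N}(J)$ for a suitable control pair $(\lambda,h)$ depending only on $C$.

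It remains to identify $j_*(x)$. Under $j_*$ the class $x$ is represented by $\hat c$ regarded as an element of $M_M(\tilde A)$; since $\|\hat c-c\|<3C\lambda'_N\varepsilon$, Lemma \ref{normestlem1} gives $[\hat c]_{\tilde A}=[c]_{\tilde A}$ up to control. Because $a'$ is homotopic to $I_M$ in $\tilde A$ with controlled parameters, left-multiplying such a homotopy by $u''$ (with quasi-inverses supplied by $v''$) exhibits $c=u''a'$ as homotopic to $u''$ with controlled parameters, so $[c]_{\tilde A}=[u'']_{\tilde A}=\iota_1(y)$ up to control; hence $j_*(x)=\iota_1(y)$ up to control, which is the desired exactness. (If one instead uses a right-hand correction of the form $c=a'v''$, one obtains $j_*(x)=-\iota_1(y)$ and simply replaces $x$ by its inverse in $K_1^{\lambda_N\varepsilon,\,h_{\varepsilon,N}r,\,\lambda_N}(J)$.) Beyond the appeal to Lemma \ref{invlift2}, the one genuinely delicate point is the bookkeeping: every perturbation and homotopy above must lose control by an amount governed by a single control pair depending only on $C$ --- this is precisely where the uniform-in-$n$ estimate in the definition of a $C$-completely filtered extension enters --- and the auxiliary quasi-inverse $a'$ must be chosen simultaneously controlled and homotopic to the identity with controlled parameters, which one reads off from the explicit constructions in Lemmas \ref{invlift1} and \ref{invlift2}.
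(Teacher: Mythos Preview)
Your proof is correct and follows essentially the same route as the paper's: apply Lemma~\ref{invlift2} (to the $3C$-completely filtered extension $0\to J\to\tilde A\to\tilde A/J\to0$) to produce a lift $a$ of $\diag(q(u),I)$ homotopic to the identity, multiply $\diag(u,I)$ by a quasi-inverse of $a$ to get an element whose image under $q$ is close to the identity, and then use the $C$-complete filtration to perturb it into $M_M(\tilde J)$. The only cosmetic difference is that the paper multiplies on the left by the quasi-inverse $w'$ while you multiply on the right by $a'$; your write-up is in fact more explicit than the paper's about why the resulting element $\hat c$ is quasi-invertible \emph{within} $M_M(J^+)$ (by perturbing the quasi-inverse $av''$ the same way), which the paper leaves implicit.
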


\begin{proof}
It is clear that the composition 
%$K_1^{\varepsilon,r,N}(J)\rightarrow K_1^{\varepsilon,r,N}(A)\rightarrow K_1^{\varepsilon,r,N}(A/J)$ 
is the zero map. Let $[u]\in K_1^{\varepsilon,r,N}(A)$ with $u\in M_n(\tilde{A})$ and $[q(u)]=[I]\in K_1^{\varepsilon,r,N}(A/J)$. By Lemma \ref{invlift2}, there exist a control pair $(\lambda,h)$, $k\in\mathbb{N}$, and $w\in GL_{(2k+2)n}^{\lambda_N\varepsilon,h_{\varepsilon,N}r,\lambda_N}(\tilde{A})$ homotopic to $I_{(2k+2)n}$ such that $q(w)=\diag(q(u),I_{(2k+1)n})$.

Let $w'$ be a $(\lambda_N\varepsilon,h_{\varepsilon,N}r,\lambda_N)$-inverse for $w$. Then, up to relaxing control, $w'$ is homotopic to $I_{(2k+2)n}$, and $w'\diag(u,I_{(2k+1)n})$ is homotopic to $\diag(u,I_{(2k+1)n})$ as $(\lambda_N\varepsilon,h_{\varepsilon,N}r,\lambda_N)$-invertibles.
Since \begin{align*} &\quad\; ||q(w'\diag(u,I_{(2k+1)n}))-I_{(2k+2)n}|| \\ &\leq||q(w')|| ||\diag(q(u),I_{(2k+1)n})-q(w)||+||q(w'w-I_{(2k+2)n})|| \\ &<(\lambda_N^2+\lambda_N)\varepsilon,\end{align*} there exists $U\in M_{(2k+2)n}(\tilde{J}_{h_{\varepsilon,N}r})$ such that \[||U-w'\diag(u,I_{(2k+1)n})||<C(\lambda_N^2+\lambda_N)\varepsilon.\] Thus by further enlarging the control pair $(\lambda,h)$, we get $j_*([U])=[u]$ in $K_1^{\lambda_N\varepsilon,h_{\varepsilon,N}r,\lambda_N}(A)$.
\end{proof}

\subsection{Controlled boundary map and controlled long exact sequence}

\begin{prop}
For any $C\geq 1$, there exists a control pair $(\lambda^{\mathcal{D}},h^{\mathcal{D}})$ such that if $0\rightarrow J\rightarrow A\stackrel{q}{\rightarrow} A/J\rightarrow 0$ is a $C$-completely filtered extension of $SQ_p$ algebras, then there is a $(\lambda^{\mathcal{D}},h^{\mathcal{D}})$-controlled morphism \[\mathcal{D}_1=(\partial_1^{\varepsilon,r,N}):\mathcal{K}_1(A/J)\rightarrow\mathcal{K}_0(J)\] which induces the usual boundary map $\partial_1:K_1(A/J)\rightarrow K_0(J)$ in $K$-theory.
\end{prop}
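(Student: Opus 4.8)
The plan is to adapt the classical Whitehead-type construction of the boundary homomorphism, keeping careful track of every parameter inflation.

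First I would fix a representative $(\varepsilon,r,N)$-inverse pair $(u,v)$ in $M_n(\widetilde{A/J})$ of a class in $K_1^{\varepsilon,r,N}(A/J)$ and pass to the unitized extension $0\to J\to\tilde A\to\widetilde{A/J}\to 0$, which is $3C$-completely filtered by the remark on such extensions. Lemma~\ref{invlift1}, applied to $\tilde A$ with constant $3C$, produces an invertible $w\in M_{2n}(\tilde A_{3r})$ with $w^{-1}\in M_{2n}(\tilde A_{3r})$, $\max(\|w\|,\|w^{-1}\|)\le(3CN+\varepsilon+1)^3$, $\max(\|q(w)-\diag(u,v)\|,\|q(w^{-1})-\diag(v,u)\|)<(N+1)\varepsilon$, and with $w,w^{-1}$ homotopic to $I_{2n}$ through invertibles of propagation $3r$ and controlled norm. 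I would then set $e=w\,\diag(I_n,0)\,w^{-1}$, a genuine idempotent in $M_{2n}(\tilde A_{6r})$; estimating $q(e)=q(w)\diag(I_n,0)q(w^{-1})$ against $\diag(u,v)\diag(I_n,0)\diag(v,u)=\diag(uv,0)$ and using $\|uv-I_n\|<\varepsilon$ gives $\|q(e)-\diag(I_n,0)\|<\delta$ with $\delta$ a fixed polynomial in $N$ times $\varepsilon$. Using the $3C$-completely filtered hypothesis I would choose $g\in M_{2n}(J_{6r})$ with $\|e-\diag(I_n,0)-g\|<3C\delta$, put $f=\diag(I_n,0)+g\in M_{2n}(J^+)$, and note that $f$ is a quasi-idempotent of propagation $6r$ with $\pi(f)=\diag(I_n,0)$ and $\|f-e\|<3C\delta$. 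The definition would be $\partial_1^{\varepsilon,r,N}([u])=[f]-[\diag(I_n,0)]$, which lies in $K_0^{\varepsilon',r',N'}(J)$ because $\pi_*$ annihilates it, where $(\varepsilon',r',N')=(\lambda^{\mathcal D}_N\varepsilon,h^{\mathcal D}_{\varepsilon,N}r,\lambda^{\mathcal D}_N)$ for a control pair $(\lambda^{\mathcal D},h^{\mathcal D})$ depending only on $C$ — with $h^{\mathcal D}$ bounded by a constant (of size around $6$) — chosen to absorb all these estimates together with the costs of Lemmas~\ref{normestlem3} and~\ref{normestlem1}.

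Next I would check that this descends to a controlled morphism. For independence of the quasi-inverse $v$, of the lifts of $u$ and $v$, and of $w$: any two choices give quasi-idempotents differing by an element of $M_{2n}(J)$ and, after relaxing control, joined by the homotopy $w_t$ of Lemma~\ref{invlift1}. For invariance under the $(4\varepsilon,2r,4N)$-homotopy defining the equivalence relation on $K_1^{\varepsilon,r,N}$: such a homotopy is an invertible element of $C([0,1],M_n(\widetilde{A/J}))$, and carrying out the same construction over $0\to C([0,1],J)\to C([0,1],\tilde A)\to C([0,1],\widetilde{A/J})\to 0$ — which is $C'$-completely filtered for a $C'$ depending only on $C$ by the reasoning used for the suspended extension, with surjectivity of the quotient map supplied by a Bartle--Graves section — yields a homotopy of quasi-idempotents between the two outputs. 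Additivity is clear since the construction commutes with block-diagonal sums, and the squares with the relaxation-of-control maps $\iota_*$ commute because the construction is monotone in $\varepsilon$, $r$, and $N$. This produces the $(\lambda^{\mathcal D},h^{\mathcal D})$-controlled morphism $\mathcal D_1=(\partial_1^{\varepsilon,r,N})$.

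Finally, to identify $\mathcal D_1$ with the usual $\partial_1$: by the earlier proposition every controlled morphism induces a homomorphism $D_1\colon K_1(A/J)\to K_0(J)$ with $D_1([\kappa_1(x)])=[\kappa_0(\partial_1^{\varepsilon,r,N}(x))]$. Given an invertible $u\in M_n(\widetilde{A/J})$, I would use Proposition~\ref{qKtoKsurj} to represent $[u]$ by an $(\varepsilon,r,N)$-invertible $u'$ with $\|u'-u\|$ and $\|v'-u^{-1}\|$ as small as desired; then the lift $w$ above is within an arbitrarily small multiple of the genuine Whitehead lift $W\in M_{2n}(\tilde A)$ of $\diag(u,u^{-1})$ coming from the same formula, for which $q(W\diag(I_n,0)W^{-1})=\diag(I_n,0)$ exactly, so $P_0:=W\diag(I_n,0)W^{-1}$ is an idempotent in $M_{2n}(\tilde J)$ with $[P_0]-[\diag(I_n,0)]=\partial_1([u])$ by the definition of the classical boundary map. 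Since $\|f-P_0\|$ is small, $\kappa_0(f)$ and $\kappa_0(P_0)=P_0$ are homotopic idempotents by the norm estimate for $\kappa_0$, whence $D_1([u])=[\kappa_0(f)]-[\diag(I_n,0)]=\partial_1([u])$. I expect the main obstacle to be bookkeeping rather than conceptual: namely, (i) packaging all of the above norm bounds into a single control pair that genuinely depends only on $C$, verifying that each ``up to stabilization and relaxing control'' step costs only a factor controlled by $C$ and $N$; and (ii) the well-definedness over $C([0,1],\cdot)$ together with the precise matching with $\partial_1$, as opposed to some a priori different boundary-type map.
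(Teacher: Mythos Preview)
Your construction is correct and follows the same overall Whitehead-type recipe as the paper: lift $\diag(u,v)$ to an element $w$ over $\tilde A$, form $w\diag(I_n,0)w^{-1}$ (or $w'$), observe its image in the quotient is close to $\diag(I_n,0)$, and then use the completely filtered hypothesis to adjust into $M_{2n}(\tilde J)$.

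There are two genuine differences in execution worth noting. First, the paper lifts via Lemma~\ref{invlift2}, which produces a \emph{quasi}-invertible $w$ with $q(w)=\diag(u,v)$ exactly, at the cost of an uncontrolled stabilization depending on the homotopy $\diag(u,v)\sim I$; it then works with $x=w\diag(I_n,0)w'$ for a quasi-inverse $w'$. You instead invoke Lemma~\ref{invlift1} directly, obtaining a \emph{genuine} invertible $w$ with $w^{-1}\in M_{2n}(\tilde A_{3r})$ and $q(w)$ only approximately $\diag(u,v)$; this makes $e=w\diag(I_n,0)w^{-1}$ an honest idempotent and keeps the propagation factor $h^{\mathcal D}$ visibly bounded (around $6$), with no stabilization in the construction itself. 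Second, for well-definedness the paper carries out a four-step case analysis (choice of $y$, choice of lift, stabilization, homotopy invariance via explicit correcting factors $a,b$), whereas you package homotopy invariance by running the construction over $C([0,1],\cdot)$. Your route is cleaner, but you should be aware that the appeal to Bartle--Graves is a distraction: what you actually need is that the extension $0\to C([0,1],J)\to C([0,1],\tilde A)\to C([0,1],\widetilde{A/J})\to 0$ is $C'$-completely filtered, and this is proved (as in the paper's remark on suspensions) by a partition-and-interpolate argument, not by abstract section theorems.
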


\begin{proof}
Let $u\in GL_n^{\varepsilon,r,N}(\tilde{A}/J)$ and choose $v\in GL_m^{\varepsilon,r,N}(\tilde{A}/J)$ such that $\diag(u,v)$ is $(2\varepsilon,2r,2(N+\varepsilon))$-homotopic to $I_{n+m}$. One choice is to take $v\in GL_n^{\varepsilon,r,N}(\tilde{A}/J)$ to be an $(\varepsilon,r,N)$-inverse for $u$ by Lemma \ref{inversepairhomotopy}. By Lemma \ref{invlift2}, up to stabilization, there exist a control pair $(\lambda,h)$ and a $(\lambda_N\varepsilon,h_{\varepsilon,N}r,\lambda_N)$-invertible $w\in M_{n+m}(\tilde{A})$ with $q(w)=\diag(u,v)$. Let $w'$ be a $(\lambda_N\varepsilon,h_{\varepsilon,N}r,\lambda_N)$-inverse for $w$, and set \[x=w\diag(I_n,0)w'\in M_{n+m}(\tilde{A}).\] There is a control pair $(\lambda',h')\geq(\lambda,h)$ such that 
\begin{itemize}
\item $(w,w')$ is a $(\lambda'_N\varepsilon,h'_{\varepsilon,N}r,\lambda'_N)$-inverse pair, 
\item $||q(w')-\diag(u',v')||<\lambda'_N\varepsilon$, where $u'$ and $v'$ are $(\varepsilon,r,N)$-inverses for $u$ and $v$ respectively, 
\item $x$ is homotopic to $\diag(I_n,0)$ as $(\lambda'_N\varepsilon,h'_{\varepsilon,N}r,\lambda'_N)$-idempotents in $M_{n+m}(\tilde{A})$,  
\item $||q(x)-\diag(I_n,0)||<\lambda'_N\varepsilon$. 
\end{itemize}
Then there exists $y\in M_{n+m}(\tilde{A}_{h'_{\varepsilon,N}r}\cap J)$ such that \[||x-\diag(I_n,0)-y||<3C\lambda'_N\varepsilon\] so there is a control pair $(\lambda'',h'')\geq(\lambda',h')$ such that $y+\diag(I_n,0)$ is a $(\lambda''_N\varepsilon,h''_{\varepsilon,N}r,\lambda''_N)$-idempotent in $M_{n+m}(\tilde{J})$. As $(\lambda''_N\varepsilon,h''_{\varepsilon,N}r,\lambda''_N)$-idempotents in $M_{n+m}(\tilde{A})$, $y+\diag(I_n,0)$ is homotopic to $x$. We will define \[ \partial_1([u])=[y+\diag(I_n,0)]-[\diag(I_n,0)].\] It remains to be shown that there is a control pair $(\lambda^{\mathcal{D}},h^{\mathcal{D}})\geq(\lambda'',h'')$ so that $\partial_1:K_1^{\varepsilon,r,N}(A/J)\rightarrow K_0^{\lambda^{\mathcal{D}}_N\varepsilon,h^{\mathcal{D}}_{\varepsilon,N}r,\lambda^{\mathcal{D}}_N}(J)$ is a well-defined homomorphism. We do so in several steps as follows:

\begin{enumerate}
\item Suppose that $y'\in M_{n+m}(\tilde{A}_{h'_{\varepsilon,N}r}\cap J)$ also satisfies \[||x-\diag(I_n,0)-y'||<3C\lambda'_N\varepsilon.\] Then $||y-y'||<6C\lambda'_N\varepsilon$ so $y+\diag(I_n,0)$ and $y'+\diag(I_n,0)$ are homotopic as $(\lambda^{\mathcal{D}}_N\varepsilon,h^{\mathcal{D}}_{\varepsilon,N}r,\lambda^{\mathcal{D}}_N)$-idempotents for an appropriate control pair $(\lambda^{\mathcal{D}},h^{\mathcal{D}})$.

\item Suppose that $z\in M_{n+m}(\tilde{A})$ is another $(\lambda_N\varepsilon,h_{\varepsilon,N}r,\lambda_N)$-invertible such that $||q(z)-\diag(u,v)||<\lambda_N\varepsilon$, and $z'\in M_{n+m}(\tilde{A})$ is a $(\lambda_N\varepsilon,h_{\varepsilon,N}r,\lambda_N)$-inverse for $z$. Setting $x':=z\diag(I_n,0)z'$, there exists $y'\in M_{2n}(\tilde{A}_{h'_{\varepsilon,N}r}\cap J)$ such that \[||x'-\diag(I_n,0)-y'||<3C\lambda'_N\varepsilon.\] Since $||q(x)-q(x')||<2\lambda'_N\varepsilon$, we have $y''\in M_{n+m}(\tilde{A}_{h'_{\varepsilon,N}r}\cap J)$ such that $||x-x'-y''||<6C\lambda'_N\varepsilon$. Then \[||x-\diag(I_n,0)-y'-y''||<9C\lambda'_N\varepsilon.\] This reduces to the case in (i).

\item Replacing $u$ by $\diag(u,I_k)$ and replacing $v$ by $\diag(v,I_j)$, we get $z\in M_{n+m+k+j}(\tilde{A})$ such that \[||q(z)-\diag(u,I_k,v,I_j)||<\lambda_N\varepsilon.\] In fact, if we write $w$ as a block matrix $\begin{pmatrix} w_{11} & w_{12} \\ w_{21} & w_{22} \end{pmatrix}$, where $w_{11}$ has size $n\times n$ and $w_{22}$ has size $m\times m$, then by (ii), we may take $z=\begin{pmatrix} w_{11} & 0 & w_{12} & 0 \\ 0 & I_k & 0 & 0 \\ w_{21} & 0 & w_{22} & 0 \\ 0 & 0 & 0 & I_j  \end{pmatrix}$, and similarly for $z'$ corresponding to $w'$. Now $x':=z\diag(I_{n+k},0)z'$ and $\diag(w,I_{k+j})\diag(I_n,0,I_k,0)\diag(w',I_{k+j})=\diag(x,I_k,0)$ are homotopic. There exists $y'\in M_{n+m+k+j}(\tilde{A}_{h'_{\varepsilon,N}r}\cap J)$ such that \[||x'-\diag(I_{n+k},0)-y'||<3C\lambda'_N\varepsilon.\] Conjugating by permutation matrices, we obtain $y''\in M_{n+m+k+j}(\tilde{A}_{h'_{\varepsilon,N}r}\cap J)$ such that \[||\diag(x,I_k,0)-\diag(I_n,0,I_k,0)-y''||<3C\lambda'_N\varepsilon.\] Then $||y''-\diag(y,0)||<6C\lambda'_N\varepsilon$. It follows that there is an appropriate control pair $(\lambda^{\mathcal{D}},h^{\mathcal{D}})$ such that in $K_0^{\lambda^{\mathcal{D}}_N\varepsilon,h^\mathcal{D}_{\varepsilon,N}r,\lambda^{\mathcal{D}}_N}(J)$ we have 
\begin{align*}
&\quad\;\partial_1([\diag(u,I_m)]) \\ &= [y'+\diag(I_{n+k},0_{m+j})]-[\diag(I_{n+k},0_{m+j})] \\ 
&= [y''+\diag(I_n,0_{m+k+j})]-[\diag(I_n,0_{m+k+j})] \\
&= [\diag(y,0_{k+j})+\diag(I_n,0_{m+k+j})]-[\diag(I_n,0_{m+k+j})] \\
&= [y+\diag(I_n,0_m)]-[\diag(I_n,0_m)]=\partial_1([u]).
\end{align*}

\item Suppose that $u_0$ and $u_1$ are homotopic as $(\varepsilon,r,N)$-invertibles in $M_n(\tilde{A}/J)$, and that $v_i\in GL_m^{\varepsilon,r,N}(\tilde{A}/J)$ is such that $\diag(u_i,v_i)$ is $(2\varepsilon,2r,2(N+\varepsilon))$-homotopic to $I_{n+m}$ for $i=0,1$. Let $u_0'$ and $v_0'$ be $(\varepsilon,r,N)$-inverses for $u_0$ and $v_0$ respectively. Then there exists a control pair $(\lambda^1,h^1)\geq(\lambda,h)$ such that as $(\lambda^1_N\varepsilon,h^1_{\varepsilon,N}r,\lambda^1_N)$-invertibles, $u_0'u_1\sim I_{n}$, and $\diag(v_0'v_1,I_n)\sim\diag(I_n,v_0'v_1)\sim\diag(u_0'u_1,v_0'v_1)\sim I_{n+m}$. Let $w_0\in M_{n+m}(\tilde{A})$ be such that $||q(w_0)-\diag(u_0,v_0)||<\lambda_N\varepsilon$. There is a control pair $(\lambda^2,h^2)\geq(\lambda^1,h^1)$ such that, up to stabilization, there exist $a\in M_n(\tilde{A})$ and $b\in M_{n+m}(\tilde{A})$ such that 
\begin{itemize}[leftmargin=*]
\item $||q(a)-u_0'u_1||<\lambda^2_N\varepsilon$, 
\item $||q(b)-\diag(v_0'v_1,I_n)||<\lambda^2_N\varepsilon$, 
\item $||q(\diag(w_0,I_n)\diag(a,b))-\diag(u_1,v_1,I_m)||<\lambda^2_N\varepsilon$, and 
\item $||\diag(w_0a,b)\diag(I_n,0)\diag(a'w_0',b')-\diag(I_n,0)-\diag(y,0)||<\lambda^2_N\varepsilon$,
%\item $||\diag(w_0,I_n)\diag(a,b)\diag(I_n,0)\diag(a',b')\diag(w_0',I_n)-\diag(I_n,0)-\diag(y,0)||<\lambda^2_N\varepsilon$,
\end{itemize}
where $a',b',w_0'$ are quasi-inverses for $a,b,w_0$ respectively. By the previous cases, there is a control pair $(\lambda^\mathcal{D},h^\mathcal{D})$ such that \begin{align*} \partial_1([u_1])&=[\diag(y,0)+\diag(I_n,0)]-[\diag(I_n,0)] \\ &=[y+\diag(I_n,0)]-[\diag(I_n,0)] \\ &=\partial_1([u_0]) \end{align*} in $K_0^{\lambda^\mathcal{D}_N\varepsilon,h^\mathcal{D}_{\varepsilon,N}r,\lambda^\mathcal{D}_N}(J)$.

\end{enumerate}

To see that this controlled boundary map induces the usual boundary map in $K$-theory, let $u_0\in M_n(\tilde{A}/J)$ be invertible, and let $u\in M_n(\tilde{A}/J)$ be sufficiently close to $u_0$ so that $u$ is $(\varepsilon,r,N)$-invertible for some $\varepsilon,r$, and $N$. Similarly, let $v$ correspond to $u_0^{-1}$. Up to relaxing control, we may assume that $(u,v)$ is an $(\varepsilon,r,N)$-inverse pair. Let $w_0\in M_{2n}(\tilde{A})$ be a lift of $\diag(u_0,u_0^{-1})$. Recall that the usual boundary map $\partial_1:K_1(A/J)\rightarrow K_0(J)$ is defined by \[\partial_1([u_0])=[w_0\diag(I_n,0)w_0^{-1}]-[\diag(I_n,0)].\] Let $N=||w_0||+||w_0^{-1}||+1$, and let $w,w'\in M_{2n}(\tilde{A}_r)$ be such that $||w-w_0||<\frac{\varepsilon}{N}$ and $||w'-w_0^{-1}||<\frac{\varepsilon}{N}$ so that $(w,w')$ is an $(\varepsilon,r,N)$-inverse pair for some $r>0$. Moreover, $||q(w)-\diag(u,v)||<\frac{2\varepsilon}{N}$. We use this $w$ in the definition of the controlled boundary map to obtain $y$ such that \[||w\diag(I_n,0)w'-\diag(I_n,0)-y||<3C\lambda'_N\varepsilon\] and \[\partial_1^{\varepsilon,r,N}([u])=[y+\diag(I_n,0)]-[\diag(I_n,0)].\] Now \begin{align*} &\quad\; ||y+\diag(I_n,0)-w_0\diag(I_n,0)w_0^{-1}|| \\ &\leq||y+\diag(I_n,0)-w\diag(I_n,0)w'||+||w\diag(I_n,0)w'-w_0\diag(I_n,0)w_0^{-1}||\end{align*} so by making $\varepsilon$ sufficiently small, $y+\diag(I_n,0)$ and $w_0\diag(I_n,0)w_0^{-1}$ will be sufficiently close in norm so that \[ [\kappa_0(y+\diag(I_n,0))]=[w_0\diag(I_n,0)w_0^{-1}] \] in $K_0(J)$.
\end{proof}

\begin{rem}\label{boundaryrmk} The following can be deduced from the definition of the controlled boundary map.
\begin{enumerate}
\item If $0\rightarrow I\rightarrow A\rightarrow A/I\rightarrow 0$ and $0\rightarrow J\rightarrow B\rightarrow B/J\rightarrow 0$ are $C$-completely filtered extensions of $SQ_p$ algebras, and $\phi:A\rightarrow B$ is a filtered homomorphism such that $\phi(I)\subset J$ (so $\phi$ induces a filtered homomorphism $\tilde{\phi}:A/I\rightarrow B/J$), then $\mathcal{D}_1^B\circ\tilde{\phi}_*=\phi_*\circ\mathcal{D}_1^A$,
\item If $0\rightarrow J\rightarrow A\rightarrow A/J\rightarrow 0$ is a completely filtered split extension of $SQ_p$ algebras, i.e., there is a filtered homomorphism $s:A/J\rightarrow A$ such that $q\circ s=Id_{A/J}$ and the induced homomorphism $\tilde{A}/J\rightarrow\tilde{A}$ is $p$-completely contractive, then $\mathcal{D}_1=0$.
\end{enumerate}
\end{rem}

\begin{prop}
For any $C\geq 1$, there exists a control pair $(\lambda,h)$ such that if $0\rightarrow J\rightarrow A\stackrel{q}{\rightarrow}A/J\rightarrow 0$ is a $C$-completely filtered extension of $SQ_p$ algebras, then we have a $(\lambda,h)$-exact sequence \[\mathcal{K}_1(A)\rightarrow\mathcal{K}_1(A/J)\stackrel{\mathcal{D}_1}{\rightarrow}\mathcal{K}_0(J).\]
\end{prop}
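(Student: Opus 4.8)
The plan is to establish $(\lambda,h)$-exactness of $\mathcal{K}_1(A)\xrightarrow{q_*}\mathcal{K}_1(A/J)\xrightarrow{\mathcal{D}_1}\mathcal{K}_0(J)$ at $\mathcal{K}_1(A/J)$, with the control pair $(\lambda,h)$ obtained by composing the pair $(\lambda^{\mathcal D},h^{\mathcal D})$ from the previous proposition with those coming from Lemmas~\ref{invlift1}, \ref{invlift2}, \ref{idemliphom}, Proposition~\ref{homtosim2}, and the norm estimates of Lemmas~\ref{normestlem1} and \ref{normestlem2}; all dependence will be only on $C$. For the vanishing $\mathcal{D}_1\circ q_*=0$, given $u\in GL_n^{\varepsilon,r,N}(\tilde A)$ take an $(\varepsilon,r,N)$-inverse $v$ over $\tilde A$, so that $\diag(q(u),q(v))$ is $(2\varepsilon,2r,2(N+\varepsilon))$-homotopic to $I_{2n}$ by Lemma~\ref{inversepairhomotopy}; then $w:=\diag(u,v)$ is an admissible lift in the construction of $\partial_1$, and with $w':=\diag(v,u)$ we get $x:=w\diag(I_n,0_n)w'=\diag(uv,0_n)$ with $||x-\diag(I_n,0_n)||=||uv-1||<\varepsilon$. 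The $C$-completely filtered hypothesis produces $y$ over $M_{2n}(\tilde A_{2r}\cap J)$ with $||x-\diag(I_n,0_n)-y||<C\varepsilon$, so by Lemma~\ref{normestlem1} the class $[y+\diag(I_n,0_n)]$ equals $[\diag(I_n,0_n)]$ in a controlled $K_0(J)$; since $\partial_1$ is independent of admissible choices up to its control pair, $\partial_1^{\varepsilon,r,N}(q_*[u])=0$ after relaxing control.

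For exactness, suppose $[\bar u]\in K_1^{\varepsilon,r,N}(A/J)$ has $\partial_1^{\varepsilon,r,N}[\bar u]=0$ in $K_0^{\lambda^{\mathcal D}_N\varepsilon,h^{\mathcal D}_{\varepsilon,N}r,\lambda^{\mathcal D}_N}(J)$; I want a class in $K_1(A)$ mapping to $\iota_1[\bar u]$, imitating the ordinary-$K$-theory argument. Using the lemma on the standard picture for $K_1$, after relaxing control assume $\pi(\bar u)=I_n$. Choose an $(\varepsilon,r,N)$-inverse $\bar v$ of $\bar u$; then $\diag(\bar u,\bar v)$ is $(2\varepsilon,2r,2(N+\varepsilon))$-homotopic to $I_{2n}$, and Lemma~\ref{invlift2} gives, after stabilization, a quasi-invertible $w$ over $\tilde A$, homotopic to the identity, with $q(w)=\diag(\bar u,\bar v,I_{\cdots})$ and controlled parameters; with $w'$ a quasi-inverse and $x=w\diag(I_n,0)w'$, the boundary map is $\partial_1[\bar u]=[y+\diag(I_n,0)]-[\diag(I_n,0)]$ for some $y$ over $\tilde A_{\cdot}\cap J$ with $||x-\diag(I_n,0)-y||$ small. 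Feeding $\partial_1[\bar u]=0$ into the standard-picture lemma for $K_0$ applied to $\tilde J$ yields, after stabilization, a homotopy of quasi-idempotents in $M_\infty(\tilde J)$ between $E_0:=\diag(y+\diag(I_n,0),I_m,0_m)$ and $E_1:=\diag(\diag(I_n,0),I_m,0_m)$; since $y$ lies over $J$ we have $\pi(E_0)=\pi(E_1)=E_1$, and a small perturbation (using that $||\kappa_0(e)-e||$ is small, so the scalar parts stay near the genuine idempotent $E_1$) makes the homotopy have constant scalar part $E_1$.

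Now Lemma~\ref{idemliphom} replaces the homotopy by a Lipschitz one (enlarging matrices), and Proposition~\ref{homtosim2}, in its form for an ideal --- the similarity elements $ef+(1-e)(1-f)$ of Lemma~\ref{closeimpliessimilar} lie in $1+M_\infty(J)$ exactly because the scalar parts coincide, which is the adaptation of the remark following Lemma~\ref{idemliphom} --- produces a quasi-invertible $g$ over $\tilde J$ with $g-1\in M_\infty(J)$, homotopic to $1$, and $||gE_0g^{-1}-E_1||$ small, with controlled parameters. Since $E_0$ is norm-close to $\diag(x,I_m,0_m)=\tilde w\,E_1\,\tilde w'$ with $\tilde w=\diag(w,I_{2m})$, we get $||(g\tilde w)E_1(\tilde w' g^{-1})-E_1||$ small, so $g\tilde w$ almost commutes with the scalar idempotent $E_1$; conjugating by a fixed permutation carrying $E_1$ to a block idempotent $\diag(I_\ell,0)$, the quasi-invertible $g\tilde w$ is norm-close (Lemma~\ref{normestlem1}) to its block-diagonal part $\diag(P_0,S_0)$, with $P_0$ a quasi-invertible over $\tilde A$. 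As $q(g)=1$ we have $q(g\tilde w)=q(\tilde w)=\diag(\bar u,\bar v,I_{\cdots})$, so reading off the block gives $q(P_0)=\diag(\bar u,I_{\cdot})$, hence $\pi(P_0)=I_\ell$ using $\pi(\bar u)=I_n$; therefore $[P_0]\in K_1^{\lambda_N\varepsilon,h_{\varepsilon,N}r,\lambda_N}(A)$ and $q_*[P_0]=[\diag(\bar u,I_\cdot)]=\iota_1[\bar u]$ in the relevant controlled group, the required preimage.

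I expect the main obstacle to be the passage from the homotopy certifying $\partial_1[\bar u]=0$ to a \emph{controlled} invertible $g$ that is congruent to $1$ modulo $J$: this is the ideal-refinement of Proposition~\ref{homtosim2}, and it requires keeping careful track of scalar parts (so that the similarity elements land in $1+M_\infty(J)$) and of the stabilizations that accumulate along the way. Once $g$ is available, the ``almost commutes $\Rightarrow$ almost block-diagonal'' step and the assembly of the final control pair are routine applications of Lemmas~\ref{normestlem1} and \ref{normestlem2}.
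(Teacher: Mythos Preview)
Your proposal is correct and follows essentially the same route as the paper: in both cases the vanishing of $\mathcal{D}_1\circ q_*$ comes from choosing the lift $w=\diag(u,v)$ so that $x=\diag(uv,0)$ is already $\varepsilon$-close to $\diag(I_n,0)$, and exactness is obtained by turning the homotopy witnessing $\partial_1[\bar u]=0$ into a quasi-similarity in $\tilde J$ (via Lemma~\ref{idemliphom} and Proposition~\ref{homtosim2}), combining it with the lift $w$ to produce a quasi-invertible in $\tilde A$ that almost commutes with $\diag(I_n,0)$, and reading off a corner.

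Two minor differences are worth noting. First, for $\mathcal{D}_1\circ q_*=0$ the paper simply takes $y=0$, since $\|x-\diag(I_n,0)\|<\varepsilon$ already; your invocation of the $C$-completely filtered hypothesis is valid but unnecessary here. Second, in the exactness step you arrange $g-1\in M_\infty(J)$ (using the remark after Lemma~\ref{idemliphom}) so that $q(g\tilde w)=q(\tilde w)$ exactly, whereas the paper does not insist on this and instead sets $V=\pi(z)z'w$, correcting by the scalar part $\pi(z)$ so that $q(V)=\pi(z)\pi(z')q(w)$ is only \emph{close} to $\diag(u,v)$. Your version is slightly cleaner at the cost of tracking scalar parts through the homotopy; the paper's version avoids that bookkeeping but carries an extra $\pi(z)\pi(z')\approx 1$ error term. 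The permutation you introduce to convert $E_1$ into $\diag(I_\ell,0)$ is harmless; the paper simply works with the top-left $n\times n$ corner directly. Either way the final control pair depends only on $C$.
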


\begin{proof}
Let $[u]\in K_1^{\varepsilon,r,N}(A)$, where $u\in GL_n^{\varepsilon,r,N}(\tilde{A})$, and let $v$ be an $(\varepsilon,r,N)$-inverse for $u$. Then $q(v)$ is an $(\varepsilon,r,N)$-inverse for $q(u)$, and $\diag(u,v)$ is a lift of $\diag(q(u),q(v))$. In the definition of the controlled boundary map, we may take $w=\diag(u,v)$ and $w'=\diag(v,u)$. Then $x=w\diag(I_n,0)w'=\diag(uv,0)$, and $||x-\diag(I_n,0)||<\varepsilon$ so we may take $y=0$. Thus $\partial_1([q(u)])=[\diag(I_n,0)]-[\diag(I_n,0)]=0$, i.e., the composition of the two morphisms is the zero map.

Now suppose that $\partial_1([u])=[y+\diag(I_n,0)]-[\diag(I_n,0)]=0$ with $u\in GL_n^{\varepsilon,r,N}(\tilde{A}/J)$. We may assume that $y+\diag(I_n,0)$ and $\diag(I_n,0)$ are homotopic as $(\varepsilon,r,N)$-idempotents in $M_{2n}(\tilde{J})$ and thus in $M_{2n}(\tilde{A})$. By Proposition \ref{homtosim2} and Lemma \ref{idemliphom}, up to stabilization, there exist a control pair $(\lambda'',h'')$ and a $(\lambda''_N\varepsilon,h''_{\varepsilon,N}r,\lambda''_N)$-inverse pair $(z,z')$ in $M_{2n}(\tilde{J})$ such that \[ ||y+\diag(I_n,0)-z\diag(I_n,0)z'||<\lambda''_N\varepsilon.\] Moreover, with $x=w\diag(I_n,0)w'$ as in the definition of the controlled boundary map, we have $||x-\diag(I_n,0)-y||<3C\lambda'_N\varepsilon$ so \[ ||x-z\diag(I_n,0)z'||<(3C\lambda'_N+\lambda''_N)\varepsilon.\] Let $V=\pi(z)z'w$, where $\pi:\tilde{J}\rightarrow\mathbb{C}$ is the usual quotient homomorphism. Then $q(V)=\pi(z)\pi(z')q(w)$ so \[ ||q(V)-\diag(u,v)||<2\lambda_N\varepsilon.\] Also, $||\pi(z)\diag(I_n,0)\pi(z')-\pi(w)\diag(I_n,0)\pi(w')||<(3C\lambda'_N+\lambda''_N)\varepsilon$ so \[||\pi(z)\diag(I_n,0)\pi(z')-\diag(\pi(uu'),0)||<(2\lambda_N+3C\lambda'_N+\lambda''_N)\varepsilon\] and \[ ||\pi(z)\diag(I_n,0)\pi(z')-\diag(I_n,0)||<\lambda'''_N\varepsilon,\] where $\lambda'''_N=2\lambda_N+3C\lambda'_N+\lambda''_N+1$. Thus, up to relaxing control, we have \[ ||V\diag(I_n,0)-\diag(I_n,0)V||<\lambda'''_N\varepsilon.\] Let $X$ be the upper left $n\times n$ corner of $V$. Then there exists a control pair $(\lambda^1,h^1)$ such that $X$ is a $(\lambda^1_N\varepsilon,h^1_{\varepsilon,N}r,\lambda^1_N)$-invertible element in $M_n(\tilde{A})$ and $[q(X)]=[u]$ in $K_1^{\lambda^1_N\varepsilon,h^1_{\varepsilon,N}r,\lambda^1_N}(A/J)$.
\end{proof}

\begin{prop}
For any $C\geq 1$, there exists a control pair $(\lambda,h)$ such that if $0\rightarrow J\rightarrow A\stackrel{q}{\rightarrow}A/J\rightarrow 0$ is a $C$-completely filtered extension of $SQ_p$ algebras, then we have a $(\lambda,h)$-exact sequence \[\mathcal{K}_1(A/J)\stackrel{\mathcal{D}_1}{\rightarrow}\mathcal{K}_0(J)\rightarrow\mathcal{K}_0(A).\]
\end{prop}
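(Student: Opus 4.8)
The goal is to produce, depending only on $C$, a control pair $(\lambda,h)$ for which the sequence $\mathcal{K}_1(A/J)\xrightarrow{\mathcal{D}_1}\mathcal{K}_0(J)\xrightarrow{j_*}\mathcal{K}_0(A)$ is $(\lambda,h)$-exact at $\mathcal{K}_0(J)$; this will amount to showing $j_*\circ\mathcal{D}_1=0$ and that any element of $\mathcal{K}_0(J)$ killed by $j_*$ lies, after a controlled relaxation of parameters, in the image of $\mathcal{D}_1$. The first point is the easy one: from the construction of the controlled boundary map, $\partial_1^{\varepsilon,r,N}([u])=[y+\diag(I_n,0)]-[\diag(I_n,0)]$, where $y+\diag(I_n,0)$ is homotopic with controlled parameters, as a quasi-idempotent in $M_{n+m}(\tilde A)$, to $x:=w\diag(I_n,0)w'$ for some quasi-inverse pair $(w,w')$. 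Applying Lemma~\ref{simtohom2} with the honest idempotent $\diag(I_n,0)$ shows that $\diag(x,0)$ and $\diag(\diag(I_n,0),0)$ are homotopic as quasi-idempotents with controlled parameters, so $[x]=[\diag(I_n,0)]$; hence $j_*\bigl(\partial_1^{\varepsilon,r,N}([u])\bigr)=0$ in the appropriate relaxed group $K_0(A)$.

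For the second point, let $y\in K_0^{\varepsilon,r,N}(J)$ with $j_*(y)=0$ in $K_0^{\varepsilon,r,N}(A)$. Using the lemma that rewrites elements of $K_0^{\varepsilon,r,N}(J)$, up to relaxing control, in the form $[e]-[I_k]$ with $e\in M_{2k}(\tilde J)$ and $\pi(e)=\diag(I_k,0_k)$, then feeding $j_*(y)=0$ into the standard-picture lemma (which, from $[e]-[I_k]=0$ in $K_0^{\varepsilon,r,N}(A)$, provides $m$ with $\diag(e,I_m,0_m)$ homotopic to $\diag(I_k,I_m,0_m)$ as quasi-idempotents in $M_\infty(\tilde A)$), and stabilizing $e$ by a block $\diag(I_m,0_m)$ and conjugating by a permutation matrix (Lemma~\ref{simtohom2}), I may assume that $e\in M_{2\ell}(\tilde J)$ is a quasi-idempotent with $\pi(e)=p:=\diag(I_\ell,0_\ell)$, that $e$ is homotopic to $p$ as quasi-idempotents in $M_{2\ell}(\tilde A)$, and that $y=[e]-[I_\ell]$. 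Lemma~\ref{idemliphom} then turns this into a Lipschitz homotopy (after one more stabilization and permutation that keeps $\pi(e)$ of the form $\diag(I_\bullet,0_\bullet)$), and Proposition~\ref{homtosim2} yields, up to a controlled relaxation, a quasi-inverse pair $(z,z')$ in $M_{2\ell}(\tilde A)$ with $||zez'-p||$ controlled-small. Inspecting that construction through Lemma~\ref{closeimpliessimilar}, $z$ and $z'$ are products of a number of quasi-invertibles, bounded in terms of $N$, each norm-close to $I$; so $z$ and $z'$ are homotopic to $I_{2\ell}$ as quasi-invertibles in $M_{2\ell}(\tilde A)$, again with controlled parameters.

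The heart of the argument will be a block computation. Applying $q$ and using $q(e)=\pi(e)=p$ gives $||q(z)\,p\,q(z')-p||$ small; since $||zz'-I||$ and $||z'z-I||$ are small we also get $||e-z'pz||$ small, hence $||q(z')\,p\,q(z)-p||$ small. Writing $q(z)$ and $q(z')$ in $\ell\times\ell$ block form, these two approximate relations together with $q(z)q(z')\approx I\approx q(z')q(z)$ force $q(z)$ and $q(z')$ to be controlled-close to block-diagonal matrices $\diag(a,d)$ and $\diag(a',d')$ in which $(a,a')$ is a quasi-inverse pair in $M_\ell(\tilde A/J)$ (and likewise $(d,d')$). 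Since $z'$ is homotopic to $I_{2\ell}$, so is $q(z')$, hence so is $\diag(a',d')$ by Lemma~\ref{normestlem1}; thus $(a',d')$ is admissible initial data for the controlled boundary map and $w:=z'$ is an admissible lift of $\diag(a',d')$, with quasi-inverse $z$. Evaluating $\mathcal{D}_1$ on $[a']$ with these choices gives $\partial_1([a'])=[y_0+p]-[p]$, where $y_0+p\in M_{2\ell}(\tilde J)$ is controlled-close to $z'pz$, which is in turn controlled-close to $e$; so $[y_0+p]=[e]$ in the relaxed group by Lemma~\ref{normestlem1}, and $\partial_1([a'])=[e]-[I_\ell]=y$ up to a controlled relaxation. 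This will exhibit $y$ in the image of $\mathcal{D}_1$, finishing the second point.

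The main obstacle is the block-decomposition step combined with the uniformity requirement: one has to extract, from the single approximate identity $zez'\approx p$ and the quasi-inverse relations, genuine quasi-invertibles $a,a'$ realized as corners of $q(z)$ and $q(z')$, with all approximations controlled by a single pair $(\lambda,h)$ depending only on $C$. Intertwined with this is the verification that the stabilizations, permutations, and the passages through Proposition~\ref{homtosim2} and Lemma~\ref{idemliphom} increase the parameters only via universal functions of $\varepsilon$ and $N$; in particular the number of successive similarities in Proposition~\ref{homtosim2} must be bounded in terms of $N$ alone, which is precisely what makes ``$z$ and $z'$ are homotopic to $I$'' a controlled statement rather than an uncontrolled one. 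The dimension bookkeeping needed to keep $\pi(e)$ equal to some $\diag(I_\bullet,0_\bullet)$ after each enlargement is routine, again via Lemma~\ref{simtohom2}.
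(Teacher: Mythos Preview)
Your proposal is correct and follows essentially the same strategy as the paper. Both arguments reduce to: from the homotopy $e\sim p$ in $\tilde A$, extract via Proposition~\ref{homtosim2} a quasi-inverse pair implementing the similarity, push through $q$ to see that the image approximately commutes with $p$, read off a block-diagonal quasi-invertible, and feed it into $\mathcal{D}_1$.

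The one genuine difference is how the homotopy-to-identity condition (needed to make the block-diagonal element admissible input for $\mathcal{D}_1$) is obtained. The paper stabilizes: it passes from $(w,w')$ to $\diag(w,w')\in M_{2n}(\tilde A)$, which is automatically homotopic to $I_{2n}$ as quasi-invertibles by Lemma~\ref{inversepairhomotopy}; the corner $V_1$ is then extracted from $q(w)$ sitting inside this stabilized matrix. You instead work directly with $z'$ in $M_{2\ell}(\tilde A)$ and argue, by inspecting the construction in Lemma~\ref{closeimpliessimilar} and Proposition~\ref{homtosim2}, that each factor $v_i$ satisfies $\|v_i-1\|<3\varepsilon$ (and similarly for $u_i$), so $z'$ is a product of boundedly many elements each linearly homotopic to $1$. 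This is a valid observation, though not stated explicitly in the paper; it buys you one fewer stabilization at the cost of an extra appeal to the internals of the similarity construction. The paper's stabilization trick is slightly cleaner in that it uses only the statement of Proposition~\ref{homtosim2}, not its proof.
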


\begin{proof}
Let $[u]\in K_1^{\varepsilon,r,N}(A/J)$ and \[\partial_1([u])=[y+\diag(I_n,0)]-[\diag(I_n,0)]\in K_0^{\lambda^{\mathcal{D}}_N\varepsilon,h^{\mathcal{D}}_{\varepsilon,N}r,\lambda^{\mathcal{D}}_N}(J).\] From the definition of the controlled boundary map, we have \[ ||x-\diag(I_n,0)-y||<\lambda'_N\varepsilon,\] and $x$ is homotopic to $\diag(I_n,0)$ as $(\lambda'_N\varepsilon,h'_{\varepsilon,N}r,\lambda'_N)$-idempotents in $M_{2n}(\tilde{A})$. Hence $[y+\diag(I_n,0)]=[\diag(I_n,0)]$ in $K_0^{\lambda^{\mathcal{D}}_N\varepsilon,h^{\mathcal{D}}_{\varepsilon,N}r,\lambda^{\mathcal{D}}_N}(A)$, i.e., the composition of the two morphisms is the zero map.

Suppose that $[e]-[I_k]\in K_0^{\varepsilon,r,N}(J)$ with $e\in M_n(\tilde{J})$, and $[e]-[I_k]=0$ in $K_0^{\varepsilon,r,N}(A)$. Up to stabilization and relaxing control, we may assume that $e$ is $(\varepsilon,r,N)$-homotopic to $\diag(I_k,0)$ in $M_n(\tilde{A})$, and there exists a $(\lambda_N\varepsilon,h_{\varepsilon,N}r,\lambda_N)$-inverse pair 
%$(u,v)$ in $M_{n}(\tilde{A})$ such that \[||uev-\diag(I_k,0)||<\lambda_N\varepsilon.\] Then $||q(uev)-\diag(I_k,0)||<\lambda_N\varepsilon$. We may assume that \[||q(e)-\diag(I_k,0)||<\lambda_N\varepsilon\] by relaxing control so that $[e]=[uev]$. There exists another $(\lambda_N\varepsilon,h_{\varepsilon,N}r,\lambda_N)$-inverse pair 
$(w,w')$ in $M_n(\tilde{A})$ such that \[||e-w\diag(I_k,0)w')||<\lambda_N\varepsilon.\] Then 
%\[ ||\diag(q(e),0)-\diag(q(w),q(w'))\diag(I_k,0)\diag(q(w'),q(w))||<Q(N)\varepsilon\] so 
\[||\diag(I_k,0)-\diag(q(w),q(w'))\diag(I_k,0)\diag(q(w'),q(w))||<2\lambda_N\varepsilon\] and it follows that \[||\diag(I_k,0)\diag(q(w),q(w'))-\diag(q(w),q(w'))\diag(I_k,0)||<3\lambda_N^2\varepsilon.\] Let $V_1$ be the upper left $k\times k$ corner of $q(w)$ and let $V_2$ be the lower right $(2n-k)\times(2n-k)$ corner of $\diag(q(w),q(w'))$. Then \[||\diag(V_1,V_2)-\diag(q(w),q(w'))||<3\lambda_N^2\varepsilon\] so there exists a control pair $(\lambda',h')$ such that $\diag(V_1,V_2)$ is homotopic to $I_{2n}$ as $(\lambda'_N\varepsilon,h'_{\varepsilon,N}r,\lambda'_N)$-invertibles. Since \[||\diag(w,w')\diag(I_k,0)\diag(w',w)-\diag(I_k,0)-(\diag(e,0)-\diag(I_k,0))||<\lambda_N\varepsilon,\] it follows from the definition of the controlled boundary map that there is an appropriate control pair such that \[\partial_1([V_1])=[\diag(e,0)]-[\diag(I_k,0)]=[e]-[I_k].\]
\end{proof}

Combining the results of this section, we get the following

\begin{thm}
For any $C\geq 1$, there exists a control pair $(\lambda,h)$ such that if $0\rightarrow J\rightarrow A\rightarrow A/J\rightarrow 0$ is a $C$-completely filtered extension of $SQ_p$ algebras, then we have a $(\lambda,h)$-exact sequence 
\[ \mathcal{K}_1(J)\rightarrow\mathcal{K}_1(A)\rightarrow\mathcal{K}_1(A/J)\stackrel{\mathcal{D}_1}{\rightarrow}\mathcal{K}_0(J)\rightarrow\mathcal{K}_0(A)\rightarrow\mathcal{K}_0(A/J). \]
\end{thm}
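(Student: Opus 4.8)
The plan is to assemble the theorem directly from the four half-exactness results and the existence of the controlled boundary map proved earlier in this section. Concretely, the preceding propositions supply, for the given constant $C$, control pairs $(\lambda^{(1)},h^{(1)}),\dots,(\lambda^{(4)},h^{(4)})$ — each depending only on $C$ — such that the segments $\mathcal{K}_0(J)\to\mathcal{K}_0(A)\to\mathcal{K}_0(A/J)$, $\mathcal{K}_1(J)\to\mathcal{K}_1(A)\to\mathcal{K}_1(A/J)$, $\mathcal{K}_1(A)\to\mathcal{K}_1(A/J)\stackrel{\mathcal{D}_1}{\to}\mathcal{K}_0(J)$, and $\mathcal{K}_1(A/J)\stackrel{\mathcal{D}_1}{\to}\mathcal{K}_0(J)\to\mathcal{K}_0(A)$ are respectively $(\lambda^{(i)},h^{(i)})$-exact; the boundary morphism $\mathcal{D}_1$ is $(\lambda^{\mathcal{D}},h^{\mathcal{D}})$-controlled, and the morphisms $j_*$ and $q_*$ are $(1,1)$-controlled since the inclusion $J\hookrightarrow A$ is $p$-completely isometric and $q$ is $p$-completely contractive. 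Note that exactness at each of the four interior terms $\mathcal{K}_1(A)$, $\mathcal{K}_1(A/J)$, $\mathcal{K}_0(J)$, $\mathcal{K}_0(A)$ of the six-term sequence is covered by exactly one of these four segments.

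The only remaining point is to produce a single control pair that works for all four. I would define $\lambda_N=\max\bigl(\lambda^{(1)}_N,\dots,\lambda^{(4)}_N,\lambda^{\mathcal{D}}_N\bigr)$ and $h_{\varepsilon,N}=\max\bigl(h^{(1)}_{\varepsilon,N},\dots,h^{(4)}_{\varepsilon,N},h^{\mathcal{D}}_{\varepsilon,N}\bigr)$; this is again a control pair, since a finite maximum of non-decreasing functions is non-decreasing and of non-increasing functions non-increasing, and it depends only on $C$. Then I would check that $(\lambda,h)$-exactness is inherited under enlarging the exactness control pair: if a composition $\mathcal{K}_i(A)\stackrel{\mathcal{F}}{\to}\mathcal{K}_j(B)\stackrel{\mathcal{G}}{\to}\mathcal{K}_l(C)$ is $(\lambda',h')$-exact and $(\lambda',h')\leq(\lambda,h)$, then for $\varepsilon<\frac{1}{20\max((\lambda^{\mathcal{F}}\cdot\lambda)_N,\lambda^{\mathcal{G}}_N)}$ — a range contained in the one where $(\lambda',h')$-exactness applies, because $\lambda^{\mathcal{F}}$ is non-decreasing — and $y\in K_j^{\varepsilon,r,N}(B)$ with $G^{\varepsilon,r,N}(y)=0$, we obtain $x'\in K_i^{\lambda'_N\varepsilon,h'_{\varepsilon,N}r,\lambda'_N}(A)$ with $F(x')=\iota_j(y)$; setting $x=\iota_i(x')\in K_i^{\lambda_N\varepsilon,h_{\varepsilon,N}r,\lambda_N}(A)$ and using the commuting squares built into the definition of a controlled morphism, $F^{\lambda_N\varepsilon,h_{\varepsilon,N}r,\lambda_N}(x)=\iota_j\bigl(F(x')\bigr)=\iota_j(y)$ in the appropriately relaxed group. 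Since $\mathcal{G}\circ\mathcal{F}=0$ is unaffected, the composition is $(\lambda,h)$-exact. Applying this to each of the four segments with $(\lambda',h')=(\lambda^{(i)},h^{(i)})$ gives $(\lambda,h)$-exactness at all four interior terms, which is exactly what it means for the six-term sequence to be $(\lambda,h)$-exact.

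I do not anticipate a genuine obstacle here: the statement is an assembly result, and all the substantive work has been carried out in the preceding propositions. The only things requiring care are purely bookkeeping — confirming that each of $j_*$, $q_*$, and $\mathcal{D}_1$ carries a control pair independent of the particular extension (so that the common $(\lambda,h)$ depends only on $C$), keeping track of the necessarily smaller range of $\varepsilon$ one must restrict to when passing to the larger control pair, and verifying the elementary claim that a finite maximum of control pairs is again a control pair.
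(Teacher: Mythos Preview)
Your proposal is correct and matches the paper's approach exactly: the paper simply states ``Combining the results of this section, we get the following'' before the theorem, with no further argument. Your write-up in fact supplies more detail than the paper does, spelling out the maximum-of-control-pairs construction and the monotonicity of $(\lambda,h)$-exactness that the paper leaves implicit.
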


Applying the theorem to the semisplit extension \[0\rightarrow SA\rightarrow CA\rightarrow A\rightarrow 0,\] and recalling that $\mathcal{K}_*(CA)=0$, we see that there is a controlled isomorphism between $\mathcal{K}_1(A)$ and $\mathcal{K}_0(SA)$.

\begin{cor}
Let $\mathcal{D}_{1S}:\mathcal{K}_1(A)\rightarrow\mathcal{K}_0(SA)$ be the controlled boundary map associated to the semisplit extension \[0\rightarrow SA\rightarrow CA\rightarrow A\rightarrow 0.\] Then there exists a control pair $(\lambda,h)$ such that $\mathcal{D}_{1S}$ is $(\lambda,h)$-invertible for any filtered $SQ_p$ algebra $A$. Moreover, for any filtered $SQ_p$ algebra $A$, there exists a $(\lambda,h)$-controlled morphism $\mathcal{B}_A:\mathcal{K}_0(SA)\rightarrow\mathcal{K}_1(A)$ that is a $(\lambda,h)$-inverse for $\mathcal{D}_{1S}$ and such that for any filtered homomorphism $f:A\rightarrow B$ of filtered $SQ_p$ algebras, we have $\mathcal{B}_B\circ (Sf)_*=f_*\circ\mathcal{B}_A$.
\end{cor}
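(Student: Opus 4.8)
The plan is to apply the controlled long exact sequence just established to the cone extension and to use contractibility of $CA$, while tracking the fact that all control pairs involved are independent of $A$. Recall that the extension $0\rightarrow SA\rightarrow CA\stackrel{ev_1}{\rightarrow}A\rightarrow 0$ admits a $p$-completely contractive filtered linear section, hence is a $1$-completely filtered extension of $SQ_p$ algebras; consequently the preceding theorem provides a \emph{single} control pair $(\lambda_0,h_0)$, depending only on $C=1$ and therefore independent of $A$, for which the sequence
\[
\mathcal{K}_1(SA)\rightarrow\mathcal{K}_1(CA)\rightarrow\mathcal{K}_1(A)\stackrel{\mathcal{D}_{1S}}{\rightarrow}\mathcal{K}_0(SA)\rightarrow\mathcal{K}_0(CA)\rightarrow\mathcal{K}_0(A)
\]
is $(\lambda_0,h_0)$-exact. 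Since $\mathcal{K}_*(CA)=0$ --- which follows from homotopy invariance applied to the contraction $\Phi_s(f)(t)=f(st)$, assembling into a homotopy $\Phi\colon CA\rightarrow C([0,1],CA)$ with $||\Phi||_{pcb}\leq 1$ from the zero homomorphism to $Id_{CA}$ --- controlled exactness at $\mathcal{K}_1(A)$ forces $\mathcal{D}_{1S}$ to be $(\lambda_1,h_1)$-injective, and controlled exactness at $\mathcal{K}_0(SA)$ forces it to be $(\lambda_1,h_1)$-surjective, for a control pair $(\lambda_1,h_1)$ depending only on $(\lambda_0,h_0)$ and a universal constant. By the observation following the definition of controlled injectivity and surjectivity, $\mathcal{D}_{1S}$ is then a $(\lambda,h)$-isomorphism for a control pair $(\lambda,h)$ depending only on $(\lambda_1,h_1)$, hence independent of $A$; this gives the first assertion.

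To obtain a \emph{natural} inverse I would not invoke an arbitrary $(\lambda,h)$-inverse but instead pin $\mathcal{B}_A$ down using the controlled injectivity of $\mathcal{D}_{1S}$. Given $y\in K_0^{\varepsilon,r,N}(SA)$, controlled surjectivity produces $x\in K_1^{\lambda_N\varepsilon,h_{\varepsilon,N}r,\lambda_N}(A)$ with $\mathcal{D}_{1S}(x)=\iota_0(y)$ up to control, and controlled injectivity forces any two such $x$ to coincide after a further, universally bounded, relaxation of control; setting $\mathcal{B}_A^{\varepsilon,r,N}(y)=\iota_1(x)$ at that level gives a well-defined group homomorphism, and the same injectivity makes this family compatible with the relaxation maps, so that $\mathcal{B}=(\mathcal{B}_A^{\varepsilon,r,N})$ is a controlled morphism and, after enlarging $(\lambda,h)$, a $(\lambda,h)$-inverse of $\mathcal{D}_{1S}$. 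The functoriality then comes from naturality of $\mathcal{D}_{1S}$: for a filtered homomorphism $f\colon A\rightarrow B$, the induced map $Cf\colon CA\rightarrow CB$ carries $SA$ into $SB$ and induces $f$ on the quotients, so Remark \ref{boundaryrmk}(1) gives $\mathcal{D}_{1S}^{B}\circ f_*=(Sf)_*\circ\mathcal{D}_{1S}^{A}$. Applying this to the chosen preimage $x$ of $y$ shows that $f_*(x)$ is a preimage of $(Sf)_*(y)$ under $\mathcal{D}_{1S}^{B}$, whence uniqueness of preimages yields $\mathcal{B}_B\circ(Sf)_*=f_*\circ\mathcal{B}_A$.

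I expect the main obstacle to be exactly this last step: the ``choice of preimage'' has to be carried out coherently over all triples $(\varepsilon,r,N)$ and over all filtered $SQ_p$ algebras simultaneously, so that $\mathcal{B}_A$ is a genuine controlled morphism (compatible with relaxation of control) and so that the displayed naturality identity is an honest equality of controlled morphisms rather than merely an equality after relaxing control. It is precisely the $A$-independence of the control pair witnessing the injectivity of $\mathcal{D}_{1S}$ that makes this coherence possible; the remaining work is the routine bookkeeping of propagating control pairs through the arguments of the previous subsection, together with the verification that the associated $K$-theory map $B\colon K_0(SA)\rightarrow K_1(A)$ is the usual suspension isomorphism.
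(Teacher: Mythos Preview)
Your proposal is correct and follows the same approach as the paper, which in fact presents this corollary without a proof beyond the single sentence preceding it (``Applying the theorem to the semisplit extension \ldots and recalling that $\mathcal{K}_*(CA)=0$ \ldots''). You have simply filled in the details that the paper leaves implicit: the $1$-complete filteredness via the linear section, the vanishing of $\mathcal{K}_*(CA)$ via the contracting homotopy, the passage from controlled exactness to controlled injectivity/surjectivity of $\mathcal{D}_{1S}$, and the naturality via Remark~\ref{boundaryrmk}(i); your concern about coherence of the inverse is legitimate bookkeeping that the paper does not spell out either.
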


\begin{cor}
There exists a control pair $(\lambda,h)$ such that if \[0\rightarrow J\stackrel{j}{\rightarrow}A\rightarrow A/J\rightarrow 0\] is a completely filtered split extension of $SQ_p$ algebras (so there exists a filtered homomorphism $s:A/J\rightarrow A$ such that $q\circ s=Id_{A/J}$ and the induced homomorphism $\tilde{A}/J\rightarrow\tilde{A}$ is $p$-completely contractive), then we have $(\lambda,h)$-exact sequences \[0\rightarrow\mathcal{K}_*(J)\rightarrow\mathcal{K}_*(A)\rightarrow\mathcal{K}_*(A/J)\rightarrow 0,\] and we have $(\lambda,h)$-isomorphisms \[\mathcal{K}_*(J)\oplus\mathcal{K}_*(A/J)\rightarrow\mathcal{K}_*(A)\] given by $(x,y)\mapsto j_*(x)+s_*(y)$.
\end{cor}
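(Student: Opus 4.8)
The plan is to derive the statement from the controlled long exact sequence of the previous section, together with the vanishing of the controlled boundary map for split extensions, and to supply by hand the one ingredient that the long exact sequence fragment does not provide, namely controlled injectivity of $j_*$ on $\mathcal{K}_1$.

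First I would note that the hypotheses force the extension to be $1$-completely filtered: the section $s\colon A/J\to A$ is a filtered homomorphism with $s(q(A_r))\subset A_r$ and is $p$-completely contractive, being the restriction of the $p$-completely contractive map $\widetilde{A/J}\to\tilde A$; so the lemma on $p$-completely contractive sections applies. Applying the controlled long exact sequence theorem with $C=1$ then yields a control pair $(\lambda_0,h_0)$, depending on nothing, and a $(\lambda_0,h_0)$-exact sequence $\mathcal{K}_1(J)\to\mathcal{K}_1(A)\to\mathcal{K}_1(A/J)\stackrel{\mathcal{D}_1}{\to}\mathcal{K}_0(J)\to\mathcal{K}_0(A)\to\mathcal{K}_0(A/J)$, and by Remark \ref{boundaryrmk}(ii) we have $\mathcal{D}_1=0$. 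Feeding $\mathcal{D}_1=0$ into controlled exactness at $\mathcal{K}_0(J)$ gives, after enlarging the control pair, that $j_*\colon\mathcal{K}_0(J)\to\mathcal{K}_0(A)$ is controlled-injective; controlled exactness at $\mathcal{K}_0(A)$ and at $\mathcal{K}_1(A)$ gives the middle exactness of the desired short sequence in both degrees; and, since $q\circ s=\mathrm{Id}_{A/J}$, the morphism $s_*$ is a controlled right inverse of $q_*$, so $q_*$ is controlled-surjective in both degrees. Thus the whole short sequence $0\to\mathcal{K}_*(J)\to\mathcal{K}_*(A)\to\mathcal{K}_*(A/J)\to 0$ is in hand except controlled injectivity of $j_*\colon\mathcal{K}_1(J)\to\mathcal{K}_1(A)$.

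For that remaining point I would pass to suspensions. The suspended extension $0\to SJ\to SA\to S(A/J)\to 0$ is again a completely filtered split extension, split by $Ss$, whose induced map on unitizations is the pointwise application of $\widetilde{A/J}\to\tilde A$ and hence $p$-completely contractive; so the degree-zero part of what was just proved applies to it and $(Sj)_*\colon\mathcal{K}_0(SJ)\to\mathcal{K}_0(SA)$ is controlled-injective. By the corollary on the semisplit cone extension, the controlled boundary maps $\mathcal{D}_{1S}^{J}\colon\mathcal{K}_1(J)\to\mathcal{K}_0(SJ)$ and $\mathcal{D}_{1S}^{A}\colon\mathcal{K}_1(A)\to\mathcal{K}_0(SA)$ are controlled isomorphisms, and they are natural up to control (obtained by composing the stated naturality of the inverse $\mathcal{B}$ with $\mathcal{D}_{1S}$ on both sides); hence $\mathcal{D}_{1S}^{A}\circ j_*$ is controlled-equivalent to $(Sj)_*\circ\mathcal{D}_{1S}^{J}$, and controlled injectivity of $j_*$ on $\mathcal{K}_1$ follows from that of $(Sj)_*$ and $\mathcal{D}_{1S}^{J}$ together with the invertibility of $\mathcal{D}_{1S}^{A}$. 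Assembling this and dominating the finitely many control pairs that have appeared by a single $(\lambda,h)$ yields the first assertion.

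For the second assertion I would consider the controlled morphism $\Psi\colon\mathcal{K}_*(J)\oplus\mathcal{K}_*(A/J)\to\mathcal{K}_*(A)$, $(x,y)\mapsto j_*(x)+s_*(y)$, which is a controlled morphism as the sum of the controlled morphisms $j_*$ and $s_*$ (both with $p$-completely contractive coefficients) composed with the coordinate projections. Applying $q_*$ and using $q_*j_*=0$ and $q_*s_*=\mathrm{Id}$ shows that $\Psi(x,y)=0$ up to control forces $y=0$, then $j_*(x)=0$, then $x=0$ up to control by the injectivity just established, so $\Psi$ is controlled-injective; and given $z\in K_*^{\varepsilon,r,N}(A)$, setting $y=q_*(z)$ makes $z-s_*(y)$ lie in the controlled kernel of $q_*$, so controlled exactness at $\mathcal{K}_*(A)$ produces $x$ with $j_*(x)=z-s_*(y)$ up to control, i.e.\ $\Psi(x,y)=z$ up to control, so $\Psi$ is controlled-surjective. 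A controlled morphism that is both controlled-injective and controlled-surjective is a controlled isomorphism for a suitable enlargement of the control pair depending only on the given one, which finishes the argument. The main obstacle is not any single step but the bookkeeping: one must check that the repeated enlargements of the control pair --- coming from controlled exactness, from $\mathcal{D}_1=0$, from the suspension isomorphism, and from the passage ``injective $+$ surjective $\Rightarrow$ isomorphism'' --- are all universal, ultimately depending only on the constant $C=1$ that the $p$-completely contractive section forces, so that one pair $(\lambda,h)$ works uniformly for all completely filtered split extensions; the only genuinely non-formal step is the reduction of $\mathcal{K}_1$-injectivity of $j_*$ to $\mathcal{K}_0$-injectivity through the natural controlled isomorphism $\mathcal{D}_{1S}$.
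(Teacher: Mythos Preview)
Your proof is correct and follows the approach the paper intends: the corollary is placed immediately after the controlled long exact sequence theorem and the corollary on the controlled isomorphism $\mathcal{D}_{1S}:\mathcal{K}_1(A)\to\mathcal{K}_0(SA)$, precisely so that it can be derived from them together with Remark~\ref{boundaryrmk}(ii) on the vanishing of $\mathcal{D}_1$ for split extensions, and the paper omits the details. Your identification that controlled injectivity of $j_*$ on $\mathcal{K}_1$ does not come directly from the six-term fragment and must be obtained via suspension is exactly the point of the preceding corollary on $\mathcal{D}_{1S}$. One small simplification: naturality of $\mathcal{D}_{1S}$ with respect to filtered homomorphisms follows directly from Remark~\ref{boundaryrmk}(i) applied to the cone extensions, so you need not route it through the naturality of $\mathcal{B}$.
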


\begin{rem}
At this point, it seems appropriate to make a remark about Bott periodicity. Indeed, one can recover Bott periodicity in ordinary $K$-theory from our controlled long exact sequence by considering suspensions. In \cite{OY15}, a proof of controlled Bott periodicity in the $C^*$-algebraic setting was given using the power of $KK$-theory. However, we do not have a proof of controlled Bott periodicity in our setting. %One encounters problems in trying to adapt the proof presented in \cite{Bl} but it seems plausible that the proof presented in \cite{HR} may be adapted for our purposes.
\end{rem}

\section{A Controlled Mayer-Vietoris Sequence} \label{sec:MV} %  for $L_p$ Operator Algebras?

In this section, we follow the approach in \cite{OY} to obtain a controlled Mayer-Vietoris sequence for filtered $SQ_p$ algebras. Throughout this section, whenever $A$ is a Banach subalgebra of a unital Banach algebra $B$, we will view $A^+$ as $A+\mathbb{C}1_B\subset B$.

\subsection{Preliminary definitions}

\begin{defn}
Let $A$ be a filtered $SQ_p$ algebra with filtration $(A_r)_{r> 0}$. Let $s> 0$ and let $(\Delta_1,\Delta_2)$ be a pair of closed linear subspaces of $A_s$. Then $(\Delta_1,\Delta_2)$ is called a completely coercive decomposition pair of degree $s$ for $A$ if there exists $c>0$ such that for every $r\leq s$, any positive integer $n$, and any $x\in M_n(A_r)$, there exist $x_1\in M_n(\Delta_1\cap A_r)$ and $x_2\in M_n(\Delta_2\cap A_r)$ such that $\max(||x_1||,||x_2||)\leq c||x||$ and $x=x_1+x_2$.

The number $c$ is called the coercity of the pair $(\Delta_1,\Delta_2)$.
\end{defn}

%Recall that an $L_p$ operator algebra is a Banach algebra that isometrically embeds as a closed subalgebra of $B(L_p(X,\mu))$ for some measure space $(X,\mu)$.
%If $A$ is a unital filtered $L_p$ operator algebra, and $B$ is a nonunital closed subalgebra of $A$, then we view the unitization $\tilde{B}$ as $B+\mathbb{C}1_A\subset A$. In this case, we say that a pair $(\Delta_1,\Delta_2)$ of closed linear subspaces of $B_r$ is a completely coercive decomposition pair of degree $r$ for $B$ if there exists $c>0$ such that for every $s\leq r$, any positive integer $n$, and any $x\in M_n(\tilde{B}_s)$, there exist $x_1\in M_n(\Delta_1\cap B_s)$ and $x_2\in M_n(\Delta_2\cap B_s)$ such that $\max(||x_1||,||x_2||)\leq c||x||$ and $x=x_1+x_2$.

\begin{defn}
Let $A$ be a filtered $SQ_p$ algebra with filtration $(A_r)_{r> 0}$. Let $r$ and $R$ be positive numbers, and let $\Delta$ be a closed linear subspace of $A_r$. We define the $R$-neighborhood of $\Delta$, denoted by $\mathfrak{N}_\Delta^{(r,R)}$, to be $\Delta+A_R\Delta+\Delta A_R+A_R\Delta A_R$.

We will denote $\mathfrak{N}_\Delta^{(r,R)}\cap A_s$ by $\mathfrak{N}_{\Delta,s}^{(r,R)}$. For $s\leq r$, we also denote the $R$-neighborhood of $\Delta\cap A_s$ by $\mathfrak{N}_\Delta^{(s,R)}$.
\end{defn}

%Note that $\mathfrak{N}_\Delta^{(r,R)}$ is a filtered Banach algebra with filtration $(\mathfrak{N}_\Delta^{(r,R)}\cap A_s)_{s> 0}$.

\begin{defn}
Let $A$ be a filtered $SQ_p$ algebra with filtration $(A_r)_{r> 0}$. Let $s> 0$ and let $\Delta$ be a closed linear subspace of $A_s$. Then a Banach subalgebra $B$ of $A$ is called an $s$-controlled $\Delta$-neighborhood in $A$ if $B$ has filtration $(B\cap A_r)_{r> 0}$, and $B$ contains $\mathfrak{N}_{\Delta}^{(s,5s)}$.
\end{defn}

%\begin{rem}
The choice of the coefficient 5 in the preceding definition is determined by the proof of Lemma \ref{MVtechlem}.
%\end{rem}

\begin{defn}
Let $S_1$ and $S_2$ be subsets of an $SQ_p$ algebra $A$. The pair $(S_1,S_2)$ is said to have the complete intersection approximation property if there exists $c>0$ such that for any $\varepsilon>0$, any positive integer $n$, any $x\in M_n(S_1)$ and $y\in M_n(S_2)$ with $||x-y||<\varepsilon$, there exists $z\in M_n(S_1\cap S_2)$ such that $\max(||z-x||,||z-y||)<c\varepsilon$.

The number $c$ is called the coercity of the pair $(S_1,S_2)$.
\end{defn}

\begin{defn}
Let $A$ be a filtered $SQ_p$ algebra with filtration $(A_r)_{r> 0}$, let $s> 0$, and let $c>0$. An $(s,c)$-controlled Mayer-Vietoris pair for $A$ is a pair $(A_{\Delta_1},A_{\Delta_2})$ of Banach subalgebras of $A$ associated with a pair $(\Delta_1,\Delta_2)$ of closed linear subspaces of $A_s$ such that
\begin{itemize}
\item $(\Delta_1,\Delta_2)$ is a completely coercive decomposition pair for $A$ of degree $s$ with coercity $c$;
\item $A_{\Delta_i}$ is an $s$-controlled $\Delta_i$-neighborhood in $A$ for $i=1,2$;
\item $(A_{\Delta_1,r},A_{\Delta_2,r})$ has the complete intersection approximation property for every $r\leq s$ with coercity $c$.
\end{itemize}
\end{defn}

\begin{rem}\label{ciarem}
If $A$ is a non-unital filtered $SQ_p$ algebra, and $(A_{\Delta_1,r},A_{\Delta_2,r})$ has the complete intersection approximation property with coercity $c$ with respect to $A$, then $(\widetilde{A_{\Delta_1,r}},\widetilde{A_{\Delta_2,r}})$ has the complete intersection approximation property with coercity $2c+\frac{1}{2}$ with respect to $\tilde{A}$. Indeed, if $x\in M_n(\widetilde{A_{\Delta_1,r}})$ and $y\in M_n(\widetilde{A_{\Delta_2,r}})$ are such that $||x-y||<\varepsilon$, then $||(x-\pi(x))-(y-\pi(y))||<2\varepsilon$ so there exists $z_0\in M_n(A_{\Delta_1,r}\cap A_{\Delta_2,r})$ such that \[\max(||x-\pi(x)-z_0||,||y-\pi(y)-z_0||)<2c\varepsilon.\] Letting $z=z_0+\frac{1}{2}(\pi(x)+\pi(y))\in M_n(\widetilde{A_{\Delta_1,r}}\cap\widetilde{A_{\Delta_2,r}})$, we have \[\max(||x-z||,||y-z||)<(2c+\frac{1}{2})\varepsilon.\]
\end{rem}

Let $A$ be a filtered $SQ_p$ algebra with filtration $(A_r)_{r> 0}$, let $s> 0$, and let $c>0$. Then $C([0,1],A)$ 
%is a filtered $SQ_p$ algebra with 
has filtration $(C([0,1],A_r))_{r> 0}$. Suppose that $(A_{\Delta_1},A_{\Delta_2})$ is an $(s,c)$-controlled Mayer-Vietoris pair for $A$. Note that $C([0,1],\Delta_i)$ is a closed linear subspace of $C([0,1],A_s)$ for $i=1,2$. Also, $C([0,1],A_{\Delta_i})$ is a Banach subalgebra of $C([0,1],A)$ for $i=1,2$. For $n\in\mathbb{N}$, we view $M_n(C([0,1],A))$ as a subalgebra of $C([0,1],M_n(A))$ with the supremum norm. 

One can show that $(A_{\Delta_1}[0,1],A_{\Delta_2}[0,1])$ is an $(s,2c)$-controlled Mayer-Vietoris pair for $A[0,1]$. Moreover, if the decomposition $x=x_1+x_2$ in the definition of completely coercive decomposition pair is continuous in the sense that $||x(k)_i-x(l)_i||\leq c||x(k)-x(l)||$ for all $k,l\in[0,1]$, $i=1,2$, and $x\in M_n(A[0,1]_r)$, then $(A_{\Delta_1}[0,1],A_{\Delta_2}[0,1])$ is an $(s,c)$-controlled Mayer-Vietoris pair for $A[0,1]$. Similar statements hold for the respective suspensions.

Our main goal is to show that a Mayer-Vietoris pair gives rise to a controlled Mayer-Vietoris sequence. The following example is our motivating example. It illustrates the potential applicability of the notions introduced here, and thus the existence of a controlled Mayer-Vietoris sequence, in a situation where we have a group action with finite dynamic asymptotic dimension as defined in \cite{GWY1}. Indeed, this idea was implemented to investigate the $C^*$-algebraic Baum-Connes conjecture in \cite{GWY2}, and we will discuss the $L_p$ operator algebra version in \cite{Chung2}.

\begin{ex}
Let $X$ be a compact Hausdorff space, and let $G$ be a discrete group acting on $X$ by homeomorphisms. Then we get an isometric action of $G$ on $C(X)$, where we regard $C(X)$ as an $L_p$ operator algebra with functions in $C(X)$ acting as multiplication operators on $L_p(X,\mu)$ for some regular Borel measure $\mu$. Consider the reduced $L_p$ crossed product $A=C(X)\rtimes_{\lambda,p}G$, which is defined in an analogous manner to the reduced $C^*$ crossed product by completing the skew group ring in $B(L_p(G\times X))$ with the operator norm (cf. \cite{Phil13}). Equip $G$ with a proper length function $l$. Then $A$ is a filtered $L_p$ operator algebra (and thus a filtered $SQ_p$ algebra) with filtration given by $A_r=\{\sum f_g g:f_g\in C(X),l(g)\leq r\}$. 

Suppose that for any finite subset $E$ of $G$, there exist open subsets $U_0$ and $U_1$ such that $X=U_0\cup U_1$, and the set \[ \left\{g\in G:\;\parbox[c][4em][c]{0.6\textwidth}{there exist $x\in U_i$ and $g_n,\ldots,g_1\in E$ such that $g=g_n\cdots g_1$ and $g_k\cdots g_1x\in U_i$ for all $k\in\{1,\ldots,n\}$}\right\} \]  is finite for $i=0,1$. Let $E=\{g\in G:l(g)\leq 4R\}$ for some fixed $R>0$, let $V_0$ and $V_1$ be the open subsets of $X$ corresponding to the finite set $E^3$, and let $U_i=\bigcup_{g\in E}gV_i$ for $i=0,1$. Let $A_i$ be the Banach subalgebra of $A$ generated by $\{f_g g:supp(f_g)\subset U_i,g\in G\}$. By considering \[\Delta_i=\{\sum f_g g:supp(f_g)\subset V_i,l(g)\leq R\},\] one can show that $(A_0,A_1)$ is an $(R,1)$-controlled Mayer-Vietoris pair for $A$.
\end{ex}

\subsection{Key technical lemma}

In order to build a controlled Mayer-Vietoris sequence from a Mayer-Vietoris pair, we need to be able to factor a quasi-invertible as a product of two invertibles, one from each subalgebra in the Mayer-Vietoris pair. We will prove a lemma that allows us to do so.

Let $A$ be a unital filtered $SQ_p$ algebra. For $x,y\in A$, set
\[ X(x)=\begin{pmatrix} 1 & x \\ 0 & 1 \end{pmatrix}, Y(y)=\begin{pmatrix} 1 & 0 \\ y & 1 \end{pmatrix}. \]
% Z(x,y)=X(x)Y(y)X(x)^{-1}Y(y)^{-1},Z'(x,y)=Y(y)^{-1}X(x)^{-1}Y(y)X(x)
Note that $X(x_1+x_2)=X(x_1)X(x_2)$, $Y(y_1+y_2)=Y(y_1)Y(y_2)$, $X(x)^{-1}=X(-x)$, and $Y(y)^{-1}=Y(-y)$. %so $Z(x,y)^{-1}=Z'(-x,-y)$.
Also, if $(u,v)$ is an $(\varepsilon,r,N)$-inverse pair in $A$, then 
\begin{align*} \left\Vert\begin{pmatrix} u & 0 \\ 0 & v \end{pmatrix}-X(u)Y(-v)X(u)\begin{pmatrix} 0 & -1 \\ 1 & 0 \end{pmatrix}\right\Vert &=\left\Vert\begin{pmatrix} (uv-1)u & 1-uv \\ vu-1 & 0 \end{pmatrix}\right\Vert \\ &<(N+1)\varepsilon.\end{align*}
%Observe that if $u=u_1+u_2$ and $v=v_1+v_2$, then \[X(u)Y(-v)X(u)=X(u_1+u_2)Y(-v_1)X(u_1-u_2) X(u_2-u_1)Y(-v_2)X(u_1+u_2).\]
%Moreover, if $u_i\in A_R$, and $v_i\in \Delta_i\subset A_r$, then \[X(u_1+u_2)Y(-v_1)X(u_1-u_2)-I\in M_2(\mathfrak{N}_{\Delta_1,r+2R}^{(r,R)}),\] and \[X(u_2-u_1)Y(-v_2)X(u_1+u_2)-I\in M_2(\mathfrak{N}_{\Delta_2,r+2R}^{(r,R)}).\]

If $A$ is non-unital, and $(u,v)$ is an $(\varepsilon,r,N)$-inverse pair in $A^+$ with $\pi(u)=\pi(v)=1$, $u-1=u_1+u_2$, and $v-1=v_1+v_2$, then letting \[ P_1=X(u_1+1)X(u_2)Y(-v_1-1)X(u_1+1)\begin{pmatrix} 0 & -1 \\ 1 & 0 \end{pmatrix}X(-u_2) \] and \[ P_2=X(u_2)\begin{pmatrix} 0 & 1 \\ -1 & 0 \end{pmatrix}X(-u_1-1)Y(-v_2)X(u_1+1)X(u_2)\begin{pmatrix} 0 & -1 \\ 1 & 0 \end{pmatrix}, \]
we have $X(u)Y(-v)X(u)\begin{pmatrix} 0 & -1 \\ 1 & 0 \end{pmatrix}=P_1P_2$. Moreover, if $u_i,v_i\in\Delta_i\cap A_r$, then $P_1-I\in M_2(\mathfrak{N}_{\Delta_1,4r}^{(r,2r)})$ and $P_2-I\in M_2(\mathfrak{N}_{\Delta_2,4r}^{(r,2r)})$.

\begin{lem} \label{MVtechlem}
There exists a polynomial $Q$ with positive integer coefficients such that for any filtered $SQ_p$ algebra $A$ with filtration $(A_r)_{r> 0}$, any $(s,c)$-controlled Mayer-Vietoris pair $(A_{\Delta_1},A_{\Delta_2})$ for $A$, and any $(\varepsilon,r,N)$-inverse pair $(u,v)$ in $\tilde{A}$ such that $u$ is homotopic to 1 with $0<r\leq s$, $u-1\in A$, and $v-1\in A$, there exist an integer $k\geq 2$, $M=M(c,N)>1$, and $z_i\in M_k(\widetilde{A_{\Delta_i}}\cap\tilde{A}_{16r})$ such that
\begin{itemize}
\item $z_i$ is invertible in $M_k(\widetilde{A_{\Delta_i}})$ for $i=1,2$;
\item $\max(||z_i||,||z_i^{-1}||)\leq M$ for $i=1,2$;

%\item $\pi_i(w_i)$ and $\pi_i(w_i^{-1})$ are homotopic to $I_k$ via homotopies of invertible elements with norm at most 2, where $\pi_i:\widetilde{\mathfrak{N}_{\Delta_i,12r}^{(r,4r)}}\rightarrow\mathbb{C}$ is the usual quotient map; \todo{may need $=I$ instead of homotopic}
\item $||\diag(u,I_{k-1})-z_1z_2||<Q(N)\varepsilon$; 
\item for $i=1,2$, there exists a homotopy $(z_{i,t})_{t\in[0,1]}$ of invertible elements in $M_k(\widetilde{A_{\Delta_i}}\cap\tilde{A}_{16r})$ between $I_k$ and $z_i$ such that \[\max(||z_{i,t}||,||z_{i,t}^{-1}||)\leq M\] for each $t\in[0,1]$. Moreover, $\pi_i(z_{i,t})$ and $\pi_i(z_{i,t}^{-1})$ are homotopic to $I_k$ in $M_k(\mathbb{C})$ via homotopies of invertible elements of norm less than 3, where $\pi_i:\widetilde{A_{\Delta_i}}\rightarrow\mathbb{C}$ is the quotient homomorphism. 

\end{itemize}
\end{lem}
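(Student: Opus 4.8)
The plan is to build the factorisation from the explicit Whitehead‑type identity recorded just before the statement. Since $\tilde A = A + \mathbb{C}1$ and $A = \ker(\pi\colon\tilde A\to\mathbb{C})$, the hypotheses $u-1\in A$ and $v-1\in A$ say precisely that $\pi(u)=\pi(v)=1$, so we are in the situation of that discussion. As $0<r\le s$ we have $u-1,v-1\in A_r$, so the completely coercive decomposition pair $(\Delta_1,\Delta_2)$ (coercity $c$) gives $u-1=u_1+u_2$, $v-1=v_1+v_2$ with $u_i,v_i\in\Delta_i\cap A_r$ and $\max(\|u_i\|,\|v_i\|)\le c(N+1)$. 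With $W=X(u)Y(-v)X(u)R$, $R=\bigl(\begin{smallmatrix} 0 & -1 \\ 1 & 0 \end{smallmatrix}\bigr)$, we already know $W=P_1P_2$, $P_i-I\in M_2(\mathfrak{N}_{\Delta_i,4r}^{(r,2r)})$, and $\|\diag(u,v)-W\|<(N+1)\varepsilon$; since $r\le s$ and $A_{\Delta_i}\supseteq\mathfrak{N}_{\Delta_i}^{(s,5s)}\supseteq\mathfrak{N}_{\Delta_i}^{(r,2r)}$, this puts $P_i,P_i^{-1}\in M_2(\widetilde{A_{\Delta_i}}\cap\tilde A_{4r})\subseteq M_2(\widetilde{A_{\Delta_i}}\cap\tilde A_{16r})$, and multiplying out the bounded number of factors $X(\cdot),Y(\cdot)$ occurring in $P_i$ and $P_i^{-1}$ gives $\max(\|P_i\|,\|P_i^{-1}\|)\le M_0(c,N)$ for an explicit polynomial $M_0$. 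This settles every assertion except that we have so far factored $\diag(u,v)$, not $\diag(u,I_{k-1})$.

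Replacing the second entry $v$ by $1$ is exactly where the hypothesis that $u$ be homotopic to $1$ is used, and I would handle it by an enlargement argument in the spirit of \cite{OY}. Fix a homotopy $(u_t)_{t\in[0,1]}$ of $(\varepsilon,r,N)$-invertibles with $u_0=1$, $u_1=u$, and $(\varepsilon,r,N)$-inverses $v_t$ with $v_0=1$ (first Lipschitz‑regularise the homotopy as in Lemma~\ref{idemliphom}, at the cost of enlarging matrices by an amount controlled by $N$). Subdivide $[0,1]$ by $0=t_0<\cdots<t_n=1$ so finely that $\|u_{t_j}-u_{t_{j-1}}\|<\varepsilon/N$; then $(u_{t_j},v_{t_{j-1}})$ is a $(2\varepsilon,r,N)$-inverse pair and $g_j:=u_{t_j}v_{t_{j-1}}\in\tilde A_{2r}$ satisfies $\|g_j-1\|<2\varepsilon$. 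Now work in $M_k(\tilde A)$ with $k=O(n)$: perform the $2\times2$ construction above simultaneously on all the pairs $(u_{t_j},v_{t_{j-1}})$, placed on disjoint $2\times2$ diagonal blocks, and combine them by inserting suitable non‑overlapping \emph{coupling} elementary factors (propagation $\le r$, norm polynomial in $c,N$) between adjacent blocks, built from the $g_j$, from $v_0\approx1$, and from the relations $v_{t_j}u_{t_j}\approx1$, so arranged that the product of the two assembled matrices telescopes to something $(Q(N)\varepsilon)$-close to a permutation of $\diag(u,I_{k-1})$. The decisive point is that each of $z_1,z_2$ is a \emph{single} $k\times k$ matrix whose entries have propagation $\le 16r$ (the constant $5$ in the definition of an $s$-controlled $\Delta$-neighbourhood is exactly what makes this bound hold) and whose operator norm, together with that of its inverse, is $\le M(c,N)$ independently of $n$; the couplings being non‑overlapping, the nilpotent part of $z_i$ squares to zero, so $z_i^{-1}$ has the same block shape and the same norm bound. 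The homotopies $(z_{i,t})$ of the last bullet come from running the same assembly with each block‑factor and each coupling factor replaced by the explicit homotopy to the identity of Lemma~\ref{invlift1} (for the pieces coming from $W$) or by the linear/rotation homotopies used in the $P_i$; applying $\pi_i\colon\widetilde{A_{\Delta_i}}\to\mathbb{C}$ to all of these building blocks yields one of finitely many explicit scalar elementary, permutation, and rotation matrices, each homotopic to $I_k$ in $GL_k(\mathbb{C})$ through a path of norm $<3$, which gives the stated condition on $\pi_i(z_{i,t})$.

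The main obstacle is precisely this bookkeeping in the enlargement step: one must define the coupling factors so that the telescoped product is genuinely \emph{close} — not merely homotopic — to $\diag(u,I_{k-1})$, while simultaneously (i) keeping all entries inside $\widetilde{A_{\Delta_i}}$ and of propagation $\le 16r$, which forces couplings to involve only adjacent blocks so that no matrix entry receives contributions of propagation exceeding a fixed multiple of $r$, and (ii) keeping $\|z_i^{\pm1}\|$ bounded independently of $k$, which is what the non‑overlap (square‑zero nilpotent part) achieves. Everything else — the reduction to $\pi(u)=\pi(v)=1$, the passage among $A$, $\tilde A$, $A_{\Delta_i}$, $\widetilde{A_{\Delta_i}}$ (cf.\ Remark~\ref{ciarem}), and the norm estimates on the Whitehead factors — costs only universal constants and is routine.
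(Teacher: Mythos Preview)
Your broad strategy matches the paper's: invoke the Whitehead identity $\diag(u,v)\approx P_1P_2$ from the preamble and enlarge matrices along the homotopy of $u$ to $1$. But your execution of the enlargement step is underspecified at exactly the point where the real work lies, and the mechanism you sketch does not obviously produce the required cancellation.

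The paper's device is to put the telescoping \emph{before} the $P_1P_2$ factorisation, not after. With $0=t_0<\cdots<t_m=1$ chosen so that $\|u_{t_i}-u_{t_{i-1}}\|<\varepsilon/N$ and with quasi-inverses $u'_{t_i}$, one sets
\[
V=\diag(u_{t_0},\ldots,u_{t_m},u'_{t_0},\ldots,u'_{t_m}),\quad
W=\diag(1,u'_{t_0},\ldots,u'_{t_{m-1}},u_{t_0},\ldots,u_{t_{m-1}},1),
\]
both \emph{diagonal}; the one-step shift in $W$ makes every diagonal entry of $VW$ either $u$, or $u_{t_i}u'_{t_{i-1}}\approx1$, or $u'_{t_j}u_{t_j}\approx1$, or $u'_{t_m}=1$, so $\|VW-\diag(u,I_{2m+1})\|<2\varepsilon$. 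One then decomposes each $u_{t_i}-1$, $u'_{t_i}-1$ via $(\Delta_1,\Delta_2)$, packages the pieces into diagonal matrices $x_1,x_2,x'_1,x'_2$ (size $m{+}1$) and $y_1,y_2,y'_1,y'_2$ (size $m$), and applies the $P_1,P_2$ formulae \emph{once} to $V$ and \emph{once} to $W$ (the latter padded by $\diag(1,\cdot,1)$). Since every argument of every $X(\cdot),Y(\cdot)$ is diagonal, axiom $\mathcal{D}_\infty$ makes each product decouple into $m{+}1$ parallel scalar $2\times2$ computations, so all norms are bounded by a fixed polynomial in $c,N$ with \emph{no} dependence on $m$. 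Finally one rearranges $P_1(x)P_2(x)P_1(y')^\wedge P_2(y')^\wedge$ as $z_1z_2$ with $z_1=P_1(x)P_1(y')^\wedge$ and $z_2=\bigl(P_1(y')^\wedge\bigr)^{-1}P_2(x)\,P_1(y')^\wedge\cdot P_2(y')^\wedge$; this conjugation is what pushes propagation to $16r$ and dictates the constant $5$ in the neighbourhood axiom.

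By contrast, your plan factors each $2\times2$ block $\diag(u_{t_j},v_{t_{j-1}})$ separately and then inserts unspecified ``coupling elementary factors.'' The blockwise product is, after permutation, close to $\diag(u,1,u_{t_1},v_{t_1},\ldots,u_{t_{n-1}},v_{t_{n-1}})$, and the leftover pairs $(u_{t_j},v_{t_j})$ must still be cancelled. If the couplings are built only from $g_j\approx1$ and the relations $v_{t_j}u_{t_j}\approx1$, they are $\varepsilon$-close to the identity and cannot effect this cancellation; if instead they genuinely contain the $u_{t_j},v_{t_j}$, they must themselves be split between $A_{\Delta_1}$ and $A_{\Delta_2}$, which is the original problem again. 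Your ``nilpotent part squares to zero'' norm-control is asserted without a construction that realises it. Finally, Lemma~\ref{idemliphom} is a Lipschitz-regularisation for idempotents, not invertibles; no analogue is available (nor needed) here, since the diagonal trick makes every bound independent of $m$ directly.
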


%\todo[inline]{from proof, $Q(N)=9(N+1)^2$, $M=(2cN+2)^{20}$.}

\begin{proof}
If $||u-1||<\varepsilon$, then we may simply take $z_1=z_2=I_2$. In the general case, we proceed as follows.

Let $(u_t)_{t\in[0,1]}$ be a homotopy of $(\varepsilon,r,N)$-invertibles in $\tilde{A}$ with $u_0=u$ and $u_1=1$. Up to relaxing control, we may assume that $u_t-1\in A$ for each $t$. For each $t\in[0,1]$, let $u_t'$ be an $(\varepsilon,r,N)$-inverse for $u_t$. Up to relaxing control, we may also assume that $u_t'-1\in A$ for each $t$. Let $0=t_0<\cdots<t_m=1$ be such that $||u_{t_i}-u_{t_{i-1}}||<\frac{\varepsilon}{N}$ for $i=1,\ldots,m$. Set \begin{align*} V&=\diag(u_{t_0},\ldots,u_{t_m},u_{t_0}',\ldots,u_{t_m}'), \\ W&=\diag(1,u_{t_0}',\ldots,u_{t_{m-1}}',u_{t_0},\ldots,u_{t_{m-1}},1).\end{align*} Then \[||\diag(u,I_{2m+1})-VW||<2\varepsilon.\]

In the rest of this proof, we shall write $X^\wedge$ for $\diag(1,X,1)\in M_{2k+2}(\tilde{A})$ whenever $X\in M_{2k}(\tilde{A})$.

For each $i\in\{0,\ldots,m\}$, there exist $v_i,v_i'\in\Delta_1\cap A_r$ and $w_i,w_i'\in\Delta_2\cap A_r$ such that \begin{align*} u_{t_i}-1=v_i+w_i,&\quad u_{t_i}'-1=v_i'+w_i', \\ \max(||v_i||,||w_i||)\leq c||u_{t_i}-1||, &\quad \max(||v_i'||,||w_i'||)\leq c||u_{t_i}'-1||.\end{align*} Set 
\begin{align*}
x_1=\diag(v_0+1,\ldots,v_m+1) &, x_2=\diag(w_0,\ldots,w_m), \\
x_1'=\diag(v_0'+1,\ldots,v_m'+1) &, x_2'=\diag(w_0',\ldots,w_m'), \\
y_1=\diag(v_0+1,\ldots,v_{m-1}+1) &, y_2=\diag(w_0,\ldots,w_{m-1}), \\
y_1'=\diag(v_0'+1,\ldots,v_{m-1}'+1) &, y_2'=\diag(w_0',\ldots,w_{m-1}').
\end{align*}
Then
\[ \begin{pmatrix} x_1+x_2 & 0 \\ 0 & x_1'+x_2' \end{pmatrix}=V \] and
\[ \begin{pmatrix} y_1'+y_2' & 0 \\ 0 & y_1+y_2 \end{pmatrix}^\wedge=W.  \]

Set \[P_1(x,y)=X(x)X(y)Y(-x')X(x)\begin{pmatrix} 0 & -1 \\ 1 & 0 \end{pmatrix}X(-y)\]
and \[P_2(x,y)=X(y)\begin{pmatrix} 0 & 1 \\ -1 & 0 \end{pmatrix}X(-x)Y(-y')X(x)X(y)\begin{pmatrix} 0 & -1 \\ 1 & 0 \end{pmatrix}\]
with the convention that $(x')'=x$. 
%Observe that $T(x,y)^{-1}=T(-x,y)$. 
We have
\[ \Vert V-P_1(x_1,x_2)P_2(x_1,x_2)\Vert<(N+1)\varepsilon \]
and
\[ \left\Vert \begin{pmatrix} y_1'+y_2' & 0 \\ 0 & y_1+y_2 \end{pmatrix}-P_1(y_1',y_2')P_2(y_1',y_2') \right\Vert<(N+1)\varepsilon \]
so
\[ \Vert W-P_1(y_1',y_2')^\wedge P_2(y_1',y_2')^\wedge\Vert<(N+1)\varepsilon \]
and
\[ \Vert VW-P_1(x_1,x_2)P_2(x_1,x_2)P_1(y_1',y_2')^\wedge P_2(y_1',y_2')^\wedge \Vert <3N(N+1)\varepsilon. \]
Hence
\[ \biggl\Vert \begin{pmatrix} u & 0 \\ 0 & I_{2m+1} \end{pmatrix}-P_1(x_1,x_2)P_2(x_1,x_2)P_1(y_1',y_2')^\wedge P_2(y_1',y_2')^\wedge \biggr\Vert <(3N(N+1)+2)\varepsilon. \]

We will show that 
\[P_1(x_1,x_2)P_2(x_1,x_2)P_1(y_1',y_2')^\wedge P_2(y_1',y_2')^\wedge\]
can be factored as a product $z_1z_2$ with $z_1$ and $z_2$ having the required properties.

Let
\[ z_1=P_1(x_1,x_2)P_1(y_1',y_2')^\wedge  \]
and
\[ z_2=\left[(P_1(y_1',y_2')^\wedge)^{-1}P_2(x_1,x_2) P_1(y_1',y_2')^\wedge\right] P_2(y_1',y_2')^\wedge. \] 
Then $z_1$ and $z_2$ are matrices in $M_{2m+2}(\widetilde{A_{\Delta_i}}\cap\tilde{A}_{16r})$ that are invertible in $M_{2m+2}(\widetilde{A_{\Delta_i}})$, and \[ P_1(x_1,x_2)P_2(x_1,x_2)P_1(y_1',y_2')^\wedge P_2(y_1',y_2')^\wedge=z_1z_2. \]

Let $\pi_i:\widetilde{A_{\Delta_i}}\rightarrow\mathbb{C}$ denote the quotient homomorphism. Then \[\pi_1(P_1(x_1,x_2))=\pi_1(P_1(y_1',y_2')^\wedge)=I_{2m+2},\]
and \[\pi_2(P_2(x_1,x_2))=\pi_2(P_2(y_1',y_2')^\wedge)=I_{2m+2}.\]
Thus $\pi_1(z_1)=\pi_1(z_1^{-1})=\pi_2(z_2)=\pi_2(z_2^{-1})=I_{2m+2}$.

Note that $||X(x)||\leq 1+||x||$ and $||Y(y)||\leq 1+||y||$. Since \[\max(||x_i||,||x_i'||,||y_i||,||y_i'||)\leq \begin{cases} c(N+1)+1, & i=1 \\ c(N+1), & i=2 \end{cases},\] we have for $i=1,2$, \[\max(||z_i||,||z_i^{-1}||)\leq (1+N)^4(1+c(N+1))^6(2+c(N+1))^6.\]
For $t\in[0,1]$, set \begin{align*} z_{1,t}&=P_1(tx_1,tx_2)P_1(ty_1',ty_2')^\wedge, \\  
z_{2,t}&=\left[ (P_1(y_1',y_2')^\wedge)^{-1}P_2(tx_1,tx_2)P_1(y_1',y_2')^\wedge\right] P_2(ty_1',ty_2')^\wedge.\end{align*}
Then $(z_{i,t})_{t\in[0,1]}$ is a homotopy of invertibles in $M_{2m+2}(\widetilde{A_{\Delta_i}}\cap\tilde{A}_{16r})$ between $I_{2m+2}$ and $z_i$ for $i=1,2$. Moreover, for each $t\in[0,1]$, $\pi_2(z_{2,t})=\pi_2(z_{2,t}^{-1})=I_{2m+2}$, while $\pi_1(z_{1,t})$ and $\pi_1(z_{1,t}^{-1})$ are homotopic to $I_{2m+2}$ in $M_{2m+2}(\mathbb{C})$ via homotopies of invertible elements of norm less than 3. Finally, \[\max(||z_{i,t}||,||z_{i,t}^{-1}||)\leq (1+N)^4(1+c(N+1))^6(2+c(N+1))^6\] for $i=1,2$.
\end{proof}

\subsection{Controlled Mayer-Vietoris sequence}

Given a filtered $SQ_p$ algebra $A$ and an $(s,c)$-controlled Mayer-Vietoris pair $(A_{\Delta_1},A_{\Delta_2})$ for $A$, we will consider the inclusion maps $j_1:A_{\Delta_1}\rightarrow A$, $j_2:A_{\Delta_2}\rightarrow A$, $j_{1,2}:A_{\Delta_1}\cap A_{\Delta_2}\rightarrow A_{\Delta_1}$, and $j_{2,1}:A_{\Delta_1}\cap A_{\Delta_2}\rightarrow A_{\Delta_2}$.

For any $0<\varepsilon<\frac{1}{20},r> 0$, and $N\geq 1$, it is clear that the composition 
\[ \begin{CD}
K_0^{\varepsilon,r,N}(A_{\Delta_1}\cap A_{\Delta_2}) @>(j_{1,2*},j_{2,1*})>> K_0^{\varepsilon,r,N}(A_{\Delta_1})\oplus K_0^{\varepsilon,r,N}(A_{\Delta_2}) @>j_{1*}-j_{2*}>> K_0^{\varepsilon,r,N}(A)
\end{CD} \]
is the zero map.

\begin{prop}
For every $c>0$, there exists a control pair $(\lambda,h)$ such that for any filtered $SQ_p$ algebra $A$, any $(s,c)$-controlled Mayer-Vietoris pair $(A_{\Delta_1},A_{\Delta_2})$ for $A$, any $N\geq 1$, $0<\varepsilon<\frac{1}{20\lambda_N}$, and $0< r\leq\frac{s}{h_{\varepsilon,N}}$, if $y_1\in K_0^{\varepsilon,r,N}(A_{\Delta_1})$ and $y_2\in K_0^{\varepsilon,r,N}(A_{\Delta_2})$ are such that $j_{1*}(y_1)=j_{2*}(y_2)$ in $K_0^{\varepsilon,r,N}(A)$, then there exists $z\in K_0^{\lambda_N\varepsilon,h_{\varepsilon,N}r,\lambda_N}(A_{\Delta_1}\cap A_{\Delta_2})$ such that $j_{1,2*}(z)=y_1$ in $K_0^{\lambda_N\varepsilon,h_{\varepsilon,N}r,\lambda_N}(A_{\Delta_1})$ and $j_{2,1*}(z)=y_2$ in $K_0^{\lambda_N\varepsilon,h_{\varepsilon,N}r,\lambda_N}(A_{\Delta_2})$.
\end{prop}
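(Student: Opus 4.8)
The plan is to produce, for controlled parameters, a single quasi-idempotent over the intersection $A_{\Delta_1}\cap A_{\Delta_2}$ whose class over $A_{\Delta_1}$ is $y_1$ and whose class over $A_{\Delta_2}$ is $y_2$, and then to set $z$ to be (the class of) that element. First I would invoke the standard-picture lemmas: up to replacing $(\varepsilon,r,N)$ by its image under a universal control pair, write $y_i=[e_i]-[I_d]$ with $e_i$ a quasi-idempotent in $M_{2d}(\widetilde{A_{\Delta_i}})$ and $\pi_i(e_i)=\diag(I_d,0_d)$. Since $j_{1*}(y_1)=j_{2*}(y_2)$ in $K_0^{\varepsilon,r,N}(A)$, the lemma on the standard picture of $\mathcal{K}_0$ gives, after stabilization, a homotopy of quasi-idempotents in $M_\infty(\tilde A)$ between (stabilizations of) $e_1$ and $e_2$; applying Lemma~\ref{idemliphom} to make it Lipschitz with constant depending only on $N$, and then Proposition~\ref{homtosim2}, I obtain --- after further stabilization and relaxing of control, absorbed into $e_1,e_2$ --- a quasi-invertible pair $(w,w')$ in $M_k(\tilde A)$ with $\|we_1w^{-1}-e_2\|$ bounded by a controlled multiple of $\varepsilon$, with $w$ homotopic to $1$ (replacing $w$ by $\diag(w,w')$ and using Lemma~\ref{inversepairhomotopy} if needed), and with $w-1\in A$: this last point uses that $\pi_i(e_i)$ is a fixed coordinate projection $p$, so the formulas $v=ef+(1-e)(1-f)$, $u=\sum(1-v)^j$ of Lemma~\ref{closeimpliessimilar} and Proposition~\ref{homtosim2} yield $\pi(v)=p^2+(1-p)^2=1$ and $\pi(u)=1$.

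The heart of the argument is the factorization. I would apply Lemma~\ref{MVtechlem} to the quasi-invertible $w^{-1}$ (homotopic to $1$, with $w^{-1}-1\in A$), with $\varepsilon,r,N$ replaced by the current parameters; the hypothesis $r\le s/h_{\varepsilon,N}$ is precisely what forces the current propagation to be $\le s$ so that the lemma applies. This produces, up to one more stabilization, invertibles $z_1\in M_{k'}(\widetilde{A_{\Delta_1}}\cap\tilde A_{16r'})$ and $z_2\in M_{k'}(\widetilde{A_{\Delta_2}}\cap\tilde A_{16r'})$ with $\max(\|z_i^{\pm1}\|)\le M(c,N)$, with $\|\diag(w^{-1},I)-z_1z_2\|$ a controlled multiple of $\varepsilon$, and with norm-controlled homotopies $(z_{i,t})$ to $I_{k'}$ inside $M_{k'}(\widetilde{A_{\Delta_i}}\cap\tilde A_{16r'})$. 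Writing $E_i=\diag(e_i,0)\in M_{k'}(\widetilde{A_{\Delta_i}})$, I set $f_1:=z_1^{-1}E_1z_1\in M_{k'}(\widetilde{A_{\Delta_1}})$ and $f_2:=z_2E_2z_2^{-1}\in M_{k'}(\widetilde{A_{\Delta_2}})$. The reason for factoring $w^{-1}$ (rather than $w$) in this order is exactly that then each $f_i$ is conjugated by a factor lying in the \emph{same} subalgebra, so it stays in $\widetilde{A_{\Delta_i}}$. Using $z_1z_2\approx\diag(w^{-1},I)$, the norm bounds on the $z_i$, and $we_1w^{-1}\approx e_2$, one checks $z_2^{-1}f_1z_2=(z_1z_2)^{-1}E_1(z_1z_2)\approx\diag(w,I)E_1\diag(w,I)^{-1}\approx E_2$, hence $\|f_1-f_2\|<c_*\varepsilon'$ for a constant $c_*$ depending only on $c$ and $N$ and a relaxed $\varepsilon'$; moreover, by Lemma~\ref{simtohom1} applied to the inverse pairs $(z_{i,t},z_{i,t}^{-1})$, each $f_i$ is a quasi-idempotent homotopic over $A_{\Delta_i}$ to $E_i$, hence to $e_i$, with controlled parameters.

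Finally I would invoke the complete intersection approximation property. The elements $f_1,f_2$ have propagation $\le\rho$ for a controlled $\rho$ (again $\le s$ by the hypothesis on $r$), they lie in $M_{k'}(\widetilde{A_{\Delta_1,\rho}})$ and $M_{k'}(\widetilde{A_{\Delta_2,\rho}})$ respectively, and $\|f_1-f_2\|<c_*\varepsilon'$; by Remark~\ref{ciarem} there exists $g\in M_{k'}(\widetilde{A_{\Delta_1,\rho}}\cap\widetilde{A_{\Delta_2,\rho}})=M_{k'}(\widetilde{(A_{\Delta_1}\cap A_{\Delta_2})_\rho})$ with $\max(\|g-f_1\|,\|g-f_2\|)<(2c+\tfrac12)c_*\varepsilon'$. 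Then $g$ is a quasi-idempotent over $\widetilde{A_{\Delta_1}\cap A_{\Delta_2}}$ which, by Lemma~\ref{normestlem1}, is homotopic over $A_{\Delta_i}$ to $f_i$ (hence to $e_i$); and since $\pi(g)$ is close to $\pi(f_1)=\pi_1(z_1)^{-1}\pi_1(E_1)\pi_1(z_1)$, which is conjugate to $\diag(I_d,0)$ by an invertible scalar matrix of controlled norm, $\pi(g)$ and $\diag(I_d,0)$ are homotopic quasi-idempotents in $M_{k'}(\mathbb{C})$, so $[\pi(g)]=[I_d]$ and $z:=[g]-[I_d]$ lies in $K_0^{\lambda_N\varepsilon,h_{\varepsilon,N}r,\lambda_N}(A_{\Delta_1}\cap A_{\Delta_2})$. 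Pushing forward, $j_{1,2*}(z)=[g]-[I_d]=[e_1]-[I_d]=y_1$ in $K_0^{\lambda_N\varepsilon,h_{\varepsilon,N}r,\lambda_N}(A_{\Delta_1})$, and likewise $j_{2,1*}(z)=y_2$. The control pair $(\lambda,h)$ is assembled by composing the universal control pairs coming from the standard-picture lemmas, Lemma~\ref{idemliphom}, Proposition~\ref{homtosim2}, Lemma~\ref{MVtechlem} (whence the dependence on $c$, through $M(c,N)$ and $Q$), and Remark~\ref{ciarem}; it depends only on $c$, and enlarging $h$ so that $16r'$ and $\rho$ are both at most $s$ whenever $r\le s/h_{\varepsilon,N}$ is what makes the lemma and the intersection approximation property applicable.

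I expect the main obstacle to be the bookkeeping in the factorization step: arranging that $w$ (and the quasi-invertibles it is built from) has $\pi(w)=I$ and is homotopic to $1$, so that Lemma~\ref{MVtechlem} applies; choosing which of $w$ and $w^{-1}$ to feed to that lemma so that the two resulting conjugates $f_1,f_2$ land in $\widetilde{A_{\Delta_1}}$ and $\widetilde{A_{\Delta_2}}$ separately, rather than in a product of both subalgebras; and tracking every auxiliary propagation so that it stays below $s$ throughout. The genuinely nontrivial analytic input (the factorization of a quasi-invertible through the Mayer-Vietoris pair) is already isolated in Lemma~\ref{MVtechlem}.
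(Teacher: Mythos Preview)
Your proposal is correct and follows essentially the same route as the paper's proof: reduce to the standard picture, convert the homotopy $e_1\sim e_2$ in $M_\infty(\tilde A)$ into an approximate similarity via Lemma~\ref{idemliphom} and Proposition~\ref{homtosim2}, factor the conjugating quasi-invertible through the Mayer--Vietoris pair using Lemma~\ref{MVtechlem}, compare the two resulting conjugates $z_1^{-1}E_1z_1$ and $z_2E_2z_2^{-1}$, and finish with the complete intersection approximation property. The only cosmetic difference is that the paper arranges $ue_2v\approx e_1$ and factors $\diag(u,v)$ directly (already homotopic to $I$ by Lemma~\ref{inversepairhomotopy}), whereas you arrange $we_1w'\approx e_2$ and factor the quasi-inverse; after your own remark about replacing $w$ by $\diag(w,w')$ this is the same computation with the indices swapped.
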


\begin{proof}
Up to rescaling $\varepsilon$, $r$, and $N$, and up to stabilization, we may write $y_i=[e_i]-[I_k]$, where $e_i$ is an $(\varepsilon,r,N)$-idempotent in $M_n(\widetilde{A_{\Delta_i}})$ for $i=1,2$ with $e_i-\diag(I_k,0)\in M_n(A_{\Delta_i})$ and with $e_1$ and $e_2$ homotopic as $(\varepsilon,r,N)$-idempotents in $M_n(\tilde{A})$. 
%Then $[e_1]-[I_k]=[e_2]-[I_k]$ in $K_0^{\varepsilon,r,N}(A)$ so $\diag(e_1,I_k,0)$ and $\diag(e_2,I_k,0)$ are homotopic as $4\varepsilon$-$r$-$(4N+1)$-idempotents in $M_\infty(\tilde{A})$. Thus, up to stabilization and relaxing control, we may assume that $e_1$ and $e_2$ are homotopic as $\varepsilon$-$r$-$N$-idempotents in $M_n(\tilde{A})$. 
By Lemma \ref{idemliphom} and Proposition \ref{homtosim2}, and up to stabilization, there exists a control pair $(\lambda,h)$, and a $(\lambda_N\varepsilon,h_{\varepsilon,N}r,\lambda_N)$-inverse pair $(u,v)$ in $M_n(\tilde{A})$ such that $||ue_2v-e_1||<\lambda_N\varepsilon$. Then \[||\diag(u,v)\diag(e_2,0)\diag(v,u)-\diag(e_1,0)||<\lambda_N\varepsilon.\] We may also assume that $u-I_n\in M_n(A)$ and $v-I_n\in M_n(A)$.

By Lemma \ref{MVtechlem}, there exist $Q=Q(\lambda_N)$ and $M=M(c,\lambda_N)$, and up to stabilization, there exist invertible elements $w_i\in M_{2n}(\widetilde{A_{\Delta_i}}\cap\tilde{A}_{16h_{\varepsilon,N} r})$ such that $\max(||w_i||,||w_i^{-1}||)\leq M$ for $i=1,2$, $\pi_i(w_i)$ and $\pi_i(w_i^{-1})$ are homotopic to $I_{2n}$ via homotopies of invertible elements with norm less than 3, where $\pi_i:\widetilde{A_{\Delta_i}}\rightarrow\mathbb{C}$ is the quotient homomorphism, and $||\diag(u,v)-w_1w_2||<Q\varepsilon$. Then we also have 
\begin{align*}
&\quad\; ||\diag(v,u)-w_2^{-1}w_1^{-1}|| \\ &= ||(\diag(v,u)(w_1w_2-\diag(u,v))+\diag(vu-1,uv-1))w_2^{-1}w_1^{-1}|| \\
&\leq \lambda_N(Q+1)M^2\varepsilon.
\end{align*}
Thus 
\begin{align*}
&\quad\; ||w_1^{-1}\diag(e_1,0)w_1-w_2\diag(e_2,0)w_2^{-1}|| \\ &= ||w_1^{-1}(\diag(e_1,0)-w_1w_2\diag(e_2,0)w_2^{-1}w_1^{-1})w_1|| \\
&\leq M^2 ||(\diag(u,v)-w_1w_2)\diag(e_2,0)w_2^{-1}w_1^{-1} \\ &\quad\quad +\diag(u,v)\diag(e_2,0)(\diag(v,u)-w_2^{-1}w_1^{-1}) 
+\diag(e_1-ue_2v,0)|| \\
&\leq M^2(QNM^2 + \lambda_N^2 N(Q+1)M^2 + \lambda_N)\varepsilon.
\end{align*}

Note that $M^2(QNM^2 + \lambda_N^2 N(Q+1)M^2 + \lambda_N)$ depends only on $c$ and $N$. We will denote it by $M'=M'(c,N)$.

%Then we have
%\[ ||(w_1^{-1}\diag(e_1,0)w_1-\pi_1(w_1^{-1}\diag(e_1,0)w_1))-(w_2\diag(e_2,0)w_2^{-1}-\pi_2(w_2\diag(e_2,0)w_2^{-1}))||\leq 2M'\varepsilon. \] % \todo{norm of scalar matrix as operator on Lp is at most norm as operator on $\ell_p^n$}

%Now $\pi_1(w_1\diag(e_1,0)w_1^{-1})$ is homotopic to $\diag(I,0)$ and $\pi_2(w_2^{-1}\diag(e_2,0)w_2)$ is homotopic to $\diag(I,0)$, where $\pi_i$ is induced by the usual quotient map $\widetilde{A_{\Delta_i,(2Cn_\varepsilon+1)r}}\rightarrow\mathbb{C}$ for $i=1,2$. 

Thus there exists $e\in M_{2n}(\widetilde{A}_{\Delta_1,(32h_{\varepsilon,N}+1)r}\cap \widetilde{A}_{\Delta_2,(32h_{\varepsilon,N}+1)r})$ such that 
\[ ||e-w_1^{-1}\diag(e_1,0)w_1||<\biggl(2c+\frac{1}{2}\biggr)M'\varepsilon \]
and \[ ||e-w_2\diag(e_2,0)w_2^{-1}||<\biggl(2c+\frac{1}{2}\biggr)M'\varepsilon, \]
with the estimates being obtained based on Remark \ref{ciarem}.

%Set $e=y+\frac{1}{2}(\pi_1(w_1^{-1}\diag(e_1,0)w_1)+\pi_2(w_2\diag(e_2,0)w_2^{-1}))$. Then $||e-w_1^{-1}\diag(e_1,0)w_1||<(2c+\frac{1}{2})M'\varepsilon$ and $||e-w_2\diag(e_2,0)w_2^{-1}||<(2c+\frac{1}{2})M'\varepsilon$. 
By Lemma \ref{simtohom2} and Lemma \ref{normestlem1}, there exists a control pair $(\lambda'',h'')$ such that 
\begin{itemize}[leftmargin=*]
\item $w_1^{-1}\diag(e_1,0)w_1$ and $w_2\diag(e_2,0)w_2^{-1}$ are $(\lambda''_N\varepsilon,h''_{\varepsilon,N}r,\lambda''_N)$-idempotents in $M_{2n}(\tilde{A})$,
\item $\diag(w_1^{-1},0)\diag(e_1,0)\diag(w_1,0)$ and $\diag(w_2,0)\diag(e_2,0)\diag(w_2^{-1},0)$ are $(\lambda''_N\varepsilon,h''_{\varepsilon,N}r,\lambda''_N)$-homotopic to $\diag(e_1,0)$ and $\diag(e_2,0)$ respectively in $M_{4n}(\tilde{A})$,
\item $e$ is $(\lambda''_N\varepsilon,h''_{\varepsilon,N}r,\lambda''_N)$-homotopic to $w_1^{-1}\diag(e_1,0)w_1$ and $w_2\diag(e_2,0)w_2^{-1}$ in $M_{2n}(\tilde{A})$,
\item $\pi(e)$ is homotopic to $I$ as $(\lambda''_N\varepsilon,h''_{\varepsilon,N}r,\lambda''_N)$-idempotents in $M_{2n}(\mathbb{C})$, where $\pi:\widetilde{A_{\Delta_1}}\cap\widetilde{A_{\Delta_2}}\rightarrow\mathbb{C}$ is the quotient homomorphism. % $e=a+z$ for some $a\in A_1\cap A_2,z\in\mathbb{C}$ so $\pi(e)=\pi_i(e)=z$
\end{itemize}
Hence $j_{1,2*}([e]-[I_k])=y_1$ in $K_0^{\lambda''_N\varepsilon,h''_{\varepsilon,N}r,\lambda''_N}(A_{\Delta_1})$ and $j_{2,1*}([e]-[I_k])=y_2$ in $K_0^{\lambda''_N\varepsilon,h''_{\varepsilon,N}r,\lambda''_N}(A_{\Delta_2})$.
\end{proof}

In the odd case, we also have that for any $0<\varepsilon<\frac{1}{20}$, $r> 0$, and $N\geq 1$, the composition
\[ \begin{CD}
K_1^{\varepsilon,r,N}(A_{\Delta_1}\cap A_{\Delta_2}) @>(j_{1,2*},j_{2,1*})>> K_1^{\varepsilon,r,N}(A_{\Delta_1})\oplus K_1^{\varepsilon,r,N}(A_{\Delta_2}) @>j_{1*}-j_{2*}>> K_1^{\varepsilon,r,N}(A)
\end{CD} \]
is the zero map.

\begin{prop}
For every $c>0$, there exists a control pair $(\lambda,h)$ such that for any filtered $SQ_p$ algebra $A$, any $(s,c)$-controlled Mayer-Vietoris pair $(A_{\Delta_1},A_{\Delta_2})$ for $A$, any $N\geq 1$, $0<\varepsilon<\frac{1}{20\lambda_N}$, and $0< r\leq\frac{s}{h_{\varepsilon,N}}$, if $y_1\in K_1^{\varepsilon,r,N}(A_{\Delta_1})$ and $y_2\in K_1^{\varepsilon,r,N}(A_{\Delta_2})$ are such that $j_{1*}(y_1)=j_{2*}(y_2)$ in $K_1^{\varepsilon,r,N}(A)$, then there exists $z\in K_1^{\lambda_N\varepsilon,h_{\varepsilon,N}r,\lambda_N}(A_{\Delta_1}\cap A_{\Delta_2})$ such that $j_{1,2*}(z)=y_1$ in $K_1^{\lambda_N\varepsilon,h_{\varepsilon,N}r,\lambda_N}(A_{\Delta_1})$ and $j_{2,1*}(z)=y_2$ in $K_1^{\lambda_N\varepsilon,h_{\varepsilon,N}r,\lambda_N}(A_{\Delta_2})$.
\end{prop}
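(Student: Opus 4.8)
The plan is to run, in the odd degree, the same strategy used in the even case, with quasi-invertibles and products in the roles played there by quasi-idempotents and conjugation, and with Lemma \ref{MVtechlem} supplying the factorization. The composition $(j_{1*}-j_{2*})\circ(j_{1,2*},j_{2,1*})$ is zero, as already observed, so only the exactness statement needs proof.

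First I would reduce to a standard picture. Given $y_1\in K_1^{\varepsilon,r,N}(A_{\Delta_1})$, $y_2\in K_1^{\varepsilon,r,N}(A_{\Delta_2})$ with $j_{1*}(y_1)=j_{2*}(y_2)$ in $K_1^{\varepsilon,r,N}(A)$, I would use the standard-picture lemma for $K_1$ (writing classes as $[u]$ with $\pi(u)=I$ after relaxing control) together with stabilization to arrange, up to rescaling $(\varepsilon,r,N)$, that $y_i=[u_i]$ with $u_i\in GL_n^{\varepsilon,r,N}(\widetilde{A_{\Delta_i}})$, $\pi_i(u_i)=I_n$, $u_i-I_n\in M_n(A_{\Delta_i})$, carrying an $(\varepsilon,r,N)$-quasi-inverse $u_i'$ with the same two properties, and that $u_1$ and $u_2$ are homotopic as $(\varepsilon,r,N)$-invertibles in $M_n(\tilde A)$. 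Set $u=u_1u_2'$ and $v=u_2u_1'$. Then $(u,v)$ is a quasi-inverse pair in $M_n(\tilde A)$ with $u-I_n,v-I_n\in M_n(A)$, $\pi(u)=\pi(v)=I_n$, and concatenating the homotopy from $u_1$ to $u_2$ (multiplied on the right by $u_2'$) with a linear homotopy from $u_2u_2'$ to $I_n$ shows that $u$ is homotopic to $1$ as a quasi-invertible in $M_n(\tilde A)$, all up to relaxing control.

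Next I would feed $u$ into Lemma \ref{MVtechlem}: provided $h_{\varepsilon,N}r\le s$, it yields, up to stabilization, invertible $z_i\in M_k(\widetilde{A_{\Delta_i}}\cap\tilde A_{16h_{\varepsilon,N}r})$ with $\max(\|z_i\|,\|z_i^{-1}\|)\le M(c,\lambda_N)$, homotopies $(z_{i,t})$ of invertibles of bounded norm inside $M_k(\widetilde{A_{\Delta_i}}\cap\tilde A_{16h_{\varepsilon,N}r})$ from $I_k$ to $z_i$ with $\pi_i(z_{i,t})$ and $\pi_i(z_{i,t}^{-1})$ homotopic to $I_k$ through invertibles of norm $<3$, and $\|\diag(u,I_{k-1})-z_1z_2\|<Q(\lambda_N)\varepsilon$. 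Right-multiplying this estimate by $\diag(u_2,I_{k-1})$ and using $u_2'u_2\approx I$ together with the norm bounds on $z_i^{\pm1}$ gives $\|z_1^{-1}\diag(u_1,I_{k-1})-z_2\diag(u_2,I_{k-1})\|<M'(c,N)\varepsilon$ for a constant $M'$ depending only on $c$ and $N$, where $z_1^{-1}\diag(u_1,I_{k-1})\in M_k(\widetilde{A_{\Delta_1}})$ and $z_2\diag(u_2,I_{k-1})\in M_k(\widetilde{A_{\Delta_2}})$ are quasi-invertibles (with quasi-inverses $\diag(u_1',I_{k-1})z_1$ and $z_2^{-1}\diag(u_2',I_{k-1})$) whose parameters are controlled in terms of $c$ and $N$. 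Applying the complete intersection approximation property of $(A_{\Delta_1,\bullet},A_{\Delta_2,\bullet})$ in the unitized form of Remark \ref{ciarem}, I obtain $w\in M_k(\widetilde{A_{\Delta_1}}\cap\widetilde{A_{\Delta_2}})=M_k(\widetilde{A_{\Delta_1}\cap A_{\Delta_2}})$, of controlled propagation, close to both; by Lemmas \ref{normestlem3} and \ref{normestlem1} it is a quasi-invertible over each subalgebra, and since $\pi(w)$ is close to $I_k$ it is homotopic to $I_k$ in $M_k(\mathbb{C})$, so $z:=[w]$ defines a class in $K_1^{\lambda_N\varepsilon,h_{\varepsilon,N}r,\lambda_N}(A_{\Delta_1}\cap A_{\Delta_2})$ for a suitable control pair $(\lambda,h)$.

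Finally I would check that $z$ maps correctly. In $K_1^\bullet(A_{\Delta_1})$, Lemma \ref{normestlem1} gives $[w]=[z_1^{-1}\diag(u_1,I_{k-1})]$, and the path $t\mapsto z_{1,t}^{-1}\diag(u_1,I_{k-1})$ is a norm-continuous homotopy of quasi-invertibles in $M_k(\widetilde{A_{\Delta_1}})$ with controlled parameters (its quasi-inverse being $\diag(u_1',I_{k-1})z_{1,t}$, and $\pi_1$ of it being homotopic to $I_k$ through invertibles of norm $<3$) connecting $z_1^{-1}\diag(u_1,I_{k-1})$ to $\diag(u_1,I_{k-1})$; hence $j_{1,2*}(z)=[u_1]=y_1$. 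Symmetrically, using $t\mapsto z_{2,t}\diag(u_2,I_{k-1})$, one gets $j_{2,1*}(z)=[u_2]=y_2$ in $K_1^\bullet(A_{\Delta_2})$, after enlarging $(\lambda,h)$ to absorb all the relaxations, which depend only on $c$. The main obstacle is the parameter bookkeeping: one must verify that every approximation constant — in particular the norm $M(c,\lambda_N)$ of $z_i^{-1}$ incurred when passing from $u_1\approx z_1(z_2u_2)$ to $z_1^{-1}u_1\approx z_2u_2$ — is governed by $c$ and $N$ alone, and that the homotopies from Lemma \ref{MVtechlem} remain inside $\widetilde{A_{\Delta_i}}\cap\tilde A_{16r}$ with norm control so that the identifications $[z_1^{-1}u_1]=[u_1]$ and $[z_2u_2]=[u_2]$ hold in quantitative $K_1$ with a uniformly bounded relaxation of control. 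This is exactly what the elaborate conclusions of Lemma \ref{MVtechlem} were built to provide, so the rest is the routine translation of the even-case argument.
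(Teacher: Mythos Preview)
Your proof is correct and follows essentially the same approach as the paper: reduce to $y_i=[u_i]$ with $\pi_i(u_i)=I_n$, form $u_1u_2'$ which is homotopic to $1$, apply Lemma~\ref{MVtechlem} to factor it as $w_1w_2$ (your $z_1z_2$), deduce $\|w_1^{-1}u_1-w_2u_2\|$ is small, and use the complete intersection approximation property to find the common class. Your write-up is in fact slightly more explicit than the paper's about the final homotopy step, spelling out how the homotopies $z_{i,t}$ from Lemma~\ref{MVtechlem} are used to identify $[z_i^{\pm1}\diag(u_i,I)]$ with $[u_i]$ inside $K_1^{\bullet}(A_{\Delta_i})$.
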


\begin{proof}
By relaxing control, we may write $y_i=[u_i]$, where $u_i$ is an $(\varepsilon,r,N)$-invertible in $M_n(\widetilde{A_{\Delta_i}})$ for $i=1,2$ with $\pi_i(u_i)=I_n$. Then $[u_1]=[u_2]$ in $K_1^{\varepsilon,r,N}(A)$ so up to stabilization and relaxing control, we may assume that $u_1$ and $u_2$ are homotopic as $(\varepsilon,r,N)$-invertibles in $M_n(\tilde{A})$. Let $v_2\in M_n(\widetilde{A_{\Delta_2}})$ be an $(\varepsilon,r,N)$-inverse for $u_2$ with $\pi_2(v_2)=I_n$. If $(u_{t+1})_{t\in[0,1]}$ is a homotopy of $(\varepsilon,r,N)$-invertibles in $M_n(\tilde{A})$ between $u_1$ and $u_2$ with $\pi(u_{t+1})=I_n$ for all $t$, then $(u_{t+1}v_2)_{t\in[0,1]}$ is a homotopy of $((N^2+1)\varepsilon,2r,N^2)$-invertibles in $M_n(\tilde{A})$ between $u_1v_2$ and $u_2v_2$. Moreover, since $||u_2v_2-I_n||<\varepsilon$, we have that $u_2v_2$ and $I_n$ are homotopic as $(\varepsilon,2r,1+\varepsilon)$-invertibles in $M_n(\tilde{A})$. Hence $u_1v_2$ and $I_n$ are homotopic as $((N^2+1)\varepsilon,2r,N^2)$-invertibles in $M_n(\tilde{A})$.

By Lemma \ref{MVtechlem}, there exist $Q(N)$, $M=M(c,N)$, and up to stabilization, there exist invertibles $w_i$ in $M_n(\widetilde{A_{\Delta_i}}\cap\tilde{A}_{32 r})$ such that $\max(||w_i||,||w_i^{-1}||)\leq M$, $\pi(w_i)$ and $\pi(w_i^{-1})$ are homotopic to $I_n$ via homotopies of invertible elements in $M_n(\mathbb{C})$ with norm less than 3, and $||u_1v_2-w_1w_2||<Q(N)\varepsilon$. Now \begin{align*} ||w_1^{-1}u_1-w_2u_2|| &\leq||w_1^{-1}(u_1v_2-w_1w_2)u_2||+||w_1^{-1}u_1(v_2u_2-1)|| \\ &<MN(Q(N)+1)\varepsilon.\end{align*}
%Also, $||(w_1^{-1}u_1-\pi(w_1^{-1}u_1))-(w_2u_2-\pi(w_2u_2))||<2MN(Q(N)+1)\varepsilon$. 
Thus there exists $z\in M_n(\widetilde{A_{\Delta_1,}}_{33r}\cap \widetilde{A_{\Delta_2,}}_{33r})$ such that \[||z-w_1^{-1}u_1||<\biggl(2c+\frac{1}{2}\biggr)MN(Q(N)+1)\varepsilon\] and \[||z-w_2u_2||<\biggl(2c+\frac{1}{2}\biggr)MN(Q(N)+1)\varepsilon.\]
%Let $z=z'+\frac{1}{2}(\pi(w_1^{-1}u_1)+\pi(w_2u_2))$. Then $||z-w_1^{-1}u_1||<(2c+\frac{1}{2})MN(Q(N)+1)\varepsilon$ and $||z-w_2u_2||<(2c+\frac{1}{2})MN(Q(N)+1)\varepsilon$.

It follows from Lemma \ref{normestlem1} that there exists a control pair $(\lambda,h)$ such that $z$ is homotopic to $u_i$ as $(\lambda_N\varepsilon,h_{\varepsilon,N}r,\lambda_N)$-invertibles in $M_n(\widetilde{A_{\Delta_i}})$ for $i=1,2$, and $\pi(z)$ is homotopic to $I_n$ as $(\lambda_N\varepsilon,h_{\varepsilon,N}r,\lambda_N)$-invertibles in $M_n(\mathbb{C})$, where $\pi:\widetilde{A_{\Delta_1}}\cap\widetilde{A_{\Delta_2}}\rightarrow\mathbb{C}$ is the quotient homomorphism. Hence 
\begin{itemize}
\item $[z]\in K_1^{\lambda_N\varepsilon,h_{\varepsilon,N}r,\lambda_N}(A_{\Delta_1}\cap A_{\Delta_2})$, 
\item $j_{1,2*}([z])=y_1$ in $K_1^{\lambda_N\varepsilon,h_{\varepsilon,N}r,\lambda_N}(A_{\Delta_1})$, and 
\item $j_{2,1*}([z])=y_2$ in $K_1^{\lambda_N\varepsilon,h_{\varepsilon,N}r,\lambda_N}(A_{\Delta_2})$.
\end{itemize}

%\todo[inline]{if change definition so that $(\widetilde{A_{\Delta_1,s}},\widetilde{A_{\Delta_2,s}})$ has the CIA, then proof continues as follows}
%
%Since $w_1^{-1}u_1\in M_n(\widetilde{A_{\Delta_1,(Cn_\varepsilon+1)s}})$ and $w_2u_2\in M_n(\widetilde{A_{\Delta_2,(Cn_\varepsilon+1)s}})$, there exists $z\in M_n(\widetilde{A_{\Delta_1,(Cn_\varepsilon+1)s}}\cap\widetilde{A_{\Delta_2,(Cn_\varepsilon+1)s}})$ such that $||z-w_1^{-1}u_1||<cMN(\lambda+1)\varepsilon$ and $||z-w_2u_2||<cMN(\lambda+1)\varepsilon$.
%
%Since $w_1^{-1}$ and $w_2$ are each homotopic to $I_n$, and since $||\pi_1(z-w_1^{-1}u_1)||<cMN(\lambda+1)\varepsilon$ and $||\pi_2(z-w_2u_2)||<cMN(\lambda+1)\varepsilon$, it follows that there exist $\alpha=\alpha(N)\geq 1$ and $l_\varepsilon>0$ such that $z$ is homotopic to $u_i$ as $\alpha\varepsilon$-$\alpha l_\varepsilon s$-$\alpha$-invertibles in $M_n(\widetilde{A_{\Delta_i}})$ for $i=1,2$, and $\pi(z)$ is homotopic to $I_n$ as $\alpha\varepsilon$-$\alpha l_\varepsilon s$-$\alpha$-invertibles in $M_n(\mathbb{C})$. Hence $[z]\in K_1^{\alpha\varepsilon,\alpha l_\varepsilon s,\alpha}(A_{\Delta_1}\cap A_{\Delta_2})$, $j_{1,2*}([z])=y_1$ in $K_1^{\alpha\varepsilon,\alpha l_\varepsilon s,\alpha}(A_{\Delta_1})$, and $j_{2,1*}([z])=y_2$ in $K_1^{\alpha\varepsilon,\alpha l_\varepsilon s,\alpha}(A_{\Delta_2})$.

\end{proof}

Next, we want to define a boundary map \[\partial^{\varepsilon,r,N}:K_1^{\varepsilon,s,N}(A)\rightarrow K_0^{\lambda^{\mathcal{D}}_N\varepsilon,h^{\mathcal{D}}_{\varepsilon,N}s,\lambda^{\mathcal{D}}_N}(A_{\Delta_1}\cap A_{\Delta_2})\] for an appropriate control pair $(\lambda^{\mathcal{D}},h^{\mathcal{D}})$ depending only on the coercity $c$.

\begin{lem} \label{MVboundlem}
For every $c>0$, there exists a control pair $(\lambda,h)$ such that for any filtered $SQ_p$ algebra $A$, any $(s,c)$-controlled Mayer-Vietoris pair $(A_{\Delta_1},A_{\Delta_2})$ for $A$, any $N\geq 1$, $0<\varepsilon<\frac{1}{20\lambda_N}$, and $0<r\leq\frac{s}{h_{\varepsilon,N}}$, if $u\in GL_n^{\varepsilon,r,N}(\tilde{A})$ and $v\in GL_m^{\varepsilon,r,N}(\tilde{A})$ are such that $u-I_n\in M_n(A)$ and $v-I_n\in M_n(A)$, and 
$w_i\in GL_{n+m}^{\varepsilon,r,N}(\widetilde{A_{\Delta_i}})$ for $i=1,2$ are such that 
%$w_i\in GL_{n+m}(\widetilde{A_{\Delta_i}})$ for $i=1,2$ are such that $\max(||w_1||,||w_1^{-1}||,||w_2||,||w_2^{-1}||)\leq N$, 
%$\pi_i(w_i)$ is homotopic to $I_{n+m}$ in $M_{n+m}(N_{\Delta_i,s}^{(r,4r)})$ for $i=1,2$, 
$||\diag(u,v)-w_1w_2||<\varepsilon$, then letting $w_i'$ be an $(\varepsilon,r,N)$-inverse for $w_i$, there exists a $(\lambda_N\varepsilon,h_{\varepsilon,N}r,\lambda_N)$-idempotent $e\in M_{n+m}(\widetilde{A_{\Delta_1}}\cap \widetilde{A_{\Delta_2}})$ such that \[\max(||e-w_1'\diag(I_n,0)w_1||,||e-w_2\diag(I_n,0)w_2'||)<\lambda_N\varepsilon,\] and $\diag(\pi(e),0)$ is homotopic to $\diag(I_n,0)$ as $(\lambda_N\varepsilon,h_{\varepsilon,N}r,\lambda_N)$-idempotents in $M_{2(n+m)}(\mathbb{C})$, where $\pi:\widetilde{A_{\Delta_1}}\cap\widetilde{A_{\Delta_2}}\rightarrow\mathbb{C}$ is the quotient homomorphism.
\end{lem}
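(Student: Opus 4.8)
The plan is to follow the construction of the Mayer--Vietoris boundary map in the $C^*$-algebra setting (cf.\ \cite{OY}); the essential algebraic fact we will exploit is that the idempotent $\diag(I_n,0)$ commutes with $\diag(u,v)$. Throughout we may take the function $h$ of the control pair to be the constant $2$, so that the hypothesis $0<r\le s/h_{\varepsilon,N}$ becomes $2r\le s$.

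First I would set $p_1=w_1'\diag(I_n,0)w_1\in M_{n+m}(\widetilde{A_{\Delta_1}}\cap\tilde A_{2r})$ and $p_2=w_2\diag(I_n,0)w_2'\in M_{n+m}(\widetilde{A_{\Delta_2}}\cap\tilde A_{2r})$; by Lemma \ref{simtohom1} both are quasi-idempotents with parameters polynomial in $N$ and propagation $2r$. The heart of the argument is the estimate $\|p_1-p_2\|<C(N)\varepsilon$ for some polynomial $C$ with positive coefficients. To obtain it, choose $(\varepsilon,r,N)$-inverses $u',v'$ of $u,v$ and set $D=\diag(u,v)$, $D'=\diag(u',v')$; from $w_1w_2\approx D$ and $w_1'w_1\approx I$ one gets $w_2\approx w_1'D$, then from $w_2w_2'\approx I$, $w_1w_1'\approx I$ and $D'D\approx I$ one gets $w_2'\approx D'w_1$, and finally, using $D\diag(I_n,0)=\diag(I_n,0)D$ and $DD'\approx I$,
\[p_2=w_2\diag(I_n,0)w_2'\approx w_1'D\diag(I_n,0)D'w_1=w_1'\diag(I_n,0)(DD')w_1\approx w_1'\diag(I_n,0)w_1=p_1,\]
with every error a multiple of $\varepsilon$ by a polynomial in $N$.

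Next I would invoke the complete intersection approximation property: since $2r\le s$, the pair $(A_{\Delta_1,2r},A_{\Delta_2,2r})$ has this property with coercity $c$, and Remark \ref{ciarem} upgrades it to the unitizations, producing $e\in M_{n+m}(\widetilde{A_{\Delta_1,2r}}\cap\widetilde{A_{\Delta_2,2r}})$ with $\max(\|e-p_1\|,\|e-p_2\|)<(2c+\frac{1}{2})C(N)\varepsilon$. By Lemma \ref{normestlem1}, $e$ is a $(\lambda_N\varepsilon,2r,\lambda_N)$-idempotent for a suitable $\lambda_N$ polynomial in $N$ with coefficients depending on $c$, homotopic as such to both $p_1$ and $p_2$; this yields the first displayed inequality of the statement. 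For the claim about $\pi$, I would apply the contractive homomorphism $\pi_1\colon\widetilde{A_{\Delta_1}}\to\mathbb{C}$: then $(\pi_1(w_1'),\pi_1(w_1))$ is an $(\varepsilon,0,N)$-inverse pair in $M_{n+m}(\mathbb{C})$, so Lemma \ref{simtohom2} gives a homotopy of quasi-idempotents in $M_{2(n+m)}(\mathbb{C})$ from $\diag(\pi_1(p_1),0)=\diag(\pi_1(w_1')\diag(I_n,0)\pi_1(w_1),0)$ to $\diag(I_n,0)$, with parameters polynomial in $N$; since $\pi(e)=\pi_1(e)$ is within $(2c+\frac{1}{2})C(N)\varepsilon$ of $\pi_1(p_1)$, Lemma \ref{normestlem1} connects $\diag(\pi(e),0)$ to $\diag(\pi_1(p_1),0)$, hence to $\diag(I_n,0)$. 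Enlarging $\lambda$ once more and keeping $h\equiv 2$ produces a control pair depending only on $c$ with the required properties.

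The difficulty here is not conceptual but bookkeeping: one must check that each error term --- in $\|p_1-p_2\|$, in the quasi-idempotent estimates from Lemma \ref{simtohom1}, and in the homotopies furnished by Lemmas \ref{normestlem1} and \ref{simtohom2} --- is a function of $c$ and $N$ alone, independent of $A$, $n$, $m$, $\varepsilon$ and $r$, and that the propagation of $e$ stays at $2r\le s$ so that the complete intersection approximation property remains available.
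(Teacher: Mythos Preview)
Your proposal is correct and follows essentially the same approach as the paper's proof: both set $p_1=w_1'\diag(I_n,0)w_1$ and $p_2=w_2\diag(I_n,0)w_2'$, establish $\|p_1-p_2\|<C(N)\varepsilon$ via the approximations $w_2\approx w_1'D$ and $w_2'\approx D'w_1$ together with the commutation of $D=\diag(u,v)$ with $\diag(I_n,0)$, then invoke the complete intersection approximation property (through Remark \ref{ciarem}) and Lemmas \ref{normestlem1} and \ref{simtohom2}. Your presentation is slightly more conceptual, and you actually spell out the homotopy argument for $\pi(e)$ via $\pi_1$ in more detail than the paper does; the paper in turn makes the polynomial $C(N)=2N^5+N^4+3N^3+2N^2$ explicit.
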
 

\begin{proof}
Let $u'$ be an $(\varepsilon,r,N)$-inverse for $u$ with $u'-I_n\in M_n(A)$, and let $v'$ be an $(\varepsilon,r,N)$-inverse for $v$ with $v'-I_n\in M_n(A)$. Then 
\begin{align*} &\quad\; ||w_1'\diag(I_n,0)w_1-w_1'\diag(u,v)\diag(I_n,0)\diag(u',v')w_1|| \\ &=||w_1'\diag(I_n-uu',0)w_1|| \\ &<N^2\varepsilon, \end{align*} 
Since $||\diag(u,v)-w_1w_2||<\varepsilon$, we have \begin{align*} ||w_1'\diag(u,v)-w_2|| &\leq||w_1'(\diag(u,v)-w_1w_2)||+||(w_1'w_1-1)w_2||\\ &<2N\varepsilon. \end{align*} Thus 
\begin{align*}
&\quad\; ||w_1'\diag(I_n,0)w_1-w_2\diag(I_n,0)w_2'|| \\ 
&< N^2\varepsilon+||w_1'\diag(u,v)\diag(I_n,0)\diag(u',v')w_1-w_2\diag(I_n,0)w_2'|| \\
&\leq N^2\varepsilon+||(w_1'\diag(u,v)-w_2)\diag(I_n,0)\diag(u',v')w_1|| \\ &\quad\quad +||w_2\diag(I_n,0)(\diag(u',v')w_1-w_2')|| \\
&< N^2\varepsilon + 2N^3\varepsilon + N||\diag(u',v')w_1-w_2'|| \\
&\leq (N^2+2N^3)\varepsilon + N(||\diag(u',v')w_1(w_2-w_1'\diag(u,v))w_2'|| \\ &\quad\quad +||\diag(u',v')w_1(1-w_2w_2')|| +||\diag(u',v')(w_1w_1'-1)\diag(u,v)w_2'|| \\ &\quad\quad +||\diag(u'u-1,v'v-1)w_2'||) \\
&< (N^2+2N^3)\varepsilon + N(2N^4+N^2+N^3+N)\varepsilon \\
&= (2N^5+N^4+3N^3+2N^2)\varepsilon.
\end{align*}
There exists $e\in M_{n+m}(\widetilde{A_{\Delta_1}}\cap \widetilde{A_{\Delta_2}})$ such that \[||e-w_1'\diag(I_n,0)w_1||<\biggl(2c+\frac{1}{2}\biggr)(2N^5+N^4+3N^3+2N^2)\varepsilon\] and \[||e-w_2\diag(I_n,0)w_2'||)<\biggl(2c+\frac{1}{2}\biggr)(2N^5+N^4+3N^3+2N^2)\varepsilon.\]
%Set $e=y+\frac{1}{2}(\pi(w_1^{-1}\diag(I_n,0)w_1)+\pi(w_2\diag(I_n,0)w_2^{-1}))$. Then $\max(||e-w_1^{-1}\diag(I_n,0)w_1||,||e-w_2\diag(I_n,0)w_2^{-1}||)<(8N^5c+2N^5)\varepsilon$, and $||\pi(e)-\pi(w_1)^{-1}\diag(I_n,0)\pi(w_1)||<(8N^5c+2N^5)\varepsilon$, so there exists a control pair $(\lambda,h)$ such that $e$ is a $\lambda_N\varepsilon$-$h_{\varepsilon,N}s$-$\lambda_N$-idempotent in $M_{n+m}(\widetilde{A_{\Delta_1}}_{,h_{\varepsilon,N}s}\cap \widetilde{A_{\Delta_2}}_{,h_{\varepsilon,N}s})$, and $\diag(\pi(e),0)$ is homotopic to $\diag(I_n,0)$ as $\lambda_N\varepsilon$-$h_{\varepsilon,N}s$-$\lambda_N$-idempotents in $M_{2(n+m)}(\mathbb{C})$.
By Lemma \ref{normestlem1} and Lemma \ref{simtohom2}, there exists a control pair $(\lambda,h)$ depending only on $c$ such that $e$ is a $(\lambda_N\varepsilon,h_{\varepsilon,N}r,\lambda_N)$-idempotent, 
%in $M_{n+m}(\widetilde{A_{\Delta_1}}\cap\widetilde{A_{\Delta_2}})$
and $\diag(\pi(e),0)$ is homotopic to $\diag(I_n,0)$ as $(\lambda_N\varepsilon,h_{\varepsilon,N}r,\lambda_N)$-idempotents in $M_{2(n+m)}(\mathbb{C})$.
\end{proof}

Given $u\in GL_n^{\varepsilon,r,N}(\tilde{A})$ with $u-I_n\in M_n(A)$, pick $v\in GL_m^{\varepsilon,r,N}(\tilde{A})$ with $v-I_n\in M_n(A)$ such that $\diag(u,v)$ is homotopic to $I_{n+m}$ as $(2\varepsilon,2r,2(N+\varepsilon))$-invertibles. For instance, by Lemma \ref{inversepairhomotopy}, we may pick $v\in GL_n^{\varepsilon,r,N}(\tilde{A})$ to be an $(\varepsilon,r,N)$-inverse for $u$. By Lemma \ref{MVtechlem}, there exists a control pair $(\lambda,h)$ depending only on the coercity $c$ such that, up to replacing $v$ by $\diag(v,I_k)$ for some integer $k$, there exist $(\lambda_N\varepsilon,h_{\varepsilon,N}r,\lambda_N)$-invertibles $w_i\in M_{n+m}(\widetilde{A_{\Delta_i}}\cap\tilde{A}_{h_{\varepsilon,N}r})$ such that $||\diag(u,v)-w_1w_2||<\lambda_N\varepsilon$.

By Lemma \ref{MVboundlem}, there exists a control pair $(\lambda',h')$ depending only on $c$, and a $((\lambda'\cdot\lambda)_N\varepsilon,(h'\cdot h)_{\varepsilon,N}s,(\lambda'\cdot\lambda)_N)$-idempotent $e\in M_{n+m}(\widetilde{A_{\Delta_1}}\cap \widetilde{A_{\Delta_2}})$ such that letting $w_i'$ be a $(\lambda_N\varepsilon,h_{\varepsilon,N}r,\lambda_N)$-inverse for $w_i$, we have \[\max(||e-w_1'\diag(I_n,0)w_1||,||e-w_2\diag(I_n,0)w_2'||)<(\lambda'\cdot\lambda)_N\varepsilon,\] and $\diag(\pi(e),0)$ is $((\lambda'\cdot\lambda)_N\varepsilon,(h'\cdot h)_{\varepsilon,N}r,(\lambda'\cdot\lambda)_N)$-homotopic to $\diag(I_n,0)$ in $M_{2(n+m)}(\mathbb{C})$. 

We would like to define the boundary map by $\partial([u])= [e]-[I_n]$ but in order for it to be well-defined (i.e., independent of the various choices made), we need to place $[e]-[I_n]$ in the appropriate quantitative $K_0$ group.

More precisely, we need to check that there is a control pair $(\lambda^{\mathcal{D}},h^{\mathcal{D}})$ (depending only on the coercity $c$) such that for $N\geq 1$, $0<\varepsilon<\frac{1}{20\lambda^{\mathcal{D}}_N}$ and $0<r\leq\frac{s}{h^{\mathcal{D}}_{\varepsilon,N}}$, the map $\partial:K_1^{\varepsilon,r,N}(A)\rightarrow K_0^{\lambda^{\mathcal{D}}_N\varepsilon,h^{\mathcal{D}}_{\varepsilon,N}r,\lambda^{\mathcal{D}}_N}(A_{\Delta_1}\cap A_{\Delta_2})$ given by $[u]\mapsto[e]-[I_n]$
\begin{enumerate}
\item does not depend on the choice of $e$ satisfying the conclusion of Lemma \ref{MVboundlem};
\item does not depend on the choice of $w_1,w_2$ satisfying the hypotheses of Lemma \ref{MVboundlem};
\item does not change upon replacing $u$ (resp. $v$) by $\diag(u,1)$ (resp. $\diag(v,1)$);
\item does not depend on the choice of $v\in GL_m^{\varepsilon,r,N}(A)$ such that $\diag(u,v)$ is homotopic to $I_{n+m}$ as $(2\varepsilon,2r,2(N+\varepsilon))$-invertibles;
\item only depends on the equivalence class of $u$.
\end{enumerate}

We also want the compositions
\[\minCDarrowwidth15pt \begin{CD}
K_1^{\varepsilon,r,N}(A_{\Delta_1})\oplus K_1^{\varepsilon,r,N}(A_{\Delta_2}) @>(j_{1*}-j_{2*})>> K_1^{\varepsilon,r,N}(A) @>\partial>> K_0^{\lambda^{\mathcal{D}}_N\varepsilon,h^{\mathcal{D}}_{\varepsilon,N}r,\lambda^{\mathcal{D}}_N}(A_{\Delta_1}\cap A_{\Delta_2})
\end{CD} \]
and
\[ \begin{CD}
K_1^{\varepsilon,r,N}(A) @>\partial>> K_0^{\lambda^{\mathcal{D}}_N\varepsilon,h^{\mathcal{D}}_{\varepsilon,N}r,\lambda^{\mathcal{D}}_N}(A_{\Delta_1}\cap A_{\Delta_2}) \\ @>(j_{1,2*},j_{2,1*})>> K_0^{\lambda^{\mathcal{D}}_N\varepsilon,h^{\mathcal{D}}_{\varepsilon,N}r,\lambda^{\mathcal{D}}_N}(A_{\Delta_1})\oplus K_0^{\lambda^\mathcal{D}_N\varepsilon,h^{\mathcal{D}}_{\varepsilon,N}r,\lambda^{\mathcal{D}}_N}(A_{\Delta_2})
\end{CD} \]

to be the zero maps.

Now we will address each of these points in turn. When there is a need to relax control by increasing the parameters, we will sometimes omit precise expressions of the parameters involved with the understanding that they increase in a controlled manner.

\begin{enumerate}
\item Suppose that $e_0$ and $e_1$ both satisfy the conclusion of Lemma \ref{MVboundlem}. Then $||e_0-e_1||<2\lambda_N\varepsilon$ so $e_0$ and $e_1$ are $(2\lambda_N^2\varepsilon,h_{\varepsilon,N}r,\lambda_N)$-homotopic by Lemma \ref{normestlem2}.

\item Suppose that $w_3\in GL_{n+m}^{\varepsilon,r,N}(\widetilde{A_{\Delta_1}})$ and $w_4\in GL_{n+m}^{\varepsilon,r,N}(\widetilde{A_{\Delta_2}})$ have the same properties as $w_1,w_2$ so $||\diag(u,v)-w_1w_2||<\lambda_N\varepsilon$ and $||\diag(u,v)-w_3w_4||<\lambda_N\varepsilon$. Let $w_i'$ be a $(\lambda_N\varepsilon,h_{\varepsilon,N}r,\lambda_N)$-inverse for $w_i$. Then $||w_1w_2-w_3w_4||<2\lambda_N\varepsilon$ and 
\begin{align*} &\quad\quad\quad\quad\; ||w_3'w_1-w_4w_2'|| \\ &\quad\quad\quad\leq||w_3'(w_1w_2-w_3w_4)w_2'||+||w_3'w_1(1-w_2w_2')||+||(w_3'w_3-1)w_4w_2'||\\ &\quad\quad\quad <4\lambda_N^3\varepsilon.\end{align*} 
By the complete intersection approximation property, there exists $y\in M_{n+m}(\widetilde{A_{\Delta_1,}}_{h_{\varepsilon,N}r}\cap \widetilde{A_{\Delta_2,}}_{h_{\varepsilon,N}r})$ with $||y-w_3'w_1||<4\lambda_N^3(2c+\frac{1}{2})\varepsilon$ and $||y-w_4w_2'||<4\lambda_N^3(2c+\frac{1}{2})\varepsilon$. 
%Let $y=y'+\frac{1}{2}(\pi(w_3^{-1}w_1)+\pi(w_4w_2^{-1}))$. Then $\max(||y-w_3^{-1}w_1||,||y-w_4w_2^{-1}||)<(4c+1)\lambda_N^3\varepsilon$. 
Thus \[\max(||w_3y-w_1||,||yw_2-w_4||)<\biggl(4\lambda_N^4\biggl(2c+\frac{1}{2}\biggr)+\lambda_N^2\biggr)\varepsilon.\] 

Similarly, there exists $z\in M_{n+m}(\widetilde{A_{\Delta_1,}}_{h_{\varepsilon,N}r}\cap \widetilde{A_{\Delta_2,}}_{h_{\varepsilon,N}r})$ such that \[\max(||z-w_1'w_3||,||z-w_2w_4'||)<4\lambda_N^3\biggl(2c+\frac{1}{2}\biggr)\varepsilon\] so \[\max(||zw_3'-w_1'||,||zw_4-w_2||)<\biggl(4\lambda_N^4\biggl(2c+\frac{1}{2}\biggr)+\lambda_N^2\biggr)\varepsilon.\] Moreover, \begin{align*} &\quad\quad\; ||yz-1|| \\ &\quad\leq||(y-w_3'w_1)z||+||w_3'w_1(z-w_1'w_3)||+||w_3'(w_1w_1'-1)w_3||+||w_3'w_3-1||\\ &\quad<\biggl(4\lambda_N^3\biggl(2c+\frac{1}{2}\biggr)\biggl(\lambda_N^2+4\lambda_N^3\biggl(2c+\frac{1}{2}\biggr)\biggr)+\lambda_N^2\biggl(4\lambda_N^3\biggl(2c+\frac{1}{2}\biggr)\biggr)+\lambda_N^3+\lambda_N\biggr)\varepsilon
\\ &\quad=\biggl(16\lambda_N^6\biggl(2c+\frac{1}{2}\biggr)^2+8\lambda_N^5\biggl(2c+\frac{1}{2}\biggr)+\lambda_N^3+\lambda_N\biggr)\varepsilon, \end{align*} and similarly for $||zy-1||$, so $(y,z)$ is a quasi-inverse pair.

If $e$ is the quasi-idempotent element obtained from $w_1$ and $w_2$, then with respect to an appropriate control pair $(\lambda'',h'')$ depending only on the coercity $c$, $yez$ is the quasi-idempotent element obtained from $w_3$ and $w_4$, and $[yez]=[e]$ in $K_0^{\lambda''_N\varepsilon,h''_{\varepsilon,N}r,\lambda''_N}(A_{\Delta_1}\cap A_{\Delta_2})$.

\item Replacing $u$ by $\diag(u,1)$, and letting $w_1,w_2$ be such that \[||\diag(u,v)-w_1w_2||<\lambda_N\varepsilon,\] we have $||\diag(u,v,I_2)-\diag(w_1w_2,I_2)||<\lambda_N\varepsilon$. Now \[U\diag(u,v,I_2)U^{-1}=\diag(u,1,v,1)\] for some permutation matrix $U$ so \[||\diag(u,1,v,1)-U\diag(w_1w_2,I_2)U^{-1}||<\lambda_N\varepsilon.\]

If $||e-w_1'\diag(I_n,0)w_1||<(\lambda'\cdot\lambda)_N\varepsilon$, then \[\qquad||\diag(e,1,0)-\diag(w_1',I_2)\diag(I_n,0_m,1,0)\diag(w_1,I_2)||<(\lambda'\cdot\lambda)_N\varepsilon.\] But $\diag(I_n,0_m,1,0)=U^{-1}\diag(I_{n+1},0)U$ so \[\qquad||\diag(e,1,0)-\diag(w_1',I_2)U^{-1}\diag(I_{n+1},0)U\diag(w_1,I_2)||<(\lambda'\cdot\lambda)_N\varepsilon.\] Similarly, we have \[\qquad||\diag(e,1,0)-\diag(w_2,I_2)U^{-1}\diag(I_{n+1},0)U\diag(w_2',I_2)||<(\lambda'\cdot\lambda)_N\varepsilon.\] Thus $\partial([\diag(u,1)])=[\diag(e,1,0)]-[I_{n+1}]=[e]-[I_n]=\partial([u])$. Similarly, one sees that replacing $v$ by $\diag(v,1)$ does not change $\partial([u])$.

\item Let $v_0\in GL_m^{\varepsilon,r,N}(\tilde{A})$ and $v_1\in GL_k^{\varepsilon,r,N}(\tilde{A})$ be such that $\diag(u,v_0)$ is homotopic to $I_{n+m}$ as $(2\varepsilon,2r,2(N+\varepsilon))$-invertibles, and $\diag(u,v_1)$ is homotopic to $I_{n+k}$ as $(2\varepsilon,2r,2(N+\varepsilon))$-invertibles. Assume that $m\geq k$. Then $\diag(u,v_1,I_{m-k})$ is homotopic to $I_{n+m}$ as $(2\varepsilon,2r,2(N+\varepsilon))$-invertibles. Let $(U_t)_{t\in[0,1]}$ be a homotopy of $(2\varepsilon,2r,2(N+\varepsilon))$-invertibles between $\diag(u,v_0)$ and $\diag(u,v_1,I_{m-k})$. We may assume that $\pi(U_t)=I_{n+m}$ for all $t$. Then we may regard $U=(U_t)$ as a $(2\varepsilon,2r,2(N+\varepsilon))$-invertible in $C([0,1],M_{n+m}(\tilde{A}_{2r}))$ with $\pi(U)=I_{n+m}$. Moreover, $U$ is homotopic to 1 as $(2\varepsilon,2r,2(N+\varepsilon))$-invertibles. By Lemma \ref{MVtechlem}, there exist $l\in\mathbb{N}$ and invertibles $W_i \in C([0,1],M_{n+m+l}(\widetilde{A_{\Delta_i}}))$ with $||\diag(U,I_l)-W_1W_2||<\lambda_N\varepsilon$. We obtain a quasi-idempotent $E\in C([0,1],M_{n+m+l}(\tilde{A}))$ by Lemma \ref{MVboundlem}. Using $v_0$ in the definition of $\partial$ yields $[E_0]-[I_n]$ while using $v_1$ in the definition yields $[E_1]-[I_n]$, but $[E_0]-[I_n]=[E_1]-[I_n]$.

\item Suppose that $[u_0]=[u_1]$ in $K_1^{\varepsilon,r,N}(A)$. Then up to stabilization, we may assume that $u_0$ and $u_1$ are homotopic as $(4\varepsilon,2r,4N)$-invertibles in $M_n(\tilde{A})$. Let $(u_t)_{t\in[0,1]}$ be such a homotopy. We may assume that $\pi(u_t)=I_n$ for all $t$. Then we may regard $u=(u_t)$ as a $(4\varepsilon,2r,4N)$-invertible in $C([0,1],M_n(\tilde{A}_{2r}))$. Let $u'$ be a $(4\varepsilon,2r,4N)$-inverse for $u$ with $\pi(u')=I_n$. Up to stabilization, there exist quasi-invertibles $w_i$ in $C([0,1],M_n(\widetilde{A_{\Delta_i}}))$ such that $||\diag(u,u')-w_1w_2||<\lambda_{4N}\varepsilon$. Then there is a $(\lambda'''_N\varepsilon,h'''_{\varepsilon,N}r,\lambda'''_N)$-idempotent $e\in C([0,1],M_n(\widetilde{A_{\Delta_1}}_{,h'''_{\varepsilon,N} r}\cap\widetilde{A_{\Delta_2}}_{,h'''_{\varepsilon,N} r}))$ such that \[\qquad\max(||e-w_1'\diag(I_n,0)w_1||,||e-w_2\diag(I_n,0)w_2'||)<\lambda'''_N\varepsilon.\] Now $(e_t)_{t\in[0,1]}$ is a homotopy of $(\lambda'''_N\varepsilon,h'''_{\varepsilon,N}r,\lambda'''_N)$-idempotents, so $\partial([u_0])=[e_0]-[I_n]=[e_1]-[I_n]=\partial([u_1])$.
\end{enumerate}

Consider the composition
\[\minCDarrowwidth15pt \begin{CD}
K_1^{\varepsilon,r,N}(A_{\Delta_1})\oplus K_1^{\varepsilon,r,N}(A_{\Delta_2}) @>(j_{1*}-j_{2*})>> K_1^{\varepsilon,r,N}(A) @>\partial>> K_0^{\lambda^{\mathcal{D}}_N\varepsilon,h^{\mathcal{D}}_{\varepsilon,N}r,\lambda^{\mathcal{D}}_N}(A_{\Delta_1}\cap A_{\Delta_2}).
\end{CD} \]
Suppose that $u\in M_n(\widetilde{A_{\Delta_1}})$ and $v\in M_n(\widetilde{A_{\Delta_2}})$ are $(\varepsilon,r,N)$-invertibles. In the preliminary definition of $\partial([u])$, we may take $w_1=\diag(u,u')$, where $u'$ is an $(\varepsilon,r,N)$-inverse for $u$, and $w_2=I_{2n}$. Then $\partial([u])=[e_0]-[I_n]$, where $e_0$ is a $((\lambda'\cdot\lambda)_N\varepsilon,(h'\cdot h)_{\varepsilon,N}r,(\lambda'\cdot\lambda)_N)$-idempotent in $M_{2n}(\tilde{A})$ such that $||e_0-\diag(I_n,0)||<(\lambda'\cdot\lambda)_N\varepsilon$. Similarly, $\partial([v])=[e_1]-[I_n]$, where $e_1$ is an $((\lambda'\cdot\lambda)_N\varepsilon,(h'\cdot h)_{\varepsilon,N}r,(\lambda'\cdot\lambda)_N)$-idempotent in $M_{2n}(\tilde{A})$ such that $||e_1-\diag(I_n,0)||<(\lambda'\cdot\lambda)_N\varepsilon$. Now $||e_0-e_1||<2(\lambda'\cdot\lambda)_N\varepsilon$ so by Lemma \ref{normestlem2}, $e_0$ and $e_1$ are homotopic as $(2(\lambda'\cdot\lambda)_N^2\varepsilon,(h'\cdot h)_{\varepsilon,N}r,(\lambda'\cdot\lambda)_N)$-idempotents. Thus there exists a control pair $(\lambda^{\mathcal{D}},h^{\mathcal{D}})$ depending only on the coercity $c$ such that $\partial([u]-[v])=0$ in $K_0^{\lambda^{\mathcal{D}}_N\varepsilon,h^{\mathcal{D}}_{\varepsilon,N}r,\lambda^{\mathcal{D}}_N}(A_{\Delta_1}\cap A_{\Delta_2})$.

Finally, consider the composition
\[ \begin{CD}
K_1^{\varepsilon,r,N}(A) @>\partial>> K_0^{\lambda^{\mathcal{D}}_N\varepsilon,h^{\mathcal{D}}_{\varepsilon,N}r,\lambda^{\mathcal{D}}_N}(A_{\Delta_1}\cap A_{\Delta_2}) \\ @>(j_{1,2*},j_{2,1*})>> K_0^{\lambda^{\mathcal{D}}_N\varepsilon,h^{\mathcal{D}}_{\varepsilon,N}r,\lambda^{\mathcal{D}}_N}(A_{\Delta_1})\oplus K_0^{\lambda^\mathcal{D}_N\varepsilon,h^{\mathcal{D}}_{\varepsilon,N}r,\lambda^{\mathcal{D}}_N}(A_{\Delta_2}).
\end{CD} \]

The preliminary definition of the boundary map yields $j_{1,2*}(\partial([u]))=0$ in $K_0^{\lambda^{\mathcal{D}}_N\varepsilon,h^{\mathcal{D}}_{\varepsilon,N}r,\lambda^{\mathcal{D}}_N}(A_{\Delta_1})$ and $j_{2,1*}(\partial([u]))=0$ in $K_0^{\lambda^{\mathcal{D}}_N\varepsilon,h^{\mathcal{D}}_{\varepsilon,N}r,\lambda^{\mathcal{D}}_N}(A_{\Delta_2})$ when $(\lambda^{\mathcal{D}},h^{\mathcal{D}})$ is sufficiently large, so that the composition is the zero map. 

Now we give a formal definition of the boundary map in terms of a control pair $(\lambda^{\mathcal{D}},h^{\mathcal{D}})$ making all the above hold.

\begin{defn}
Let $A$ be a filtered $SQ_p$ algebra, and let $(A_{\Delta_1},A_{\Delta_2})$ be an $(s,c)$-controlled Mayer-Vietoris pair for $A$. Let $(\lambda,h)$ be the control pair from Lemma \ref{MVtechlem}, and let $(\lambda',h')$ be the control pair from Lemma \ref{MVboundlem}. 
Given $[u]\in K_1^{\varepsilon,r,N}(A)$, where $N\geq 1$, $0<\varepsilon<\frac{1}{20\lambda^{\mathcal{D}}_N}$, $0<r\leq\frac{s}{h^{\mathcal{D}}_{\varepsilon,N}}$, and $u\in GL_n^{\varepsilon,r,N}(\tilde{A})$ with $u-I_n\in M_n(A)$, 
\begin{enumerate}
\item find $v\in GL_m^{\varepsilon,r,N}(\tilde{A})$ with $v-I_n\in M_n(A)$ such that $\diag(u,v)$ is homotopic to $I_{n+m}$ as $(2\varepsilon,2r,2(N+\varepsilon))$-invertibles,
\item let $w_i$ ($i=1,2$) be an $(\lambda_N\varepsilon,h_{\varepsilon,N}r,\lambda_N)$-invertible in $M_{n+m}(\widetilde{A_{\Delta_i}})$ such that \[||\diag(u,v)-w_1w_2||<\lambda_N\varepsilon,\] 
\item let $w_i'$ be a $(\lambda_N\varepsilon,h_{\varepsilon,N}r,\lambda_N)$-inverse for $w_i$, and 
\item let $e\in M_{n+m}(\widetilde{A_{\Delta_1}}\cap\widetilde{A_{\Delta_2}})$ be such that \[\max(||e-w_1'\diag(I_n,0)w_1||,||e-w_2\diag(I_n,0)w_2'||)<(\lambda'\cdot\lambda)_N\varepsilon\] and $\diag(\pi(e),0)$ is $((\lambda'\cdot\lambda)_N\varepsilon,(h'\cdot h)_{\varepsilon,N}r,(\lambda'\cdot\lambda)_N)$-homotopic to $\diag(I_n,0)$ in $M_{2(n+m)}(\mathbb{C})$.
\end{enumerate}
Define $\partial:K_1^{\varepsilon,r,N}(A)\rightarrow K_0^{\lambda^{\mathcal{D}}_N\varepsilon,h^{\mathcal{D}}_{\varepsilon,N} r,\lambda^{\mathcal{D}}_N}(A_{\Delta_1}\cap A_{\Delta_2})$ by $[u]\mapsto[e]-[I_n]$.
%For $[u]\in K_1^{\varepsilon,r,N}(A)$ where $N\geq 1$, $0<\varepsilon<\frac{1}{9\lambda^{\mathcal{D}}_N}$, and $0<r\leq\frac{s}{h^{\mathcal{D}}_{\varepsilon,N}}$, define $\partial([u])=[e]-[I_n]\in K_0^{\lambda^{\mathcal{D}}_N\varepsilon,h^{\mathcal{D}}_{\varepsilon,N} r,\lambda^{\mathcal{D}}_N}(A_{\Delta_1}\cap A_{\Delta_2})$. 
\end{defn}

\begin{prop}
For every $c>0$, there exists a control pair $(\lambda,h)$ such that for any filtered $SQ_p$ algebra $A$, any $(s,c)$-controlled Mayer-Vietoris pair $(A_{\Delta_1},A_{\Delta_2})$ for $A$, and $N\geq 1$, $0<\varepsilon<\frac{1}{20\lambda_N}$, and $0<r\leq\frac{s}{h_{\varepsilon,N}}$,

\begin{enumerate}
\item if $u\in M_n(\tilde{A})$ is an $(\varepsilon,r,N)$-invertible such that $\partial([u])=[e]-[I_n]=0$ in $K_0^{\lambda^{\mathcal{D}}_N\varepsilon,h^{\mathcal{D}}_{\varepsilon,N}r,\lambda^{\mathcal{D}}_N}(A_{\Delta_1}\cap A_{\Delta_2})$, then $j_{1*}(y_1)-j_{2*}(y_2)=[u]$ in $K_0^{\lambda_N\varepsilon,h_{\varepsilon,N}r,\lambda_N}(A)$ for some $y_i\in K_1^{\lambda_N\varepsilon,h_{\varepsilon,N}r,\lambda_N}(A_{\Delta_i})$;

\item if $[e]-[I_n]\in K_0^{\lambda^{\mathcal{D}}_N\varepsilon,h^{\mathcal{D}}_{\varepsilon,N}r,\lambda^{\mathcal{D}}_N}(A_{\Delta_1}\cap A_{\Delta_2})$ is such that $[e]-[I_n]=0$ in $K_0^{\lambda^{\mathcal{D}}_N\varepsilon,h^{\mathcal{D}}_{\varepsilon,N}r,\lambda^{\mathcal{D}}_N}(A_{\Delta_i})$ for $i=1,2$, then $\partial(y)=[e]-[I_n]$ for some $y\in K_1^{\lambda_N\varepsilon,h_{\varepsilon,N}r,\lambda_N}(A_{\Delta_1}\cap A_{\Delta_2})$.
\end{enumerate}
\end{prop}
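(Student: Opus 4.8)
Both statements are the two remaining exactness assertions needed to close up the controlled Mayer--Vietoris sequence: (i) is exactness at $\mathcal{K}_1(A)$ and (ii) is exactness at $\mathcal{K}_0(A_{\Delta_1}\cap A_{\Delta_2})$. For both, the plan is to \emph{reverse} the construction of the boundary map: starting from the vanishing hypothesis, use Proposition~\ref{homtosim2} together with Lemma~\ref{idemliphom} to upgrade a homotopy of quasi-idempotents to a similarity implemented by a quasi-invertible, and then recombine the similarity data coming from the two subalgebras. Throughout I will work up to stabilization and relaxing control, always choosing a quasi-invertible together with a quasi-inverse lying in the same subalgebra (legitimate by Lemma~\ref{MVtechlem} and by inspecting the proof of Proposition~\ref{homtosim2}), with every inflation of $(\varepsilon,r,N)$ depending only on the coercity $c$ so that it can be absorbed into a single control pair at the end.

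For (i), I would first unwind the definition of $\partial$ to produce $v$, quasi-invertibles $w_i\in M_\bullet(\widetilde{A_{\Delta_i}})$ with $w_i^{-1}\in M_\bullet(\widetilde{A_{\Delta_i}})$, and $e\in M_\bullet(\widetilde{A_{\Delta_1}}\cap\widetilde{A_{\Delta_2}})$ with $\|\diag(u,v)-w_1w_2\|$ small and $e\approx w_1^{-1}qw_1\approx w_2qw_2^{-1}$ for a coordinate projection $q$. Since $[e]-[I_n]=0$, the standard picture for $K_0$ shows that, after enlarging matrices (and a permutation), $e$ is homotopic to $q$ as a quasi-idempotent in $M_\bullet(\widetilde{A_{\Delta_1}}\cap\widetilde{A_{\Delta_2}})$, so Proposition~\ref{homtosim2} and Lemma~\ref{idemliphom} give a quasi-inverse pair $(a,a')$ in that algebra with $aea'\approx q$. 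Then $z_1:=aw_1^{-1}\in M_\bullet(\widetilde{A_{\Delta_1}})$ and $z_2:=aw_2\in M_\bullet(\widetilde{A_{\Delta_2}})$ (with quasi-inverses $w_1a'$ and $w_2^{-1}a'$) each almost commute with $q$ since $z_iqz_i'\approx aea'\approx q$, hence by Lemma~\ref{normestlem1} each is homotopic to a block-diagonal quasi-invertible $\diag(z_i^{(1)},z_i^{(2)})$ with $z_i^{(1)}$ and a quasi-inverse in $M_\bullet(\widetilde{A_{\Delta_i}})$. Finally $z_1^{-1}z_2=w_1w_2$ exactly, so $\diag(u,v)\approx\diag((z_1^{(1)})^{-1}z_2^{(1)},(z_1^{(2)})^{-1}z_2^{(2)})$, whence $u$ is homotopic to $(z_1^{(1)})^{-1}z_2^{(1)}$ as quasi-invertibles; passing to $K_1$ (replacing honest inverses by subalgebra quasi-inverses) gives $[u]=j_{1*}(-[z_1^{(1)}])-j_{2*}(-[z_2^{(1)}])$ after relaxing control, which exhibits $[u]$ in the image of $j_{1*}-j_{2*}$.

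For (ii), since $[e]-[I_n]$ dies in $K_0(A_{\Delta_1})$ and in $K_0(A_{\Delta_2})$, after a common stabilization $e$ becomes a quasi-idempotent $\hat e$ homotopic to a coordinate projection $\hat q$ \emph{both} in $M_\bullet(\widetilde{A_{\Delta_1}})$ and in $M_\bullet(\widetilde{A_{\Delta_2}})$; Proposition~\ref{homtosim2} and Lemma~\ref{idemliphom} supply quasi-inverse pairs $(a_i,a_i')$ in $M_\bullet(\widetilde{A_{\Delta_i}})$ with $a_i\hat ea_i'\approx\hat q$, and I note from the proof of Proposition~\ref{homtosim2} that each $a_i$ is a product of quasi-invertibles each homotopic to the identity (as in the proof of Lemma~\ref{closeimpliessimilar}), so $[a_i]=0$ in $K_1(A_{\Delta_i})$ and in $K_1(A)$. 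Set $w_1:=a_1$, $w_2:=a_2^{-1}$, so $w_1^{-1}\hat qw_1\approx\hat e\approx w_2\hat qw_2^{-1}$ and $g:=w_1w_2=a_1a_2^{-1}$ almost commutes with $\hat q$; by Lemma~\ref{normestlem1}, $g$ is homotopic to $\diag(u,v)$, where $u$ (resp.\ $v$) is the compression of $g$ to the range (resp.\ kernel) of $\hat q$. Because $[g]=[a_1]-[a_2]=0$ in $K_1(A)$, enlarging $v$ (which by well-definedness does not alter $\partial$) makes $\diag(u,v)$ homotopic to the identity, so $(u,v,w_1,w_2)$ is a legitimate choice in the construction of $\partial([u])$, and that construction returns the quasi-idempotent $w_1^{-1}\hat qw_1\approx\hat e$. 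Using Lemmas~\ref{normestlem1}, \ref{normestlem2}, \ref{simtohom2} and well-definedness of $\partial$, this gives $\partial([u])=[\hat e]-[I_{n'}]=[e]-[I_n]$, so $y:=[u]$ does the job.

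I expect the main obstacle to be purely technical bookkeeping: threading the stabilizations and permutation conjugations so that the block decompositions coming out of Lemma~\ref{normestlem1} match the matrix sizes dictated by the definition of $\partial$, and checking at every step that the norm hypotheses of Lemmas~\ref{normestlem1}, \ref{normestlem2}, \ref{simtohom2} and Proposition~\ref{homtosim2} are satisfied with parameter inflation depending only on $c$. The sole point that is not mechanical is in (ii): one must verify that $\diag(u,v)$ is homotopic to the identity so that $\partial([u])$ is even defined, and this is exactly where the $K_1$-triviality of the similarity-implementing quasi-invertibles produced by Proposition~\ref{homtosim2} gets used.
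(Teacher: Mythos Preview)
Your proposal is correct and follows essentially the same route as the paper's proof: in both parts you, like the paper, convert the vanishing hypothesis into a similarity via Proposition~\ref{homtosim2}/Lemma~\ref{idemliphom}, observe that the resulting combinations ($aw_1^{-1}$ and $aw_2$ in (i); $a_1a_2^{-1}$ in (ii)) almost commute with the coordinate projection, and then read off the required $K_1$-classes from the diagonal blocks. The only cosmetic differences are your explicit treatment of the $\diag(u,v)\sim I$ requirement in (ii) (which the paper leaves implicit, since the conjugators produced by Lemma~\ref{closeimpliessimilar} are close to $1$) and the phrase ``$z_1^{-1}z_2=w_1w_2$ exactly,'' which must of course be read with quasi-inverses in order to keep propagation controlled, as you yourself note.
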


\begin{proof}
For (i), suppose that $u\in M_n(\tilde{A})$ is an $(\varepsilon,r,N)$-invertible such that $\partial([u])=[e]-[I_n]=0$ in $K_0^{\lambda^{\mathcal{D}}_N\varepsilon,h^{\mathcal{D}}_{\varepsilon,N}r,\lambda^{\mathcal{D}}_N}(A_{\Delta_1}\cap A_{\Delta_2})$. Up to stabilization, we may assume that $e$ is $(4\lambda^{\mathcal{D}}_N\varepsilon,h^{\mathcal{D}}_{\varepsilon,N}r,4\lambda^{\mathcal{D}}_N+1)$-homotopic to $\diag(I_n,0)$ in $M_{2n}(\widetilde{A_{\Delta_1}}\cap\widetilde{A_{\Delta_2}})$. By Proposition \ref{homtosim2} and Lemma \ref{idemliphom}, there exists a $(\lambda''_N\varepsilon,h''_{\varepsilon,N}r,\lambda''_N)$-inverse pair $(v,v')$ in $M_{2n}(\widetilde{A_{\Delta_1}}\cap\widetilde{A_{\Delta_2}})$ such that \[||e-v\diag(I_n,0)v'||<\lambda''_N\varepsilon.\] 

Since $\max(||e-w_1'\diag(I_n,0)w_1||,||e-w_2\diag(I_n,0)w_2'||)<\lambda^{\mathcal{D}}_N\varepsilon$, we have 
\begin{align*} ||w_1'\diag(I_n,0)w_1-v\diag(I_n,0)v'||&<(\lambda^{\mathcal{D}}_N+\lambda''_N)\varepsilon, \\
||w_2\diag(I_n,0)w_2'-v\diag(I_n,0)v'||&<(\lambda^{\mathcal{D}}_N+\lambda''_N)\varepsilon.
\end{align*} 
By successively relaxing control if necessary, we see that there exists a control pair $(\lambda''',h''')$ depending only on the coercity $c$ such that 
\begin{itemize}
\item $||\diag(I_n,0)w_1v-w_1v\diag(I_n,0)||<\lambda'''_N\varepsilon$;
\item $||v'w_2\diag(I_n,0)-\diag(I_n,0)v'w_2||<\lambda'''_N\varepsilon$;
\item $\max(||w_1v-\diag(u_1,v_1)||,||v'w_2-\diag(u_2,v_2)||)<\lambda'''_N\varepsilon$, where $u_i,v_i$ are quasi-invertible elements in $M_n(\widetilde{A_{\Delta_i}})$ for $i=1,2$;
\item $||\diag(u,u')-\diag(u_1u_2,v_1v_2)||<\lambda'''_N\varepsilon$, where $u'$ is an $(\varepsilon,r,N)$-inverse for $u$;
\item $||u-u_1u_2||<\lambda'''_N\varepsilon$; 
\item $u$ and $u_1u_2$ are homotopic as $(\lambda'''_N\varepsilon,h'''_{\varepsilon,N}s,\lambda'''_N)$-invertibles; 
\item $[u]=[u_1]+[u_2]$ in $K_1^{\lambda'''_N\varepsilon,h'''_{\varepsilon,N}r,\lambda'''_N}(A)$.
\end{itemize}

For (ii), suppose that $[e]-[I_n]\in K_0^{\lambda^{\mathcal{D}}_N\varepsilon,h^{\mathcal{D}}_{\varepsilon,N}r,\lambda^{\mathcal{D}}_N}(A_{\Delta_1}\cap A_{\Delta_2})$ is such that $[e]-[I_n]=0$ in $K_0^{\lambda^{\mathcal{D}}_N\varepsilon,h^{\mathcal{D}}_{\varepsilon,N}r,\lambda^{\mathcal{D}}_N}(A_{\Delta_i})$ for $i=1,2$. Up to stabilization, we may assume that $e$ is $(4\lambda^{\mathcal{D}}_N\varepsilon,h^{\mathcal{D}}_{\varepsilon,N}r,4\lambda^{\mathcal{D}}_N+1)$-homotopic to $\diag(I_n,0)$ in $M_{2n}(\widetilde{A_{\Delta_i}})$ for $i=1,2$. By successively relaxing control if necessary, we see that there exists a control pair $(\lambda'',h'')\geq(\lambda^{\mathcal{D}},h^{\mathcal{D}})$ depending only on the coercity $c$ such that
\begin{itemize}
\item $\max(||e-w_1'\diag(I_n,0)w_1||,||e-w_2\diag(I_n,0)w_2'||)<\lambda''_N\varepsilon$, where $(w_i,w_i')$ is some $(\lambda''_N\varepsilon,h''_{\varepsilon,N}s,\lambda''_N)$-inverse pair in $M_{2n}(\widetilde{A_{\Delta_i}})$;
%\item $\max(||e-w_1^{-1}\diag(I_n,0)w_1||,||e-w_2\diag(I_n,0)w_2^{-1}||)<\lambda''_N\varepsilon$;
%\item $||w_1^{-1}\diag(I_n,0)w_1-w_2\diag(I_n,0)w_2^{-1}||<2\lambda''_N\varepsilon$;
\item $||w_1w_2\diag(I_n,0)-\diag(I_n,0)w_1w_2||<3(\lambda''_N)^3\varepsilon$;
\item $||w_1w_2-\diag(u,v)||<3(\lambda''_N)^3\varepsilon$, where $u,v$ are $(\lambda''_N\varepsilon,h''_{\varepsilon,N}s,\lambda''_N)$-invertibles in $M_n(A)$;
\item $\partial([u])=[e]-[I_n]$ in $K_0^{\lambda''_N\varepsilon,h''_{\varepsilon,N}r,\lambda''_N}(A_{\Delta_1}\cap A_{\Delta_2})$.
\end{itemize}
\end{proof}

%It follows that $||w_1w_2-\diag(u,v)||<4Q''(N)^3\varepsilon$ where $u,v$ are $Q'''(N)\varepsilon$-$h'''(\varepsilon,N)s$-$Q'''(N)$-invertibles in $M_n(A)$. Hence $\partial([u])=[e]-[I_n]$ in $K_0^{...}(A_{\Delta_1}\cap A_{\Delta_2})$. \todo{if don't req $\pi(w)$ hom to I}

We summarize the results of this section as follows:

\begin{thm}
For every $c>0$, there exists a control pair $(\lambda,h)$ such that for any filtered $SQ_p$ algebra $A$ and any $(s,c)$-controlled Mayer-Vietoris pair $(A_{\Delta_1},A_{\Delta_2})$ for $A$, we have the following $(\lambda,h)$-exact sequence of order $s$:
\[
\begindc{\commdiag}[100]		%[10]
\obj(0,5)[1a]{$\mathcal{K}_1(A_{\Delta_1}\cap A_{\Delta_2})$}		
\obj(15,5)[1b]{$\mathcal{K}_1(A_{\Delta_1})\oplus\mathcal{K}_1(A_{\Delta_2})$}
\obj(30,5)[1c]{$\mathcal{K}_1(A)$}

\obj(0,0)[2a]{$\mathcal{K}_0(A)$}	
\obj(15,0)[2b]{$\mathcal{K}_0(A_{\Delta_1})\oplus\mathcal{K}_0(A_{\Delta_2})$}
\obj(30,0)[2c]{$\mathcal{K}_0(A_{\Delta_1}\cap A_{\Delta_2})$}

\mor{1a}{1b}{$(j_{1,2*},j_{2,1*})$}	\mor{1b}{1c}{$j_{1*}-j_{2*}$}

\mor{2c}{2b}{$(j_{1,2*},j_{2,1*})$}	\mor{2b}{2a}{$j_{1*}-j_{2*}$}	\mor{1c}{2c}{$\partial$}
\enddc
\]
\end{thm}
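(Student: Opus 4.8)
The statement is an assembly of the results proved in this section, so the plan is to collect them and extract a single control pair depending only on $c$. First I would record that all the arrows in the sequence except $\partial$ are induced by filtered homomorphisms: the inclusions $j_1\colon A_{\Delta_1}\hookrightarrow A$, $j_2\colon A_{\Delta_2}\hookrightarrow A$, $j_{1,2}\colon A_{\Delta_1}\cap A_{\Delta_2}\hookrightarrow A_{\Delta_1}$ and $j_{2,1}\colon A_{\Delta_1}\cap A_{\Delta_2}\hookrightarrow A_{\Delta_2}$ are $p$-completely isometric (each subalgebra carries the matrix norms and the filtration inherited from $A$), so the induced controlled morphisms $j_{1*}-j_{2*}$ and $(j_{1,2*},j_{2,1*})$ are $(1,1)$-controlled; here $\mathcal{K}_*(A_{\Delta_1})\oplus\mathcal{K}_*(A_{\Delta_2})$ is identified with $\mathcal{K}_*(A_{\Delta_1}\oplus A_{\Delta_2})$ using the remark that coordinate projections induce isomorphisms on quantitative $K$-groups of direct sums. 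For the connecting map, the verifications carried out after Lemma~\ref{MVboundlem} show precisely that $[u]\mapsto[e]-[I_n]$ is independent of all choices and compatible with the relaxation-of-control maps, so it defines a controlled morphism $\mathcal{D}\colon\mathcal{K}_1(A)\to\mathcal{K}_0(A_{\Delta_1}\cap A_{\Delta_2})$ for a control pair $(\lambda^{\mathcal{D}},h^{\mathcal{D}})$ depending only on $c$, and because $\partial^{\varepsilon,r,N}$ is only defined for $0<r\le s/h^{\mathcal{D}}_{\varepsilon,N}$ this morphism is of order $s$.

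Next I would invoke the four exactness propositions of this subsection. The two ``Mayer--Vietoris'' propositions give, for control pairs depending only on $c$, exactness of $\mathcal{K}_i(A_{\Delta_1}\cap A_{\Delta_2})\to\mathcal{K}_i(A_{\Delta_1})\oplus\mathcal{K}_i(A_{\Delta_2})\to\mathcal{K}_i(A)$ at the middle term for $i=0,1$; the last proposition, together with the computations preceding the definition of $\partial$ (which establish $\partial\circ(j_{1*}-j_{2*})=0$ and $(j_{1,2*},j_{2,1*})\circ\partial=0$), gives exactness at $\mathcal{K}_1(A)$ and at $\mathcal{K}_0(A_{\Delta_1}\cap A_{\Delta_2})$, again for control pairs depending only on $c$. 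The six-term sequence has exactly four interior terms, namely $\mathcal{K}_1(A_{\Delta_1})\oplus\mathcal{K}_1(A_{\Delta_2})$, $\mathcal{K}_1(A)$, $\mathcal{K}_0(A_{\Delta_1}\cap A_{\Delta_2})$ and $\mathcal{K}_0(A_{\Delta_1})\oplus\mathcal{K}_0(A_{\Delta_2})$, and these four propositions cover exactness at all of them.

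To conclude I would choose a control pair $(\lambda,h)$ dominating $(\lambda^{\mathcal{D}},h^{\mathcal{D}})$ and all four of the control pairs supplied by the exactness propositions; this is possible since there are finitely many of them, each depending only on $c$, and a control pair may always be enlarged. With respect to this $(\lambda,h)$ every morphism in the sequence is a $(\lambda^{\mathcal{F}},h^{\mathcal{F}})$-controlled morphism with $(\lambda^{\mathcal{F}},h^{\mathcal{F}})\le(\lambda,h)$, all four exactness statements hold, and the sequence is of order $s$ because $\mathcal{D}$ is; this is exactly the assertion.

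I do not expect a genuine obstacle here: the substantive work is already contained in Lemma~\ref{MVtechlem}, Lemma~\ref{MVboundlem} and the four exactness propositions, and what remains is bookkeeping. The only points needing a little care are confirming that the relaxation-of-control squares in the definition of a controlled morphism commute for $\mathcal{D}$ (which follows by running the construction of $\partial$ through the relaxation maps, exactly as in the well-definedness argument) and checking that a single control pair can be chosen uniformly in $A$ and in the Mayer--Vietoris pair, which is immediate since all the constants produced above depend only on the coercity $c$.
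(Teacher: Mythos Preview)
Your proposal is correct and matches the paper's approach exactly: the theorem is presented in the paper simply as a summary of the results of the section, with no separate proof given, and your account of how the pieces assemble (the $(1,1)$-controlled inclusion maps, the controlled boundary morphism of order $s$, the four exactness propositions, and the choice of a dominating control pair depending only on $c$) is precisely the intended argument.
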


In view of the observations after Remark \ref{ciarem}, one can also obtain the corresponding controlled exact sequence for the suspensions by considering the controlled Mayer-Vietoris pair $(SA_{\Delta_1},SA_{\Delta_2})$ for $SA$.

\section{Remarks on the $C^*$-algebra case}

Finally, we make some remarks about the case where $A$ is a filtered $C^*$-algebra. In this case, both our definition of the quantitative $K$-theory groups and the definition in \cite{OY15} are applicable, and we shall briefly explain (without detailed proofs) that the two definitions give us essentially the same information.

In \cite{OY15}, the quantitative $K$-theory groups are defined by equivalence relations similar to ours but in terms of quasi-projections and quasi-unitaries instead of quasi-idempotents and quasi-invertibles.

\begin{defn} Let $A$ be a filtered $C^*$-algebra. For $0<\varepsilon<\frac{1}{20}$ and $r>0$,
\begin{enumerate}
\item an element $p\in A$ is called an $(\varepsilon,r)$-projection if $p\in A_r$, $p^*=p$, and $||p^2-p||<\varepsilon$;
\item if $A$ is unital, an element $u\in A$ is called an $(\varepsilon,r)$-unitary if $u\in A_r$, and $\max(||uu^*-1||,||u^*u-1||)<\varepsilon$.
\end{enumerate}
\end{defn}

It is straightforward to see that every $(\varepsilon,r)$-projection is an $(\varepsilon,r,1+\varepsilon)$-idempotent, and every $(\varepsilon,r)$-unitary is an $(\varepsilon,r,1+\varepsilon)$-invertible \cite[Remark 1.4]{OY15}. Writing $K_*^{'\varepsilon,r}(A)$ for the quantitative $K$-theory groups defined in \cite{OY15}, we thus have canonical homomorphisms $\phi_*:K_*^{'\varepsilon,r}(A)\rightarrow K_*^{\varepsilon,r,N}(A)$ for $0<\varepsilon<\frac{1}{20}$, $r>0$, and $N\geq 1+\varepsilon$.

Given an $(\varepsilon,r,N)$-idempotent $e$ in $A$, by considering
\begin{align*}
p_1 &= Q((2e^*-1)(2e-1))eR((2e^*-1)(2e-1)), \\
p_2 &= \frac{p_1+p_1^*}{2},
\end{align*}
where $Q(t)$ and $R(t)$ are polynomials such that $||(t+1)^{1/2}-Q(t)||<\frac{\varepsilon}{6N^5}$ and $||(t+1)^{-1/2}-R(t)||<\frac{\varepsilon}{6N^5}$ on $[0,(2N+1)^2]$, one can show the existence of a control pair $(\lambda,h)$ such that $\diag(e,0)$ and $\diag(p_2,0)$ are homotopic as $(\lambda_N\varepsilon,h_{\varepsilon,N}r,\lambda_N)$-idempotents. Since $p_2$ is a quasi-projection, we get a homomorphism $K_0^{\varepsilon,r,N}(A)\rightarrow K_0^{'\lambda_N\varepsilon,h_{\varepsilon,N}r}(A)$, which is a controlled inverse for $\phi_0$.

In the odd case, we have the following analog of polar decomposition:
\begin{lem}\cite[Lemma 2.4]{OY}
There exists a control pair $(\lambda,h)$ such that for any unital filtered $C^*$-algebra $A$ and any $(\varepsilon,r,N)$-invertible $x\in A$, there exists a positive $(\lambda_N\varepsilon,h_{\varepsilon,N}r,\lambda_N)$-invertible $y\in A$ and a $(\lambda_N\varepsilon,h_{\varepsilon,N}r)$-unitary $u\in A$ such that $|||x|-y||<\lambda_N\varepsilon$ and $||x-uy||<\lambda_N\varepsilon$. Moreover, we can choose $u$ and $y$ such that
\begin{itemize}
\item there exists a real polynomial $Q$ with $Q(1)=1$ such that $u=xQ(x^*x)$ and $y=x^*xQ(x^*x)$;
\item $y$ has a positive $(\lambda_N\varepsilon,h_{\varepsilon,N}r,\lambda_N)$-inverse;
\item if $x$ is homotopic to 1 as $(\varepsilon,r,N)$-invertibles, then $u$ is homotopic to 1 as $(\lambda_N\varepsilon,h_{\varepsilon,N}r)$-unitaries.
\end{itemize}
\end{lem}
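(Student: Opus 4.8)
The plan is to adapt the classical polar decomposition argument to the quasi-invertible setting, working entirely within the holomorphic/continuous functional calculus available in a $C^*$-algebra. Given an $(\varepsilon,r,N)$-invertible $x\in A$, the element $x^*x$ is positive, lies in $A_{2r}$, and satisfies $\|x^*x\|\le N^2$ and (since $x$ has a quasi-inverse, hence is genuinely invertible with $\|x^{-1}\|<\frac{N}{1-\varepsilon}$) $x^*x\ge\frac{(1-\varepsilon)^2}{N^2}$; so the spectrum of $x^*x$ sits in a compact interval $[\delta,N^2]$ bounded away from $0$, with $\delta$ controlled by $\varepsilon$ and $N$. The function $t\mapsto t^{-1/2}$ is continuous and real-analytic on a neighborhood of this interval, so I can choose a real polynomial $Q$ approximating $t^{-1/2}$ uniformly on $[\delta,N^2]$ to within a tolerance that will be chosen as a small multiple of $\varepsilon$ (with the implicit constant depending only on $\varepsilon,N$ via the length of the interval and the derivative bounds of $t^{-1/2}$ there); normalizing so that $Q(1)=1$ costs only another $O(\varepsilon)$ perturbation. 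Then set $u=xQ(x^*x)$ and $y=x^*xQ(x^*x)$. Propagation control is automatic: $Q$ has some degree $d=d(\varepsilon,N)$, so $Q(x^*x)\in A_{2dr}$, whence $u\in A_{(2d+1)r}$ and $y\in A_{(2d+2)r}$, giving the function $h$ in the control pair.

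Next I would verify the three norm estimates. Writing $f(t)=t^{-1/2}$ and $g(t)=t^{1/2}$, one has $|x|=g(x^*x)=x^*x\cdot f(x^*x)$, and since $Q\approx f$ on the spectrum of $x^*x$ up to $O(\varepsilon)$, continuity of the functional calculus gives $\| |x| - y\|<\lambda_N\varepsilon$. For the multiplicativity-type estimate, $x-uy = x - x Q(x^*x) x^*x Q(x^*x) = x\bigl(1 - (x^*x)Q(x^*x)^2\bigr)$, and $(x^*x)Q(x^*x)^2\approx (x^*x)f(x^*x)^2 = 1$ on the spectrum, so $\|x-uy\|\le N\cdot\|1-(x^*x)Q(x^*x)^2\| < \lambda_N\varepsilon$ after adjusting $\lambda$. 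To see that $u$ is a quasi-unitary, note $u^*u = Q(x^*x)(x^*x)Q(x^*x) = (x^*x)Q(x^*x)^2$ (everything here commutes, being a polynomial in the self-adjoint $x^*x$), which is within $O(\varepsilon)$ of $1$; and $uu^* = x Q(x^*x)^2 x^*$, which I would handle by observing $Q(x^*x)^2\approx f(x^*x)^2 = (x^*x)^{-1}$, so $uu^*\approx x(x^*x)^{-1}x^* = xx^{-1}(x^*)^{-1}x^* = 1$ up to $O(\varepsilon)$ (this last manipulation is legitimate because $x$ is genuinely invertible). Norm bounds $\|u\|,\|y\|\le\lambda_N$ follow from $\|x\|\le N$ and the uniform bound on $|Q|$ over $[\delta,N^2]$, and the positivity of $y$ plus the existence of a positive quasi-inverse for $y$ come from applying the functional calculus to $t\mapsto t Q(t)$ and its reciprocal approximant on $[\delta,N^2]$.

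For the homotopy clause: if $(x_s)_{s\in[0,1]}$ is a homotopy of $(\varepsilon,r,N)$-invertibles with $x_1=1$, then each $x_s^*x_s$ has spectrum in a common interval $[\delta,N^2]$ (the bounds being uniform in $s$), so the \emph{same} polynomial $Q$ works for all $s$, and $u_s = x_s Q(x_s^*x_s)$ is a norm-continuous path of $(\lambda_N\varepsilon,h_{\varepsilon,N}r)$-unitaries from $u$ to $1\cdot Q(1)=1$. The main obstacle I anticipate is purely bookkeeping rather than conceptual: pinning down $\delta$ as an explicit function of $\varepsilon$ and $N$ (it degrades as $\varepsilon$ grows toward $\tfrac1{20}$ and as $N$ grows), then tracking how the polynomial degree $d(\varepsilon,N)$ — needed to approximate $t^{-1/2}$ on $[\delta,N^2]$ within $c\varepsilon$ — feeds into $h$, and making sure the various $O(\varepsilon)$ estimates compose into a single legitimate control pair $(\lambda,h)$ with $\lambda$ non-decreasing and $h(\cdot,N)$ non-increasing. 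Since this lemma is quoted verbatim from \cite{OY}, in the paper I would simply cite it; the sketch above is the argument one would reconstruct.
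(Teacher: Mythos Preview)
Your reconstruction is correct and is the standard argument; the paper itself does not prove this lemma but simply cites it from \cite{OY}, exactly as you anticipated in your final sentence. Your sketch---approximate $t^{-1/2}$ on the spectrum of $x^*x\subset[(1-\varepsilon)^2/N^2,N^2]$ by a real polynomial $Q$ normalized at $1$, set $u=xQ(x^*x)$ and $y=x^*xQ(x^*x)$, and run the same $Q$ along a homotopy---is precisely the intended proof, and the bookkeeping you flag is indeed the only real work.
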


By applying Lemma \ref{normestlem1} and using an appropriate polynomial approximation of $t\mapsto\exp(t\log y)$, one can find a control pair $(\lambda',h')\geq(\lambda,h)$ such that $x\sim uy\sim u$ as $(\lambda_N'\varepsilon,h_{\varepsilon,N}'r,\lambda_N')$-invertibles. This gives us a controlled inverse for $\phi_1$.

In summary, we have a controlled isomorphism between $\mathcal{K}'_*(A)$ and $\mathcal{K}_*(A)$ when $A$ is a filtered $C^*$-algebra.

\bibliographystyle{plain}
\bibliography{mybib}
\end{document}